\theoremstyle{plain}  
\newtheorem{theorem}{Theorem}[section]
\newtheorem{lemma}{Lemma}[section]
\newtheorem{prop}{Proposition}[section]
\newtheorem{rem}{Remark}[section]
\newtheorem{example}{Example}
\newcommand{\trueparam}{\theta^\dagger}
\newcommand{\truebeta}{\beta^\dagger}
\newcommand{\truesigma}{\sigma^\dagger}
\newcommand{\truedist}{\nu_{\trueparam}}
\newcommand{\mrm}{\mathrm} 
\newcommand{\limit}{$n \to \infty$, $\Delta_n \to 0$ and $n \Delta_n \to \infty$}
\newcommand*{\sample}[2]{{#1}_{t_{#2}}}
\newcommand*{\obs}[3]{{#1}_{t_{#2}}^{(\mrm{#3})}}
\newcommand{\probconv}{\xrightarrow{\mathbb{P}_{\trueparam}}}
\newcommand{\distconv}{\xrightarrow{\mathcal{L}_{\trueparam}}}
\providecommand{\keywords}[1]{\textit{Keywords:} #1}
\newcommand{\ruby}[2]{
\leavevmode
\setbox0=\hbox{#1}
\setbox1=\hbox{\tiny #2}
\ifdim\wd0>\wd1 \dimen0=\wd0 \else \dimen0=\wd1 \fi
\hbox{
\kanjiskip=0pt plus 2fil
\xkanjiskip=0pt plus 2fil
\vbox{
\hbox to \dimen0{
\small \hfil#2\hfil}
\nointerlineskip
\hbox to \dimen0{\mathstrut\hfil#1\hfil}}}}
\renewcommand{\theenumi}{$[\mathrm{H}\arabic{enumi}]$}
\renewcommand{\labelenumi}{\theenumi}
\newcommand*{\addFileDependency}[1]{% argument=file name and extension
\typeout{(#1)}
\@addtofilelist{#1}
\IfFileExists{#1}{}{\typeout{No file #1.}}
}\makeatother
\title{Parameter Inference for Hypo-Elliptic Diffusions under a Weak Design Condition} 
\author[1]{Yuga Iguchi}
\author[1]{Alexandros Beskos}
\affil[1]{Department of Statistical Science, University College London, London, UK}
\begin{document}
\maketitle

\begin{abstract}
We address the problem of parameter estimation for degenerate diffusion processes defined via the solution of Stochastic Differential Equations (SDEs) with diffusion matrix that is not full-rank. For this class of hypo-elliptic SDEs recent works have proposed contrast estimators that are asymptotically normal, provided that the step-size in-between observations $\Delta=\Delta_n$ and their total number $n$ satisfy $n \to \infty$, $n \Delta_n \to \infty$, $\Delta_n \to 0$, and additionally $\Delta_n = o (n^{-1/2})$. 
This latter restriction places a requirement for a so-called  `rapidly increasing experimental design'. In this paper, we relax this limitation and develop a general contrast estimator satisfying asymptotic normality under the weaker condition $\Delta_n = o(n^{-1/p})$ for general $p \ge 2$. 
% We thus cover a gap in the literature of hypo-elliptic SDEs. 
Such a result has been obtained for elliptic SDEs, %\cite{kess:97, uchi:12} 
but its derivation in a hypo-elliptic setting is highly non-trivial and requires a separate approach. We provide numerical results to illustrate the advantages of the developed theory.  
\\ 

\noindent
\keywords{%
{Stochastic Differential Equation}; %
{hypo-elliptic diffusion}; %;
{high-frequency observations};
{experimental design of observations}. 
%{contrast estimator}; 
%{asymptotics}.
}
\end{abstract}

%\tableofcontents

% \newpage

\section{Introduction} 
\label{sec:intro}
We consider the problem of parameter inference for multivariate Stochastic Differential Equations (SDEs) determined via a degenerate  diffusion matrix. Let $(\Omega, \mathcal{F}, \{\mathcal{F}_t \}_{t \geq 0}, \mathbb{P})$ be a filtered probability space and let $W = (W_{t}^1, \ldots, W_{t}^d)$, $t \geq 0$, be the $d$-dimensional standard Brownian motion, $d\ge 1$, defined on such a  space. We introduce $N$-dimensional SDE models, $N\ge 1$: 
% of the following general form: 
%
\begin{align} \label{eq:sde_1}
   d X_t = \mu (X_t, \theta) dt + \sum_{j = 1}^d A_j (X_t, \theta) d W_{t}^j, \qquad  X_0 = x \in \mathbb{R}^N,
\end{align}
with $\mu, A_j (\cdot, \theta) : \mathbb{R}^N \to \mathbb{R}^N$, $1 \le j \le d$, for parameter $\theta$. This work focuses on the case where the diffusion matrix $A = [A_1, \ldots, A_d]$ is not of full-rank, i.e.~the matrix $a = AA^\top$ is not positive definite. The law of the solution to (\ref{eq:sde_1}) is assumed to be absolutely continuous with respect to (w.r.t.) the Lebesgue measure. Such models are referred to as \emph{hypo-elliptic diffusions} and constitute an important class of %continuous-time
Markovian processes used in a wide range of applications. E.g., this class includes the (underdamped) Langevin equation, the Synaptic-Conductance model \citep{dit:19}, 
the $\mathrm{NLCAR}(p)$ model \citep{ts:00, str:07}, the Jansen-Rit neural mass model \citep{buc:20}, the %Susceptible-Infected-Recovered 
SIR epidemiological model \citep{sir:22} and the quasi-Markovian generalised Langevin equations ($q$-GLE) \citep{vr:22}. We investigate two (sub-)classes of hypo-elliptic SDEs which cover a wide range of models, including the ones mentioned in the above examples. The first model class is specified via the following SDE: 
\begin{align}
\begin{aligned} \label{eq:hypo-I}  
d X_t & = 
 \begin{bmatrix}
 d X_{S,t} \\ 
 d X_{R,t} 
 \end{bmatrix}
 = 
 \begin{bmatrix}
  \mu_{S} ( X_t,  \beta_S)  \\ 
  \mu_{R} ( X_t,  \beta_R) 
 \end{bmatrix} dt
 + 
 \sum_{j = 1}^d 
 \begin{bmatrix}
 \mathbf{0}_{N_S} \\
 A_{R, j} (X_t, \sigma)
 \end{bmatrix} d W_{t}^j, \\
 X_0 & = x \equiv
 \begin{bmatrix}
 x_S \\ 
 x_R 
 \end{bmatrix}
\in \mathbb{R}^N. 
\end{aligned} \tag{Hypo-I}  
\end{align} 
The involved drift functions and diffusion coefficients are specified as follows: 
\begin{gather*}
 \mu_{S} : \mathbb{R}^N  \times \Theta_{\beta_S} \to \mathbb{R}^{N_S}, \quad   
\mu_{R} : \mathbb{R}^N  \times \Theta_{\beta_R} \to \mathbb{R}^{N_R}, \\ 
 A_{R,j} : \mathbb{R}^N  \times \Theta_{\sigma} \to \mathbb{R}^{N_R},  \quad  1 \le j \le d, 
\end{gather*} 
for positive integers $N_S, N_R$ such that $N = N_S + N_R$. 
Here, the unknown parameter vector is
$$
\theta = (\beta_{S}, \beta_{R}, \sigma) \in \Theta 
= \Theta_{\beta_S} \times \Theta_{\beta_R} \times \Theta_{\sigma}
 \subseteq \mathbb{R}^{N_{\beta_S}} \times \mathbb{R}^{N_{\beta_R}} \times\mathbb{R}^{N_{\sigma}},
$$ 
for positive integers $N_{\beta_S}, \, N_{\beta_R}, \, N_\sigma$ satisfying $N_{\beta_S} + N_{\beta_R} + N_\sigma = N_{\theta}$. The parameter space $\Theta$ is assumed to be a compact subset  of $\mathbb{R}^{N_\theta}$. 
Later on we place a condition on (\ref{eq:hypo-I}),
related to the so-called \emph{H\"ormander's condition},
so that such an SDE gives rise to a hypo-elliptic process, i.e.~its finite-time transition distributions %of the SDE 
admit a Lebesgue density. 
In brief, the condition will guarantee  that randomness from the rough component $X_{R,t}$ propagates into the smooth component $X_{S,t}$ via the drift $\mu_S$. The second  hypo-elliptic model class we treat in this work is specified via the following SDE:  
\begin{align}
\begin{aligned} \label{eq:hypo-II}
\hspace{-0.2cm} d X_t &  
=
\begin{bmatrix}
 d X_{S_1,t} \\ 
 d X_{S_2,t} \\  
 d X_{R,t} 
 \end{bmatrix}
 =  
\begin{bmatrix}
    \mu_{S_1} ( X_{S, t} , \beta_{S_1}) \\
    \mu_{S_2} ( X_{t} , \beta_{S_2}) \\ 
    \mu_{R} ( X_{t}, \beta_{R}) 
\end{bmatrix} dt 
+ \sum_{j = 1}^d 
\begin{bmatrix}
   \mathbf{0}_{N_{S_1}} \\
   \mathbf{0}_{N_{S_2}} \\ 
   A_{R, j} (X_t, \sigma)
\end{bmatrix} d W_{t}^j,  \\ 
X_0 & =  x
\equiv
\begin{bmatrix}
   x_{S_1} \\
   x_{S_2} \\
   x_R
\end{bmatrix} 
\in \mathbb{R}^N, 
\end{aligned}  \tag{Hypo-II}  
\end{align}
where we have set $X_{S, t}
= \bigl[ X_{S_1, t}^\top, X_{S_2, t}^\top \bigr]^\top$.
Also, we now have the parameter vector:
\begin{align*}
\theta = (\beta_{S_1}, \beta_{S_2}, \beta_R, \sigma) \in \Theta &=
\Theta_{\beta_{S_1}} \times \Theta_{\beta_{S_2}} \times \Theta_{\beta_{R}} \times \Theta_\sigma \\  &\qquad \qquad \subseteq 
\mathbb{R}^{N_{\beta_{S_1}}} \times
\mathbb{R}^{N_{\beta_{S_2}}} \times 
\mathbb{R}^{N_{\beta_R}} \times
\mathbb{R}^{N_{\sigma}},
\end{align*}
with integers $N_{\beta_{S_1}}, N_{\beta_{S_2}}$ such that $N_{\beta_{S}} = N_{\beta_{S_1}} + N_{\beta_{S_2}}$, and the drift functions are now specified as: 
\begin{align*} 
   \mu_{S_1} : \mathbb{R}^{N_S} \times \Theta_{N_{\beta_{S_1}}} 
   \to \mathbb{R}^{N_{S_1}}, \quad 
   \mu_{S_2} : \mathbb{R}^{N} \times \Theta_{N_{\beta_{S_2}}} 
   \to \mathbb{R}^{N_{S_2}},
\end{align*}
with integers $N_{S_1}, N_{S_2}\ge 1$ so that $ 
N_{S_1} + N_{S_2} = N_S$. Again, $\Theta$ is assumed to be compact. Note that the drift $\mu_{S_1}$ depends on the smooth component  
$
X_{S, t}
$ and not on $X_{R,t}$, thus randomness from $X_{R,t}$ does not directly propagate onto $X_{S_1, t}$. 
We refer to an SDE of the form (\ref{eq:hypo-II}) as a \emph{highly degenerate diffusion}.
Throughout the paper the class (\ref{eq:hypo-II}) is treated separately from (\ref{eq:hypo-I}). Later on we introduce a H\"ormander-type condition guaranteeing that (\ref{eq:hypo-II}) gives rise to hypo-elliptic SDEs. 
%For instance, SIR model, q-GLE model belong in the class (\ref{eq:hypo-II}).  
%

We consider parameter estimation for the  classes (\ref{eq:hypo-I}),  (\ref{eq:hypo-II}), given  discrete-time observations of the full vector $X_t$
at instances $t_i = i \Delta_n$, $1\le i \le n$,
for step-size $\Delta_n > 0$. We develop contrast estimators for models in (\ref{eq:hypo-I}), (\ref{eq:hypo-II}), 
and study their asymptotic properties under a 
\emph{high-frequency complete observation} scenario. In particular, we assume the setting $n \to \infty$, $\Delta_n \to 0$, $n \Delta_n \to \infty$. 
Over the last two-three decades, numerous works have studied parametric inference for diffusion processes in a high-frequency setting. 
These works were initially focused on \emph{elliptic} diffusions, i.e.~SDEs in~(\ref{eq:sde_1}) with a positive definite matrix $a = A A^\top$. 
The work in \cite{kess:97} proposed a contrast estimator for scalar elliptic diffusions and proved asymptotic normality (via a CLT) under the design condition $\Delta_n = o (n^{-1/p})$, $p \ge 2$. %equivalently $n \Delta_n^p \to 0$. %which determines the design of observations. 
\cite{uchi:12} extended such a result to multivariate elliptic SDEs and proposed an adaptive-type contrast estimator achieving a CLT under the same design condition as in \cite{kess:97}.
In contrast,  hypo-elliptic SDEs have been relatively under-explored until recently, though an interesting empirical study, without analytical results, was provided by \cite{poke:09}. 

To obtain asymptotic results in the hypo-elliptic setting similar to the ones available for elliptic SDEs, one must deal with a number of challenges. A standard Euler-Maruyama discretisation leads to a Dirac measure due to the involved degenerate diffusion matrix. 
This can be resolved  by considering a higher-order It\^o-Taylor expansion for $X_{S, t}$ that propagates additional Gaussian variates onto the smooth components.
For class (\ref{eq:hypo-I})
such a direction leads to a discretisation scheme with Gaussian increments of size $\mathcal{O}(\Delta_n^{3/2})$ %(resp.~$\mathcal{O}(\Delta_n^{1/2})$) 
for the smooth components. 
%(resp.~rough components). 
However, recent works \citep{dit:19, glot:21, igu:23} have highlighted that introduction of high-order Gaussian variates in the smooth components must be accompanied by an appropriate high-order mean approximation of the rough components. If such a balance is not achieved, estimation of $\beta_R$ becomes  biased. 
Within class (\ref{eq:hypo-I}), \cite{dit:19} developed a non-degenerate discretisation scheme in the setting of $N_S = 1$ and of the diffusion matrix $A$ being diagonal. 
\cite{dit:19} proposed contrast estimators separately for $\beta_S$ and $(\beta_R, \sigma)$ %by using the marginal Gaussian density of smooth and rough components respectively 
and proved a CLT under $\Delta_n = o (n^{-1/2})$, 
i.e.~for a `rapidly increasing experimental design' \citep{rao:88}.   
\cite{glot:20, glot:21} addressed the issue of disjoint estimation, within (\ref{eq:hypo-I}), by working with an approximate Gaussian density for the complete vector $X_t$ and providing a joint contrast estimator for the full parameter vector $(\beta_R, \beta_S, \sigma)$.
They showed that the estimator is asymptotically normal under the same design condition $\Delta_n = o (n^{-1/2})$. 
For bivariate  models in class (\ref{eq:hypo-I}), \cite{melnykova2020parametric} used a local linearisation \citep{bis:96} of the drift function to obtain a non-degenerate  Gaussian discretisation scheme and construct a contrast estimator attaining a CLT under $\Delta_n = o (n^{-1/2})$. 
For  class (\ref{eq:hypo-II}), \cite{igu:23}  worked in a general multivariate setting and established a joint contrast estimator achieving a CLT under $\Delta_n = o (n^{-1/2})$. 
The design condition $\Delta_n = o (n^{-1/2})$ assumed in the above works can be restrictive in practice. Indicatively, if a user tries to design a dataset with a large  interval $T_n = n \Delta_n$ and a large number of samples $n$ to obtain accurate estimation results, then %the step-size 
$\Delta_n$ should be set to a quite small value so that 
%the 
condition $n \Delta_n^2 = T_n \Delta_n \to 0$ is satisfied, e.g.~for $T_n = 1000$, $\Delta_n$ could be much less than $10^{-3}$.
Datasets with such  small step-sizes are not always available in applications. 
% As the numerical experiments provided in this paper indicate, existing estimators requiring $\Delta_n = o (n^{-1/2})$ invokes bias if the condition $\Delta_n = o (n^{-1/2})$ does not hold.  % Thus, the established analytic results for hypo-elliptic diffusions basically rely on the condition $\Delta_n = o (n^{-1/2})$.

An apparent open question in the hypo-elliptic setting is the weakening of the design condition $\Delta_n = o (n^{-1/2})$. 
Indicatively, \cite{melnykova2020parametric, glot:20} required $\Delta_n = o (n^{-1/2})$ 
to control terms of size $\mathcal{O}(\Delta_n^{-q})$, $q \ge 1/2$, when proving consistency for their contrast estimators. Thus, to arrive to a CLT under a weaker design condition, one needs to develop a different approach compared to previous works to prove consistency without requiring that $\Delta_n = o(n^{-1/2})$. Furthermore, the construction of a general contrast function for degenerate SDEs is not straightforward due to the degenerate structure of the diffusion matrix. 
% the increments of smooth and rough components be of varying scales in $\Delta_n$. 
For class  (\ref{eq:hypo-I}), \cite{igu:22} developed a contrast estimator achieving the CLT under the weaker condition $\Delta_n = o (n^{-1/3})$ via a novel closed-form transition density expansion for SDEs within (\ref{eq:hypo-I}). However, a general asymptotic theory under the condition $\Delta_n = o (n^{-1/p})$, $p\ge 2$,  
has yet to be established for degenerate SDEs. 

Our work closes the above gap in the research of hypo-elliptic SDEs. We propose a general contrast estimator for a wide class of hypo-elliptic diffusion models and show its asymptotic normality under the weaker design condition $\Delta_n = o (n^{-1/p})$, $p\ge 2$. Specifically, our contributions include the following:  
\begin{description} 
\item[(a)] We develop two contrast estimators for the two model classes (\ref{eq:hypo-I}) and (\ref{eq:hypo-II}), for the purposes of joint estimation of the unknown parameter vector $(\beta_S, \beta_R, \sigma)$. 
The contrast functions are based on approximate log-likelihood terms which we construct by making use of Gaussian approximations with high-order mean and variance expansions,  while at the same time dealing with the degenerate structure of the involved SDEs. 
\item[(b)] We show that within the high-frequency, complete observation regime, a CLT is obtained provided that the design condition $\Delta_n = o (n^{-1/p})$, for \mbox{$p \ge 2$}, is satisfied. 
To the best of our knowledge, this is the first work to define a contrast estimator achieving a CLT under the weak design condition that $n\Delta_n^p\rightarrow 0$, for arbitrarily large $p\ge 2$, in the hypo-elliptic setting. For reference, Table \ref{table:literature} summarises existing works with corresponding design conditions, including our contribution in this setting. 
\item[(c)] We provide numerical experiments to demonstrate that the proposed estimator is asymptotically unbiased in the high-frequency complete observation regime under the weaker design condition $\Delta_n = o (n^{-1/p})$, for $p \ge 3$. The numerical results highlight that the estimators requiring $\Delta_n = o (n^{-1/2})$ proposed in the literature can indeed suffer from bias when $n \Delta_n^2$ is not sufficiently small. 
\item[(d)] Our methodology is relevant beyond the setting of 
high-frequency and/or 
complete observations. The developed Gaussian approximation of the SDE transition density can be used as part of a broader data augmentation algorithm (e.g., MCMC or EM) 
given 
low-frequency and/or 
partial observations. Analytical results for such a setting are beyond the scope of this paper.  
% the condition of a step-size $\Delta_n = o (n^{-1/p}), \, p \ge 2$, could still be applied.  
% larger step-size $\Delta_n = o(n^{-1/p})$ does suggest that a wider step-size $\Delta$ could be chosen to control the bias in comparison to alternative discretisation approaches. 
We provide a numerical result showcasing the advantage of the use of the developed approximate log-likelihood under the weaker condition $\Delta_n = o (n^{-1/p})$, $p \ge 3$, in a setting of high-frequency partial observations. 
\end{description} 
To add to point (d) above, %we stress that our analytical results are obtained for the complete observation regime. 
often in applications only smooth components are observed. 
\cite{dit:19, igu:23} have shown empirically that filtering procedures incorporating 
the developed approximate likelihood of the full state vector (within a data augmentation approach) can lead to asymptotically unbiased parameter estimation in a partial observation regime.

The rest of the paper is organised as follows. Section \ref{sec:pre} prepares some conditions related to the hypo-ellipticity of  models (\ref{eq:hypo-I}) and (\ref{eq:hypo-II}). Section~\ref{sec:main} develops the new contrast estimators and states their asymptotic properties, thus providing the main results of this work. Simulation studies are shown in Section \ref{sec:sim}
(the code is available at \url{https://github.com/YugaIgu/Parameter-estimation-hypo-SDEs}).
The proofs for the main results are collected in Section~\ref{sec:pf}. Section \ref{sec:conclusion} concludes the paper.  
\begin{table}
\caption{Contrast estimators for Hypo-elliptic SDEs (high-frequency, complete observation regime)} % title of Table
\label{table:literature}
\centering % used for centering table
\begin{tabular}{c c c} % centered columns (4 columns)
\toprule %inserts double horizontal lines
\\[-5pt] 
Work  & Model & 
\begin{tabular}{c}
     Design Condition \\ 
     on $\Delta_n$ for CLT 
\end{tabular} 
\\ % inserts table
%heading
\midrule  
\cite{dit:19} & 
\begin{tabular} {c}
    (\ref{eq:hypo-I}) with $N_S = 1$. \\ Diffusion matrix is diagonal.
\end{tabular} & $\Delta_n = o (n^{-1/2})$  \\% inserting body of the table
\hline
\cite{melnykova2020parametric} & (\ref{eq:hypo-I}) with 
$N_S = N_R = 1$. & $\Delta_n =  o (n^{-1/2})$ 
\\ % inserting body of the 
\hline
\cite{glot:20, glot:21} & (\ref{eq:hypo-I}) & $\Delta_n = o (n^{-1/2})$ 
\\ 
\hline 
\cite{igu:22} & (\ref{eq:hypo-I}) & $\Delta_n  = o (n^{-1/3})$ 
\\
\hline 
\cite{igu:23} & (\ref{eq:hypo-II}) & $\Delta_n = o (n^{-1/2})$ 
\\
\hline 
This paper & (\ref{eq:hypo-I}) \& (\ref{eq:hypo-II}) & 
$\Delta_n = o (n^{-1/p}), \; p \ge 2$ 
\\ 
\bottomrule   
\end{tabular}
\label{table:nonlin} % is used to refer this table in the text
\end{table}

%\subsubsection*{Notation.} 
\vspace{0.2cm}\noindent \textbf{Notation.}
\noindent 
For the model class (\ref{eq:hypo-II}), and for 
$x = (x_S, x_R) \in \mathbb{R}^{N_S} \times \mathbb{R}^{N_R} = \mathbb{R}^N$, we write: 
\begin{align*}
\beta_S = (\beta_{S_1}, \beta_{S_2}) \in \Theta_{\beta_S} = \Theta_{\beta_{S_1}} \times \Theta_{\beta_{S_2}}, 
\quad 
\mu_{S} (x, \beta_S) 
= 
\begin{bmatrix}
    \mu_{S_1} (x_S, \beta_{S_1}) \\
    \mu_{S_2} (x, \beta_{S_2})   
\end{bmatrix}. 
% =  \left[ \mu_{S_1} (x_S, \beta_{S_1})^\top, 
% \mu_{S_2} (x, \beta_{S_2})^\top  \right]^\top. 
\end{align*}
% 
%for 
%$x = (x_S, x_R) \in \mathbb{R}^{N_S} \times \mathbb{R}^{N_R} = \mathbb{R}^N$.  %
We can now use the following notation, for both (\ref{eq:hypo-I}),  (\ref{eq:hypo-II}) for $x  \in \mathbb{R}^N$, $\theta = (\beta_S, \beta_R, \sigma) \in \Theta$: 
\begin{align*}
\mu (x, \theta)  = 
\begin{bmatrix}
    \mu_{S} (x, \beta_S) \\ 
    \mu_{R} (x, \beta_R) 
\end{bmatrix}, 
\ \ 
% \left[ \mu_{S} (x, \beta_S)^\top, 
% \mu_{R} (x, \beta_R)^\top \right]^\top, \qquad 
A_j (x, \theta)  = 
\begin{bmatrix}
    \mathbf{0}_{N_S}^\top \\ 
    A_{R,j} (x, \sigma)
\end{bmatrix}, 
% \left[ \mathbf{0}_{N_S}^\top, 
% A_{R,j} (x, \sigma)^\top   \right]^\top, 
\quad 1 \le j \le d. 
\end{align*} 
% 
% 
%for $x  \in \mathbb{R}^N$ and $\theta = (\beta_S, \beta_R, \sigma) \in \Theta$. 
% 
% 
For a test function $\varphi (\cdot , \theta) : \mathbb{R}^N \to \mathbb{R}$, $\theta \in \Theta$, bounded up to second order derivatives, we introduce differential operators $\mathcal{L}$ and $\mathcal{L}_j, \; 1 \le  j  \le d$, so that for $(x, \theta) \in \mathbb{R}^N \times \Theta$: 
\begin{align}
  \mathcal{L} \varphi (x ,  \theta)  
   &:= \sum_{i=1}^N \mu^i (x, \theta) 
  \frac{\partial \varphi} {\partial x_i}(x, \theta)  + \tfrac{1}{2}  \sum_{i_1, i_2 = 1}^N \sum_{k=1}^d  A_k^{i_1} (x, \theta) A_k^{i_2} (x, \theta)  \frac{\partial^2 \varphi }{\partial x_{i_1} \partial x_{i_2} } (x,\theta);   \label{eq:L_0} \\[0.2cm] 
   \mathcal{L}_j \varphi (x , \theta) 
  &:= \sum_{i=1}^N A_j^i (x, \theta) \frac{\partial \varphi}{\partial x_i}(x , \theta), \quad  1 \le j \le d.  \label{eq:L_j} %\\[-0.9cm] \nonumber
\end{align}  
We denote by $\mathbb{P}_{\theta}$ the probability law of 
$\{ X_{t} \}_{t \geq 0}$ under  $\theta$. We write  
$\probconv$ and $\distconv$ 
to denote convergence in probability and distribution, respectively, under the true parameter $\trueparam$. The latter is assumed to be unique and to lie in the interior of $\Theta$. We denote by $\mathcal{S}$ {the space of functions $f : [0, \infty) \times \mathbb{R}^N \times \Theta \to \mathbb{R}$ so that there are  constants $C, q > 0$ such that 
$| f (\Delta, x, \theta) |  \le C ( 1 + |x|^q) \Delta $,  $(\Delta, x, \theta) \in [0, \infty) \times \mathbb{R}^N \times \Theta$}. 
% and $\theta \mapsto f (\Delta, x, \theta)$ is differentiable for any $(\Delta, x) \in [0,\infty) \times \mathbb{R}^N$. 
\textcolor{black}{We denote by $C_p^{K} (\mathbb{R}^n \times \Theta; \mathbb{R}^m)$, $n, m, K \in \mathbb{N}$, the set of functions $f: \mathbb{R}^n \times \Theta \to \mathbb{R}^m$ such that $f$ is continuously differentiable up to order $K$ w.r.t. $x \in \mathbb{R}^n$ for all $\theta \in \Theta$, and $f$ and its derivatives up to order $K$ are of polynomial growth in $x$ uniformly in $\theta \in \Theta$.}
% {We denote by $C_p^\infty (\mathbb{R}^n \times \Theta \, ; \, \mathbb{R}^m)$, $n, m \in \mathbb{N}$, the set of functions 
% $f: \mathbb{R}^n \times \Theta \to \mathbb{R}^m$ such that $f$ is infinitely differentiable w.r.t $x \in \mathbb{R}^n$ for all $\theta \in \Theta$ and $f$ and its derivatives of any order have at most polynomial growth in $x$ uniformly in $\theta \in \Theta$.} 
$C_b^\infty (\mathbb{R}^n, \mathbb{R}^m)$ is the set of smooth functions such that the function and its derivatives of any order are bounded. 
For a sufficiently smooth $f: \mathbb{R}^n \to \mathbb{R}$, 
for $\alpha \in  \{1, \ldots,  n \}^l$, $l \ge 1$ and $ 1 \le i \le n$,  we write  
$\textstyle 
\partial_\alpha^u f(u) 
:= \tfrac{\partial^l}{\partial u_{\alpha_1} \cdots \partial u_{\alpha_l}} f (u)$,
$\partial_{u, i} f (u) 
:= \tfrac{\partial}{\partial u_i} f (u).  
$
We write 
$
\partial_u = 
\bigl[
\partial_{u, i}, \ldots, \partial_{u, n} \bigr]^\top, 
\, \partial_u^2 = \partial_u \partial_u^\top, 
$
for the standard differential operators acting upon maps $\mathbb{R}^n \to \mathbb{R}, \, 
n \ge 1$. For a matrix $A$, we write its $(i,j)$-th element as $[A]_{ij}$. 
\section{Model Assumptions and Contrast Estimators}  
\label{sec:pre}
We start by stating some basic assumptions for the classes (\ref{eq:hypo-I}), (\ref{eq:hypo-II}). 
In particular, Section \ref{sec:basic_con} provides H\"ormander-type conditions implying that the SDEs of interest are hypo-elliptic, thus their transition distribution admits a density w.r.t.~the Lebesgue measure. The stated conditions also highlight that classes  (\ref{eq:hypo-I}),  (\ref{eq:hypo-II}) are distinct.
%so that we can construct contrast estimators achieving asymptotic normality under $\Delta_n = o (n^{-1/p}), \, p \ge 2$. 
%In particular, Section \ref{sec:basic_con} gives a regularity condition for the SDE coefficients and H\"ormander-type condition. 
We proceed to develop our estimators in Section \ref{sec:contrast_hypo}. \textcolor{black}{We often denote by $\obs{X}{}{I}$, $\obs{X}{}{II}$ the solutions to SDEs (\ref{eq:hypo-I}), (\ref{eq:hypo-II}), respectively.} 
%to emphasise the model class.   
% 
% 
\subsection{Conditions for Hypo-Ellipticity}  
\label{sec:basic_con}
%
%Before stating the conditions for the SDEs (\ref{eq:hypo-I}) and (\ref{eq:hypo-II}) to be hypo-elliptic, 
We first introduce some notation. 
We write the drift function of the Stratonovich-type SDE corresponding to the It\^o-type one, given in (\ref{eq:hypo-I}) or (\ref{eq:hypo-II}), as  
$ \textstyle
A_0 (x, \theta) := \mu (x, \theta) - \tfrac{1}{2} 
\sum_{k = 1}^d \mathcal{L}_k A_k (x, \theta)
$, 
$ 
(x, \theta) \in \mathbb{R}^N  \times \Theta. 
$ 
We often treat vector fields $V:\mathbb{R}^N \to \mathbb{R}^N$ as differential operators via the relation $\textstyle{V \leftrightarrow  \sum_{i = 1}^N V^i \partial_{x, i}}$. For two vector fields $V, W : \mathbb{R}^N \to \mathbb{R}^N$, their Lie bracket is defined as 
$[V, W] = VW - W V$. That is, for the SDE vector fields $A_k, A_l$,  $0 \le k, l\le d$, we have that:  
\begin{align*} 
[A_k, A_l ] (x, \theta) =  \sum_{i = 1}^N A_k^i (x, \theta) \partial_{x, i} A_l (x, \theta)  
- \sum_{i = 1}^N A_l^i (x, \theta) \partial_{x, i} A_k (x, \theta), \qquad   
(x, \theta) \in \mathbb{R}^N \times \Theta. 
\end{align*} 
%  
% 
% where we used: for two vector fields on $\mathbb{R}^m, \, m \in \mathbb{N}$, \, $\textstyle{ W_j \equiv \sum_{i = 1}^N W_j^i (x) \partial_{x_i}, \, j = 1,2}$, the Lie bracket is defined as
% %
% \begin{align} \label{eq:LB}
% [W_1, W_2] 
% := W_1 W_2 - W_2 W_1, 
% \end{align} 
% % 
% or at point $x \in \mathbb{R}^N$, 
% $$
% [W_1, W_2] (x) = \sum_{i = 1}^N \bigl\{ W_1^i (x) \partial_{x_i} W_2 (x)
% - W_2^i (x) \partial_{x_i} W_1 (x) \bigr\}
% . 
% $$
% % 
% Then, H\"ormander's condition is stated as follows: 
% %
% \begin{defn}
% H\"ormander's condition is satisfied at point $x \in \mathbb{R}^N$ if there exists $M \in \mathbb{N}$ so that 
% $$
% \mrm{span} \{ V (x) \,: \,V \in \widetilde{\mathcal{H}}_M \} = \mathbb{R}^N. 
% $$ 
% \end{defn}
% 
For $1 \le i \leq j \le N$, we define the projection operator $\mathrm{proj}_{i,j} : \mathbb{R}^N \to \mathbb{R}^{j - i + 1}$ as 
% 
% \begin{align*}
$
x = \bigl[x_1,\ldots, x_N\bigr]^{\top} \mapsto \mathrm{proj}_{i, j} (x)
= \bigl[ x_{i}, \ldots, x_j \bigr]^\top
$. 
% \end{align*}
We impose the following conditions on classes (\ref{eq:hypo-I}) and (\ref{eq:hypo-II}). \\
% 
%\\ 
%\noindent
%\textbf{Basic Conditions.}
\begin{enumerate}[noitemsep, topsep=1pt, parsep=1pt, partopsep=1pt, leftmargin=30pt] 
\renewcommand{\theenumi}{H\arabic{enumi}}
\renewcommand{\labelenumi}{\theenumi}
\item 
\label{assump:hor}
$(i)$. For class (\ref{eq:hypo-I}), it holds that, for any $(x, \theta) \in \mathbb{R}^N \times \Theta$: 
\begin{align} 
&\mrm{span} \Big\{ A_{R, k} (x, \sigma),
 \,1 \le k \le  d \Big\} =
\mathbb{R}^{N_R}; \nonumber \\ 
& \mrm{span} \Big\{ \big\{\,A_{k} (x,\sigma),\,
[A_{0}, A_k](x,\theta) \, \big\}, \, 1 \le k \le d \Big\} = \mathbb{R}^N. 
\label{eq:span-I} 
\end{align} 
$(ii)$. For class (\ref{eq:hypo-II}),
it holds that, for any $(x, \theta) \in \mathbb{R}^N \times \Theta$:
\begin{align}
&\mrm{span} \Big\{ A_{R,k} (x,\sigma), \, 1 \le 
 k \le d \Big\} = \mathbb{R}^{N_R};
\nonumber
\\[0.2cm]
&
\mrm{span} \Big\{ \mathrm{proj}_{N_{S_{1}}+1,N} \big\{  A_{k} (x,\sigma)  \bigr\}, \,\mathrm{proj}_{N_{S_{1}}+1,N} \big\{\,
[A_{0}, A_k](x,\theta) \big\}, \, 1 \le k\le d \Big\} = \mathbb{R}^{N_{S_2} + N_R }; 
\nonumber
\\[0.2cm]
&
\mrm{span} \Big\{ \big\{\, A_{k} (x,\sigma),\,[A_{0}, A_k] (x,\theta)
\,, 
\bigl[A_{0}, [A_0, A_k]\bigr](x,\theta)\,\big\},\,1\le k\le d\Big\} 
= \mathbb{R}^N. 
\label{eq:span-II}
\end{align}
\textcolor{black}{\item \label{assump:coeff} 
\hspace{-0.3cm} 
$(K)$
$(i)$. For class (\ref{eq:hypo-I}): 
\begin{align*}
    \mu_S,  A_j \in C_p^{K+2} (\mathbb{R}^N \times \Theta \, ; \mathbb{R}^{N_S}), 
    \ \  
    \mu_R \in C_p^{K} (\mathbb{R}^N \times \Theta \, ; \mathbb{R}^{N_R}), \quad  1 \le j \le d.  
\end{align*}
$(ii)$. For class (\ref{eq:hypo-II}):  
% $\mu, A_j \in C_p^\infty (\mathbb{R}^N \times \Theta \, ; \mathbb{R}^N), \, 1 \le j \le d$.  
% 
\begin{gather*}
\mu_{S_1} \in C_p^{K+4} (\mathbb{R}^N \times \Theta \, ; \mathbb{R}^{N_{S_1}}), \ \ 
\mu_{S_2}, A_j \in C_p^{K+2} (\mathbb{R}^N \times \Theta \, ; \mathbb{R}^{N_{S_2}}); \\ 
\mu_R \in C_p^{K} (\mathbb{R}^N \times \Theta \, ; \mathbb{R}^{N_R}),  
\quad 1 \le j \le d. 
\end{gather*}}
\end{enumerate} 
\begin{rem}
We use \ref{assump:hor} to introduce some  structure upon the degenerate SDE models (\ref{eq:hypo-I}) and (\ref{eq:hypo-II}) and so that contrast functions developed later on are well-defined.
\ref{assump:hor} is stronger than {H\"ormander's condition}. The latter states that there is $M \in \mathbb{N}$ so that for any $(x, \theta) \in \mathbb{R}^N \times \Theta$: 
\begin{align*}
\mrm{span} 
\Bigl\{ 
  V (x, \theta)  :  V(x,\theta) \in \bigcup_{1 \le m \le  M} \mathbf{V}_m
 \Bigr\} = \mathbb{R}^N, 
\end{align*}
where we have set
%
%\begin{gather*} 
$\mathbf{V}_0 = \bigl\{ A_1, \ldots, A_d \bigr\}$,   
$\mathbf{V}_k = \big\{ [A_l, A]  :  A  \in \mathbf{V}_{k-1}, \, 0  \le l \le d \bigr\}$, $k\ge 1$. \\[0.2cm]   
% \quad \widetilde{\mathbf{H}}_m = \bigcup_{k = 0}^m \mathbf{H}_k. 
%\end{gather*} 
If $\mu (\cdot, \theta), A_j (\cdot, \theta) \in C_b^\infty (\mathbb{R}^N; \mathbb{R}^N)$ then H\"ormander's condition implies that there exists a smooth Lebesgue density for the law of $X_t$, $t>0$, for any initial condition $x \in \mathbb{R}^N$ (see e.g.~\cite{nua:06}). Thus, (\ref{eq:hypo-I}), (\ref{eq:hypo-II})  
belong to the class of hypo-elliptic SDEs.  We note that \ref{assump:hor} is satisfied for most hypo-elliptic SDEs used in applications, as e.g.~is the case for the models cited in Section \ref{sec:intro}. Condition \ref{assump:coeff} allows the SDE coefficients to lie in the larger class 
$C_p^K (\mathbb{R}^N; \mathbb{R}^N)$ for some $K \in \mathbb{N}$ 
rather than $C_b^\infty (\mathbb{R}^N; \mathbb{R}^N)$. \textcolor{black}{$K$ is determined so that the contrast functions proposed later are well-defined and error terms obtained from It\^o-Taylor expansion in constructing the contrast functions are appropriately controlled. We also note that the drift functions in the smooth components, i.e. $\mu_{S}$ in (\ref{eq:hypo-I}) and $\mu_{S_1}, \mu_{S_2}$ in (\ref{eq:hypo-II}), require more regularity compared to %the rough drift function, i.e. 
$\mu_R$. This is because our construction of the contrast functions uses higher order stochastic Taylor expansions for those functions. More details are given in Sections \ref{sec:contrast_I}, \ref{sec:contrast_II}.}
\end{rem} 
\begin{example} \label{ex:hypo_I_II}
We provide some examples of SDEs satisfying the above conditions in applications.
\begin{itemize}[noitemsep, topsep=1pt, parsep=1pt, partopsep=1pt, leftmargin=14pt]  
\item[i.] Underdamped (standard) Langevin equation for 
one-dimensional particle with a unit mass:
\begin{align}
\begin{aligned} \label{eq:ULE}
d q_t & = p_t dt; \\ 
d p_t & = (-  U' (q_t) + \gamma p_t) dt + \sigma d W_t, 
\end{aligned} 
\end{align}
where $\theta = (\gamma, \sigma)$ is the parameter vector and $U : \mathbb{R} \to \mathbb{R}$ is some smooth potential with polynomial growth. The values of $q_t$ and $p_t$ represent the position and momentum of the particle, respectively. This model belongs in class (\ref{eq:hypo-I}) and indeed satisfies condition \ref{assump:hor}(i). 
\item[ii.] Quasi-Markovian Generalised Langevin Equation ($q$-GLE) for the case of an one-dimensional particle with an one-dimensional auxiliary variable: 
\begin{align} 
\begin{aligned} 
d q_t & = p_t dt; \\
d p_t & = 
( -  U' (q_t) +  \lambda s_t ) dt; \\ 
d s_t & = (- \lambda p_t - \alpha s_t) dt +  \sigma  \, d W_t,   
\end{aligned} \label{eq:QGLE}
\end{align} 
where $\theta = (\lambda, \alpha, \sigma)$ is the parameter vector and $U$ is as in (\ref{eq:ULE}). Note that component $s_t$ is now introduced as an auxiliary variable to capture the non-Markovianity of the memory kernel. The $q$-GLE class has been recently actively studied as an effective model in physics (see e.g.~\cite{lei:15}) and parameter estimation of the model also has been investigated in  \cite{kal:15}. The drift function of the smooth component $q_t$ is now independent of the rough component $s_t$. Model (\ref{eq:QGLE}) belongs in the SDE class (\ref{eq:hypo-II}) and satisfies  condition \ref{assump:hor}(ii). Details can be found, e.g., in \cite{igu:23}. 
\end{itemize}
 
\end{example}
\subsection{Contrast Estimators for Degenerate SDEs} \label{sec:contrast_hypo} 
Under \ref{assump:hor}-\ref{assump:coeff}, we define contrast functions for the classes of  hypo-elliptic SDEs (\ref{eq:hypo-I}) and (\ref{eq:hypo-II}), so that the corresponding parameter estimators attain a CLT under the design condition $\Delta_n = o (n^{-1/p})$, $p \ge 2$. The development of  our contrast functions is related to the approaches of \cite{kess:97, uchi:12} where, in an elliptic setting, contrast functions delivering CLTs with $\Delta_n = o (n^{-1/p})$ are obtained.   %based on Gaussian approximations approach in \cite{kess:97, uchi:12} where they developed general contrast functions for elliptic diffusions. 
However, carrying forward such earlier approaches to the hypo-elliptic diffusion is far from straightforward, mainly due to the degeneracy of the diffusion matrix. After a brief review of the construction of contrast functions that deliver a CLT with $\Delta_n = o (n^{-1/p})$ in the elliptic case, we proceed to the treatment of the hypo-elliptic class of models. \textcolor{black}{Hereafter, we make use of superscripts $(\mrm{I})$ and $(\mrm{II})$ as necessary to specify the class of hypo-elliptic SDEs we work with.}  
\subsubsection{Review of Contrast Estimators for  Elliptic SDEs}
We review the construction of contrast estimators for elliptic SDEs in \cite{kess:97, uchi:12}, where the diffusion matrix $a (x, \sigma) = A (x, \sigma) A(x, \sigma)^\top$ for the SDE in (\ref{eq:sde_1}) is now assumed to be positive definite for any $(x, \sigma) \in \mathbb{R}^N \times \Theta_\sigma$.   

\begin{rem}
We introduce the following notation. For a vector $y \in \mathbb{R}^N$ and a symmetric matrix $\Sigma \in \mathbb{R}^{N \times N}$, $N \in \mathbb{N}$, we define $\mathbf{q} (y; \Sigma) = y^\top \Sigma y$.    
\end{rem}

\noindent
\textbf{Step 1.} Via an It\^o-Taylor expansion, one obtains high-order approximations for the mean and variance of $\sample{X}{i}$ given $\sample{X}{i-1}$, that is: 
\begin{align*}
\mathbb{E}_{\theta} 
\bigl[ \sample{X}{i}  | \mathcal{F}_{t_{i-1}}\bigr] 
& = r_{K_p} (\Delta, \sample{X}{i-1}, \theta) + \mathcal{O} (\Delta^{K_p + 1}); \\
\mathrm{Var}_{\theta} 
\bigl[ \sample{X}{i}  | \mathcal{F}_{t_{i-1}}\bigr] 
& = \Delta \cdot \Xi_{K_p} (\Delta, \sample{X}{i-1}, \theta) 
+ \mathcal{O} (\Delta^{K_p + 1}), %\label{eq:var_ell}
\end{align*}
where $K_p = [p/2]$, with $r_{K_p} (\Delta, \sample{X}{i-1}, \theta) \in \mathbb{R}^N$ and $ \Xi_{K_p} (\Delta, \sample{X}{i-1}, \theta) \in \mathbb{R}^{N \times N}$ determined as follows:
\begin{gather*} 
r_K (\Delta, \sample{X}{i-1}, \theta) 
= \sample{X}{i-1} + \sum_{1 \le k \le K_p} \Delta^k \cdot \mathcal{L}^{k-1} \mu (\sample{X}{i-1}, \theta); \\ 
\Xi_{K_p} (\Delta, \sample{X}{i-1}, \theta) 
=  \sum_{0 \le k \le K_p} \Delta^{k} \cdot 
\Sigma_{k} ( \sample{X}{i-1}, \sigma ).
\end{gather*}
In the above expression for $\Xi_{K_p}$, we have $\Sigma_0 = a = A A^\top$, and $\Sigma_k$, $k \ge 1$, are matrices available in closed-form and which include high-order derivatives of the SDE coefficients. 
\\ 

\noindent
\textbf{Step 2.} 
\cite{kess:97, uchi:12} make use of a Gaussian density with mean $r_{K_p} (\Delta_n, \sample{X}{i-1}, \theta)$ and variance $\Delta_n \cdot \Xi_{K_p} (\Delta_n, \sample{X}{i-1}, \theta)$ as a proxy for the intractable transition density of $X_{t_i}$ given $X_{t_{i-1}}$.   %$p_{\Delta_n}^X (\sample{X}{i-1}, \sample{X}{i}; \theta)$, \cite{kess:97, uchi:12} they exploited Gaussian density with mean $r_{K_p} (\Delta_n, \sample{X}{i-1}, \theta)$ and variance $\Delta_n \cdot \Xi_{K_p} (\Delta_n, \sample{X}{i-1}, \theta)$. F
Furthermore, to ensure that the approximation is well-defined (notice that $\Xi_{K_p}$ is not guaranteed to be positive-definite) and avoid cumbersome technicalities,  they apply a formal Taylor expansion on $\Xi_{K_p}^{-1} (\Delta_n, \sample{X}{i-1}, \theta)$ and $\log \det \Xi_{K_p} (\Delta_n, \sample{X}{i-1}, \theta)$ 
around $\Delta_n = 0$ so that the positive definiteness of the matrix $a(x, \sigma)$ is exploited.  
Thus, they define the estimator as 
%
% \begin{align*}
$
\hat{\theta}_{p, n}^{\, \mrm{Elliptic}} = \mathrm{arg \, min}_{\theta \in \Theta} \ell_{p, n}^{\, \mrm{Elliptic}} (\theta), \,  p \ge 2,
$ 
% \end{align*}
%
for the contrast function: 
\begin{align}  \label{eq:contrast_ell}
\ell_{p, n}^{\, \mrm{Elliptic}} (\theta)  
& = \sum_{i = 1}^n \sum_{k = 0}^{K_p} 
\Delta_n^{k} \, \cdot  \biggl\{ 
\tfrac{1}{\Delta_n}  \mathbf{q} \bigl(\sample{X}{i} -  r_{K_p} (\Delta_n, \sample{X}{i-1}, \theta); \mathbf{G}_k (\sample{X}{i-1}, \theta) \bigr)
% \bigl( \sample{X}{i} -  r_{K_p} (\Delta_n, \sample{X}{i-1}, \theta) \bigr)^\top 
% \mathbf{G}_k (\sample{X}{i-1}, \theta)
% \bigl( \sample{X}{i} -  r_{K_p} (\Delta_n, \sample{X}{i-1}, \theta) \bigr)  
\nonumber \\ 
& \qquad \qquad \qquad \qquad 
+ \mathbf{H}_k (\sample{X}{i-1}, \theta) \biggr\}, 
% \mathbf{L}_{i, k}^p (\Delta_n, \theta),  
\end{align} 
% 
% with: 
%
% \begin{align*}
% % \mathbf{L}_{i, 0}^p (\Delta, \theta) 
% % & =  \tfrac{1}{\Delta} 
% % \bigl( \sample{X}{i} -  r_{K_p} (\Delta, \sample{X}{i-1}, \theta) \bigr)^\top a^{-1} (\sample{X}{i-1}, \sigma)
% % \bigl( \sample{X}{i} -  r_{K_p} (\Delta, \sample{X}{i-1}, \theta) \bigr) 
% % + \log \det  
% % a (\sample{X}{i-1}, \sigma); \\
% \mathbf{L}_{i, k}^p (\Delta, \theta)  
% & = 
% \tfrac{1}{\Delta}  
% \bigl( \sample{X}{i} -  r_{K_p} (\Delta, \sample{X}{i-1}, \theta) \bigr)^\top 
% \mathbf{G}_k (\sample{X}{i-1}, \theta)
% \bigl( \sample{X}{i} -  r_p (\Delta, \sample{X}{i-1}, \theta) \bigr) 
% + \mathbf{H}_k (\sample{X}{i-1}, \theta), 
% % \\[-0.4cm]
% \end{align*} 
% 
where
$\mathbf{G}_k : \mathbb{R}^N \times \Theta \to \mathbb{R}^{N \times N}$ and 
$\mathbf{H}_k : \mathbb{R}^N \times \Theta \to \mathbb{R}$ are analytically available and correspond to the coefficients of the $\Delta^k$-term in the formal Taylor expansion of 
$\Xi_{K_p}^{-1} 
(\Delta, \sample{X}{i-1}, \theta)$ 
and of $\log \det \Xi_{K_p} (\Delta, \sample{X}{i-1}, \theta)$ at $\Delta = 0$, respectively.
\begin{rem}
%Due to the definition, especially $K_p = [p/2]$, $p \ge 2$,
Due to the fact that $K_p = [p/2]$, $p \ge 2$,
the contrast function/estimator has the same form for $p = 2k, 2k+1, \, k \in \mathbb{N}$. However, as \cite{kess:97, uchi:12} noted in their works, when $p=2$, a simpler estimator based upon the Euler-Maruyama discretisation is asymptotically normal under the condition $\Delta_n = o (n^{-1/2})$, with a contrast function given as:
\begin{align*}
& \ell^{\mrm{EM}}_n (\theta)  = \sum_{i = 1}^n 
\biggl\{ \tfrac{1}{\Delta_n} \cdot \mathbf{q} \bigl(\sample{X}{i} -  X_{t_{i-1}} - \Delta_n \, \mu (X_{t_{i-1}}, \beta); a^{-1} (\sample{X}{i-1}, \sigma)  \bigr) \\
% + \sum_{i = 1}^n 
& \qquad \qquad \qquad  
+ \log \det a (\sample{X}{i-1}, \sigma)
\biggr\}, 
\end{align*}  
% 
% 
% The above is different from $\ell^{\, \mrm{Elliptic}}_{2, n} (\theta)$ in the sense that term 
% $
% \textstyle{
% \sum_{i = 1}^n \Delta_n \cdot \mathbf{L}_{i, 1}^2 (\Delta_n, \theta)
% }
% $ 
% is not required in $\ell^{\mrm{EM}}_n (\theta)$. 
% 
\end{rem}
% 
% \begin{rem}
% As for the higher order expansion terms involving $\Delta_n^k, \, 1 \le k \le K_p$,  it is shown that
% % 
% \begin{align}
% \Delta_n^k  \cdot 
% \Bigl| 
% \mathbb{E}_{\trueparam} \bigl[ 
% \mathbf{L}_{i+1, k} (\Delta_n, \theta)  | \mathcal{F}_{t_{i}}
% \bigr] 
% \Bigr|
% = \mathcal{O} (\Delta_n^k), \quad 0 \le i \le n-1, \,  1 \le k \le K_p, 
% \end{align} 
% % 
% for any $p \ge 2$ since we have: for any $p \ge 2$,   
% % 
% \begin{align} 
% \mathbb{E}_{\trueparam} \Bigl[ \tfrac{1}{\sqrt{\Delta_n}} 
% \bigl| \sample{X}{i+1}^j - r^j (\Delta_n , \sample{X}{i}, \theta)  \bigr|^p | \mathcal{F}_{t_i} \Bigr] =  \mathcal{O} (1),  
% \quad 1 \le j \le N, 
% \end{align}
% % 
% uniformly in $\theta \in \Theta$. This implies that addition of the higher-order terms converges to $0$ in probability, however they play a key role to obtain the CLT under the weaker condition $\Delta_n = o (n^{-1/p}), \, p \ge 2$.  
% % 
% \end{rem}
% 
\subsubsection{Contrast Estimators for SDE Class (\ref{eq:hypo-I})} 
\label{sec:contrast_I}
We will adapt the strategy of \cite{kess:97, uchi:12} to construct contrast functions for degenerate SDEs, starting from the hypo-elliptic class (\ref{eq:hypo-I}). 
%in particular in the form of (\ref{eq:contrast_ell}). 
However, the development is not straightforward because now the diffusion matrix $a(x, \sigma)$ is not positive definite. 
% , which plays a substantial role in constructing the well-defined approximate log-likelihood. 
Another important difference between the hypo-elliptic and the elliptic setting is that an It\^o-Taylor expansion for moments of the SDE can involve $\Delta$ with varying orders across smooth and rough components. 
%This is easily checked by the following example. 
% 
% 
\begin{example} \label{ex:hypo-I}
% 
% We consider the following two-dimensional underdamped Langevin SDE with potential function $U: \mathbb{R} \to \mathbb{R}$ (assumed sufficiently regular):
% % 
% \begin{align*}
% \begin{aligned}
% & d X_{t}^1 = X_{t}^2 dt; \\
% & d X_{t}^2 = \bigl( - U' ( X_{t}^1) - X_{t}^2 \bigr) dt + \sigma d W_t, 
% \end{aligned}
% \end{align*} 
% % 
% where $\sigma > 0$ is a diffusion parameter. 
We consider the under-damped Langevin equation defined in (\ref{eq:ULE}). 
For the model, the It\^o-Taylor expansion gives:  
\begin{align*}
\mrm{Var}_\theta [\sample{X}{i+1} | \mathcal{F}_{t_i}] = \sigma^2 
\begin{bmatrix}
\tfrac{\Delta^3}{3} & \tfrac{\Delta^2}{2} \\[0.1cm] 
\tfrac{\Delta^2}{2} & \Delta 
\end{bmatrix} 
+ 
\begin{bmatrix}
\mathcal{O} (\Delta^4) & \mathcal{O} (\Delta^3) \\ 
\mathcal{O} (\Delta^3) & \mathcal{O} (\Delta^2)
\end{bmatrix}, \quad \Delta = t_{i+1} - t_i.  
\end{align*}
Thus, the order of $\Delta$ is larger for the smooth component. Note that the leading term $\sigma^2 \Delta^3/3$ in the variance of $X_{\Delta}^1$ derives from the Gaussian variate $\textstyle \sigma \int_0^\Delta W_s ds$ arising in the It\^o-Taylor expansion of  $X_\Delta^1$. 
\end{example}
One must now find a positive definite matrix instead of $a(x, \theta)$ to obtain a general contrast function in the form of (\ref{eq:contrast_ell}) in a hypo-elliptic setting while dealing with such structure of varying scales amongst the components of degenerate SDEs. We thus begin by considering a standardisation of $X_\Delta^{\, (\mrm{I})}$ (conditionally on $\textstyle X_0^{\, (\mrm{I})} = x$) via subtracting high-order mean approximations from smooth/rough components and dividing with appropriate $\Delta$-terms. In particular, we introduce the $\mathbb{R}^N$-valued random variables as:
\begin{align*}
{Y}^{\,(\mathrm{I})}_{\, p, \Delta}
:=  m^{\, (\mathrm{I})}_{p} (\Delta, x, 
X_\Delta^{\, (\mathrm{I})}, \theta)
= 
\begin{bmatrix}
  \tfrac{1}{\sqrt{\Delta^3}} 
\Bigl( X_{S, \Delta}^{\, (\mathrm{I})} 
- {r}_{S, K_p + 1}^{(\mathrm{I})} (\Delta, x; \theta)  \Bigr) \\   
\tfrac{1}{\sqrt{\Delta}} 
\Bigl( X_{R, \Delta}^{\, (\mathrm{I})}  
- {r}_{R, K_p}^{(\mathrm{I})}  (\Delta, x; \theta) \Bigr) 
\end{bmatrix}, 
% \left[ 
% \tfrac{1}{\sqrt{\Delta^3}} 
% \Bigl( X_{S, \Delta}^{\, (\mathrm{I})} 
% - {r}_{S, K_p + 1}^{(\mathrm{I})} (\Delta, x; \theta)  \Bigr)^\top, \, 
% \tfrac{1}{\sqrt{\Delta}} 
% \Bigl( X_{R, \Delta}^{\, (\mathrm{I})}  
% - {r}_{R, K_p}^{(\mathrm{I})}  (\Delta, x; \theta) \Bigr)^\top \, 
% \right]^\top, 
% \begin{bmatrix}
% \frac{X_{S, \Delta}^{x, \, (\mathrm{I})} 
% - {\mu}_{S, K_p + 1}^{(\mathrm{I})} (\Delta, x; \theta)}{\sqrt{\Delta^{3}}} \\[0.4cm]
% \frac{X_{R, \Delta}^{x, \, (\mathrm{I})}  
% - {\mu}_{R, K_p}^{(\mathrm{I})}  (\Delta, x; \theta)}{\sqrt{\Delta}}
% \end{bmatrix},  
\end{align*}
where we have set $K_p = [p/2]$ and for $q \in \mathbb{N}$, 
\begin{align}  \label{eq:mean_expansion_I} 
\begin{bmatrix}
    {r}_{S, q}^{(\mathrm{I})}  (\Delta, x, \theta)  \\[0.1cm]  
    {r}_{R, q}^{(\mathrm{I})} (\Delta, x, \theta)  
\end{bmatrix}
= 
\begin{bmatrix}
    x_{S} \\[0.1cm]
    x_R
\end{bmatrix}
 + \sum_{k=1}^q \frac{\Delta^k}{k!} 
\begin{bmatrix}
    \mathcal{L}^{k-1}  \mu_{S} (x, \theta) \\
    \mathcal{L}^{k-1}  \mu_{R} (x, \theta)
\end{bmatrix}, 
\quad x = 
\begin{bmatrix}
   x_S \\ 
   x_R 
\end{bmatrix} \in \mathbb{R}^N
% \bigl[ x_R^\top, x_S^\top \bigr]^\top \in \mathbb{R}^{N_S} \times \mathbb{R}^{N_R}, 
\end{align}  
with this latter quantity obtained from an It\^o-Taylor expansion of $\mathbb{E}_\theta [X_{\Delta}^{\, (\mathrm{I})}]$. 
\textcolor{black}{We note that $Y_{p, \Delta}$ is well-defined under \ref{assump:coeff}$(2K_p)$}. 
Also, note that different orders (by one) of mean approximation are used for smooth and rough components in the above standardisation. We will provide some more details on this in Remark \ref{rem:mean_def} later in the paper. We denote the Lebesgue density of the distribution of $X_\Delta^{\, (\mrm{I})}$ given $X_0^{\, (\mrm{I})} = x$ and $\theta \in \Theta$ as 
% 
% \begin{align*}
$\textstyle 
y \mapsto {\mathbb{P}_\theta ( X^{\, (\mrm{I})}_\Delta \in dy) }/{dy} = p^{X^{\, (\mrm{I})}_\Delta} (x, y; \theta).
$
% \end{align*}
%
Transformation of random variables gives that, for $(x, y, \theta) \in \mathbb{R}^N \times \mathbb{R}^N \times \Theta$:
\begin{align*} %\label{eq:likelihood_X}
p^{X^{(\mathrm{I})}_{\Delta}} (x, y ; \theta)
= \frac{1}{\sqrt{\Delta^{ 3 N_{S} + N_R}}} p^{{Y}^{\,(\mathrm{I})}_{\, p, \Delta}} 
\left( \xi ; \theta \right) \big|_{\xi =  m_{p}^{\, (\mathrm{I})} (\Delta, x, y, \theta)},  \ \    
\end{align*} 
where 
% $\xi \mapsto p^{{Y}^{\,(\mathrm{I})}_{\, p, \Delta}} (\xi; \theta)$ 
\textcolor{black}{$p^{{Y}^{\,(\mathrm{I})}_{\, p, \Delta}} (\xi; \theta) \equiv \mathbb{P}_\theta (Y_{p, \Delta}^{(\mrm{I})} \in d \xi) / d \xi $ denotes the Lebesgue density of the law of ${Y}^{ (\mathrm{I})}_{\, p, \Delta}$ given the parameter $\theta \in \Theta$.}  
%
% We thus apply the Gaussian approximation for $\log p^{{Y}^{\,(\mathrm{I})}_{\, p, \Delta}} \left( \xi ; \theta \right)$. 
Following the standarisation, an It\^o-Taylor expansion now gives, under \ref{assump:coeff}$\,(2J)$, $J \in \mathbb{N}$:
\begin{align} \label{eq:quadratic_approx}
\mathbb{E}_{\theta} 
\left[
{Y}^{\, (\mathrm{I})}_{\, p, \Delta} 
\bigl({Y}^{\,(\mathrm{I})}_{\, p, \Delta} \bigr)^\top 
\right]  
=
\boldsymbol{\Sigma}^{\, (\mathrm{I})} (x, \theta) 
+ \sum_{j = 1}^{J} \Delta^j \cdot
\boldsymbol{\Sigma}_{j}^{\, (\mathrm{I})} (x, \theta)   
+ R^{(\mathrm{I})} (\Delta^{J  + 1}, x, \theta),
\end{align} 
where $R^{(\mathrm{I})} \in \mathcal{S}$,
for some analytically available $N \times N$ matrices  $\boldsymbol{\Sigma}_{j}^{\, (\mathrm{I})} ( x, \theta)$. In particular, matrix $\boldsymbol{\Sigma}^{(\mrm{I})} (x, \theta)$ has the following block expression: 
\begin{align*} 
\boldsymbol{\Sigma}^{\, (\mrm{I})} ( x, \theta) 
\equiv 
\begin{bmatrix}
\boldsymbol{\Sigma}_{SS}^{\, (\mrm{I})} ( x, \theta)  
& \boldsymbol{\Sigma}_{SR}^{\, (\mrm{I})} ( x, \theta)  \\[0.1cm] 
\boldsymbol{\Sigma}_{RS}^{\, (\mrm{I})} ( x, \theta)  
& \boldsymbol{\Sigma}_{RR}^{\, (\mrm{I})} ( x, \theta) 
\end{bmatrix}, 
\end{align*}
where we have set: 
\begin{align*}
\boldsymbol{\Sigma}_{RR}^{\, (\mrm{I})} (x, \theta) 
&= \sum_{k = 1}^d  A_{R,k} (x , \sigma)  A_{R, k} (x , \sigma)^\top 
\equiv  a_R (x , \sigma);  \\ 
\boldsymbol{\Sigma}_{SR}^{\, (\mrm{I})} (x, \theta)
& = \tfrac{1}{2} \sum_{k = 1}^d 
\mathcal{L}_k \mu_{S} (x, \theta) A_{R, k} (x, \sigma)^\top, \quad  
% \equiv \tfrac{1}{2} 
% \partial_{x_R}^\top \mu_{S} (x, \theta) a_R (x, \sigma);  
\boldsymbol{\Sigma}_{RS}^{\, (\mrm{I})} (x, \theta) = \boldsymbol{\Sigma}_{SR}^{\, (\mrm{I})} (x, \theta)^\top; \\ 
\boldsymbol{\Sigma}_{SS}^{\, (\mrm{I})} (x, \theta)  
& = \tfrac{1}{3} \sum_{k = 1}^d 
\mathcal{L}_k \mu_{S} (x, \theta) 
\mathcal{L}_k \mu_{S} (x, \theta)^\top \equiv \tfrac{1}{3} a_S (x, \theta).  
\end{align*}
Matrix $\boldsymbol{\Sigma}^{\, (\mrm{I})} (x, \theta)$ plays a role similar to $a (x, \theta)$ in the elliptic setting due to the following result whose proof is in Section \ref{append:pf_positive_I} of the Supplementary Material. 
\begin{lemma} \label{lemma:positive_I}
Under \ref{assump:hor}$(i)$, the matrices $a_R (x , \sigma)$,   $a_S (x, \theta)$ and $\boldsymbol{\Sigma}^{\, (\mrm{I})} (x, \theta)$ are positive definite for all $(x, \theta) \in \mathbb{R}^N \times \Theta$.
\end{lemma}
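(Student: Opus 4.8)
The plan is to establish positive definiteness in three stages, moving from the simplest block to the full matrix, in each case translating the relevant part of the H\"ormander-type condition \ref{assump:hor}$(i)$ into a statement that a certain Gram-type matrix has full rank.

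\emph{Stage 1: $a_R$ is positive definite.} Fix $(x,\sigma)$ and let $v \in \mathbb{R}^{N_R}$. Then $v^\top a_R(x,\sigma) v = \sum_{k=1}^d \bigl( v^\top A_{R,k}(x,\sigma) \bigr)^2 \ge 0$, with equality iff $v \perp A_{R,k}(x,\sigma)$ for every $k$. Since the first line of \ref{assump:hor}$(i)$ states that $\mathrm{span}\{A_{R,k}(x,\sigma) : 1 \le k \le d\} = \mathbb{R}^{N_R}$, this forces $v = 0$. Hence $a_R(x,\sigma)$ is positive definite.

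\emph{Stage 2: $a_S$ is positive definite.} The key observation is that $\mathcal{L}_k \mu_S(x,\theta) = [A_0, A_k]_{S}(x,\theta)$ up to terms that vanish on the smooth block: because the diffusion vector fields $A_k = [\mathbf{0}_{N_S}^\top, A_{R,k}^\top]^\top$ have zero smooth part, the smooth component of the Lie bracket $[A_0, A_k]$ reduces precisely to $-\sum_i A_{R,k}^i \partial_{x_{R,i}} \mu_S = -\mathcal{L}_k \mu_S$ (the other term $\sum_i A_0^i \partial_{x_i} A_k$ has vanishing smooth part). Thus for $w \in \mathbb{R}^{N_S}$, $w^\top a_S(x,\theta) w = \sum_{k=1}^d \bigl( w^\top \mathcal{L}_k \mu_S(x,\theta) \bigr)^2 = 0$ would imply $w \perp \mathrm{proj}_{1,N_S}[A_0,A_k](x,\theta)$ for all $k$. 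Combined with the fact (Stage 1) that the rough parts $A_{R,k}$ already span $\mathbb{R}^{N_R}$, the second line of \ref{assump:hor}$(i)$ — that $\{A_k, [A_0,A_k]\}_k$ spans $\mathbb{R}^N$ — forces the smooth parts of the brackets to span $\mathbb{R}^{N_S}$ modulo the rough directions, hence $w = 0$. So $a_S(x,\theta)$ is positive definite.

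\emph{Stage 3: the full matrix $\mathbf{\Sigma}^{(\mrm{I})}$ is positive definite.} The natural route is to recognize $\mathbf{\Sigma}^{(\mrm{I})}(x,\theta)$, up to the constant factors $1/3$ and $1/2$, as the Gram matrix of the $N$ vectors $\bigl\{ \mathcal{L}_k\mu_S(x,\theta)\,;\,A_{R,k}(x,\sigma)\bigr\}$ stacked appropriately — concretely, write $B(x,\theta)$ for the $N \times d$ matrix whose $k$-th column is $\bigl[\tfrac{1}{\sqrt 3}\mathcal{L}_k\mu_S(x,\theta)^\top,\, A_{R,k}(x,\sigma)^\top\bigr]^\top$; then a direct check of the four blocks shows $\mathbf{\Sigma}^{(\mrm{I})}(x,\theta) = B(x,\theta)B(x,\theta)^\top$. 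Hence $\mathbf{\Sigma}^{(\mrm{I})}$ is positive semidefinite, and it is positive definite iff $B(x,\theta)$ has rank $N$, i.e.~iff the columns of $B$ span $\mathbb{R}^N$. Each column of $B$ is, up to the identification of the smooth part of $[A_0,A_k]$ with $-\mathcal{L}_k\mu_S$ established in Stage 2 and up to scaling, the vector $[A_0,A_k](x,\theta)$ minus its $A_k$-contribution; since $A_k$ itself has vanishing smooth part and equals $[\mathbf{0}^\top, A_{R,k}^\top]^\top$, the span of $\{A_k, [A_0,A_k]\}_k$ coincides with the span of the columns of $B$ together with the rough coordinate directions — but the latter are already contained in the former. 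Thus the second line of \ref{assump:hor}$(i)$ gives exactly $\mathrm{rank}\, B(x,\theta) = N$, completing the proof.

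The main obstacle I anticipate is Stage 3, specifically the bookkeeping needed to verify that $\mathbf{\Sigma}^{(\mrm{I})} = BB^\top$ for the correctly scaled $B$ and, in parallel, that the span of $\{A_k, [A_0,A_k]\}_k$ really does match the column span of $B$ rather than some larger or smaller space. One has to be careful that the constant $\tfrac13$ (versus $\tfrac12$ for the off-diagonal, which would be the "wrong" constant for a clean Gram factorization) does not obstruct positive definiteness — it does not, since scaling the smooth rows of $B$ by $\tfrac{1}{\sqrt3}$ is an invertible operation and $\tfrac12 \ne \tfrac{1}{\sqrt3}\cdot 1$ merely means the off-diagonal block of $BB^\top$ would be $\tfrac{1}{\sqrt 3}\sum_k \mathcal{L}_k\mu_S A_{R,k}^\top$, not $\tfrac12\sum_k(\cdots)$; so in fact $\mathbf{\Sigma}^{(\mrm{I})}$ is \emph{not} literally $BB^\top$ but rather $D\,(\text{Gram})\,D$ for a diagonal $D$ only in the diagonal blocks, and one must instead argue positive definiteness via a Schur complement: $\mathbf{\Sigma}^{(\mrm{I})} \succ 0$ iff $\mathbf{\Sigma}_{RR}^{(\mrm{I})} = a_R \succ 0$ (Stage 1) and the Schur complement $\tfrac13 a_S - \tfrac14 (\sum_k \mathcal{L}_k\mu_S A_{R,k}^\top) a_R^{-1} (\sum_k A_{R,k}\mathcal{L}_k\mu_S^\top) \succ 0$. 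The cleanest way to see the Schur complement is positive is to note it equals $\tfrac13$ times the covariance of the residual of $\int_0^1 (\text{a Brownian-type functional})$ after projecting onto the rough noise — equivalently, it is $\tfrac{1}{12}$ times the Gram matrix of the functions $t \mapsto \mathcal{L}_k\mu_S(x,\theta)$ viewed through the covariance structure of $(\int_0^\Delta W_s\,ds,\, W_\Delta)$, whose degeneracy is exactly controlled by condition \ref{assump:hor}$(i)$. I would present Stage 3 via this Schur-complement computation, reducing positive definiteness of the Schur complement to the span condition through the identification of smooth bracket parts from Stage 2.
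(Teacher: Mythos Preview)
Your Stages 1 and 2 are correct and match the paper's argument essentially verbatim. The paper, like you, reads off the positive definiteness of $a_R$ from the first line of \ref{assump:hor}$(i)$, and then observes that the smooth part of $[A_0,A_k]$ is $-\mathcal{L}_k\mu_S$, so the second line forces $\{\mathcal{L}_k\mu_S\}_k$ to span $\mathbb{R}^{N_S}$, giving $a_S\succ 0$.

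In Stage 3 your instinct to abandon the Gram factorization $BB^\top$ and pass to the Schur complement is exactly right, and this is effectively what the paper does as well (it states the determinant formula $\det\mathbf{\Sigma}^{(\mrm I)}=c\cdot\det a_R\det a_S$ with $c>0$, which is the block-determinant/Schur-complement identity). However, your justification that the Schur complement is positive definite is left vague --- you invoke a ``covariance of the residual'' and assert it is $\tfrac{1}{12}$ times a Gram matrix without showing why. The clean one-line algebraic fact you are missing is this: because $A_k$ has zero smooth part, $\mathcal{L}_k\mu_S=(\partial_{x_R}^\top\mu_S)\,A_{R,k}$. Writing $J:=\partial_{x_R}^\top\mu_S\in\mathbb{R}^{N_S\times N_R}$, this gives
\[
a_S=J\,a_R\,J^\top,\qquad \mathbf{\Sigma}_{SR}^{(\mrm I)}=\tfrac{1}{2}\,J\,a_R,
\]
and hence the Schur complement is
\[
\tfrac{1}{3}a_S-\tfrac{1}{4}\,J\,a_R\,a_R^{-1}\,a_R\,J^\top=\tfrac{1}{3}a_S-\tfrac{1}{4}a_S=\tfrac{1}{12}\,a_S,
\]
which is positive definite by your Stage~2. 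This closes the argument immediately, with no need for the probabilistic interpretation or a second appeal to \ref{assump:hor}$(i)$. The detour through the $BB^\top$ factorization and the span-matching discussion can be dropped entirely; it is, as you noticed yourself, obstructed by the mismatch between $\tfrac12$ and $\tfrac{1}{\sqrt3}$ and does not lead anywhere.
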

We form a Gaussian approximation for $ p^{{Y}^{\,(\mathrm{I})}_{\, p, \Delta}} \left( \xi ; \theta \right)$ to obtain a contrast function that will be well-defined due to the positive definiteness of matrix $\boldsymbol{\Sigma}^{(\mrm{I})} (x, \theta)$. We introduce some notation. For $h > 0$, $\theta \in \Theta$, $0 \le i \le n$, $1 \le j \le K$ and $K \in \mathbb{N}$,  
\begin{gather*}
\mathbf{m}_{p, i}^{\, (\mathrm{I})} (h, \theta) 
:= {m}_p^{(\mathrm{I})} 
(h, \sample{X}{i-1}^{(\mathrm{I})}, 
\sample{X}{i}^{(\mathrm{I})}, \theta); \\  
\boldsymbol{\Sigma}_{i}^{(\mathrm{I})} (\theta) 
:= \boldsymbol{\Sigma}^{(\mathrm{I})} (\sample{X}{i}, \theta), 
\quad  
\boldsymbol{\Sigma}_{i, j}^{(\mathrm{I})} (\theta) := \boldsymbol{\Sigma}_{j}^{(\mathrm{I})} (\sample{X}{i}, \theta); \\ 
\boldsymbol{\Xi}^{(\mathrm{I})}_{K, i} (h, \theta )
:= \boldsymbol{\Sigma}_i^{(\mathrm{I})} (\theta) 
+ \sum_{1 \le j \le K} h^j \cdot 
\boldsymbol{\Sigma}_{i, j}^{(\mathrm{I})} (\theta), \quad  
\boldsymbol{\Lambda}_{i}^{(\mathrm{I})} (\theta)
:= \bigl( \boldsymbol{\Sigma}_{i}^{(\mathrm{I})} (\theta) \bigr)^{-1}.  
\end{gather*}
\textcolor{black}{Note that the above functions are well-defined under \ref{assump:coeff}$(2K_p)$; see (\ref{eq:mean_expansion_I})-(\ref{eq:quadratic_approx}).} 
We write the formal Taylor expansion of 
$ \bigl( \boldsymbol{\Xi}_{K, i}^{(\mathrm{I})} (h, \theta) \bigr)^{-1}$ and $\log \det \boldsymbol{\Xi}_{K, i}^{(\mathrm{I})} (h, \theta)$ up to the level $K \in \mathbb{N}$ as 
% 
% \begin{align*} 
% \boldsymbol{\Lambda}_{i}^{(\mathrm{I})} (\theta) 
$ \textstyle 
\sum_{0 \le k \le K} h^k \cdot \mathbf{G}_{i, k}^{(\mathrm{I})} (\theta), \;  
% \log \bigl| \boldsymbol{\Sigma}_{i}^{(\mathrm{I})} (\theta) \bigr| +
\sum_{0 \le k \le K} h^k \cdot 
\mathbf{H}_{i, k}^{(\mathrm{I})} (\theta), 
$
% \end{align*}
%  
respectively, where 
\begin{align*}
\mathbf{G}_{i, k}^{(\mrm{I})} (\theta) 
& = 
\frac{1}{k!} \partial_h^k \, \bigl( \boldsymbol{\Xi}_{K,i}^{(\mrm{I})} (h, \theta) \bigr)^{-1} \, \big|_{h = 0}; \\  
\mathbf{H}_{i, k}^{(\mrm{I})} (\theta) 
& = 
\frac{1}{k!} \partial_h^k 
\bigl( \log \det \boldsymbol{\Xi}_{K, i}^{(\mrm{I})} (h, \theta) \bigr) \big|_{h = 0}, 
\qquad 0 \le k \le K. 
\end{align*}
For instance, one has: 
\begin{gather*}
\mathbf{G}_{i, 0}^{(\mathrm{I})} (\theta) 
 = \boldsymbol{\Lambda}_{i}^{\mrm{(I)}} (\theta), \quad  
\mathbf{G}_{i, 1}^{(\mathrm{I})} (\theta)  
 = - \boldsymbol{\Lambda}_i^{(\mathrm{I})} (\theta) \, 
\boldsymbol{\Sigma}_{i, 1}^{(\mathrm{I})} (\theta) \, \boldsymbol{\Lambda}_i^{(\mathrm{I})} (\theta); \\ 
\mathbf{G}_{i, 2}^{(\mathrm{I})} (\theta)
=  - \bigl( \mathbf{G}_{i, 1}^{ (\mathrm{I})} (\theta) \boldsymbol{\Sigma}_{i, 1}^{  (\mrm{I})} (\theta) 
+ \boldsymbol{\Lambda}_i^{ (\mathrm{I})} (\theta) 
\boldsymbol{\Sigma}_{i, 2}^{ (\mathrm{I})} (\theta) 
\bigr) 
\boldsymbol{\Lambda}_i^{ (\mathrm{I})} (\theta)
\end{gather*}
and 
\begin{gather*}
\mathbf{H}_{i, 0}^{(\mathrm{I})} (\theta) 
= \log \det \boldsymbol{\Sigma}^{\,(\mrm{I})}_{i} (\theta),  \quad 
\mathbf{H}_{i, 1}^{(\mathrm{I})} (\theta) 
 = \mathrm{Tr} 
\Bigl[ 
\boldsymbol{\Lambda}^{\, (\mathrm{I})}_i (\theta) \, 
\boldsymbol{\Sigma}_{i, 1}^{(\mathrm{I})} (\theta) 
\Bigr]; \\ 
\mathbf{H}_{i, 2}^{(\mathrm{I})} (\theta)  
= 
\mathrm{Tr} 
\Bigl[ 
\tfrac{1}{2} \mathbf{G}_{i, 1}^{(\mrm{I})} (\theta) \boldsymbol{\Sigma}^{(\mrm{I})}_{i, 1} (\theta)
+ 
\boldsymbol{\Lambda}^{ (\mrm{I})}_i (\theta) \boldsymbol{\Sigma}_{i, 2}^{(\mrm{I})} (\theta) 
\Bigr].
\end{gather*}
% \begin{align*}
% % \begin{array}{ll} 
% \mathbf{G}_{i, 0}^{(\mathrm{I})} (\theta) 
% & = \boldsymbol{\Lambda}_{i}^{\mrm{(I)}} (\theta), \quad  
% \mathbf{G}_{i, 1}^{(\mathrm{I})} (\theta)  
%  = - \boldsymbol{\Lambda}_i^{(\mathrm{I})} (\theta) \, 
% \boldsymbol{\Sigma}_{i, 1}^{(\mathrm{I})} (\theta) \, \boldsymbol{\Lambda}_i^{(\mathrm{I})} (\theta), \\ 
% \mathbf{G}_{i, 2}^{(\mathrm{I})} (\theta)
% & =  - \bigl( \mathbf{G}_{i, 1}^{ (\mathrm{I})} (\theta) \boldsymbol{\Sigma}_{i, 1}^{  (\mrm{I})} (\theta) 
% + \boldsymbol{\Lambda}_i^{ (\mathrm{I})} (\theta) 
% \boldsymbol{\Sigma}_{i, 2}^{ (\mathrm{I})} (\theta) 
% \bigr) 
% \boldsymbol{\Lambda}_i^{ (\mathrm{I})} (\theta),  \ 
% \mathbf{H}_{i, 0}^{(\mathrm{I})} (\theta) 
% = \log \det \boldsymbol{\Sigma}^{\,(\mrm{I})}_{i} (\theta), \\ 
% \mathbf{H}_{i, 1}^{(\mathrm{I})} (\theta) 
% & = \mathrm{Tr} 
% \Bigl[ 
% \boldsymbol{\Lambda}^{\, (\mathrm{I})}_i (\theta) \, 
% \boldsymbol{\Sigma}_{i, 1}^{(\mathrm{I})} (\theta) 
% \Bigr], \quad 
% \mathbf{H}_{i, 2}^{(\mathrm{I})} (\theta)  
% = 
% \mathrm{Tr} 
% \Bigl[ 
% \tfrac{1}{2} \mathbf{G}_{i, 1}^{(\mrm{I})} (\theta) \boldsymbol{\Sigma}^{(\mrm{I})}_{i, 1} (\theta)
% + 
% \boldsymbol{\Lambda}^{ (\mrm{I})}_i (\theta) \boldsymbol{\Sigma}_{i, 2}^{(\mrm{I})} (\theta) 
% \Bigr].
% %
% % \end{array}
% \end{align*}
% 
% 
Note that the terms $\mathbf{G}_{i, k}^{(\mrm{I})} (\theta)$ and 
$\mathbf{H}_{i, k}^{(\mrm{I})} (\theta)$ involve the inverse of the matrix $\boldsymbol{\Sigma}_i^{(\mathrm{I})} (\theta)$, and are well-defined following Condition \ref{assump:hor}$(i)$ 
and Lemma \ref{lemma:positive_I}. We now construct the contrast function $\ell_{p, n, \Delta}^{\, (\mathrm{I})} (\theta)$,  $\theta = (\beta_S, \beta_R, \sigma) \in \Theta$, for the hypo-elliptic class (\ref{eq:hypo-I}) as follows. \textcolor{black}{Under Condition \ref{assump:coeff}$(2K_p)$}: 
\begin{itemize}[leftmargin=20pt]
\item[(i)] For $p = 2$, 
\begin{align} 
\label{eq:contrast_I_p2}
\ell_{2, n}^{\, (\mrm{I})} (\theta) = 
\sum_{i = 1}^n 
\Bigl\{ \mathbf{m}_{2, i}^{\, (\mathrm{I})} (\Delta_n, \theta)^\top 
\boldsymbol{\Lambda}^{\, (\mathrm{I})}_{i-1} (\theta) 
\, \mathbf{m}_{2, i}^{\, (\mathrm{I})} (\Delta_n, \theta) 
+ \log \det 
\boldsymbol{\Sigma}^{\, (\mathrm{I})}_{i-1} (\theta)  \Bigr\}. 
\end{align}
\item[(ii)] For $p \ge 3$, 
\begin{align} \label{eq:contrast_I} 
\hspace{-0.8cm}
\ell_{p, n}^{\, (\mathrm{I})} (\theta) 
=
\sum_{i = 1}^n \sum_{j = 0}^{K_p} 
\Delta_n^{j} \cdot 
\bigl\{ 
\mathbf{m}_{p, i}^{\, (\mathrm{I})} (\Delta_n, \theta)^\top  
\, 
\mathbf{G}_{i-1, j}^{\, (\mathrm{I})} (\theta) 
\, 
\mathbf{m}_{p, i}^{\, (\mathrm{I})} (\Delta_n, \theta) 
+ \mathbf{H}_{i-1, j}^{\, (\mathrm{I})} (\theta)
\bigr\}. 
\end{align}
\end{itemize}
Thus, the contrast estimator $
\hat{\theta}_{p, n}^{\, (\mathrm{I})}
= ( 
\hat{\beta}_{S, p, n}, \, 
\hat{\beta}_{R, p, n}, \, 
\hat{\sigma}_{p, n}
)$,  $p \ge 2$, is defined as 
%
% 
%\begin{align*} %\label{eq:MLE_I}
$$\hat{\theta}_{p, n}^{\, (\mathrm{I})} =
\operatorname*{arg\,min}_{\theta \in \Theta} \ell_{p, n}^{\, (\mathrm{I})} (\theta).$$ 
%\end{align*}
%
% 
\subsubsection{Contrast Estimators for SDE Class (\ref{eq:hypo-II})} \label{sec:contrast_II} 
To construct the contrast function and corresponding estimator for the highly degenerate diffusion class defined via (\ref{eq:hypo-II}), 
we proceed as above, that is we consider an appropriate standardisation that takes under consideration the scales of the three components $(X_{S_1, \Delta}, X_{S_2, \Delta}, X_{R, \Delta}), \, \Delta > 0 $. Note that the first smooth component $X_{S_1, \Delta}$ has a variance of size $\mathcal{O} (\Delta^5)$ due to the largest (as $\Delta\rightarrow 0$) variate in the It\^o-Taylor expansion being  $\textstyle \int_0^\Delta \int_0^s W_u du ds$, where we have made use of the fact that the drift  $\mu_{S_1}$ is a function only of the smooth components. 
Thus, \textcolor{black}{under Condition \ref{assump:coeff}$\,(2K_p)$}, we introduce: 
\begin{align*}
\mathbf{m}_{p, i}^{(\mathrm{II})} (\Delta, \theta)
:= 
\begin{bmatrix} 
\tfrac{1}{\sqrt{\Delta_n^{5}}} 
\bigl( X_{S_1, t_i}^{(\mathrm{II})} - {r}_{S_1, K_p + 2}^{(\mathrm{II})} (\Delta, \sample{X}{i-1}^{(\mathrm{II})}, \theta) \bigr)  \\ 
\tfrac{1}{\sqrt{\Delta_n^{3}}} 
\bigl( X_{S_2, t_i}^{(\mathrm{II})} - {r}_{S_2, K_p + 1}^{(\mathrm{II})} (\Delta, \sample{X}{i-1}^{(\mathrm{II})}, \theta) \bigr)  \\ 
\tfrac{1}{\sqrt{\Delta_n}} 
\bigl( X_{R, t_i}^{(\mathrm{II})} - {r}_{R, K_p}^{(\mathrm{II})} (\Delta, \sample{X}{i-1}^{(\mathrm{II})}, \theta) \bigr)
\end{bmatrix}, 
\end{align*}
where we have set, for $q \in \mathbb{N}$, $(\Delta, x, \theta) \in (0, \infty) \times \mathbb{R}^N \times \Theta$:
\begin{align*}  %\label{eq:mean_expansion} 
\begin{bmatrix}
{r}_{S_1, q}^{(\mathrm{II})}  (\Delta, x, \theta)  \\[0.1cm]  
{r}_{S_2, q}^{(\mathrm{II})}  (\Delta, x, \theta)  \\[0.1cm]    
{r}_{R, q}^{(\mathrm{II})} (\Delta, x, \theta)  
\end{bmatrix}
= 
\begin{bmatrix}
x_{S_1} \\[0.1cm]
x_{S_2} \\[0.1cm] 
x_R
\end{bmatrix}
+ \sum_{k=1}^q \frac{\Delta^k}{k!} 
\begin{bmatrix}
\mathcal{L}^{k-1}  \mu_{S_1} (x, \theta) \\
\mathcal{L}^{k-1}  \mu_{S_2} (x, \theta) \\
\mathcal{L}^{k-1}  \mu_{R} (x, \theta)
\end{bmatrix}. 
\end{align*}   
We will explain the choice of the truncation levels used above in the mean approximation $r^{(\mrm{II})}$ in Remark \ref{rem:mean_def}. 
Via an It\^o-Taylor expansion, we obtain \textcolor{black}{under Condition \ref{assump:coeff}$\,(2K_p)$} that:
\begin{align} \label{eq:expansion_cov_2}
& \mathbb{E}_{\trueparam} 
\bigl[  
\mathbf{m}_{p, i+1}^{(\mathrm{II})} (\Delta, \theta)^\top 
\mathbf{m}_{p, i+1}^{(\mathrm{II})} (\Delta, \theta) 
| \mathcal{F}_{t_{i}} 
\bigr] 
= \boldsymbol{\Sigma}^{(\mrm{II})}_i (\theta) + \sum_{j = 1}^{K_p} \Delta^j \cdot  \boldsymbol{\Sigma}^{(\mrm{II})}_{i, j} (\theta)  
+ {R} (\Delta^{K_p + 1}, \obs{X}{i}{II}, \theta), 
\end{align}
for some analytically available matrices $\boldsymbol{\Sigma}^{(\mrm{II})}_i (\theta)$,  $\boldsymbol{\Sigma}^{(\mrm{II})}_{j, i} (\theta)  \in \mathbb{R}^{N \times N}$, $ 1 \le i \le n$, $j \in \mathbb{N}$, 
and a residual $R(\cdot)$, such that $[{R}(\cdot)]_{k_1 k_2} \in \mathcal{S}$, $1 \le k_1, k_2  \le N$. 
% , where we have defined: 
% 
% 
In particular, the matrix $\boldsymbol{\Sigma}^{(\mrm{II})}_i (\theta)
\equiv \boldsymbol{\Sigma}^{(\mrm{II})} (\obs{X}{i}{II}, \theta)$ admits the following block expression:
\begin{align} \label{eq:mat_Sigma_II}
\boldsymbol{\Sigma}^{(\mrm{II})} (x, \theta) 
= 
\begin{bmatrix}
\boldsymbol{\Sigma}^{(\mrm{II})}_{S_1S_1} (x, \theta) & 
\boldsymbol{\Sigma}^{(\mrm{II})}_{S_1 S_2} (x, \theta) & 
\boldsymbol{\Sigma}^{(\mrm{II})}_{S_1 R}  (x, \theta)  \\[0.1cm]
\boldsymbol{\Sigma}^{(\mrm{II})}_{S_2 S_1} (x, \theta) & 
\boldsymbol{\Sigma}^{(\mrm{II})}_{S_2 S_2} (x, \theta) & 
\boldsymbol{\Sigma}^{(\mrm{II})}_{S_2 R}  (x, \theta) \\[0.1cm]  
\boldsymbol{\Sigma}^{(\mrm{II})}_{R S_1} (x, \theta) & 
\boldsymbol{\Sigma}^{(\mrm{II})}_{R S_2} (x, \theta) & 
\boldsymbol{\Sigma}^{(\mrm{II})}_{R R}  (x, \theta) %  
\end{bmatrix} 
\end{align} 
for $(x, \theta) \in \mathbb{R}^N \times \Theta$, where we have set: 
\begin{align*}
\boldsymbol{\Sigma}_{RR}^{\, (\mrm{II})} (x, \theta) 
&= \sum_{k = 1}^d  A_{R,k} (x , \sigma)  A_{R, k} (x , \sigma)^\top 
\equiv  a_R (x , \sigma);  \\ 
\boldsymbol{\Sigma}_{S_2 R}^{\, (\mrm{II})} (x, \theta)  
& = \tfrac{1}{2} \sum_{k = 1}^d 
\mathcal{L}_k \mu_{S_2} (x, \theta) A_{R, k} (x, \sigma)^\top, \quad  \boldsymbol{\Sigma}_{RS_2}^{\, (\mrm{II})} (x, \theta)   
= \boldsymbol{\Sigma}_{S_2R}^{\, (\mrm{II})} (x, \theta)^\top; \\ 
\boldsymbol{\Sigma}_{S_1 R}^{\, (\mrm{II})} (x, \theta)  
& = \tfrac{1}{6} \sum_{k = 1}^d 
\mathcal{L}_k \mathcal{L} \mu_{S_1} (x, \theta) A_{R, k} (x, \sigma)^\top, \quad 
\boldsymbol{\Sigma}_{R S_1}^{\, (\mrm{II})} (x, \theta)   
= \boldsymbol{\Sigma}_{S_1 R}^{\, (\mrm{II})} (x, \theta)^\top; \\ 
\boldsymbol{\Sigma}_{S_2 S_2}^{\, (\mrm{II})} (x, \theta)  
& = \tfrac{1}{3} \sum_{k = 1}^d 
\mathcal{L}_k  \mu_{S_2} (x, \theta) \mathcal{L}_k  \mu_{S_2} (x, \theta)^\top
\equiv \tfrac{1}{3} a_{S_2} (x, \theta),
\\[-0.3cm] 
\boldsymbol{\Sigma}_{S_1 S_2}^{\, (\mrm{II})} (x, \theta)  
&= \tfrac{1}{8} \sum_{k = 1}^d 
\mathcal{L}_k \mathcal{L} \mu_{S_1} (x, \theta) \mathcal{L}_k  \mu_{S_2} (x, \theta)^\top, \quad \boldsymbol{\Sigma}_{S_2 S_1}^{\, (\mrm{II})} (x, \theta)   
= \boldsymbol{\Sigma}_{S_1 S_2}^{\, (\mrm{II})} (x, \theta)^\top, \\[-0.3cm]
\boldsymbol{\Sigma}_{S_1 S_1}^{\, (\mrm{II})} (x, \theta)  
&= \tfrac{1}{20} \sum_{k = 1}^d 
\mathcal{L}_k \mathcal{L} \mu_{S_1} (x, \theta) \mathcal{L}_k \mathcal{L} \mu_{S_1} (x, \theta)^\top \equiv \tfrac{1}{20} a_{S_1} (x, \theta).   
\end{align*}
As is the case with the matrix $\boldsymbol{\Sigma}^{(\mrm{I})} (x, \theta)$,  $(x, \theta) \in \mathbb{R}^N \times \Theta$, 
for the matrix $\boldsymbol{\Sigma}^{(\mrm{II})} (x, \theta)$, we have the following result whose proof is provided in Appendix B of \cite{igu:23}. 
\begin{lemma} \label{lemma:positive_II}
Under condition {\ref{assump:hor}}$(ii)$, the matrices $a_R (x, \theta)$, $a_{S_2} (x, \theta)$, $a_{S_1} (x, \theta)$ and $\boldsymbol{\Sigma}^{(\mrm{II})}(x, \theta)$ are positive definite for all $(x, \theta) \in \mathbb{R}^N \times \Theta$. 
\end{lemma}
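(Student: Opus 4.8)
The plan is to follow the template of the proof of Lemma~\ref{lemma:positive_I}, treating the three nested ``rough-to-smooth'' levels of the highly degenerate structure (\ref{eq:hypo-II}) one at a time and then assembling $\mathbf\Sigma^{(\mrm{II})}$ from its blocks. First I would dispose of $a_R$, $a_{S_2}$ and $a_{S_1}$. Positive definiteness of $a_R(x,\sigma)=\sum_{k=1}^d A_{R,k}A_{R,k}^\top$ is immediate from the first span in condition~\ref{assump:hor}$(ii)$. For the other two matrices I would use the elementary fact that a surjective linear projection carries a spanning set to a spanning set, together with the structural features of (\ref{eq:hypo-II}): the fields $A_k$ have vanishing smooth coordinates, the Stratonovich drift $A_0$ has smooth part equal to $\mu_S=[\mu_{S_1}^\top,\mu_{S_2}^\top]^\top$, and $\mu_{S_1}$ does not depend on the rough variable $x_R$. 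A direct Lie-bracket computation then yields $\mathrm{proj}_{1,N_{S_1}}A_k=\mathbf 0$ and $\mathrm{proj}_{1,N_{S_1}}[A_0,A_k]=\mathbf 0$ (here one uses $\partial_{x,i}\mu_{S_1}=0$ for rough indices $i$), while $\mathrm{proj}_{N_{S_1}+1,N_S}[A_0,A_k]=-\mathcal L_k\mu_{S_2}$ and $\mathrm{proj}_{1,N_{S_1}}\bigl[A_0,[A_0,A_k]\bigr]=\mathcal L_k\mathcal L\mu_{S_1}$, where in the last identity the second-order part of $\mathcal L$ drops out because $\mu_{S_1}$ depends only on $x_S$. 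Projecting the second span in condition~\ref{assump:hor}$(ii)$ onto the $S_2$-coordinates then forces $\mathrm{span}\{\mathcal L_k\mu_{S_2}(x,\theta):1\le k\le d\}=\mathbb R^{N_{S_2}}$, hence $a_{S_2}=\sum_k\mathcal L_k\mu_{S_2}(\mathcal L_k\mu_{S_2})^\top$ is positive definite; projecting the third span onto the $S_1$-coordinates forces $\mathrm{span}\{\mathcal L_k\mathcal L\mu_{S_1}(x,\theta):1\le k\le d\}=\mathbb R^{N_{S_1}}$, hence $a_{S_1}=\sum_k\mathcal L_k\mathcal L\mu_{S_1}(\mathcal L_k\mathcal L\mu_{S_1})^\top$ is positive definite.

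For $\mathbf\Sigma^{(\mrm{II})}$ I would exploit the fact that the explicit blocks displayed before the lemma factor through a fixed numerical matrix. Writing $U=[\mathcal L_1\mathcal L\mu_{S_1},\dots,\mathcal L_d\mathcal L\mu_{S_1}]\in\mathbb R^{N_{S_1}\times d}$, $V=[\mathcal L_1\mu_{S_2},\dots,\mathcal L_d\mu_{S_2}]\in\mathbb R^{N_{S_2}\times d}$, $W=[A_{R,1},\dots,A_{R,d}]\in\mathbb R^{N_R\times d}$ and $D=\mathrm{diag}(U,V,W)$, one checks directly that
\[
\mathbf\Sigma^{(\mrm{II})}(x,\theta)=D\,(c\otimes I_d)\,D^\top,\qquad c=\begin{bmatrix}\tfrac1{20}&\tfrac18&\tfrac16\\[3pt]\tfrac18&\tfrac13&\tfrac12\\[3pt]\tfrac16&\tfrac12&1\end{bmatrix}.
\]
The matrix $c$ is exactly the covariance of the Gaussian triple $\bigl(\int_0^1\!\int_0^s W_u\,du\,ds,\;\int_0^1 W_s\,ds,\;W_1\bigr)$ that governs the leading terms of the It\^o-Taylor expansions of the three components of $\mathbf{m}_{p,i}^{(\mrm{II})}$; equivalently, one verifies Sylvester's criterion, the leading principal minors being $\tfrac1{20}$, $\tfrac1{960}$ and $\det c=200/120^3$, all positive. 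Thus $c\succ0$ and $c\otimes I_d\succ0$. By the previous step $UU^\top=a_{S_1}$, $VV^\top=a_{S_2}$ and $WW^\top=a_R$ are positive definite, so $U$, $V$, $W$, and therefore $D$, have full row rank $N$. Factoring $c=LL^\top$ with $L$ invertible gives $\mathbf\Sigma^{(\mrm{II})}=\bigl(D(L\otimes I_d)\bigr)\bigl(D(L\otimes I_d)\bigr)^\top$ with $D(L\otimes I_d)$ of full row rank, hence $\mathbf\Sigma^{(\mrm{II})}(x,\theta)$ is positive definite for every $(x,\theta)\in\mathbb R^N\times\Theta$.

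The computational core, and the step I expect to be the main obstacle, is the reduction of the $(S_2)$- and $(S_1)$-coordinate blocks of the iterated Lie brackets to $\mathcal L_k\mu_{S_2}$ and $\mathcal L_k\mathcal L\mu_{S_1}$: this is precisely where the degenerate structure of (\ref{eq:hypo-II}) (no diffusion on the smooth coordinates, and $\mu_{S_1}$ independent of $X_R$) enters, and it requires careful bookkeeping of which partial derivatives vanish. Once these identities are in place, the block factorisation $\mathbf\Sigma^{(\mrm{II})}=D(c\otimes I_d)D^\top$ and the positivity of the fixed $3\times 3$ matrix $c$ reduce to a reindexing and an elementary determinant check.
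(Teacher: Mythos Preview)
Your argument is correct. The paper does not prove this lemma in-text; it defers to Appendix~B of \cite{igu:23}, so there is no self-contained proof here to compare against. Your route via the Lie-bracket identities $\mathrm{proj}_{S_2}[A_0,A_k]=-\mathcal L_k\mu_{S_2}$ and $\mathrm{proj}_{S_1}[A_0,[A_0,A_k]]=\mathcal L_k\mathcal L\mu_{S_1}$, followed by the block factorisation $\mathbf\Sigma^{(\mrm{II})}=D\,(c\otimes I_d)\,D^\top$ with $c$ the covariance of $\bigl(\int_0^1\!\int_0^s W_u\,du\,ds,\int_0^1 W_s\,ds,W_1\bigr)$, is clean and self-contained; the determinant computation $\det(120\,c)=200$ and the full-row-rank argument for $D$ are correct as written.
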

For $h > 0$, $0 \le i \le n$ 
% $K \in \mathbb{N}$, 
and $0 \le k \le K_p = [p/2]$,  we write: 
\vspace{-0.1cm}
\begin{gather*} 
\boldsymbol{\Lambda}_i^{(\mrm{II})} (\theta) = 
\bigl( \boldsymbol{\Sigma}_i^{(\mrm{II})} (\theta)  \bigr)^{-1},
\  
\boldsymbol{\Xi}_{K_p, i}^{\, (\mrm{II})} (h, \theta) 
= \boldsymbol{\Sigma}^{(\mrm{II})}_i (\theta) 
+ \sum_{j = 1}^{K_p} h^j \cdot  \boldsymbol{\Sigma}^{(\mrm{II})}_{i, j} (\theta); \\[-0.2cm]  
\mathbf{G}_{i, k}^{(\mrm{II})} (\theta) 
= \tfrac{1}{k!} \, \partial_h^k 
\bigl( \boldsymbol{\Xi}_{K_p, i}^{(\mrm{II})} (h, \theta) \bigr)^{-1} \big|_{h = 0}, 
\    
\mathbf{H}_{i, k}^{(\mrm{II})}  
(t, \theta) 
= \tfrac{1}{k!} \partial_h^k 
\bigl( 
\log 
\det 
\boldsymbol{\Xi}_{K_p, i}^{(\mrm{II})}
(h, \theta)     
\bigr) |_{h = 0}.   
\end{gather*}
We now obtain our contrast function $\ell_{p, n}^{\, (\mrm{II})} (\theta)$, $p \ge 2$, for the hypo-elliptic class (\ref{eq:hypo-II}). \textcolor{black}{Under Condition \ref{assump:coeff}$\,(2K_p)$}:   
\begin{itemize}[topsep = 1pt, parsep = 1pt, partopsep = 1pt, leftmargin=20pt]
\item[(i)] For $p = 2$: 
\begin{align*}
\ell_{2, n}^{\, (\mrm{II})} (\theta) = 
\sum_{i = 1}^n 
\Bigl\{ \mathbf{m}_{2, i}^{\, (\mathrm{II})} (\Delta_n, \theta)^\top 
\boldsymbol{\Lambda}^{\, (\mathrm{II})}_{i-1} (\theta) 
\, \mathbf{m}_{2, i}^{\, (\mathrm{II})} (\Delta_n, \theta) 
+ \log \det \boldsymbol{\Sigma}^{\, (\mathrm{II})}_{i-1} (\theta) \Bigr\}. 
\end{align*}
\item[(ii)] For $p \ge 3$: 
\begin{align*} %\label{eq:contrast_II} 
\ell_{p, n}^{\, (\mathrm{II})} (\theta) 
=
\sum_{i = 1}^n \sum_{j = 0}^{K_p} 
\Delta_n^{j} \cdot 
\bigl\{ 
\mathbf{m}_{p, i}^{\, (\mathrm{II})} (\Delta_n, \theta)^\top 
\, 
\mathbf{G}_{i-1, j}^{\, (\mathrm{II})} (\theta) 
\, 
\mathbf{m}_{p, i}^{\, (\mathrm{II})} (\Delta_n, \theta) 
+ \mathbf{H}_{i-1, j}^{\, (\mathrm{II})} (\theta) 
\bigr\}.
\end{align*}
% 
% where $K_p = [p/2]$. 
\end{itemize}
Thus, the contrast estimator  is defined as:
\begin{align*} %\label{eq:MLE_II}
\hat{\theta}_{p, n}^{\, (\mathrm{II})} 
=
\bigl(
\hat{\beta}_{S_1, p, n}^{\, (\mathrm{II})}, \,
\hat{\beta}_{S_2, p, n}^{\, (\mathrm{II})}, \, 
\hat{\beta}_{R, p, n}^{\, (\mathrm{II})}, \, 
\hat{\sigma}_{p, n}^{\, (\mathrm{II})}  \bigr)
= 
\operatorname*{arg \, min}_{\theta \in \Theta} \ell_{p, n}^{\, (\mathrm{II})} (\theta). 
\end{align*}   
\begin{rem} \label{rem:mean_def}
In the definition of contrast estimator, we make use of mean approximations with different number of terms for the various components, so that it holds that, for any $\theta \in \Theta$, $p \ge 2$, $1 \le i \le n,$ and $1 \le k \le N$:
\begin{align} \label{eq:m_approx}
\mathbb{E}_\theta 
\Bigl[
\mathbf{m}_{p, i}^{(w), k} (\Delta_n,  \theta) | \mathcal{F}_{t_{i-1}} \Bigr] 
= R_k \bigl( \sqrt{\Delta_n^{2 K_p + 1}}, \sample{X}{i-1}^{(w)}, \theta \bigr),  \quad  w \in \{ \mrm{I}, \mrm{II} \}, 
\end{align}
with some $R_k \in \mathcal{S}$. This is one of the key developments to obtain the CLT under the weaker design condition $\Delta_n = o (n^{-1/p})$, $p \ge 2$. 
Notice that we divide the smooth components by smaller quantities in terms of powers of $\Delta_n$, e.g., $\sqrt{\Delta_n^5}$ and $\sqrt{\Delta_n^3}$ for the components $X_{S_1}^{(\mrm{II})}$ and $X_{S_2}^{(\mrm{II})}$, respectively, thus we require more accurate mean approximations for those components to obtain (\ref{eq:m_approx}).  
\end{rem}
\section{Asymptotic Properties of the Contrast Estimators}
\label{sec:main} 
To state our main results, we introduce a set of additional conditions for the two classes (\ref{eq:hypo-I}) and (\ref{eq:hypo-II}). Recall that we write the true parameter as 
$ \theta^{\dagger} = 
\bigl(
\beta_{S}^{\dagger},  
\beta_{R}^{\dagger},  
\sigma^{\dagger} 
\bigr) $, under the interpretation that
$ \beta_{S}^\dagger = \bigl(
\beta_{S_1}^{\dagger},  
\beta_{S_2}^{\dagger}  
\bigr)$ 
for class (\ref{eq:hypo-II}), 
and that $\theta^{\dagger}$ is assumed to be unique and to lie in the interior of $\Theta$.
%  
%\\

%\noindent 
%\textbf{Additional Conditions.}
{
\begin{enumerate}[noitemsep, topsep=2pt,parsep=3pt, partopsep=3pt, leftmargin=25pt] 
\setcounter{enumi}{2}
\renewcommand{\theenumi}{H\arabic{enumi}}
\renewcommand{\labelenumi}{\theenumi} 
\item \label{assump:additional_con}
For any $x \in \mathbb{R}^N$ and any multi-index $\alpha \in \{1, \ldots, N\}^{l}$, $l \ge 0$,
the functions: 
\begin{align*}
\theta \mapsto  
% \tfrac{\partial^l}{\partial x_{\alpha_1} \cdots x_{\alpha_l}}
\partial^x_\alpha
\mu^i (x,  \theta), 
\quad 
\theta \mapsto  
% \tfrac{\partial^l}{\partial x_{\alpha_1} \cdots x_{\alpha_l}} 
\partial^x_\alpha A_j^i (x,  \theta), 
\qquad  1 \le j \le  {d}, \quad 1 \le i \le N, 
\end{align*}
are three times differentiable. For any multi-index $\beta \in \{1, \ldots,  N_\theta \}^{l}$,  
$l \in\{1, 2, 3\}$, the functions:  
\begin{align*}
x \mapsto  
\partial^\theta_\beta
\partial^x_\alpha 
\mu^i (x ,  \theta),  \qquad 
x \mapsto   
\partial^\theta_\beta
\partial^x_\alpha A_j^i (x, \theta), \qquad 1 \le j \le d, 
\quad 1 \le i \le N,  
\end{align*}
\textcolor{black}{are of polynomial growth}, uniformly in $\theta \in \Theta$. 
%
% \item \label{assump:lipschitz} 
% For each $x \in \mathbb{R}^N$, the function $V_{S, 0} (x, \cdot) : \Theta \to \mathbb{R}^{N_S}$ is Lipschitz continuous.  
%
\item \label{assump:moments}
The diffusion process $\{X_t\}_{t \geq 0}$ defined via (\ref{eq:hypo-I}) or (\ref{eq:hypo-II}) is ergodic under $\theta = \trueparam$, with invariant distribution denoted by $\truedist$. Furthermore, all moments of $\truedist$ are finite. 
\item \label{assump:finite_moment}
It holds that for all $r \ge 1$, 
$\textstyle \sup_{t > 0} \mathbb{E}_{\trueparam} [|X_t|^r] < \infty$. 
\item \label{assump:ident}
If it holds: 
\begin{align*}
\mu_{S} (x , \beta_S) = \mu_{S} (x , \truebeta_S), \ \ 
\mu_{R} (x , \beta_R) = \mu_{R} (x , \truebeta_R), \ \  
A_R (x , \sigma) = A_R (x , \truesigma), 
\end{align*}
%. 
for $x$ in set of probability 1 under  $\nu_{\trueparam}$, then 
$\beta_S = \truebeta_S, \,  \beta_R = \truebeta_R, \, \sigma = \truesigma$.
\end{enumerate}  
} 
We first show that the proposed contrast estimators $\hat{\theta}_{p, n}^{(\mathrm{I})}$, $\hat{\theta}_{p, n}^{(\mathrm{II})}$ are consistent in the  high-frequency, complete observation regime:
\begin{theorem}[Consistency] \label{thm:consistency}
Let $p \ge 2$ be an arbitrary integer and $K_p = [p/2]$. Assume that conditions \ref{assump:hor}, \ref{assump:coeff}\,$(2K_p)$, \ref{assump:additional_con}, \ref{assump:moments}, \ref{assump:finite_moment} and \ref{assump:ident} hold. If \limit, then:
\begin{align*} 
\mbox{$\hat{\theta}_{p,n}^{\, (\mathrm{I})}
\probconv
\trueparam$},  
\qquad    
\hat{\theta}_{p,n}^{\, (\mathrm{II})}
\probconv
\trueparam.  
\end{align*} 
\end{theorem}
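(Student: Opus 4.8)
The plan is to follow the standard route for consistency of high-frequency contrast estimators, adapted to the degenerate setting: show that a suitably normalised version of the contrast function converges, uniformly in $\theta$, to a limiting contrast that is uniquely minimised at $\trueparam$, and then invoke the standard argmin argument. Concretely, I would work with the two classes $w \in \{\mathrm{I}, \mathrm{II}\}$ simultaneously where possible, writing $\ell_{p,n}^{(w)}$ in the generic form $\sum_{i=1}^n \sum_{j=0}^{K_p} \Delta_n^j \{ \mathbf{m}_{p,i}^{(w)}(\Delta_n,\theta)^\top \mathbf{G}_{i-1,j}^{(w)}(\theta) \mathbf{m}_{p,i}^{(w)}(\Delta_n,\theta) + \mathbf{H}_{i-1,j}^{(w)}(\theta) \}$, and trying to find the correct normalising sequence. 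Unlike the elliptic case, the components of $\mathbf{m}_{p,i}^{(w)}$ have conditional variances of different orders in $\Delta_n$ (for (Hypo-I): $O(1)$ after the $\Delta^{-3/2}$, $\Delta^{-1/2}$ rescaling; for (Hypo-II): likewise after $\Delta^{-5/2},\Delta^{-3/2},\Delta^{-1/2}$), so the rescaling built into the definition of $\mathbf{m}$ is precisely what makes the leading quadratic term $O(1)$ per summand; hence I expect $\tfrac{1}{n}\ell_{p,n}^{(w)}(\theta)$ to be the right object, analogous to $\tfrac1n \ell^{\mathrm{Elliptic}}_{p,n}$.

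The key steps, in order, are: (1) Establish moment bounds: using \ref{assump:coeff}, \ref{assump:additional_con}, \ref{assump:finite_moment} and standard It\^o--Taylor remainder estimates, show that $\mathbf{m}_{p,i}^{(w)}(\Delta_n,\theta)$ has conditionally (given $\mathcal{F}_{t_{i-1}}$) bounded moments of all orders, uniformly in $\theta$, with conditional mean $R_k(\sqrt{\Delta_n^{2K_p+1}},\sample{X}{i-1},\theta)\in\mathcal{S}$ by Remark \ref{rem:mean_def} and conditional second moment matrix $\mathbf{\Sigma}_{i-1}^{(w)}(\theta^\dagger)+O(\Delta_n)$ when evaluated at the true parameter. (2) Decompose $\tfrac1n\ell_{p,n}^{(w)}(\theta)$ into (a) the $j=0$ leading term $\tfrac1n\sum_i \{\mathbf{m}_{p,i}^{(w)}(\Delta_n,\theta)^\top \mathbf{\Lambda}_{i-1}^{(w)}(\theta)\mathbf{m}_{p,i}^{(w)}(\Delta_n,\theta) + \log\det\mathbf{\Sigma}_{i-1}^{(w)}(\theta)\}$ and (b) the $j\ge1$ terms, which carry a factor $\Delta_n^j\to0$ and whose summands are $O_{\mathbb P}(1)$ uniformly, hence vanish in probability after division by $n$ (here it is crucial that no negative powers of $\Delta_n$ appear — this is exactly the improvement over \cite{glot:20, melnykova2020parametric} and where the careful standardisation and mean-truncation pay off). (3) For the leading term, apply a law-of-large-numbers for triangular arrays of mixing / ergodic-diffusion functionals (à la Kessler / Genon-Catelan--Jacod, using \ref{assump:moments}) to get, uniformly in $\theta$,
\begin{align*}
\tfrac1n\ell_{p,n}^{(w)}(\theta) \probconv
\int_{\mathbb{R}^N}\Bigl\{ \mathrm{Tr}\bigl[\mathbf{\Lambda}^{(w)}(x,\theta)\,\mathcal{M}^{(w)}(x,\theta,\trueparam)\bigr] + \log\det\mathbf{\Sigma}^{(w)}(x,\theta)\Bigr\}\,\truedist(dx) =: \mathcal{U}^{(w)}(\theta),
\end{align*}
where $\mathcal{M}^{(w)}(x,\theta,\trueparam) = \mathbf{\Sigma}^{(w)}(x,\trueparam) + \bigl(\text{drift-mismatch term}\bigr)\bigl(\text{drift-mismatch term}\bigr)^\top$; the drift-mismatch term captures the contribution of $r^{(w)}_{\bullet}(\Delta_n,\cdot,\theta)-r^{(w)}_{\bullet}(\Delta_n,\cdot,\trueparam)$, which after rescaling by the appropriate $\Delta_n^{-(\text{half-integer})}$ survives in the limit only for the components where the leading drift term enters at the matching order — in particular the $\beta_R$-mismatch survives at $O(1)$ in the rough block, the $\beta_S$- (resp.\ $\beta_{S_1},\beta_{S_2}$-) mismatch survives in the smooth block(s), exactly as engineered.

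The main obstacle, and where I would spend most of the effort, is step (3) combined with the identifiability argument: showing $\mathcal{U}^{(w)}$ has a strict global minimum at $\trueparam$. By the matrix inequality $\mathrm{Tr}[\Lambda M] + \log\det\Sigma \ge \mathrm{Tr}[I] + \log\det M$ with equality iff $M=\Sigma$ (Gaussian-likelihood / Jensen-type bound), one reduces to showing that $\mathcal{M}^{(w)}(x,\theta,\trueparam)=\mathbf{\Sigma}^{(w)}(x,\theta)$ for $\truedist$-a.e.\ $x$ forces $\theta=\trueparam$. The diagonal-block and drift-mismatch structure must be disentangled carefully: equality of the $RR$-block gives $a_R(x,\sigma)=a_R(x,\truesigma)$, which via \ref{assump:ident} forces $\sigma=\truesigma$; then vanishing of each drift-mismatch block forces $\mu_R(x,\beta_R)=\mu_R(x,\truebeta_R)$ and the smooth-drift relations, and another application of \ref{assump:ident} closes the argument. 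The delicate point is verifying that the off-diagonal blocks of $\mathbf{\Sigma}^{(w)}$ (which involve $\mathcal{L}_k\mu_S$, $\mathcal{L}_k\mathcal{L}\mu_{S_1}$, etc.) do not create accidental cancellations — i.e.\ that positive-definiteness from Lemmas \ref{lemma:positive_I}--\ref{lemma:positive_II} together with the block form genuinely pins down $\sigma$ first. Once $\mathcal{U}^{(w)}$ is shown to be continuous, uniformly approximated by $\tfrac1n\ell_{p,n}^{(w)}$, and uniquely minimised at $\trueparam$, the consistency conclusion $\hat\theta_{p,n}^{(w)}\probconv\trueparam$ follows from the standard argmin theorem; the uniform convergence in $\theta$ requires a tightness/stochastic-equicontinuity argument using the three-times differentiability in $\theta$ from \ref{assump:additional_con} and the polynomial-growth moment bounds from \ref{assump:finite_moment}.
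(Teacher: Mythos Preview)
There is a genuine gap in your scaling analysis. The standardisation built into $\mathbf{m}_{p,i}^{(w)}$ makes the \emph{conditional variance} of each block $O(1)$, but it does \emph{not} make the drift-mismatch terms $O(1)$. For the rough block, $r_R(\Delta_n,\cdot,\trueparam)-r_R(\Delta_n,\cdot,\theta)=\Delta_n\,d_R(\cdot,\beta_R)+O(\Delta_n^2)$, which after division by $\sqrt{\Delta_n}$ is of order $\sqrt{\Delta_n}\to 0$ --- so $\beta_R$ disappears from the $\tfrac{1}{n}$-limit. For the smooth blocks the opposite happens: the $S_2$-mismatch divided by $\sqrt{\Delta_n^3}$ is $\Delta_n^{-1/2}d_{S_2}$, and the $S_1$-mismatch divided by $\sqrt{\Delta_n^5}$ is $\Delta_n^{-3/2}d_{S_1}$. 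Consequently $\tfrac{1}{n}\ell_{p,n}^{(w)}(\theta)$ is not bounded in probability uniformly in $\theta$: it diverges at rate $\Delta_n^{-3}$ whenever $\beta_{S_1}\neq\truebeta_{S_1}$ (for class (\ref{eq:hypo-II})), while at $\beta_S=\truebeta_S$ its limit is the paper's $\mathbf{Y}_3(\sigma)$, which does not depend on $\beta_R$. Your limiting contrast $\mathcal{U}^{(w)}$ therefore cannot identify $\beta_R$, and the claim that ``no negative powers of $\Delta_n$ appear'' is false.

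The paper resolves this by proceeding \emph{hierarchically}. First normalise by $\tfrac{\Delta_n^3}{n}$ to isolate $\beta_{S_1}$ (Lemma~\ref{lemm:step1}), then establish the \emph{rate} $\hat\beta_{S_1,p,n}-\truebeta_{S_1}=o_{\mathbb{P}_{\trueparam}}(\sqrt{\Delta_n^3})$ (Lemma~\ref{lemma:step1_B}), which neutralises the diverging $S_1$-mismatch when one subsequently normalises by $\tfrac{\Delta_n}{n}$ to identify $\beta_{S_2}$; its rate $o_{\mathbb{P}_{\trueparam}}(\sqrt{\Delta_n})$ then permits the $\tfrac{1}{n}$ and $\tfrac{1}{n\Delta_n}$ normalisations needed for $\sigma$ and $\beta_R$ respectively. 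The rate steps themselves are nontrivial because cross-terms of size $\Delta_n^{-1/2}$ arise in the score (Remark~\ref{rem:key}); these are shown to vanish identically via block-matrix identities for $\mathbf{\Lambda}^{(w)}$ (Lemmas~\ref{lemm:mat_1}, \ref{lemm:mat_2}, \ref{lemma:matrix_3}), and it is exactly this algebraic cancellation --- absent from your outline --- that allows consistency to be proved without the design condition $n\Delta_n^2\to 0$.
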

\noindent Also, the proposed estimators are  asymptotically normal under the condition $\Delta_n = o (n^{-1/p})$, $p \ge 2$.
\begin{theorem}[CLT]  \label{thm:clt}
Let $p \ge 2$ be an arbitrary integer and $K_p = [p/2]$.  
Assume that conditions \ref{assump:hor}, \ref{assump:coeff}\,$(2K_p)$, \ref{assump:additional_con}, \ref{assump:moments}, \ref{assump:finite_moment} and \ref{assump:ident} hold. If \limit, with $\Delta_n =  o ( n^{-1/p})$, then:
\begin{enumerate}[leftmargin = 15pt, parsep=1pt, partopsep = 1pt]
\item[I.] For the hypo-elliptic class (\ref{eq:hypo-I}):
\end{enumerate}
\begin{align*}
\begin{bmatrix}
\sqrt{\tfrac{n}{\Delta_n}} ( \hat{\beta}_{S, p, n}^{(\mrm{I})} - \truebeta_S) 
\\[0.2cm] 
\sqrt{n \Delta_n} ( \hat{\beta}_{R, p, n}^{(\mrm{I})} - \truebeta_R)  \\[0.2cm]  
\sqrt{n} (\hat{\sigma}_{R, p, n}^{(\mrm{I})} - \truesigma ) 
\end{bmatrix} 
% \Bigl[\sqrt{\tfrac{n}{\Delta_n}} \bigl(\hat{\beta}_{S, p, n}^{(\mrm{I})} - \truebeta_S \bigr)^\top, \, 
% \sqrt{n \Delta_n} \bigl( \hat{\beta}_{R, p, n}^{(\mrm{I})} - \truebeta_R \bigr)^\top, \, 
% \sqrt{n} \bigl( 
%  \hat{\sigma}_{R, p, n}^{(\mrm{I})} - \truesigma \bigr)^\top  \Bigr]^\top 
\distconv \mathcal{N} \bigl( \mathbf{0}_{N_\theta}, 
{\Gamma}^{(\mathrm{I})} ( \trueparam )^{-1} \bigr),  
\end{align*} 
where $\Gamma^{(\mathrm{I})} ( \trueparam ) 
= 
\mrm{Diag} 
\bigl[ 
\Gamma^{(\mathrm{I})}_{\beta_{S}}  ( \trueparam ), \, 
\Gamma^{(\mathrm{I})}_{\beta_{R}} ( \trueparam ), \, 
\Gamma^{(\mathrm{I})}_{\sigma}  ( \trueparam ) 
\bigr]$ 
is the asymptotic precision matrix 
% $\Gamma^{(\mathrm{I})} ( \trueparam )$ is defined as:  
% % 
% \begin{align*}
% \Gamma^{(\mathrm{I})} ( \trueparam ) 
% = 
% \mrm{Diag} 
% \Bigl[ 
% \Gamma^{(\mathrm{I})}_{\beta_{S}}  ( \trueparam ), \, 
% \Gamma^{(\mathrm{I})}_{\beta_{R}} ( \trueparam ), \, 
% \Gamma^{(\mathrm{I})}_{\sigma}  ( \trueparam ) 
% \Bigr]  
% \end{align*}
% 
with the involved block matrices specified as: 
\begin{align*}
& \bigl[ \Gamma^{(\mathrm{I})}_{\beta_{S}} ( \trueparam ) 
\bigr]_{i_1j_1} 
= 12 \int  \partial_{\beta_{S}, i_1}  
\mu_{S}  (y, \truebeta_{S})^\top 
a_{S}^{-1} (y, \trueparam) 
\, 
\partial_{\beta_{S}, j_1}  
\, 
\mu_{S} (y, \truebeta_{S}) 
\, \truedist (d y);  
% \quad  1 \le i, j \le N_{\beta_{S}};   
\\
& \bigl[ \Gamma^{(\mathrm{I})}_{\beta_{R}} ( \trueparam ) 
\bigr]_{i_2 j_2} 
= \int  \partial_{\beta_{R}, i_2}  
\mu_{R} \, (y, \truebeta_{R})^\top 
a_{R}^{-1} (y, \truesigma) 
\, 
\partial_{\beta_{R}, j_2}  
\, 
\mu_{R} \, (y, \truebeta_{R}) 
\, \truedist (d y);  \\ 
% \quad  1 \le i, j \le N_{\beta_{R}};   \\
& \bigl[ \Gamma^{(\mathrm{I})}_{\sigma} ( \trueparam ) 
\bigr]_{i_3j_3} 
= \tfrac{1}{2} \int  
\mathrm{Tr} \bigl[ \partial_{\sigma, i_3} \boldsymbol{\Sigma}^{(\mrm{I})} (y, \trueparam) \boldsymbol{\Lambda}^{(\mathrm{I})} (y, \trueparam) 
\partial_{\sigma, j_3} 
\boldsymbol{\Sigma}^{(\mrm{I})} (y, \trueparam) 
\boldsymbol{\Lambda}^{(\mathrm{I})} (y, \trueparam) \bigr]
\, \truedist (d y),
% \; 1 \le i, j \le N_{\sigma}.
\end{align*}   
for $1 \le i_1, j_1 \le N_{\beta_{S}}$, $1 \le i_2, j_2 \le N_{\beta_{R}}$ and $1 \le i, j \le N_{\sigma}$. 
\begin{enumerate}[leftmargin = 15pt, parsep=1pt, partopsep = 1pt]
\item[II.] For the hypo-elliptic class (\ref{eq:hypo-II}):     
\end{enumerate} 
\begin{align*}
\begin{bmatrix}
\sqrt{\tfrac{n}{\Delta_n^3}} ( \hat{\beta}_{S_1, p, n}^{(\mrm{II})} - \truebeta_{S_1}) \\[0.2cm] 
\sqrt{\tfrac{n}{\Delta_n}} ( \hat{\beta}_{S_2, p, n}^{(\mrm{II})} - \truebeta_{S_2}) \\[0.2cm] 
\sqrt{n \Delta_n} ( \hat{\beta}_{R, p, n}^{(\mrm{II})} - \truebeta_S)  \\[0.2cm] 
\sqrt{n} (\hat{\sigma}_{R, p, n}^{(\mrm{II})} - \truesigma ) 
\end{bmatrix} 
% & \Bigl[ 
% \sqrt{\tfrac{n}{\Delta_n^3}} \bigl( \hat{\beta}_{S_1, p, n}^{(\mrm{II})} - \truebeta_{S_1} \bigr)^\top,  
% \sqrt{\tfrac{n}{\Delta_n}} \bigl( \hat{\beta}_{S_2, p, n}^{(\mrm{II})} - \truebeta_{S_2} \bigr)^\top,  
% \sqrt{n \Delta_n} \bigl( \hat{\beta}_{R, p, n}^{(\mrm{II})} - \truebeta_S \bigr)^\top, 
% \sqrt{n} \bigl(\hat{\sigma}_{R, p, n}^{(\mrm{II})} - \truesigma \bigr)^\top   
% \Bigr]^\top \nonumber \\ 
\distconv \mathcal{N} \bigl( \mathbf{0}_{N_\theta}, \Gamma^{(\mathrm{II})} ( \trueparam )^{-1} \bigr),  
\end{align*} 
where $\Gamma^{(\mathrm{II})} ( \trueparam ) = 
\mrm{Diag} 
\bigl[ 
\Gamma^{(\mathrm{II})}_{\beta_{S_1}} ( \trueparam ), \, 
\Gamma^{(\mathrm{II})}_{\beta_{S_2}}  ( \trueparam ), \, 
\Gamma^{(\mathrm{II})}_{\beta_{R}} ( \trueparam ), \, 
\Gamma^{(\mathrm{II})}_{\sigma}  ( \trueparam ) 
\bigr]$ is the asymptotic precision matrix 
% 
% $\Gamma^{(\mathrm{II})} ( \trueparam )$ is defined as:  
% % 
% \begin{align*}
% \Gamma^{(\mathrm{II})} ( \trueparam ) = 
% \mrm{Diag} 
% \Bigl[ 
% \Gamma^{(\mathrm{II})}_{\beta_{S_1}} ( \trueparam ), \, 
% \Gamma^{(\mathrm{II})}_{\beta_{S_2}}  ( \trueparam ), \, 
% \Gamma^{(\mathrm{II})}_{\beta_{R}} ( \trueparam ), \, 
% \Gamma^{(\mathrm{II})}_{\sigma}  ( \trueparam ) 
% \Bigr] 
% \end{align*}
% 
with the involved block matrices specified as: 
\begin{align*}
& \bigl[ \Gamma^{(\mathrm{II})}_{\beta_{S_1}} ( \trueparam ) 
\bigr]_{i_1j_1}
= 720 \int  \partial_{\beta_{S_1}, i_1} \, 
\mu_{S_1}  (y, \truebeta_{S_1})^\top 
a_{S_1}^{-1} (y, \trueparam) 
\, 
\partial_{\beta_{S_1}, j_1} 
\, 
\mu_{S_1}  (y, \truebeta_{S_1}) 
\, \truedist (d y);  \\ 
& \bigl[ \Gamma^{(\mathrm{II})}_{\beta_{S_2}} ( \trueparam ) 
\bigr]_{ij} = 12 \int  \partial_{\beta_{S_2}, i_2}  
\mu_{S_2}  (y, \truebeta_{S_2})^\top 
a_{S_2}^{-1} (y, \trueparam) 
\, 
\partial_{\beta_{S_2}, j_2}  
\, 
\mu_{S_2} (y, \truebeta_{S_2}) 
\, \truedist (d y);   \\ 
& \bigl[ \Gamma^{(\mathrm{II})}_{\beta_{R}} ( \trueparam ) 
\bigr]_{i_3j_3} = \int  \partial_{\beta_{R}, i_3}  
\mu_{R} \, (y, \truebeta_{R})^\top 
a_{R}^{-1} (y, \truesigma) 
\, 
\partial_{\beta_{R}, j_3}  
\, 
\mu_{R} \, (y, \truebeta_{R}) 
\, \truedist (d y);  \\ 
& \bigl[ \Gamma^{(\mathrm{II})}_{\sigma} ( \trueparam ) 
\bigr]_{i_4j_4} 
= \tfrac{1}{2} \int    
\mathrm{Tr}  
\bigl[ \partial_{\sigma, i_4} \boldsymbol{\Sigma}^{(\mrm{II})} (y, \trueparam) \boldsymbol{\Lambda}^{ (\mathrm{II})} (y, \trueparam) 
\partial_{\sigma, j_4} \boldsymbol{\Sigma}^{ (\mrm{II})} (y, \trueparam) 
\boldsymbol{\Lambda}^{(\mathrm{II})} (y, \trueparam) \bigr]
\truedist (d y),   
\end{align*}  
for $1 \le i_1,j_1 \le N_{\beta_{S_1}}$, $1 \le i_2, j_2 \le N_{\beta_{S_2}}$, $1 \le i_3, j_3 \le N_{\beta_{R}}$ and $1 \le i_4, j_4 \le N_{\sigma}$.  
\end{theorem} 
The proofs are given in Section \ref{sec:pf}.  
\begin{rem} 
The design condition $\Delta_n = o (n^{-1/p})$, $p \ge 2$, i.e.~$n \Delta_n^p \to 0$, appears in the CLT result (Theorem \ref{thm:clt}). As we explain later in the proof in Section \ref{sec:pf_asymptotic_normality}, the condition is required so that the expectation of the score function, specifically, the gradient of the contrast function $\ell_{p,n} (\trueparam)$, tends to $0$. This is relevant for the mean of the asymptotic Gaussian distribution to converge to $0$. If the given design condition is not satisfied, the distribution of estimators will tend to concentrate on an area that deviates from the true value, thus the estimators will suffer from bias. We observe this issue in the numerical experiments in Section \ref{sec:sim}, where the standard contrast estimator $\hat{\theta}_{2, n}$ exhibits the described bias when $n \Delta_n^2$ is not sufficiently small,  while  $\hat{\theta}_{p, n}$ for $p \ge 3$ avoids such a bias. 
% under the design of observation such that $\Delta_n = o (n^{-1/p})$, $p \ge 3$. 
\end{rem}
\section{Numerical Applications} 
\label{sec:sim}
In the numerical examples shown below, for given choices of $\Delta_n$ and $n$, we observe gradual improvements in the behaviour of the estimates when moving from $p=2$ to $p=3$ and then to $p=4$. Indeed, discrepancies are stronger in the experiment that contrasts $p=2$ with $p=4$.
\subsection{Quasi-Markovian Generalised Langevin Equation}
We study numerically the properties of the proposed contrast estimator for the $q$-GLE model defined via (\ref{eq:QGLE}) in Example \ref{ex:hypo_I_II} and which belongs in class \mbox{(\ref{eq:hypo-II})}. We consider the following two choices of potential function $U$,  leading to a linear/non-linear system of degenerate SDEs: 
\begin{description}
    \item[Case I. Quadratic potential:] $\mathbb{R} \ni q \mapsto U (q) = D q^2/2$ with some parameter  $D > 0$.
    \item[Case II. Double-well potential:] $\mathbb{R} \ni q \mapsto U(q) =  (q^2 - D)^2/4$, with some parameter $D > 0$. 
\end{description}
For the above two cases, we generate $M=100$ independent datasets by applying the locally Gaussian (LG) discretisation scheme defined in \cite{igu:23} with a small step-size $\Delta=10^{-4}$. We treat such data as obtained from the true model (\ref{eq:hypo-II}) in the understanding that the discretisation bias is negligible.
We then obtain the synthetic complete observations by sub-sampling from the above datasets with coarser time increments. For the two choices of potential above, we consider the designs of high-frequency observations given in Table \ref{table:design}. 
%Notice that all datasets are designed so that the condition $\Delta_n = o (n^{-1/2})$ does not hold but $\Delta_n = o (n^{-1/3})$ does. 
We specify the true values for $\theta = (D, \lambda, \alpha, \sigma)$ as $(2.0, 2.0, 4.0, 4.0)$ and $(2.0, 1.0, 4.0, 4.0)$ for Case I and Case II, respectively.
% \begin{description}
%     \item[Case I.] $(D^\dagger, \lambda^\dagger, \alpha^\dagger, \sigma^\dagger) = (2.0, 2.0, 4.0, 4.0)$. 
%     \item[Case II.] $(D^\dagger, \lambda^\dagger, \alpha^\dagger, \sigma^\dagger) = (2.0, 1.0, 4.0, 4.0)$. 
% \end{description}
% shown in Table \ref{table:param_value}. 
We compute the estimators $\hat{\theta}_{p, n} (=\hat{\theta}_{p, n}^{\, (\mathrm{II})}) = (\hat{D}_{p,n}, \hat{\lambda}_{p,n}, \hat{\alpha}_{p,n}, \hat{\sigma}_{p,n})$ for $p = 2,3$. %to investigate the effect of the design condition $\Delta_n = o (n^{-1/p})$.  
To find the minima of the contrast functions we used the adaptive moments (Adam) optimiser with the following  specifications: (step-size) = $0.1$, 
%(exponential decay rate for the first moment estimates) =
%$0.9$, 
(exponential decay rate for the first moment estimates) = $0.9$, (exponential decay rate for the second moment estimates) = $0.999$, (additive term for numerical stability) = $10^{-8}$ and (number of iterations) = $8,000$. 
% Table \ref{table:mle} summarises the means and standard deviations of $M=100$ realisations of $\hat{\theta}_{p,n} - \trueparam$, with $p = 2, 3$, for model Cases I \& II, and 
Figure \ref{fig:mle} shows boxplots of the individual realisations of relative discrepancies $(\hat{\theta}_{p,n}^j - \theta^{\dagger,j})/\theta^{\dagger,j}, \, 1 \le j \le 4$. In all four designs, when $p=2$ we observe that estimator $\hat{\sigma}_{2, n}$ suffers from a severe bias, as its realisations do not cover the true value $\truesigma$ (see the boxplots at the bottom of Figure~\ref{fig:mle}). In contrast, when $p=3$ the $100$ realisations of $\hat{\sigma}_{3, n}$ are concentrated around $\truesigma$. %Thus, this indicates that the estimator $\theta_{3, n}$ is more appropriate for joint parameter estimation under the designs of experiments such that $\Delta_n = o (n^{-1/3})$. 
We observe that the estimators of parameters $D, \lambda$ in the drift functions of the smooth components converge quickly to the true values for both $p = 2, 3$. 
This agrees with the fast convergence rate of  $\sqrt{\Delta_n/n}$
obtained in the CLT result (Theorem \ref{thm:clt}). 
\textcolor{black}{In Table \ref{table:time_qgle} we show the mean computational time of one realisation of estimators corresponding to $p=2$ and $p=3$ from 10 independent sets of observations. Notice from the table that in all experiments the calculation of $\hat{\theta}_{3, n}$ does not add a significant cost compared with that of $\hat{\theta}_{2, n}$. We can thus conclude from Figure \ref{fig:mle} and Table \ref{table:time_qgle} that use of $\hat{\theta}_{3, n}$ provides a reliable estimation that does not suffer from the bias observed in the case of $\hat{\theta}_{2, n}$, under a competitive computational cost.} 
%the theoretical finding that the asymptotic variances tend to $0$ with the fast convergence rate of  $\sqrt{\Delta_n/n}$, obtained in the CLT result (Theorem \ref{thm:clt}).  
% As for the estimation of parameters in rough components, we see that the centre (mean) of $(\hat{\beta}_{3, n}, \hat{\sigma}_{3, n})$ converges more quickly to the true value compared with $(\hat{\beta}_{2, n}, \hat{\sigma}_{2, n})$ though the standard errors are slightly larger than those of $(\hat{\beta}_{2, n}, \hat{\sigma}_{2, n})$. It is remarkable that in particular estimation of diffusion parameter is dramatically improved by the contrast estimator with $p =3$ (see rows of $\hat{\sigma}_{p, n} - \truesigma, \, p = 2,3$ in Table \ref{table:qgle_result_I}, \ref{table:qgle_result_II}).
%
%   
\begin{table}%[H] 
 % \begin{minipage}[t]{.62 \textwidth}
\centering
\caption{Designs of experiment} 
\label{table:design} 
\begin{tabular}{lcc} 
\toprule  \\[-8pt]
 &  \textbf{Case I.} &  \textbf{Case II.}  \\ \midrule
 \textbf{Design A}  & $(\Delta_n, T_n) = (0.008, 1000)$ 
 &   $(\Delta_n, T_n) = (0.01, 1000)$ \\ 
 \hline 
 % \\[-8pt]
 % \textbf{Design b}  & $(\Delta_n, T_n) = (0.01, 1000)$ & $(\Delta_n, T_n) = (0.01, 1000)$  
 % \\[4pt] \hline 
 \\[-8pt] 
 \textbf{Design B}  & $(\Delta_n, T_n) = (0.005, 500)$  & $(\Delta_n, T_n) = (0.005, 1000)$ 
 \\ \bottomrule
\end{tabular}
% \end{minipage}
% 
% \begin{minipage}[t]{.36\textwidth}
%     \centering
%     \caption{True value of parameters} 
%     \label{table:param_value} 
%     \begin{tabular}{lcccc} 
%     \toprule  \\[-8pt]
%       & $D$ & $\lambda$ & $\alpha$ & $\sigma$ 
%      \\ \midrule
%      \textbf{Case I.}  
%      & $2.0$ & $2.0$ & $4.0$ & $4.0$  \\  
%      \hline 
%      \\[-8pt]
%      \textbf{Case II.}  
%      & $2.0$ & $1.0$ & $4.0$ & $4.0$  
%      \\ \bottomrule
%     \end{tabular}
% \end{minipage}
\end{table}
\begin{table}[H] 
 % \begin{minipage}[t]{.62 \textwidth}
\centering
\caption{\textcolor{black}{Average running times (seconds) of one realisation of estimators for the $q$-GLE model.}} 
\label{table:time_qgle} 
\begin{tabular}{lcc} 
\toprule  \\[-8pt]
 Experiment & $ \hat{\theta}_{2,n}$  &  $\hat{\theta}_{3,n} $ 
 \\ \midrule
 \\[-8pt] 
 \textbf{Design A} - \textbf{Case I} 
 & 38.25  & 44.33
 \\
 \textbf{Design A} - \textbf{Case II} 
 & 24.46  & 28.34   
 \\
 \textbf{Design B} - \textbf{Case I} 
 & 27.48  & 28.71  
 \\
 \textbf{Design B} - \textbf{Case II} 
 & 90.26  & 104.12  
 \\ \bottomrule
\end{tabular}
% \end{minipage}
% 
% \begin{minipage}[t]{.36\textwidth}
%     \centering
%     \caption{True value of parameters} 
%     \label{table:param_value} 
%     \begin{tabular}{lcccc} 
%     \toprule  \\[-8pt]
%       & $D$ & $\lambda$ & $\alpha$ & $\sigma$ 
%      \\ \midrule
%      \textbf{Case I.}  
%      & $2.0$ & $2.0$ & $4.0$ & $4.0$  \\  
%      \hline 
%      \\[-8pt]
%      \textbf{Case II.}  
%      & $2.0$ & $1.0$ & $4.0$ & $4.0$  
%      \\ \bottomrule
%     \end{tabular}
% \end{minipage}
\end{table} 
\begin{figure}
\centering
\begin{subfigure}[b]{0.49\textwidth}
\centering
\includegraphics[width=6.6cm]{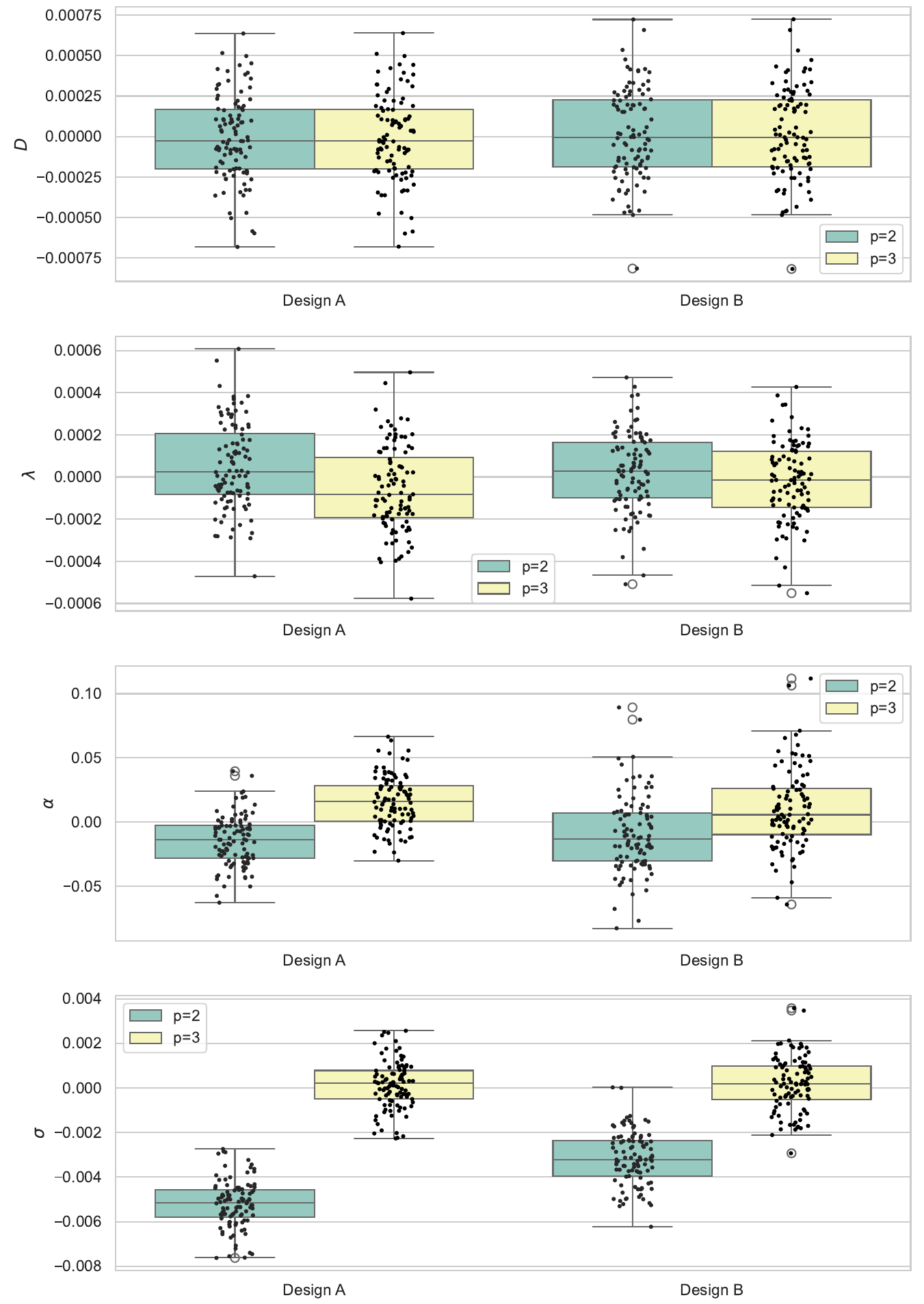}
\caption{Case I.} %
\label{fig:set_1}
\end{subfigure}
\hfill
\begin{subfigure}[b]{0.47\textwidth}
\centering
\includegraphics[width=6.6cm]{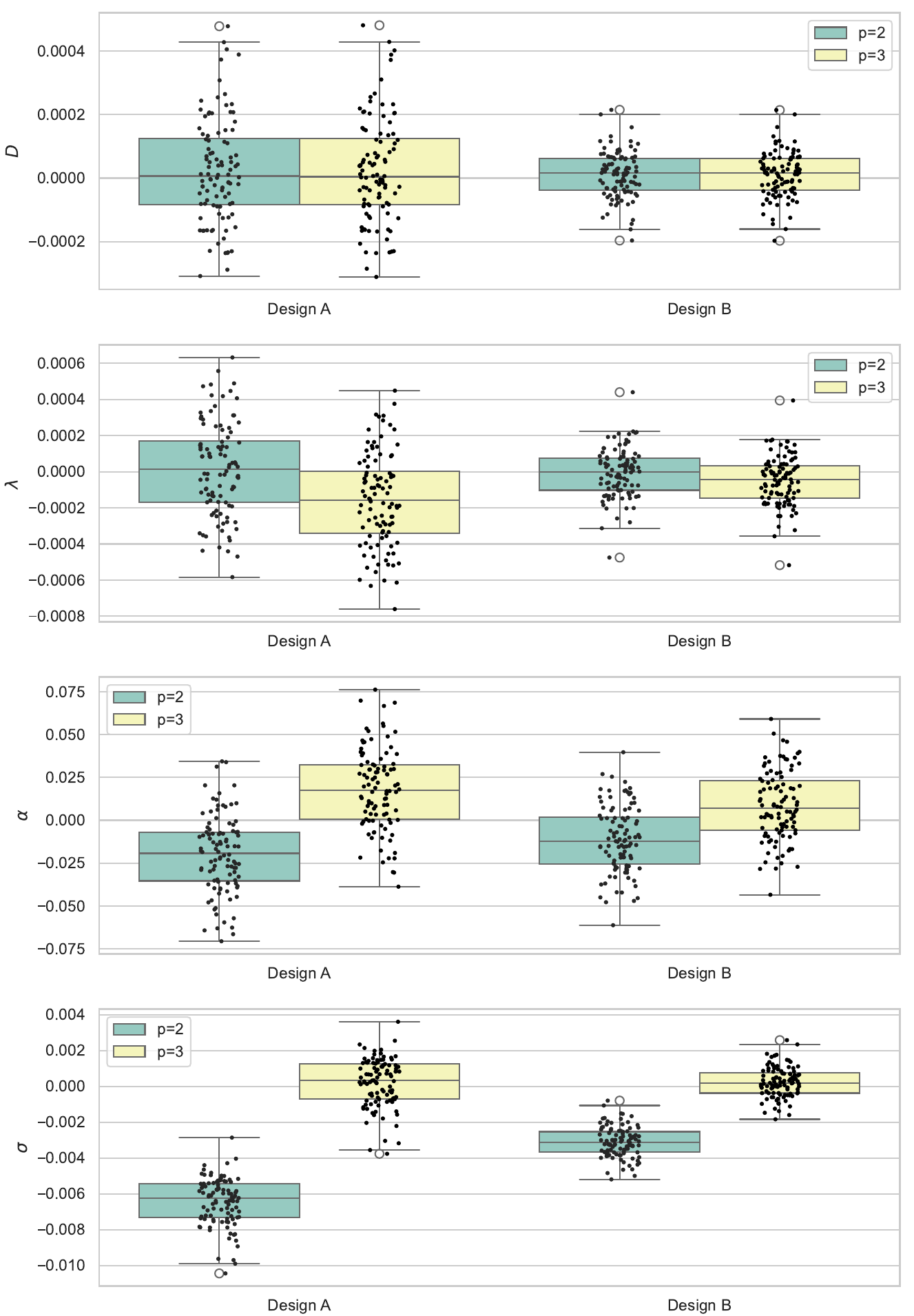}
\caption{Case II.} 
\label{fig:set_2}
\end{subfigure} 
\caption{Boxplots of $M=100$ independent realisations of $\bigl( \hat{\theta}_{p, n}-\trueparam \bigr) / \trueparam$ for the models Case I (left) and Case II (right). The black points show the individual realisations. }
\label{fig:mle}
\end{figure} 
\begin{figure}
    \centering
    \includegraphics[width=15cm]{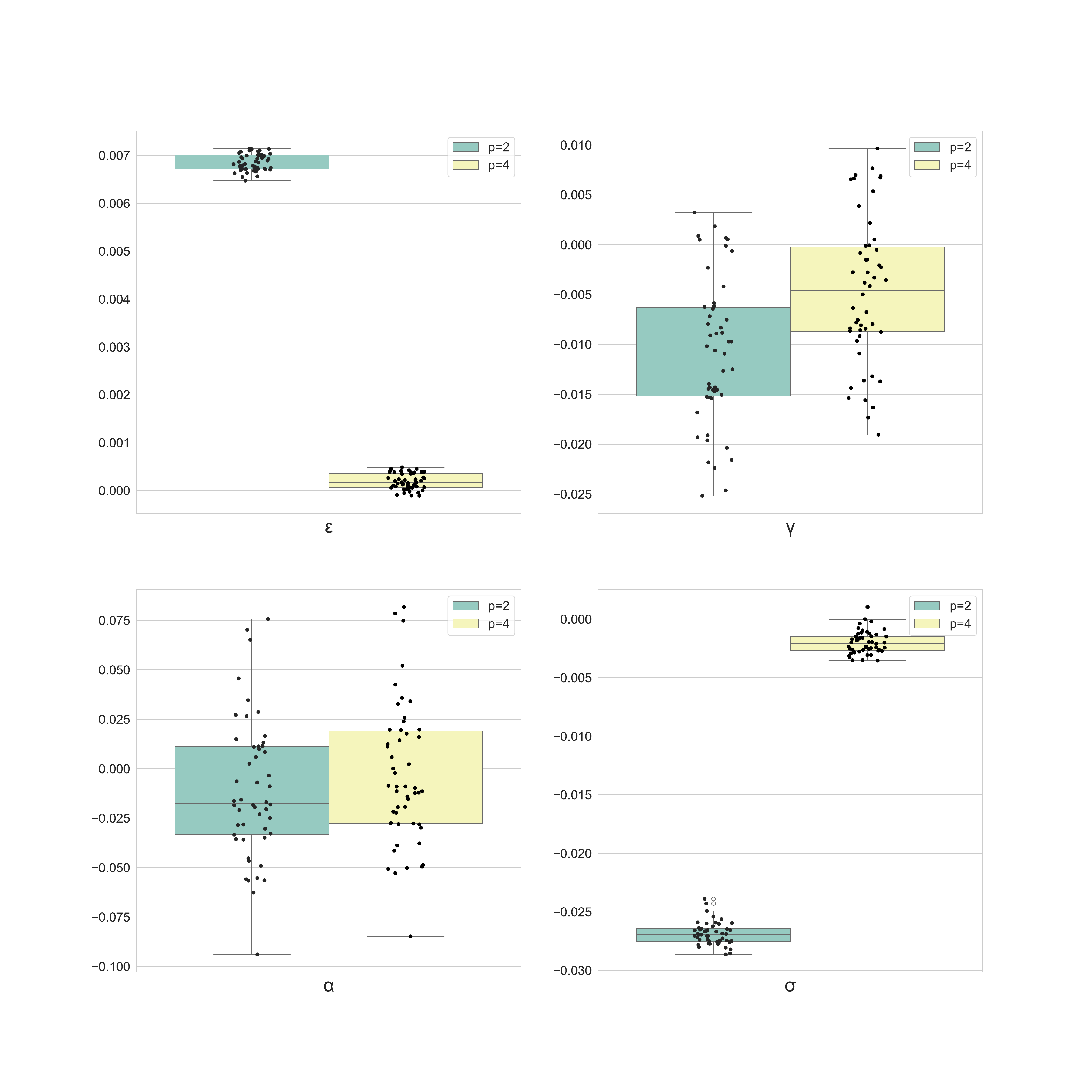}
    \caption{Boxplots of the $M=50$ realisations of $\bigl( \hat{\theta}_{p, n}-\trueparam \bigr) / \trueparam$ for the FHN model, under the design $n = 250,000$, $T_n = 5,000$, $\Delta_n = 0.02$. The black points show the individual values. }
    \label{fig:mle_re_fhn}
\end{figure}  
\subsection{FitzHugh-Nagumo Model}
We consider the FitzHugh-Nagumo (FHN) model belonging in  class (\ref{eq:hypo-I}), and specified via the following bivariate SDE:
\begin{align} 
\begin{aligned} 	
d X_t   = \tfrac{1}{\varepsilon} (X_t - (X_t)^3 - Y_t - s )dt;  \\ 
d Y_t   = (\gamma X_t  - Y_t + \alpha) dt + \sigma d W_t^1, 
\end{aligned} 
\label{eq:fhn_model}
\end{align}  
with $\theta= (\varepsilon, \gamma, \alpha, \sigma)$. We consider parameter estimation of the FHN model in both complete and partial observation regimes, where only the component $X_t$ is observed in the latter case. 
\subsubsection{Complete Observation Regime}
We consider the estimator  $\hat{\theta}_{p, n} (=\hat{\theta}_{p, n}^{\, (\mathrm{I})}) = (\hat{\varepsilon}_{p, n}, \hat{\gamma}_{p, n}, \hat{\alpha}_{p, n}, \hat{\sigma}_{p, n} )$ for $p = 2, 4$. As \cite{melnykova2020parametric}, we fix $s = 0.01$ and set the true values to $\trueparam = (0.10, 1.50, 0.30, 0.60)$. We generate $M=50$ independent datasets by using the LG discretisation for the (\ref{eq:hypo-I}) class defined in \cite{glot:20, igu:22} with a step-size $\Delta=10^{-4}$. Then, we obtain $M=50$ synthetic complete observations for the FHN model by sub-sampling from the above datasets so that  $n = 250,000$, $T_n = 5,000$ and $\Delta_n = 0.02$. 
%Notice that the data is designed so that the condition $\Delta_n = o (n^{-1/p})$ does not hold with $p = 2, 3$. 
To obtain the minima of the
 contrast functions we used the Adam optimiser with the same settings as the experiments with the $q$-GLE earlier, except for (step-size) = 0.01. 
 % Table \ref{table:mle_fn} summarises the mean and standard deviation from the $50$ realisations of $\hat{\theta}_{p, n} - \trueparam$ for $p = 2, 4$ and 
 Figure \ref{fig:mle_re_fhn} shows boxplots of the corresponding $50$ realisations of relative discrepancies $(\hat{\theta}_{p, n}^j - \theta^{\dagger,j})/\theta^{\dagger,j}$,  $1 \le j \le 4$. 
Note that the means of $\hat{\theta}_{p, n} - \trueparam$ with $p=4$ are closer to $0$ than those with $p =2$ for all $\theta$-coordinates  while both estimators attain similar standard deviations. %Figure \ref{fig:mle_re_fhn} indicates the boxplots from the $40$ realisations of $(\hat{\theta}_{p, n}^j - \theta^{\dagger,j})/\theta^{\dagger,j}, \, 1 \le j \le 4$ for $p = 2,4$. 
 Importantly, when $p=2$, we see in Figure \ref{fig:mle_re_fhn}  that 
 $\hat{\varepsilon}_{2,n}$ and $\hat{\sigma}_{2,n}$ are severely biased as the true values are not covered by the 50 realisations. In contrast, when $p=4$, the realisations of $\hat{\theta}_{4, n}$ are spread on areas close to the true parameters. 
% 
% 
% \begin{table}[H]
% \centering
% \caption{Mean and standard deviation (in parenthesis) of 
% (contrast estimate) $-$ (true value) from $M=50$ sets of complete observations for the FHN model, under the design $n = 250,000$, $T_n = 5,000$, $\Delta_n = 0.02$.}
% \begin{tabular}{@{}ccccc@{}}
% \toprule   
% & $\hat{\varepsilon}_{p, n} - \varepsilon^\dagger$ 
% & $\hat{\gamma}_{p, n} - \gamma^\dagger$ 
% & $\hat{\alpha}_{p, n} - \alpha^\dagger$  
% & $\hat{\sigma}_{p, n} - \sigma^\dagger$  
% \\[0.1cm] 
% \midrule 
% $p = 2$ 
% & $0.00068 \, (0.00002)$  
% & $-0.01760 \, (0.00858)$
% & $-0.00435 \, (0.01045)$ 
% & $-0.01607 \, (0.00067)$
% \\[0.1cm]
% $p = 4$ 
% & $ 0.00002  \, (0.00002) $ 
% & $-0.00838 \, (0.00860)$ 
% & $-0.00227 \, (0.01051) $
% & $-0.00126 \, (0.00068)$ 
% \\
% \bottomrule
% \end{tabular}
% \label{table:mle_fn}
% \end{table}
%
% 
\subsubsection{Partial Observation Regime}
\label{sec:fhn_partial}
We use the proposed contrast function, and the corresponding approximate log-likelihood, under the partial observation regime where only the smooth component $\{X_{t_i}\}$ is observed.  
In particular, we compute maximum likelihood estimators (MLEs) based upon a closed-form marginal likelihood constructed from the Gaussian approximation corresponding to the contrast function $\ell_{p, n}^{\, (\mrm{I})} (\theta)$ for $p=2, 3$. To be precise, we use the high-order expansions for the mean and the variance, but do not make use of the Taylor expansions for the inverse of the covariance and of its log-determinant, so that we obtain quantities that correspond to proper density functions. 
Here, we mention that due to the structure of the FHN model, the obtained one-step Gaussian approximation corresponding to the developed contrast function for $p=2, 3$ is a linear Gaussian model w.r.t.~the hidden component $Y_{t_i}$ given the observation $X_{t_i}$. Thus, one can make use of the Kalman Filter (KF) and calculate the marginal likelihood of the partial observations. 
See e.g.~\cite{igu:23} where a marginal likelihood is built upon KF for the $q$-GLE model belonging in class (\ref{eq:hypo-II}). We provide the details of the Gaussian approximation and the marginal likelihood for the FHN model in Section  \ref{appendix:KF} of the Supplementary Material.  
Then, the MLE for partial observation is defined as: 
\begin{align} \label{eq:mle_partial}
\hat{\theta}_{p, n} 
\bigl( = (
\hat{\varepsilon}_{p, n}, 
\hat{\gamma}_{p, n} , 
\hat{\alpha}_{p, n} ,
\hat{\sigma}_{p, n} 
) 
\bigr) 
= \operatorname*{arg \, max}_{\theta \in \Theta} 
\log f_{p, n}
(\{ X_{t_{i}} \}_{i = 0, \ldots, n}; \theta),
\end{align} 
for $p = 2, 3$, where $f_{p, n} (\{X_{t_i}\}_{i = 0, \ldots, n}; \theta)$ is the marginal likelihood. As with the experiment under complete observations, we fix $s = 0.01$ and set the true values to 
$\trueparam = (0.10, 1.50, 0.30, 0.60)$. We generate $M = 50$ independent datasets from the LG discretisation with a step-size $\Delta = 10^{-4}$. 
Then, we obtain $50$ synthetic partial observations of $\{X_{t_i}\}_{i = 0,\ldots, n}$ with $n = 200,000$, $T_n = 1,000$ and $\Delta_n = 0.005$ by sub-sampling from the datasets and removing the rough component $\{Y_{t_i}\}_{i = 0, \ldots, n}$. The Nelder-Mead method is applied to optimise the log-marginal likelihood for $p = 2,3$ with the initial guess $\theta_0 = (0.5, 0.5, 0.5, 0.5)$.     
%
% \begin{table}[H]
% \centering
% \caption{Mean and standard deviation (in parenthesis) of 
% $\hat{\theta}_{p,n} - \trueparam$ from $M=50$ sets of partial observations for the FHN model, under the design $n = 200,000$, $T_n = 1,000$ and $\Delta_n = 0.005$.}
% \begin{tabular}{@{}ccccc@{}}
% \toprule   
% & $\hat{\varepsilon}_{p, n} - \varepsilon^\dagger$ 
% & $\hat{\gamma}_{p, n} - \gamma^\dagger$ 
% & $\hat{\alpha}_{p, n} - \alpha^\dagger$  
% & $\hat{\sigma}_{p, n} - \sigma^\dagger$  
% \\[0.1cm] 
% \midrule 
% $p = 2$ 
% & $0.00114 \, (0.00049)$  
% & $0.01509 \, (0.02432)$
% & $0.00194 \, (0.02063)$ 
% & $-0.00053	\, (0.00294)$
% \\[0.1cm]
% $p = 3$ 
% & $ 0.00027  \, (0.00048) $ 
% & $ 0.00221 \, (0.02425)$ 
% & $ -0.00073 \, (0.02044) $
% & $ 0.00334 \, (0.00286)$ 
% \\
% \bottomrule
% \end{tabular}
% \label{table:mle_fn_partial}
% \end{table}
%
\begin{figure}
    \centering
    \includegraphics[width=12cm]{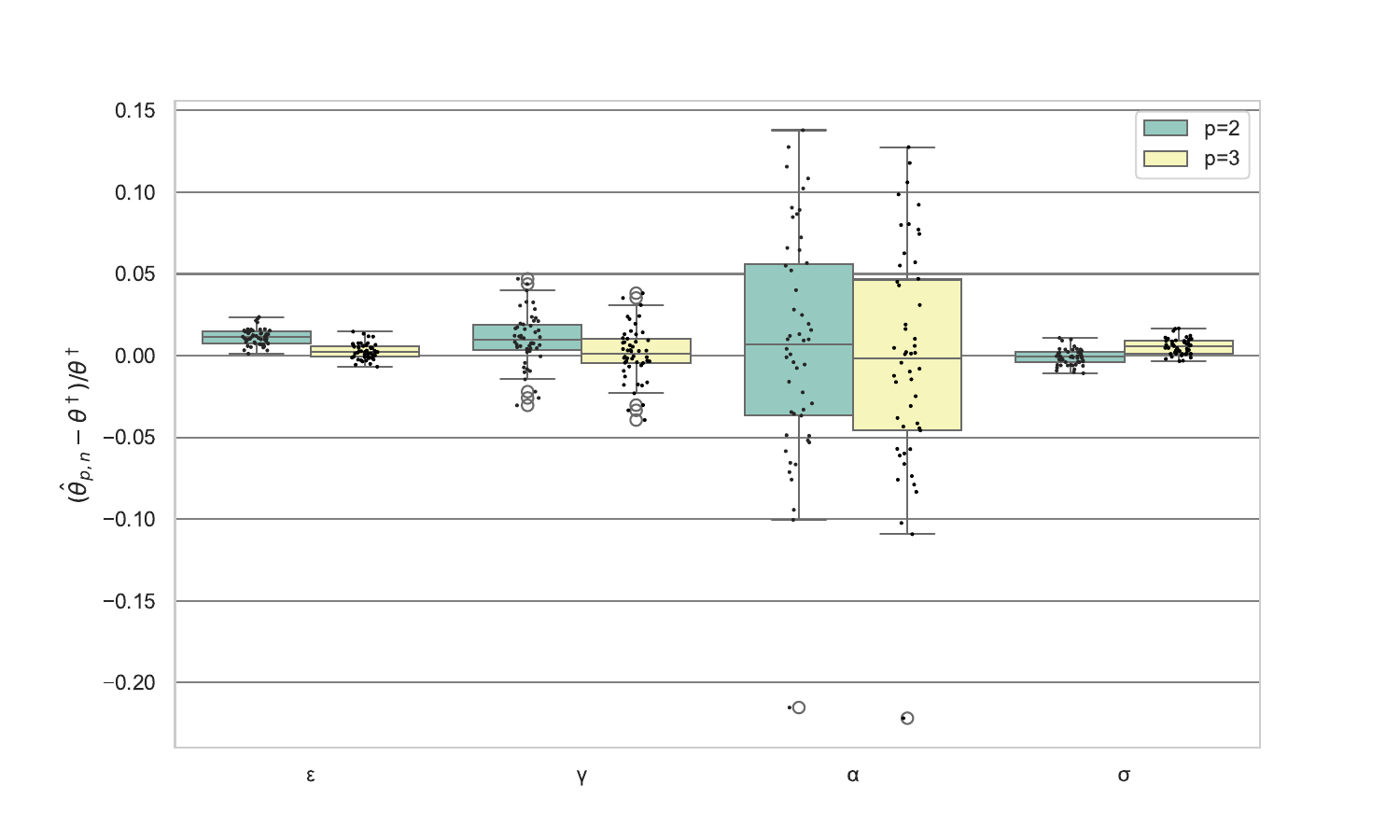}
    \caption{Boxplots of $\bigl( \hat{\theta}_{p, n}-\trueparam \bigr) / \trueparam$ from $M=50$ sets of partial observations for the FHN model, under the design $n = 200,000$, $T_n = 1,000$ and $\Delta_n = 0.005$. The black points show the individual values. }
    \label{fig:mle_re_fhn_partial}
\end{figure}  
\begin{table}%[H] 
 % \begin{minipage}[t]{.62 \textwidth}
\centering
\caption{\textcolor{black}{Average running times (seconds) of one realisation of estimators for the FHN model, from partial observations.}} 
\label{table:time_fhn} 
\begin{tabular}{cc} 
\toprule  \\[-8pt]
 $ \hat{\theta}_{2,n}$  &  $\hat{\theta}_{3,n} $ 
 \\ \midrule
 \\[-8pt] 
100.976  & 138.428 
 \\ \bottomrule
\end{tabular} 
\end{table}
% Table \ref{table:mle_fn_partial} summarises the mean and standard deviation from the $50$ realisations of $\hat{\theta}_{p, n} - \trueparam$ for $p = 2, 3$ and 
Figure \ref{fig:mle_re_fhn_partial} shows boxplots of the corresponding $50$ realisations of relative discrepancies $(\hat{\theta}_{p, n}^j - \theta^{\dagger,j})/\theta^{\dagger,j}$,  $1 \le j \le 4$. First, we observe that both estimators share almost the same standard deviations. Second, the bias in $\hat{\varepsilon}_{2, n}$ is clear from the boxplot in the sense that the true value $\varepsilon^\dagger$ is not included in the interval between the minimum and the maximum of the $50$ realisations of $\hat{\varepsilon}_{2, n}$.  
Finally, the square of the mean of $\hat{\theta}_{p, n} - \trueparam$ (eq.~bias) is reduced in the drift parameters ($\varepsilon$, $\gamma$ and $\alpha$) for $p=3$. 
\textcolor{black}{Table \ref{table:time_fhn} presents the average computational times of one realisation of estimators $\hat{\theta}_{2, n}$ and $\hat{\theta}_{3,n}$ from 10 independent sets of partial observations. The increase in the average time of $\hat{\theta}_{3, n}$ from that of $\hat{\theta}_{2, n}$ is about $40 \%$, thus use of $\hat{\theta}_{3, n}$ is quite feasible.} 
\section{Proof of Main Results} \label{sec:pf}
We provide the proof for our main results, Theorems \ref{thm:consistency} and \ref{thm:clt}, which demonstrate the asymptotic normality of the proposed contrast estimators under the design condition $\Delta_n = o (n^{-1/p})$, $p \ge 2$. We show all details of the proof for class (\ref{eq:hypo-II}) as the proof for (\ref{eq:hypo-I}) follows from similar arguments. Throughout this section, for simplicity of notation, we frequently omit the subscript $(\mrm{II})$ appearing in the estimator and the functions introduced in Section \ref{sec:contrast_II}. A number of technical results are collected in the Supplementary Material. We make use of the notation $o_{\mathbb{P}_{\trueparam}} (\cdot)$: for a sequence of random variables $\{F_n\}$ and a numerical sequence $\{\alpha_{n, \Delta_n}\}$ depending on $n$ or $\Delta_n$, we write $F_n = o_{\mathbb{P}_{\trueparam}} (\alpha_{n, \Delta_n})$ 
if 
$ {\textstyle 
{F_n}/{\alpha_{n, \Delta_n}}  
\probconv 0}
$ as \limit. 
\textcolor{black}{Throughout this section, $p \ge 2$ is an arbitrary integer, and we set $K_p = [p/2]$ as before.} 
\subsection{Proof of Theorem \ref{thm:consistency} (Consistency)} 
\label{sec:pf_consistency}
We show the consistency of the contrast estimator via the following procedure:
\begin{description}
\item[Step 1.] We show that if \limit, then $\hat{\beta}_{S_1, p, n} \probconv \beta_{S_1}^\dagger$. In particular:  
\begin{align} \label{eq:step1}
 \hat{\beta}_{S_1, p, n} - \beta_{S_1}^\dagger 
 = o_{\mathbb{P}_{\trueparam}} (\Delta_n^{3/2}). 
\end{align}
\item[Step 2.] Making use of the rate of convergence found in (\ref{eq:step1}), we show that if \limit, then $\hat{\beta}_{S_2, p, n} \probconv \beta_{S_2}^\dagger$. In particular: 
\begin{align} \label{eq:step2}
% \tfrac{1}{\sqrt{\Delta_n}} 
\hat{\beta}_{S_2, p, n} 
- \beta_{S_2}^\dagger 
= o_{\mathbb{P}_{\trueparam}} (\Delta_n^{1/2}). 
\end{align} 
\item[Step 3.] Using the rates obtained in 
(\ref{eq:step1})-(\ref{eq:step2}), we show that if \limit, then $(\hat{\beta}_{R, p, n}, \hat{\sigma}_{p, n}) \probconv (\truebeta_R, \truesigma)$.
\end{description}  
In the sequel, we express the matrix $\boldsymbol{\Lambda} (x, \theta) = ( \boldsymbol{\Sigma}^{(\mrm{II})}(x, \theta))^{-1}, \, (x, \theta) \in \mathbb{R}^N \times \Theta$, as: 
\begin{align*}
\boldsymbol{\Lambda} (x, \theta)
& = 
\begin{bmatrix}  %\label{eq:inv_Sigma_2} 
\boldsymbol{\Lambda}_{S_1 S_1} (x, \theta) 
& \boldsymbol{\Lambda}_{S_1 S_2} (x, \theta)
& \boldsymbol{\Lambda}_{S_1 R} (x, \theta)  \\ 
\boldsymbol{\Lambda}_{S_2 S_1} (x, \theta) 
& \boldsymbol{\Lambda}_{S_2 S_2} (x, \theta)
& \boldsymbol{\Lambda}_{S_2 R} (x, \theta)  \\  
\boldsymbol{\Lambda}_{R S_1} (x, \theta) 
& \boldsymbol{\Lambda}_{R S_2} (x, \theta)
& \boldsymbol{\Lambda}_{R R} (x, \theta) 
\end{bmatrix}, 
\end{align*} 
for block matrices $\boldsymbol{\Lambda}_{\iota_1 \iota_2} (x, \theta) \in \mathbb{R}^{N_{\iota_1} \times N_{\iota_2}}$,  $\iota_1, \iota_2 \in \{S_1, S_2, R \}$. Note that from Lemma 13 in \cite{igu:23} we have:
\begin{gather*}
\boldsymbol{\Lambda}_{S_1 S_1} (x, \theta) = 720 \, a_{S_1}^{-1} (x, \theta), \qquad (x, \theta) \in \mathbb{R}^N \times \Theta,
\end{gather*} 
where $a_{S_1} (x, \theta)$ is specified in the definition of the matrix $\boldsymbol{\Sigma}^{\,(\mrm{II})} (x, \theta)$ in (\ref{eq:mat_Sigma_II}).
\begin{rem} 
The strategy outlined above is technically different from proofs in \cite{melnykova2020parametric, glot:20}, where they require that $\Delta_n = o (n^{-1/2})$ to prove consistency. 
% Since the CLT of the estimator is shown after establishing its consistency, we cannot simply follow those proofs in order to prove the CLT under the weaker condition $\Delta_n = o (n^{-1/p}), \, p \ge 2$. 
Our proof of consistency proceeds without relying on $\Delta_n = o (n^{-1/2})$, and such an approach then leads to a CLT under the weaker condition $\Delta_n = o (n^{-1/p})$, $p \ge 2$. More details on this point are given in Remark~\ref{rem:key}. 
\end{rem}

\subsubsection{Step 1.}  \label{sec:step1}
The consistency of the estimator $\hat{\beta}_{S_1, p, n}$ is deduced from the following result.%
\begin{lemma} \label{lemm:step1}
Assume that conditions \ref{assump:hor}, \ref{assump:coeff}\,$(2K_p)$, \ref{assump:additional_con}, \ref{assump:moments} and \ref{assump:finite_moment} hold. If \limit, then: 
\begin{align} \label{eq:conv_1}
\sup_{\theta \in \Theta} \, \Bigl| 
\tfrac{\Delta_n^3}{n} \ell_{p, n} (\theta)  - \mathbf{Y}_1 (\theta) \Bigr| \probconv 0, 
\end{align} 
where, for $\theta \in \Theta$: 
\begin{align*} 
% \tfrac{\Delta_n^3}{n} \ell_{p, n} (\theta) 
% \probconv 
\mathbf{Y}_1 (\theta) 
= \int \mathbf{q} \Bigl( \mu_{S_1} (x_S, \beta_{S_1}) - \mu_{S_1} (x_S, \truebeta_{S_1}) ; \boldsymbol{\Lambda}_{S_1 S_1} (x , \theta) \Bigr) \nu_{\trueparam} (dx). 
% & = 
% \int 
% \bigl( \mu_{S_1} (x_S, \beta_{S_1}) - \mu_{S_1} (x_S, \truebeta_{S_1}) \bigr)^\top 
% \boldsymbol{\Lambda}_{S_1 S_1} (x , \theta) 
% \bigl( \mu_{S_1} (x_S , \beta_{S_1}) 
% - \mu_{S_1} (x_S , \truebeta_{S_1}) \bigr)  
% \nu_{\trueparam} (dx).      
\end{align*}  
\end{lemma}
The proof is given in Section \ref{appendix:pf_step1} of the Supplementary Material. Lemma \ref{lemm:step1} indeed implies the consistency of $\hat{\beta}_{S_1, p, n}$ via the following arguments. We first notice that the matrix $\boldsymbol{\Lambda}_{S_1 S_1} (x, \theta) = 720 a_S^{-1} (x, \theta)$ is positive definite for any $(x, \theta) \in \mathbb{R}^N \times \Theta$ under \ref{assump:hor} due to Lemma \ref{lemma:positive_II}. From the identifiability condition \ref{assump:ident} and the positive definiteness of $\boldsymbol{\Lambda}_{S_1 S_1}$, the term $\mathbf{Y}_1 (\theta)$, $\theta \in \Theta$, should be positive if $\beta_{S_1} \neq \truebeta_{S_1}$. Thus, for any $\varepsilon > 0$, there exists a constant $\delta > 0$ so that:
\begin{align*}
\mathbb{P}_{\trueparam} \bigl( | \hat{\beta}_{S_1, p, n}  - \truebeta_{S_1} | > \varepsilon \bigr) 
& 
\leq \mathbb{P}_{\trueparam} 
\Bigl( \mathbf{Y}_1 (\hat{\theta}_{p, n}) > \delta \Bigr). 
\end{align*}
From the definition of the estimator and Lemma \ref{lemm:step1}, we have: 
\begin{align*}
& \mathbb{P}_{\trueparam} 
\Bigl( \mathbf{Y}_1 (\hat{\theta}_{p, n}) > \delta \Bigr) \\ 
& \leq 
\mathbb{P}_{\trueparam} \Bigl(
\tfrac{\Delta_n^3}{n} \ell_{p, n}  
(\truebeta_{S_1}, \hat{\beta}_{S_2, p, n}, \hat{\beta}_{R,p,n}, \hat{\sigma}_{p,n})
- \tfrac{\Delta_n^3}{n} \ell_{p, n}
\bigl( \hat{\theta}_{p, n}  \bigr)  
+ \mathbf{Y}_1 (\hat{\theta}_{p, n}) > \delta
\Bigr) \\
& \le \mathbb{P}_{\trueparam} 
\Bigl( \, 
\sup_{\theta \in \Theta} 
\Bigl| 
\tfrac{\Delta_n^3}{n} \ell_{p, n}  
(\truebeta_{S_1}, {\beta}_{S_2}, {\beta}_{R}, {\sigma})
- \tfrac{\Delta_n^3}{n} \ell_{p, n}
\bigl( {\theta} \bigr)  
+ \mathbf{Y}_1 ({\theta})
\Bigr| > \delta \Bigr)  \to 0, 
\end{align*}
as \limit , thus $\hat{\beta}_{S_1, p, n}$ is a consistent estimator. 

We now prove  (\ref{eq:step1}). A Taylor expansion of 
$
\partial_{\beta_{S_1}} \ell_{p, n} (\hat{\theta}_{p,n})$ 
% at  
% $(\truebeta_{S_1}, \hat{\beta}_{S_2, p,n}, \hat{\beta}_{R, p,n}, \hat{\sigma}_{p,n})$ 
%
gives:   
\begin{align*} 
\textbf{A}_{p,n} 
\bigl(  
\truebeta_{S_1}, \hat{\beta}_{S_2,p,n}, \hat{\beta}_{R,p,n}, \hat{\sigma}_{p,n} 
\bigr) 
= \textbf{B}_{p,n} 
\bigl( 
\hat{\theta}_{p,n} \bigr) 
\; \times 
\tfrac{1}{\Delta_n^{3/2}} (\hat{\beta}_{S_1, p, n} - \truebeta_{S_1}),  
\end{align*}
where we have set, for $\theta = (\beta_{S_1}, \theta^{S_2, R} ) \in \Theta$, $\theta^{S_2, R} 
\equiv (\beta_{S_2}, \beta_R, \sigma) \in \Theta_{\beta_{S_2}} \times \Theta_{\beta_{R}} \times \Theta_{\sigma}$:
\begin{align*} 
& \mathbf{A}_{p,n}  \bigl( \theta \bigr) =  
- \tfrac{\Delta_n^{3/2}}{n} 
\partial_{\beta_{S_1}} \ell_{p, n} 
\bigl(  \theta \bigr); \\  
& \textbf{B}_{p,n}
\bigl( \theta \bigr)  
= \tfrac{\Delta_n^{3}}{n} 
\int_0^1  
\partial_{\beta_{S_1}}^2 \ell_{p, n} 
\bigl( \truebeta_{S_1} 
+ \lambda ( {\beta}_{S_1} - \truebeta_{S_1}), \theta^{S_2, R}  \bigr) d \lambda.  
\end{align*}
For simplicity of notation, we write $\Theta_{\beta_{S_2}, \beta_R, \sigma} \equiv \Theta_{\beta_{S_2}} \times \Theta_{\beta_R} \times \Theta_\sigma$. Limit (\ref{eq:step1}) holds from the following result whose proof is given in Section \ref{appendix:pf_step1_B} of the Supplementary Material.
\begin{lemma} \label{lemma:step1_B}
Assume that conditions \ref{assump:hor}, \ref{assump:coeff}\,$(2K_p)$, \ref{assump:additional_con}, \ref{assump:moments}, \ref{assump:finite_moment} and \ref{assump:ident} hold. If \limit, then:
\begin{align}
& \sup_{(\beta_{S_2}, \beta_R, \sigma) \in 
\Theta_{\beta_{S_2}, \beta_R, \sigma}}
% \Theta_{\beta_{S_2}} \times \Theta_{\beta_R} \times \Theta_\sigma}
\Bigl| \mathbf{A}_{p,n}
\bigl( 
\truebeta_{S_1}, \beta_{S_2}, \beta_R, \sigma
\bigr) \Bigr| \probconv 0;  \label{eq:conv_A}  \\
& 
% \sup_{(\beta_{S_2}, \beta_R, \sigma) \in \Theta_{\beta_{S_2}} \times \Theta_{\beta_R} \times \Theta_\sigma} 
\sup_{(\beta_{S_2}, \beta_R, \sigma) \in 
\Theta_{\beta_{S_2}, \beta_R, \sigma}}
\Bigl| 
\mathbf{B}_{p,n}
\bigl( 
\hat{\beta}_{S_1, p, n}, \beta_{S_2}, \beta_R, \sigma
\bigr) 
- 2  \mathbf{{B}}
\bigl( 
\truebeta_{S_1}, \beta_{S_2}, \sigma
\bigr) \Bigr| \probconv 0,  \label{eq:conv_B}
\end{align}
where for $(\beta_{S_2}, \beta_R, \sigma) \in \Theta_{\beta_{S_2}} \times \Theta_{\beta_R} \times \Theta_\sigma$: 
\begin{align*}
& \mathbf{{B}}
\bigl( 
{\beta}_{S_1}, \beta_{S_2}, \sigma
\bigr) \\ 
& = 720 \int 
\bigl( \partial_{\beta_{S_1}}^\top \mu_{S_1} (x_S, \beta_{S_1}) \bigr)^\top
a_{S_1}^{-1} (x, (\beta_{S_1}, \beta_{S_2}, \sigma)) \partial_{\beta_{S_1}}^\top \mu_{S_1} (x_S, \beta_{S_1}) \, \truedist (dx). 
\end{align*}
\end{lemma}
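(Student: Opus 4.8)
The plan is to treat (\ref{eq:conv_A}) and (\ref{eq:conv_B}) one at a time, in each case expanding the relevant $\beta_{S_1}$--derivatives of $\ell_{p, n}$ by the product rule, sorting the resulting terms according to their order in $\Delta_n$, and then applying three tools: a martingale law of large numbers for triangular arrays (for the fluctuation terms), the high-order mean-matching identity (\ref{eq:m_approx}) together with the $\mathcal{S}$-class residuals (for the leftover predictable terms), and the ergodic theorem under \ref{assump:moments}--\ref{assump:finite_moment} (for the surviving predictable terms). Uniformity over $(\beta_{S_2},\beta_R,\sigma)$ -- and, in (\ref{eq:conv_B}), over the convex-combination variable $\lambda$ and over $\hat{\beta}_{S_1, p, n}$ -- will follow from a uniform version of the ergodic theorem, which is available here because of the polynomial growth of the $\theta$--derivatives assumed in \ref{assump:additional_con}; the consistency of $\hat{\beta}_{S_1, p, n}$ established via Lemma \ref{lemm:step1} lets us restrict, with probability tending to $1$, to a fixed compact neighbourhood of $\truebeta_{S_1}$.

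For (\ref{eq:conv_A}), the first observation is that $\partial_{\beta_{S_1}} \mathbf{m}_{p, i}(\theta)$ is $\filtration{i-1}$--measurable (it only hits the mean-approximation $r^{(\mathrm{II})}$, a function of $\sample{X}{i-1}$), so $\partial_{\beta_{S_1}} \ell_{p, n}$ decomposes into a genuine $\{\filtration{i}\}$--martingale plus a predictable part. The martingale has predictable quadratic variation of order $n\Delta_n^{-3}$, hence is $o_{\mathbb{P}_{\trueparam}}(n\Delta_n^{-3/2})$ by Lenglart's inequality, and so contributes $o_{\mathbb{P}_{\trueparam}}(1)$ to $\mathbf{A}_{p, n}$. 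The delicate point is the predictable part: its a priori largest piece, of apparent size $n\Delta_n^{-2}$, arises from pairing the leading block $-\Delta_n^{-3/2}\,\partial_{\beta_{S_1}}^\top\mu_{S_1}$ of $\partial_{\beta_{S_1}} \mathbf{m}_{p, i}$ against the $S_1$-- and $S_2$--blocks of $\mathbb{E}_{\trueparam}[\mathbf{m}_{p, i}(\theta)\mid\filtration{i-1}]$, each of order $\Delta_n^{-1/2}$ when $\beta_{S_2}\neq\truebeta_{S_2}$. This piece cancels identically, because the leading blocks of the conditional mean satisfy $\mathbb{E}_{\trueparam}[\mathbf{m}_{p, i}^{S_1}(\theta)\mid\filtration{i-1}] = \tfrac12\,(\partial_{x_{S_2}}\mu_{S_1})\,\mathbb{E}_{\trueparam}[\mathbf{m}_{p, i}^{S_2}(\theta)\mid\filtration{i-1}] + o(\cdot)$, while the matrix $\mathbf{\Lambda}$ obeys $\tfrac12\,\mathbf{\Lambda}_{S_1 S_1}\,(\partial_{x_{S_2}}\mu_{S_1}) + \mathbf{\Lambda}_{S_1 S_2} = 0$. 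Both facts stem from the nested structure of (\ref{eq:hypo-II}): $\mathcal{L}_k\mathcal{L}\mu_{S_1} = (\partial_{x_{S_2}}\mu_{S_1})\,\mathcal{L}_k\mu_{S_2}$ and $\mathcal{L}_k\mu_{S_2} = (\partial_{x_R}\mu_{S_2})\,A_{R, k}$, which force the $S_1$--row of $\mathbf{\Sigma}^{(\mathrm{II})}$ to equal $(\partial_{x_{S_2}}\mu_{S_1})$ times the $S_2$--row with the precise weight ratios hidden in the constants $\tfrac{1}{20},\tfrac18,\tfrac16$ versus $\tfrac18,\tfrac13,\tfrac12$ (in particular $cd^{-1}c^\top = \tfrac14 a_{S_2}$ with $c = \mathbf{\Sigma}^{(\mathrm{II})}_{S_2 R}$, $d = a_R$). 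What remains of the predictable part is either of order $n\Delta_n^{-1}$ (from $\bar{\mathbf{m}}_{p, i}\bar{\mathbf{m}}_{p, i}^\top$--type traces and sub-leading mean terms, hence $o_{\mathbb{P}_{\trueparam}}(n\Delta_n^{-3/2})$), of order $n$ (from $\mathrm{Tr}$--terms that nearly cancel between $\partial_{\beta_{S_1}}\mathbf{H}$ and $\partial_{\beta_{S_1}}\mathbf{\Lambda}$), or is controlled directly by (\ref{eq:m_approx}); after multiplication by $\sqrt{\Delta_n^3}/n$ all of it is $o_{\mathbb{P}_{\trueparam}}(1)$, uniformly in $(\beta_{S_2},\beta_R,\sigma)$ by the uniform ergodic theorem.

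For (\ref{eq:conv_B}), I would expand $\partial_{\beta_{S_1}}^2 \ell_{p, n}$ and keep only terms in which $\mathbf{m}_{p, i}$ is differentiated twice; the leading contribution is $\tfrac{\Delta_n^3}{n}\sum_{i=1}^n 2\,[\partial_{\beta_{S_1}}\mathbf{m}_{p, i}]_{S_1}^\top\,\mathbf{\Lambda}_{S_1 S_1}(\sample{X}{i-1},\cdot)\,[\partial_{\beta_{S_1}}\mathbf{m}_{p, i}]_{S_1}$. Substituting $[\partial_{\beta_{S_1}}\mathbf{m}_{p, i}]_{S_1} = -\Delta_n^{-3/2}\,\partial_{\beta_{S_1}}^\top\mu_{S_1}(\sample{X}{i-1},\cdot) + \mathcal{O}(\Delta_n^{-1/2})$ and $\mathbf{\Lambda}_{S_1 S_1} = 720\,a_{S_1}^{-1}$ converts this into $\tfrac{1}{n}\sum_{i=1}^n 1440\,(\partial_{\beta_{S_1}}^\top\mu_{S_1})^\top a_{S_1}^{-1}(\partial_{\beta_{S_1}}^\top\mu_{S_1})(\sample{X}{i-1},\cdot) + o_{\mathbb{P}_{\trueparam}}(1)$, which converges by the ergodic theorem to $2\mathbf{B}(\truebeta_{S_1},\beta_{S_2},\sigma)$ (the factor $2$ being the one produced by twice-differentiating a quadratic form). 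Every other term -- those with an undifferentiated or once-differentiated $\mathbf{m}_{p, i} = \mathcal{O}_{\mathbb{P}_{\trueparam}}(1)$, those carrying a factor $\Delta_n^{j}$ with $j\ge 1$, and those involving $\partial_{\beta_{S_1}}^2\mathbf{m}_{p, i}$ or $\partial_{\beta_{S_1}}\mathbf{G}_{i-1, j}$ -- loses at least one power of $\sqrt{\Delta_n}$ relative to the leading term and is therefore negligible after multiplication by $\Delta_n^3/n$. Uniformity in $(\beta_{S_2},\beta_R,\sigma,\lambda)$ again comes from the uniform ergodic theorem; here one only needs that $\hat{\beta}_{S_1, p, n}\probconv\truebeta_{S_1}$ so that the argument $\truebeta_{S_1} + \lambda(\hat{\beta}_{S_1, p, n} - \truebeta_{S_1})$ lies eventually in a compact neighbourhood of $\truebeta_{S_1}$ on which all the relevant coefficients are bounded.

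The main obstacle is precisely the predictable part of $\mathbf{A}_{p, n}$: a naive triangle-inequality bound yields $\mathcal{O}_{\mathbb{P}_{\trueparam}}(\Delta_n^{-1/2})$ instead of $o_{\mathbb{P}_{\trueparam}}(1)$, so the argument genuinely hinges on identifying and exploiting the exact algebraic cancellation described above (which is special to the hierarchical/degenerate structure of (\ref{eq:hypo-II}) and has no analogue in the elliptic setting), and then on a careful accounting of the many residual terms to push all of them below the $n\Delta_n^{-3/2}$ threshold, uniformly in the nuisance parameters. This accounting, together with the martingale LLN, the uniform ergodic theorem and the $\mathcal{S}$-class bookkeeping, is collected in the Appendix.
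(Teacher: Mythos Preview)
Your proposal is essentially correct and follows the same route as the paper. For (\ref{eq:conv_A}) the paper writes out $\mathbf{A}_{p,n}^k(\truebeta_{S_1},\beta_{S_2},\beta_R,\sigma)$ as a $\Delta_n^{-1/2}$--term $F^k(\sample{X}{i-1},\theta)= -(\partial_{\beta_{S_1},k}\mu_{S_1})^\top M\, d_{S_2}$ plus residuals handled by Lemmas~\ref{lemma:aux_1}--\ref{lemma:aux_2}; the matrix identity $M=\mathbf{\Lambda}_{S_1 S_1}\,\partial_{x_{S_2}}^\top\mu_{S_1}+2\mathbf{\Lambda}_{S_1 S_2}=0$ (Lemma~\ref{lemm:mat_1}) kills $F^k$, which is exactly the cancellation you describe via $\tfrac12\mathbf{\Lambda}_{S_1S_1}(\partial_{x_{S_2}}\mu_{S_1})+\mathbf{\Lambda}_{S_1S_2}=0$ and the relation between the $S_1$-- and $S_2$--blocks of the conditional mean. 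Your martingale/predictable framing is equivalent to the paper's direct expansion plus Lemma~\ref{lemma:aux_2}.

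One small imprecision: in your treatment of (\ref{eq:conv_B}) you write ``undifferentiated $\mathbf{m}_{p,i}=\mathcal{O}_{\mathbb{P}_{\trueparam}}(1)$'', but at the point $\truebeta_{S_1}+\lambda(\hat{\beta}_{S_1,p,n}-\truebeta_{S_1})$ the $S_1$--block of $\mathbf{m}_{p,i}(\theta)$ contains $d_{S_1}/\sqrt{\Delta_n^3}$, which is not $\mathcal{O}_{\mathbb{P}_{\trueparam}}(1)$ from consistency alone. The paper handles this by first splitting $\mathbf{m}_{p,i}(\theta)=\mathbf{m}_{p,i}(\trueparam)+(\text{terms in }d_{S_1},d_{S_2},\dots)$ and isolating the $d_{S_1}$--contributions in a separate piece $\hat{\mathbf{B}}^{(1)}_{i-1}(\theta)=2\,d_{S_1}^\top\partial_{\beta_{S_1},j_1}\partial_{\beta_{S_1},j_2}\{\mathbf{\Lambda}_{S_1S_1}\,d_{S_1}\}$, which is $\mathcal{O}(|d_{S_1}|)=o_{\mathbb{P}_{\trueparam}}(1)$ by consistency; the remaining residuals then genuinely carry $\mathbf{m}_{p,i}(\trueparam)$ (which \emph{is} $\mathcal{O}_{\mathbb{P}_{\trueparam}}(1)$) together with extra $\Delta_n$--powers. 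Your argument implicitly relies on the same split, so the conclusion is right, but you should make this decomposition explicit rather than asserting boundedness of $\mathbf{m}_{p,i}(\theta)$.
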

\begin{rem} \label{rem:key}
We point here to a key fact to obtain (\ref{eq:conv_A}) leading to the rate $\hat{\beta}_{S_1, p, n} - \truebeta_{S_1} = o_{\mathbb{P}_{\trueparam}} (\Delta_n^{3/2})$. The term $\mathbf{A}_{p,n}$ is given in the form of: 
\begin{align} \label{eq:demo_A}
\mathbf{A}_{p,n} (\truebeta_{S_1}, \beta_{S_2}, \beta_R, \sigma)
& = \tfrac{1}{n \sqrt{\Delta_n}} \sum_{i = 1}^n F (1, \sample{X}{i-1}, (\truebeta_{S_1}, \beta_{S_2}, \beta_R, \sigma)
)  \nonumber \\ 
& \qquad + R (\Delta_n, (\truebeta_{S_1}, \beta_{S_2}, \beta_R, \sigma)),  
\end{align} 
where $F = [F^k]_{1 \le k \le N_{\beta_{S_1}}}$ with $F^k \in \mathcal{S}$, and the second term is a residual such that: 
$$
\sup_{(\beta_{S_2}, \beta_R, \sigma) \in \Theta_{\beta_{S_2}} \times \Theta_{\beta_R} \times \Theta_\sigma} 
\bigl| 
R (\Delta_n, (\truebeta_{S_1}, \beta_{S_2}, \beta_R, \sigma)) 
\bigr| \probconv 0. 
$$ 
The first term of the right side of (\ref{eq:demo_A}) includes $\Delta_n^{-1/2}$, however $F^k (1, x, \theta)$ is identically $0$ for any $(x, \theta) \in \mathbb{R}^N \times \Theta$  %(not in the sense of asymptotics) 
following some matrix algebra (as indicated in Lemma \ref{lemm:mat_1} of the Supplementary Material) and then (\ref{eq:conv_A}) holds. 
A similar argument related with matrix algebra (see, e.g., Lemmas \ref{lemm:mat_2}, \ref{lemma:matrix_3}) is also used in the proofs of other technical lemmas below to deal with terms of size $\mathcal{O} (\Delta_n^{-1/2})$, and then the proof of consistency proceeds without requiring that $\Delta_n = o (n^{-1/2})$. 
\end{rem}
\subsubsection{Step 2}
Making use of limit (\ref{eq:step1}), we obtain the following result whose proof is postponed to Section  \ref{appendix:pf_step_2} of the Supplementary Material.  
\begin{lemma} \label{lemm:step_2}
Assume that conditions \ref{assump:hor}, \ref{assump:coeff}\,$(2K_p)$, \ref{assump:additional_con}, \ref{assump:moments}, \ref{assump:finite_moment} and \ref{assump:ident} hold. 
If \limit, then:
\begin{align*} 
\sup_{(\beta_{S_2}, \beta_R, \sigma) \in \Theta_{\beta_{S_2}} \times \Theta_{\beta_R} \times \Theta_\sigma} 
\Bigl| 
\tfrac{\Delta_n}{n} \ell_{p, n} 
 ( \hat{\beta}_{S_1, p, n}, \beta_{S_2}, \beta_R, \sigma
) - \mathbf{Y}_2 (\beta_{S_2}, \beta_R, \sigma)
\Bigr| 
\probconv 0, 
\end{align*}
where: 
\begin{align*}
& \mathbf{Y}_2 (\beta_{S_2}, \beta_R, \sigma)  \\
& \qquad = 12 \int 
\mathbf{q} \Bigl( \mu_{S_2} (x, \beta_{S_2}) - \mu_{S_2} (x, \truebeta_{S_2}); a_{S_2}^{-1} 
\bigl(x, (\truebeta_{S_1}, \beta_{S_2}, \sigma) \bigr)  \Bigr)
% \bigl( \mu_{S_2} (x, \beta_{S_2}) - \mu_{S_2} (x, \truebeta_{S_2}) \bigr)^\top 
% a_{S_2}^{-1} 
% \bigl(x, (\truebeta_{S_1}, \beta_{S_2}, \sigma) \bigr) 
% \bigl(\mu_{S_2} (x, \beta_{S_2}) - \mu_{S_2} (x, \truebeta_{S_2}) \bigr) 
\truedist (dx). 
\end{align*}
\end{lemma}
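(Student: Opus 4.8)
The plan is to show that, after multiplication by $\Delta_n/n$, the contrast $\ell_{p, n}(\hat{\beta}_{S_1, p, n}, \beta_{S_2}, \beta_R, \sigma)$ is dominated by the $j=0$ quadratic term $\mathbf{m}_{p, i}^\top \mathbf{\Lambda}_{i-1}\mathbf{m}_{p, i}$ of \eqref{eq:contrast_II}, and that within it the leading, order-$\Delta_n^{-1}$ contribution comes from the conditional mean of $\mathbf{m}_{p, i}$ being mis-specified in its $S_1$- and $S_2$-coordinates when $\beta_{S_2} \ne \truebeta_{S_2}$. First I would discard the negligible pieces: $x \mapsto \log\det\mathbf{\Sigma}(x, \theta)$ and $x \mapsto \mathbf{H}_{i-1, j}(\theta)$ are polynomially bounded uniformly over $\Theta$, so the corresponding sums times $\Delta_n/n$ are $o_{\mathbb{P}_{\trueparam}}(1)$ uniformly by \ref{assump:finite_moment}; and each term $\Delta_n^{1+j}\,\mathbf{m}_{p, i}^\top \mathbf{G}_{i-1, j}(\theta)\,\mathbf{m}_{p, i}/n$ with $j \ge 1$ is, even allowing for the $\Delta_n^{-1}$ blow-up of $\|\mathbf{m}_{p, i}\|^2$ described below, of order $\Delta_n^{j}$ times a polynomially bounded quantity, hence negligible after summation. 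It thus suffices to study $\tfrac{\Delta_n}{n}\sum_{i=1}^n \mathbf{m}_{p, i}^\top \mathbf{\Lambda}_{i-1}\mathbf{m}_{p, i}$ at $\theta = (\hat{\beta}_{S_1, p, n}, \beta_{S_2}, \beta_R, \sigma)$.

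The crucial step is a decomposition, obtained from the It\^o--Taylor expansions behind \eqref{eq:expansion_cov_2} and \eqref{eq:m_approx} together with the rate $\hat{\beta}_{S_1, p, n} - \truebeta_{S_1} = o_{\mathbb{P}_{\trueparam}}(\sqrt{\Delta_n^3})$ of \eqref{eq:step1},
\begin{align*}
\mathbf{m}_{p, i}(\Delta_n, \hat{\beta}_{S_1, p, n}, \beta_{S_2}, \beta_R, \sigma) = \tfrac{1}{\sqrt{\Delta_n}}\, v_{i-1} + \rho_i, \qquad v_{i-1} := \bigl[\, \tfrac{1}{2}\bigl(\partial_{x_{S_2}}\mu_{S_1}(X_{S, t_{i-1}}, \truebeta_{S_1})\,\delta_{i-1}\bigr)^\top,\ \delta_{i-1}^\top,\ \mathbf{0}_{N_R}^\top \,\bigr]^\top,
\end{align*}
where $\delta_{i-1} := \mu_{S_2}(X_{t_{i-1}}, \truebeta_{S_2}) - \mu_{S_2}(X_{t_{i-1}}, \beta_{S_2})$ and $\rho_i = O_{\mathbb{P}_{\trueparam}}(1)$ has a conditional mean $\mathbb{E}_{\trueparam}[\rho_i \mid \mathcal{F}_{t_{i-1}}]$ of lower order. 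The point is that an $\mathcal{O}(1)$ error in $\beta_{S_2}$ perturbs the order-$\Delta_n$ term of $r_{S_2, K_p+1}^{(\mathrm{II})}$ and, through $\mathcal{L}\mu_{S_1} = (\partial_{x_{S_1}}\mu_{S_1})\mu_{S_1} + (\partial_{x_{S_2}}\mu_{S_1})\mu_{S_2}$, also the order-$\Delta_n^2$ term of $r_{S_1, K_p+2}^{(\mathrm{II})}$; after dividing by $\sqrt{\Delta_n^3}$ and $\sqrt{\Delta_n^5}$ these perturbations become of size $\Delta_n^{-1/2}$, whereas errors in $\beta_{S_1}$ (controlled via \eqref{eq:step1}) and in $(\beta_R, \sigma)$ enter only at size $o_{\mathbb{P}_{\trueparam}}(1)$ and are absorbed into $\rho_i$, together with the genuine $O_{\mathbb{P}_{\trueparam}}(1)$ martingale part of $\mathbf{m}_{p, i}$; note also that the $R$-coordinate of $\mathbf{m}_{p, i}$ does not blow up.

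Substituting and expanding the quadratic form (with $\mathbf{\Lambda}_{i-1} = \mathbf{\Lambda}(X_{t_{i-1}}, \theta)$) gives
\begin{align*}
\tfrac{\Delta_n}{n}\sum_{i=1}^n \mathbf{m}_{p, i}^\top \mathbf{\Lambda}_{i-1}\mathbf{m}_{p, i} = \tfrac{1}{n}\sum_{i=1}^n v_{i-1}^\top \mathbf{\Lambda}_{i-1} v_{i-1} + \tfrac{2\sqrt{\Delta_n}}{n}\sum_{i=1}^n v_{i-1}^\top \mathbf{\Lambda}_{i-1}\rho_i + \tfrac{\Delta_n}{n}\sum_{i=1}^n \rho_i^\top \mathbf{\Lambda}_{i-1}\rho_i.
\end{align*}
The last sum is $O_{\mathbb{P}_{\trueparam}}(\Delta_n)$; the middle one I would handle by splitting $\rho_i$ into its $\mathcal{F}_{t_{i-1}}$-conditional mean (contributing $o_{\mathbb{P}_{\trueparam}}(1)$) and a martingale difference, for which an $L^2$ bound against the conditional covariance — which by \eqref{eq:expansion_cov_2} is $\mathbf{\Sigma}(X_{t_{i-1}}, \trueparam) + O(\Delta_n)$, polynomially bounded — gives order $\sqrt{\Delta_n/n}$. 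For the first sum, $\hat{\beta}_{S_1, p, n} \probconv \truebeta_{S_1}$ with local Lipschitzness lets me replace $\hat{\beta}_{S_1, p, n}$ by $\truebeta_{S_1}$ inside $\mathbf{\Lambda}_{i-1}$, and the ergodic theorem (\ref{assump:moments}) then yields convergence to $\int v(x)^\top \mathbf{\Lambda}(x, (\truebeta_{S_1}, \beta_{S_2}, \beta_R, \sigma))\, v(x)\, \truedist(dx)$, where $v(x) = \bigl[\, \tfrac{1}{2}\bigl((\partial_{x_{S_2}}\mu_{S_1}(x_S, \truebeta_{S_1}))\delta(x)\bigr)^\top,\ \delta(x)^\top,\ \mathbf{0}_{N_R}^\top \,\bigr]^\top$ and $\delta(x) = \mu_{S_2}(x, \truebeta_{S_2}) - \mu_{S_2}(x, \beta_{S_2})$. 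Throughout, uniformity over $(\beta_{S_2}, \beta_R, \sigma)$ is obtained, as in \cite{kess:97, uchi:12} and the Appendix, by bounding suprema by $L^r$-norms of the relevant maps and of their $\theta$-derivatives, using \ref{assump:additional_con}, \ref{assump:coeff}.

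What remains — and what I expect to be the main obstacle — is the algebraic identity $v(x)^\top \mathbf{\Lambda}(x, \theta)\, v(x) = 12\, \delta(x)^\top a_{S_2}^{-1}(x, \theta)\, \delta(x)$, which turns the above integral into $\mathbf{Y}_2(\beta_{S_2}, \beta_R, \sigma)$ and, consistently, makes it independent of $\beta_R$ and of $\beta_{S_1}$ (since $a_{S_2}$ is). Writing $P = \partial_{x_R}\mu_{S_2}$ and $Q = \partial_{x_{S_2}}\mu_{S_1}$, the blocks in \eqref{eq:mat_Sigma_II} factor as $\mathbf{\Sigma}(x, \theta) = \mathcal{T}\,(B \otimes a_R)\,\mathcal{T}^\top$, with $\mathcal{T} = \mathrm{blockdiag}(QP, P, I_{N_R})$, $a_{S_2} = P a_R P^\top$, $a_{S_1} = Q a_{S_2} Q^\top$, and $B$ given by
\begin{align*}
B = \begin{pmatrix} \tfrac{1}{20} & \tfrac{1}{8} & \tfrac{1}{6} \\[2pt] \tfrac{1}{8} & \tfrac{1}{3} & \tfrac{1}{2} \\[2pt] \tfrac{1}{6} & \tfrac{1}{2} & 1 \end{pmatrix},
\end{align*}
the Gram matrix of the leading iterated Brownian integrals $\bigl( \int_0^{\cdot}\!\!\int_0^s W_u\,du\,ds,\ \int_0^{\cdot} W_s\,ds,\ W_{\cdot} \bigr)$ normalised to unit time. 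Solving $\mathbf{\Sigma}\, w = v$ via the preimage $v = \mathcal{T}\,\bigl[\, b_1^\top,\ 2 b_1^\top,\ \mathbf{0}_{N_R}^\top \,\bigr]^\top$ with $b_1 = \tfrac{1}{2}\, a_R P^\top a_{S_2}^{-1}\delta$, and using the numerical fact $B^{-1}(1, 2, 0)^\top = (0, 24, -12)^\top$, one finds that $\mathbf{\Lambda}\, v$ has vanishing $S_1$-block, $S_2$-block equal to $12\, a_{S_2}^{-1}\delta$ and $R$-block equal to $-6\, P^\top a_{S_2}^{-1}\delta$ — the $S_1$-cancellation being exactly what the factor $\tfrac{1}{2}\,\partial_{x_{S_2}}\mu_{S_1}$ in $v$ forces — whence $v^\top \mathbf{\Lambda}\, v = v^\top (\mathbf{\Lambda}\, v) = 12\,\delta^\top a_{S_2}^{-1}\delta$. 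This extends Lemma 13 of \cite{igu:23}, which gives $\mathbf{\Lambda}_{S_1 S_1} = 720\, a_{S_1}^{-1}$ (equivalently $[B^{-1}]_{11} = 720$).
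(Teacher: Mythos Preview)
Your proof is correct, and the high-level architecture---isolate the order-$\Delta_n^{-1}$ quadratic contribution from the $\beta_{S_2}$-misspecification, then kill the remaining pieces via the rate \eqref{eq:step1} and ergodic/martingale averaging in the spirit of Lemmas~\ref{lemma:aux_1}--\ref{lemma:aux_2}---matches the paper's. The difference lies in how the key algebraic identity $v^\top\mathbf{\Lambda}\, v = 12\,\delta^\top a_{S_2}^{-1}\delta$ is obtained. The paper decomposes $\tfrac{\Delta_n}{n}\ell_{p,n}$ into five pieces $S^{(1)},\ldots,S^{(5)}$ and collapses the surviving one, $S^{(4)}$, to $12\,d_{S_2}^\top a_{S_2}^{-1} d_{S_2}$ by quoting two block identities for $\mathbf{\Lambda}$ from \cite{igu:23} (Lemmas~\ref{lemm:mat_1} and~\ref{lemm:mat_2} here): $\mathbf{\Lambda}_{S_1S_1}\,\partial_{x_{S_2}}^\top\mu_{S_1}+2\mathbf{\Lambda}_{S_1S_2}=0$ and $\mathbf{\Lambda}_{S_2S_2}+\tfrac{1}{2}\mathbf{\Lambda}_{S_2S_1}\,\partial_{x_{S_2}}^\top\mu_{S_1}=12\,a_{S_2}^{-1}$. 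You instead factor $\mathbf{\Sigma}$ through the $3\times 3$ Gram matrix $B$ of the normalised iterated Brownian integrals and read off $\mathbf{\Lambda}\, v$ in one stroke from $B^{-1}(1,2,0)^\top=(0,24,-12)^\top$; the $S_1$-cancellation you observe is exactly Lemma~\ref{lemm:mat_1}, and the $S_2$-block $12\,a_{S_2}^{-1}\delta$ is exactly Lemma~\ref{lemm:mat_2}. Your route is self-contained and makes the origin of the constant $12$ (and of $720=[B^{-1}]_{11}$) transparent; the paper's route is shorter once those cited lemmas are granted. One notational quibble: writing $\mathbf{\Sigma}=\mathcal{T}(B\otimes a_R)\mathcal{T}^\top$ is not a literal Kronecker product when $N_{S_1},N_{S_2},N_R$ differ, but the block-wise meaning is clear, and the direct check $\mathbf{\Sigma}\cdot(\text{claimed }\mathbf{\Lambda}\, v)=v$ is what actually justifies the step.
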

\noindent Lemma \ref{lemm:step_2} leads to the consistency of $\hat{\beta}_{S_2, p, n}$ following arguments similar to the ones used in Section \ref{sec:step1} to show the consistency of $\hat{\beta}_{S_1, p, n}$. 
\\ 

\noindent To obtain the rate of convergence in (\ref{eq:step2}), we consider the Taylor expansion of $\partial_{\beta_S} \ell_{p,n} (\hat{\theta}_{p,n})$ at $(\truebeta_S, \hat{\beta}_{R, p, n}, \hat{\sigma}_{p, n})$: 
\begin{align} \label{eq:taylor_step2}
\widetilde{\mathbf{A}}_{p,n} 
\bigl(  
\truebeta_{S_1}, 
\truebeta_{S_2}, \hat{\beta}_{R,p,n}, \hat{\sigma}_{p,n} 
\bigr) 
= 
\widetilde{\mathbf{B}}_{p,n}
\bigl( 
\hat{\theta}_{p,n}
\bigr)   
\begin{bmatrix}
\tfrac{1}{\sqrt{\Delta_n^3}} (\hat{\beta}_{S_1, p, n} - \truebeta_{S_1}) \\  
\tfrac{1}{\sqrt{\Delta_n}} (\hat{\beta}_{S_2, p, n} - \truebeta_{S_2})
\end{bmatrix}, 
\end{align}
where we have set for $\theta = (\beta_{S}, \beta_R, \sigma )\in \Theta$ with $\beta_S = ( \beta_{S_1}, \beta_{S_2}) \in \Theta_{\beta_S}$: 
\begin{align*}
& \widetilde{\mathbf{A}}_{p,n} \bigl(  \theta \bigr)  
= 
\begin{bmatrix}
- \tfrac{\sqrt{\Delta_n^3}}{n} \partial_{\beta_{S_1}} \ell_{p, n} ( \theta ) \\
- \tfrac{\sqrt{\Delta_n}}{n} \partial_{\beta_{S_2}} \ell_{p, n} ( \theta )
\end{bmatrix}; \\ 
& \widetilde{\mathbf{B}}_{p,n} \bigl(  \theta \bigr)  
= \widetilde{\mathbf{M}}_{n}
\Bigl( 
\int_0^1   
\,\partial_{\beta_S}^2 \ell_{p, n} 
\bigl( \truebeta_S + \lambda ( \beta_S - \truebeta_S ), \beta_R, \sigma \bigr) 
d \lambda \,
\Bigr) 
\widetilde{\mathbf{M}}_{n}^\top,
\end{align*}
for the matrix $\widetilde{\mathbf{M}}_{n} = \mrm{Diag} (\widetilde{v}_n)$, where:  
\begin{align*}
\widetilde{v}_n = 
\Bigl[
\underbrace{\sqrt{\tfrac{\Delta_n^3}{n}}, \ldots, \sqrt{\tfrac{\Delta_n^3}{n}}}_{N_{\beta_{S_1}}}, \;  
\underbrace{\sqrt{\tfrac{\Delta_n}{n}}, \ldots, \sqrt{\tfrac{\Delta_n}{n}}}_{N_{\beta_{S_2}}} 
\Bigr]^\top.
\end{align*}
The convergence (\ref{eq:step2}) is immediately deduced from  (\ref{eq:taylor_step2}) given the following result whose proof is provided in Section \ref{appendix:pf_step_4} of the Supplementary Material.  
\begin{lemma} \label{lemma:step_2}
Assume that conditions \ref{assump:hor}, \ref{assump:coeff}\,$(2K_p)$, \ref{assump:additional_con}, \ref{assump:moments}, \ref{assump:finite_moment} and \ref{assump:ident} hold. If \limit, then:
\begin{align}
& 
\sup_{(\beta_R, \sigma) \in \Theta_{\beta_R} \times \Theta_\sigma} 
\Bigl| \widetilde{\mathbf{A}}_{p,n} \bigl( \truebeta_{S}, \beta_R, \sigma \bigr)  \Bigr| \probconv 0;  \label{eq:step2_A} \\[-0.3cm]  
& \sup_{(\beta_R, \sigma) \in \Theta_{\beta_R} \times \Theta_\sigma} 
\Bigl| \widetilde{\mathbf{B}}_{p,n} \bigl( \truebeta_{S}, \beta_R, \sigma \bigr) - 2 \, \widetilde{\mathbf{B}}  
\bigl( \truebeta_{S}, \beta_R, \sigma \bigr)  \Bigr| \probconv 0;  \label{eq:step2_B}, 
% & \widetilde{\mathbf{B}}_{p,n} 
% \bigl( \hat{\beta}_{S, p, n}, \beta_R, \sigma \bigr)  
% \probconv 2  \times 
% \mrm{Diag} \Bigl[
% \widetilde{\mathbf{B}}_{p,n}^{S_1} (\truebeta_S, \beta_R, \sigma), \; 
% \widetilde{\mathbf{B}}_{p,n}^{S_2} (\truebeta_S, \beta_R, \sigma)
% \Bigr], 
% \int  
% \bigl( D (x, \truebeta_S) \bigr)^\top 
% \Sigma_{SS}^{-1} \bigl( 
% x, (\truebeta_S, \sigma)  \bigr)
% D (x, \truebeta_S) \, \truedist (dx), 
\end{align}
where we have set for $\theta \in \Theta$, 
% 
% \begin{align*}
$
\widetilde{\mathbf{B}} \bigl( \theta \bigr) 
= \mrm{Diag} \Bigl[
\widetilde{\mathbf{B}}^{S_1} (\theta), \; 
\widetilde{\mathbf{B}}^{S_2} (\theta)
\Bigr]
$ 
% \end{align*}
% 
with $\widetilde{\mathbf{B}}^{S_1} (\theta) \in \mathbb{R}^{N_{\beta_{S_1}} \times N_{\beta_{S_1}}}$ and 
$\textstyle \widetilde{\mathbf{B}}^{ S_2} (\theta) \in \mathbb{R}^{N_{\beta_{S_2}} \times N_{\beta_{S_2}}} $ defined as: 
\begin{align}
\widetilde{\mathbf{B}}^{S_1} (\theta) 
& = 720 \int
\bigl( \partial_{\beta_{S_1}}^\top \mu_{S_1} (x_S, \beta_{S_1}) \bigr)^\top a_{S_1}^{-1} (x, \theta) 
\, 
\partial_{\beta_{S_1}}^\top  \mu_{S_1} (x_S, \beta_{S_1})   \truedist (dx); \label{eq:limit_B_1} \\
\widetilde{\mathbf{B}}^{S_2} (\theta)  
& = 12 \int 
\bigl( 
\partial_{\beta_{S_2}}^\top \mu_{S_2} (x, \beta_{S_2}) \bigr)^\top 
a_{S_2}^{-1} (x, \theta) \, 
\partial_{\beta_{S_2}}^\top  \mu_{S_2} (x, \beta_{S_2})   \truedist (dx).  \label{eq:limit_B_2}
\end{align}
\end{lemma}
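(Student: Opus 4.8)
The plan is to analyse the scaled score $\widetilde{\mathbf{A}}_{p,n}$ and scaled Hessian $\widetilde{\mathbf{B}}_{p,n}$ of $\ell_{p,n}$ directly at $\theta=(\truebeta_S,\beta_R,\sigma)$, following the pattern of Step 1. The first component of $\widetilde{\mathbf{A}}_{p,n}(\truebeta_S,\beta_R,\sigma)$ is exactly $\mathbf{A}_{p,n}(\truebeta_{S_1},\truebeta_{S_2},\beta_R,\sigma)$, so its uniform convergence to $0$ is a sub-supremum of the first claim of Lemma \ref{lemma:step1_B}; likewise, since $a_{S_1}$ does not involve $\beta_R$ and $\hat\beta_{S_1,p,n}\probconv\truebeta_{S_1}$, the $(\beta_{S_1},\beta_{S_1})$ block of $\widetilde{\mathbf{B}}_{p,n}(\truebeta_S,\beta_R,\sigma)$, i.e. $\tfrac{\Delta_n^3}{n}\partial^2_{\beta_{S_1}}\ell_{p,n}(\truebeta_S,\beta_R,\sigma)$, is handled by the argument behind Lemma \ref{lemma:step1_B} with limit $2\widetilde{\mathbf{B}}^{S_1}(\theta)$. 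It thus remains to treat (i) the second score component $-\tfrac{\sqrt{\Delta_n}}{n}\partial_{\beta_{S_2}}\ell_{p,n}(\truebeta_S,\beta_R,\sigma)$, (ii) the $(\beta_{S_2},\beta_{S_2})$ block $\tfrac{\Delta_n}{n}\partial^2_{\beta_{S_2}}\ell_{p,n}(\truebeta_S,\beta_R,\sigma)$, and (iii) the cross block $\tfrac{\Delta_n^2}{n}\partial_{\beta_{S_1}}\partial_{\beta_{S_2}}\ell_{p,n}(\truebeta_S,\beta_R,\sigma)$; in each case uniformity over $(\beta_R,\sigma)$ will follow from Sobolev's inequality once the pointwise statement is in hand, using the three $\theta$-derivatives granted by \ref{assump:additional_con} and the moment bounds \ref{assump:moments}--\ref{assump:finite_moment}.

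For (i), I would differentiate $\ell_{p,n}$ in $\beta_{S_2}$ term by term; the derivative acts on the mean corrections $r^{(\mrm{II})}$ (in particular on $r_{S_1}$, via the dependence of $\mathcal{L}\mu_{S_1}$ on $\mu_{S_2}$) and on the matrices $\mathbf{\Sigma}_i^{(\mrm{II})}$, $\mathbf{H}_{i-1,j}$, and split each summand into its $\mathcal{F}_{t_{i-1}}$-conditional expectation plus a martingale increment. Since $\beta_{S_2}=\truebeta_{S_2}$, the leading $\beta_{S_2}$-discrepancy in the conditional mean of $\mathbf{m}_{p,i}$ vanishes, so by Remark \ref{rem:mean_def} and the expansion (\ref{eq:expansion_cov_2}) every conditional-expectation contribution is $\mathcal{O}(\sqrt{\Delta_n})$ per summand, hence $o_{\mathbb{P}_{\trueparam}}(1)$ after the prefactor $\sqrt{\Delta_n}/n$; the martingale part has per-step conditional variance $\mathcal{O}(\Delta_n^{-1})$, so after scaling it is $\mathcal{O}_{\mathbb{P}_{\trueparam}}(n^{-1/2})$ by a standard martingale limit theorem.

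For (ii)--(iii), the key point is that $\partial_{\beta_{S_2}}\mathbf{m}_{p,i}$ and $\partial_{\beta_{S_1}}\mathbf{m}_{p,i}$ are $\mathcal{F}_{t_{i-1}}$-measurable, with $\partial_{\beta_{S_2}}\mathbf{m}_{p,i}=\Delta_n^{-1/2}\,\mathbf{v}(\sample{X}{i-1},\theta)+\mathcal{O}(\Delta_n^{1/2})$, where $\mathbf{v}$ has $S_1$-block $-\tfrac12(\partial_{x_{S_2}}\mu_{S_1})\partial_{\beta_{S_2}}\mu_{S_2}$, $S_2$-block $-\partial_{\beta_{S_2}}\mu_{S_2}$ and vanishing $R$-block, while the $S_1$-block of $\partial_{\beta_{S_1}}\mathbf{m}_{p,i}$ is $-\Delta_n^{-3/2}\partial_{\beta_{S_1}}\mu_{S_1}$ up to $\mathcal{O}(\Delta_n^{-1/2})$. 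Expanding $\partial^2_{\beta_{S_2}}\ell_{p,n}$ and $\partial_{\beta_{S_1}}\partial_{\beta_{S_2}}\ell_{p,n}$, every summand except $2(\partial_{\beta_{S_2}}\mathbf{m}_{p,i})^\top\mathbf{\Lambda}_{i-1}(\partial_{\beta_{S_2}}\mathbf{m}_{p,i})$ (resp.\ $2(\partial_{\beta_{S_1}}\mathbf{m}_{p,i})^\top\mathbf{\Lambda}_{i-1}(\partial_{\beta_{S_2}}\mathbf{m}_{p,i})$) is $o_{\mathbb{P}_{\trueparam}}(1)$ after the relevant scaling — again by the conditional-expectation/martingale split, the $j\ge1$ terms carrying extra powers of $\Delta_n$. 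For (ii) the surviving term equals $\tfrac{2}{n}\sum_i\mathbf{v}^\top\mathbf{\Lambda}\mathbf{v}(\sample{X}{i-1},\theta)+\mathcal{O}(\Delta_n)$, which by the ergodic theorem converges to $2\int\mathbf{v}^\top\mathbf{\Lambda}\mathbf{v}\,d\truedist$; invoking the matrix identities for $\mathbf{\Lambda}=(\mathbf{\Sigma}^{(\mrm{II})})^{-1}$ — the hypo-elliptic analogues of $\mathbf{\Lambda}_{S_1S_1}=720\,a_{S_1}^{-1}$ from Lemma 13 of \cite{igu:23}, notably $\mathbf{\Lambda}_{S_1S_2}=-\tfrac12\mathbf{\Lambda}_{S_1S_1}(\partial_{x_{S_2}}\mu_{S_1})$ already used in Step 1 (cf.\ Remark \ref{rem:key}) — one simplifies $\mathbf{v}^\top\mathbf{\Lambda}\mathbf{v}$ to $12(\partial^\top_{\beta_{S_2}}\mu_{S_2})^\top a_{S_2}^{-1}\partial^\top_{\beta_{S_2}}\mu_{S_2}$, which is precisely $2\widetilde{\mathbf{B}}^{S_2}(\theta)$. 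The same identity kills the $\mathcal{O}(\Delta_n^{-2})$ part of the surviving term in (iii), leaving an $\mathcal{O}(\Delta_n)$ quantity, so the cross block tends to $\mathbf{0}$, as required by the block-diagonal form of $2\widetilde{\mathbf{B}}$. (The case $p=2$ is identical but simpler, only the $j=0$ term being present.)

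The main obstacle is the matrix-algebra bookkeeping: for each block one must pin down exactly which power of $\Delta_n$ the leading $\mathcal{F}_{t_{i-1}}$-measurable contribution carries, and show that whenever that power would diverge after the block-dependent scaling the responsible coefficient is annihilated by an algebraic identity of $\mathbf{\Sigma}^{(\mrm{II})}$ (displayed in (\ref{eq:mat_Sigma_II})) and its inverse — the very mechanism (Remark \ref{rem:key}) that removes the need for $\Delta_n=o(n^{-1/2})$. Establishing these identities from the block structure of $\mathbf{\Sigma}^{(\mrm{II})}$, and controlling the numerous lower-order remainders uniformly in $(\beta_R,\sigma)$, is where the effort concentrates; the probabilistic inputs (martingale LLN, ergodic theorem, Sobolev's inequality) are routine.
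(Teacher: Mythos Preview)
Your proposal is correct and follows essentially the same approach as the paper: both refer the $\beta_{S_1}$-components back to Lemma \ref{lemma:step1_B}, isolate the leading quadratic form in each remaining block, and invoke the matrix identities of Lemmas \ref{lemm:mat_1}--\ref{lemm:mat_2} (your $\mathbf{\Lambda}_{S_1S_2}=-\tfrac12\mathbf{\Lambda}_{S_1S_1}\partial_{x_{S_2}}\mu_{S_1}$ together with its companion for $\mathbf{\Lambda}_{S_2S_2}$) to collapse $\mathbf{v}^\top\mathbf{\Lambda}\mathbf{v}$ to $12(\partial_{\beta_{S_2}}\mu_{S_2})^\top a_{S_2}^{-1}\partial_{\beta_{S_2}}\mu_{S_2}$ and to annihilate the cross block. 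The only differences are cosmetic: the paper packages your conditional-expectation/martingale split into Lemmas \ref{lemma:aux_1}--\ref{lemma:aux_2} (rather than appealing to Sobolev's inequality for uniformity) and in fact proves the slightly stronger uniform-in-$\lambda$ convergence of the scaled Hessian along the segment $[\truebeta_S,\hat\beta_{S,p,n}]$, which is what is actually used in (\ref{eq:taylor_step2}) and which additionally calls on the rate (\ref{eq:step1}) to control the extra $d_{S_1}$-terms.
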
 
\subsubsection{Step 3}
Finally, we show the consistency of estimators $\bigl(\hat{\beta}_{R,p,n}, \hat{\sigma}_{p,n} \bigr)$. Working with the rates of convergence obtained in (\ref{eq:step1}) and (\ref{eq:step2}), we prove the following result leading to the consistency of $\hat{\sigma}_{p, n}$. 
\begin{lemma} \label{lemma:step3_1}
Assume that conditions \ref{assump:hor}, \ref{assump:coeff}\,$(2K_p)$, \ref{assump:additional_con}, \ref{assump:moments}, \ref{assump:finite_moment} and \ref{assump:ident} hold. 
If \limit, then:
\begin{align*}
\sup_{(\beta_R, \sigma) \in \Theta_{\beta_R} \times \Theta_\sigma} \Bigl| 
\tfrac{1}{n} \ell_{p, n}
\bigl( \hat{\beta}_{S,p,n},
\beta_R, \sigma \bigr)  -  \mathbf{Y}_3 (\sigma) 
\Bigr|  \probconv 0,
\end{align*}
where we have set for $\sigma \in \Theta_\sigma$: 
\begin{align*}
\mathbf{Y}_3 (\sigma) 
=
\int \Bigl\{
\mrm{Tr} \bigl( \boldsymbol{\Lambda} (x, (\truebeta_S, \sigma) ) 
\boldsymbol{\Sigma} \bigl(x, (\truebeta_S, \truesigma)  \bigr)  \bigr)
+ \log \det \boldsymbol{\Sigma} 
\bigl(x, (\truebeta_S, \sigma) \bigr) \Bigr\} \truedist (dx).   
\end{align*} 
\end{lemma}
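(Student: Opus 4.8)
The plan is to establish a uniform (in $(\beta_R,\sigma)$) law of large numbers for the normalised contrast $\tfrac1n \ell_{p,n}(\hat\beta_{S,p,n},\beta_R,\sigma)$, exploiting the consistency rates $\hat\beta_{S_1,p,n}-\truebeta_{S_1}=o_{\mathbb{P}_{\trueparam}}(\sqrt{\Delta_n^3})$ and $\hat\beta_{S_2,p,n}-\truebeta_{S_2}=o_{\mathbb{P}_{\trueparam}}(\sqrt{\Delta_n})$ already obtained in Steps 1--2. First I would plug $\theta=(\hat\beta_{S,p,n},\beta_R,\sigma)$ into the contrast function \eqref{eq:contrast_II} (or its $p=2$ version) and write $\tfrac1n\ell_{p,n}=\tfrac1n\sum_{i=1}^n\sum_{j=0}^{K_p}\Delta_n^j\{\mathbf{m}_{p,i}^\top\mathbf{G}_{i-1,j}\mathbf{m}_{p,i}+\mathbf{H}_{i-1,j}\}$. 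The $j\ge 1$ terms carry an explicit factor $\Delta_n^j\to 0$; using the polynomial-growth bounds of Condition \ref{assump:coeff}--\ref{assump:additional_con} together with the uniform moment bound \ref{assump:finite_moment} and the ergodic averaging from \ref{assump:moments}, these contribute $o_{\mathbb{P}_{\trueparam}}(1)$ uniformly in $\theta$, so only the $j=0$ term, namely $\tfrac1n\sum_i\{\mathbf{m}_{p,i}^\top\mathbf\Lambda_{i-1}\mathbf{m}_{p,i}+\log\det\mathbf\Sigma_{i-1}\}$, survives.

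Next I would analyse the quadratic form $\mathbf{m}_{p,i}^\top\mathbf\Lambda_{i-1}\mathbf{m}_{p,i}$. The key is that $\mathbf{m}_{p,i}$ is the standardised increment \emph{evaluated at the (random) argument} $(\hat\beta_{S,p,n},\beta_R,\sigma)$, whereas the data are generated under $\trueparam$. I would split $\mathbf{m}_{p,i}(\Delta_n,(\hat\beta_{S},\beta_R,\sigma)) = \mathbf{m}_{p,i}(\Delta_n,(\truebeta_S,\beta_R,\sigma)) + \text{(correction)}$, where the correction comes from replacing $\hat\beta_S$ by $\truebeta_S$ in the mean-approximation terms $r^{(\mrm{II})}_{S_\ell,\cdot}$; because the smooth components of $\mathbf{m}_{p,i}$ are divided by $\sqrt{\Delta_n^5}$ and $\sqrt{\Delta_n^3}$ respectively, and the differences $\hat\beta_{S_\ell}-\truebeta_{S_\ell}$ carry matching powers of $\sqrt{\Delta_n}$ via Steps 1--2 (plus a mean-value/Taylor argument in $\beta_S$), the correction stays $O_{\mathbb{P}_{\trueparam}}(1)$ and in fact its contribution to the average vanishes after using the conditional-moment expansion \eqref{eq:expansion_cov_2}. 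Then, writing $\mathbf{m}_{p,i}(\Delta_n,(\truebeta_S,\beta_R,\sigma))$, I would use the conditional-moment identity \eqref{eq:expansion_cov_2} — which gives $\mathbb{E}_{\trueparam}[\mathbf{m}_{p,i}\mathbf{m}_{p,i}^\top\mid\mathcal F_{t_{i-1}}] = \mathbf\Sigma^{(\mrm{II})}(\sample{X}{i-1},(\truebeta_S,\truesigma)) + O(\Delta_n)$ — together with the (near-)martingale structure of the recentred summands and a standard triangular-array LLN (e.g.\ Genon-Catalot--Jacod type lemma) to replace $\tfrac1n\sum_i\mathbf{m}_{p,i}^\top\mathbf\Lambda_{i-1}(\theta)\mathbf{m}_{p,i}$ by $\tfrac1n\sum_i\mrm{Tr}[\mathbf\Lambda(\sample{X}{i-1},(\truebeta_S,\sigma))\,\mathbf\Sigma(\sample{X}{i-1},(\truebeta_S,\truesigma))]$ up to $o_{\mathbb{P}_{\trueparam}}(1)$; the $\log\det$ term is handled directly. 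Finally, ergodicity \ref{assump:moments} converts the empirical averages $\tfrac1n\sum_i f(\sample{X}{i-1})$ into $\int f\,d\truedist$, yielding the claimed limit $\mathbf{Y}_3(\sigma)$, and a standard equicontinuity/Sobolev-embedding argument over the compact set $\Theta_{\beta_R}\times\Theta_\sigma$ (using the three-times-differentiability in $\theta$ from \ref{assump:additional_con}) upgrades pointwise to uniform convergence.

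The main obstacle is controlling the quadratic form $\mathbf{m}_{p,i}^\top\mathbf\Lambda_{i-1}\mathbf{m}_{p,i}$ uniformly when the smooth coordinates of $\mathbf{m}_{p,i}$ are divided by the very small factors $\sqrt{\Delta_n^5}$ and $\sqrt{\Delta_n^3}$: naive bounds produce terms of order $\Delta_n^{-q}$, $q\ge 1/2$, and one must show these cancel. This is precisely where the delicate balance noted in Remark \ref{rem:mean_def} and the matrix-algebra cancellations flagged in Remark \ref{rem:key} (Lemmas \ref{lemm:mat_1}, \ref{lemm:mat_2}, \ref{lemma:matrix_3}) come into play — the specific truncation levels $K_p+2$, $K_p+1$, $K_p$ in $r^{(\mrm{II})}$ are chosen exactly so that $\mathbb{E}_{\trueparam}[\mathbf{m}_{p,i}^k\mid\mathcal F_{t_{i-1}}]\in\mathcal S\cdot\sqrt{\Delta_n^{2K_p+1}}$ as in \eqref{eq:m_approx}, and substituting the consistency rates from Steps 1--2 then makes all dangerous negative powers of $\Delta_n$ disappear. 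The rest of the argument — ergodic averaging, the triangular-array LLN, and the passage from pointwise to uniform convergence — is routine given Conditions \ref{assump:coeff}--\ref{assump:ident}.
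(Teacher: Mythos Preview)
Your proposal is correct and follows the same route as the paper: decompose $\mathbf{m}_{p,i}(\Delta_n,(\hat\beta_S,\beta_R,\sigma))$ into $\mathbf{m}_{p,i}(\Delta_n,\trueparam)$ plus correction terms driven by the scaled drift differences $d_{S_1}(\cdot,\hat\beta_{S_1})/\sqrt{\Delta_n^3}$, $d_{S_2}(\cdot,\hat\beta_{S_2})/\sqrt{\Delta_n}$ and $(\mathcal{L}\mu_{S_1}(\cdot,\truebeta_S)-\mathcal{L}\mu_{S_1}(\cdot,\hat\beta_S))/\sqrt{\Delta_n}$, use the rates from Steps~1--2 to kill those corrections, and then apply Lemmas~\ref{lemma:aux_1}--\ref{lemma:aux_2} to the surviving $j=0$ quadratic and the $\log\det$ term to obtain $\mathbf{Y}_3(\sigma)$ uniformly. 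One small over-statement: the matrix-algebra cancellations (Lemmas~\ref{lemm:mat_1}, \ref{lemm:mat_2}, \ref{lemma:matrix_3}) are \emph{not} invoked in the paper's proof of this lemma --- once the rates $\hat\beta_{S_1}-\truebeta_{S_1}=o_{\mathbb{P}_{\trueparam}}(\sqrt{\Delta_n^3})$ and $\hat\beta_{S_2}-\truebeta_{S_2}=o_{\mathbb{P}_{\trueparam}}(\sqrt{\Delta_n})$ are in hand, all dangerous negative powers of $\Delta_n$ are neutralised directly and no further structural cancellation is needed here.
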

\noindent We provide the proof in Section  \ref{appendix:pf_step3_1} of the Supplementary Material. We will show that Lemma \ref{lemma:step3_1} indeed leads to the consistency of $\hat{\sigma}_{p, n}$. We have:
\begin{align*}
\mathbf{Y}_3 (\sigma) - \mathbf{Y}_3 (\truesigma)
= 2 \int \varphi (y; x, \truesigma) \log \frac{\varphi (y; x, \sigma)}{\varphi(y; x, \truesigma)} \, \truedist(dx), \quad 
\sigma \in \Theta, 
\end{align*}
% 
% where 
% 
% \begin{align*}
% \Phi (x, \sigma) 
% &= \mrm{Tr} \bigl( \boldsymbol{\Lambda} (x, (\truebeta_S, \sigma) ) 
% \boldsymbol{\Sigma} \bigl(x, (\truebeta_S, \truesigma) \bigr) \bigr)
% - N - \log \tfrac{\det \boldsymbol{\Sigma} (x,  (\truebeta_S, \truesigma) )}{\det 
% \boldsymbol{\Sigma} (x,  (\truebeta_S, \sigma) ) } 
% \\ &= 2 \times \int \varphi (y; x, \truesigma) \log \frac{\varphi (y; x, \sigma)}{\varphi(y; x, \truesigma)} dy, 
% \end{align*}
% 
with $y \mapsto \varphi (y; x, \sigma), \, (x, \sigma) \in \mathbf{R}^N \times \Theta$ being the density of the distribution $\mathcal{N} (\mathbf{0}_{N}, \boldsymbol{\Sigma} (x, (\truebeta_S, \sigma))$. Thus, under condition \ref{assump:ident}, $\mathbf{Y}_3 (\sigma) - \mathbf{Y}_3 (\truesigma)$ should be positive if $\sigma \neq \truesigma$. Hence, for every $\varepsilon > 0$, there exists a constant $\delta > 0$ so that: 
\begin{align*}
& \mathbb{P}_{\trueparam} \bigl( | \hat{\sigma}_{p, n } - \truesigma| > \varepsilon \bigr)
 \le 
\mathbb{P}_{\trueparam} \bigl( \mathbf{Y}_3 (\hat{\sigma}_{p, n}) - \mathbf{Y}_3 (\truesigma)  > \delta \bigr) \\
& \le 
\mathbb{P}_{\trueparam} 
\Bigl( 
\tfrac{1}{n} \ell_{p,n} (\hat{\beta}_{p,n}, \truesigma)
- \tfrac{1}{n} \ell_{p,n} (\hat{\theta}_{p,n}) 
+ \mathbf{Y}_3 (\hat{\sigma}_{p, n}) - \mathbf{Y}_3 (\truesigma)  > \delta 
\Bigr) \to 0, 
% &  \le 
% \mathbb{P}_{\trueparam} 
% \Bigl( 
% \sup_{(\beta_R, \sigma) \in \Theta_{\beta_R} \times \Theta_\sigma} 
% \Bigl| \tfrac{1}{n} \ell_{p,n} (\hat{\beta}_{S, p, n},\beta_R, \truesigma)
% - \tfrac{1}{n} \ell_{p,n} (\hat{\beta}_{S, p, n}, \beta_R, \sigma) 
% + \mathbf{Y}_3 (\sigma) - \mathbf{Y}_3 (\truesigma) \Bigr|  > \delta 
% \Bigr) \to 0,  
\end{align*}
as \limit. 

To show the consistency of $\hat{\beta}_{R, p, n}$, we consider, for 
$\theta = (\beta_{S_1}, \beta_{S_2}, 
\beta_{R}, \sigma) \in \Theta$:
\begin{align} \label{eq:K}
\mathbf{K} ( \theta )  
:= \tfrac{1}{n \Delta_n}  \ell_{p, n}  
\bigl( \theta \bigr) 
- \tfrac{1}{n \Delta_n}  \ell_{p, n}  
\bigl( \beta_{S_1},  \beta_{S_2}, 
\truebeta_R, \sigma \bigr).   
\end{align}
The consistency of  $\hat{\beta}_{R, p, n}$ follows from the result below whose proof is given in Section  \ref{appendix:pf_step3_2} of the Supplementary Material. 
\begin{lemma} \label{lemma:step3_2}
Assume that conditions \ref{assump:hor}, \ref{assump:coeff}\,$(2K_p)$, \ref{assump:additional_con}, \ref{assump:moments}, \ref{assump:finite_moment} and \ref{assump:ident} hold. 
If \limit, then:
\begin{align*}
\sup_{\beta_R \in \Theta_{\beta_R}} 
\Bigl| 
\mathbf{K} (\hat{\beta}_{S_1, p, n},  \hat{\beta}_{S_2, p, n},  \beta_R,  \hat{\sigma}_{p,n}) - \mathbf{Y}_4 (\beta_R)
\Bigr| \probconv 0, 
\end{align*}
where we have set for $\beta_R \in \Theta_{\beta_R}$: 
\begin{align*}
\mathbf{Y}_4 (\beta_R) 
= \int 
\mathbf{q} \bigl( \mu_{R} (x, \beta_R) - \mu_{R} (x, \truebeta_R) ;  a^{-1}_{R} (x, \truesigma) \bigr) 
% \bigl( \mu_{R} (x, \beta_R) - \mu_{R} (x, \truebeta_R) \bigr)^\top 
% a^{-1}_{R} (x, \truesigma)
% \bigl( \mu_{R} (x, \beta_R) - \mu_{R} (x, \truebeta_R) \bigr) 
\, \truedist (dx). 
\end{align*} 
% 
% where we have defined: 
% % 
% \begin{align}
%  e (x, \beta_R) 
%  \equiv 
%  \begin{bmatrix}
%     \tfrac{1}{6} \mathcal{L}^2 V_{S_1, 0} (x, \trueparam) 
%     - \tfrac{1}{6} \mathcal{L}^2 V_{S_1, 0}
%      \bigl(x, (\beta_R, \truebeta_S, \truesigma) \bigr)  
%      \\
%      \tfrac{1}{2} \mathcal{L} V_{S_2, 0} 
%      \bigl(x, (\truebeta_R, \truebeta_{S_2}, \truesigma) \bigr) 
%     - \tfrac{1}{2} \mathcal{L} V_{S_2, 0}
%      \bigl(x, (\beta_R, \truebeta_{S_2}, \truesigma) \bigr)  \\ 
%      V_{R, 0} (x, \truebeta_R) - V_{R, 0} (x, \beta_R)
%  \end{bmatrix},  
% \end{align} 
% 
\end{lemma}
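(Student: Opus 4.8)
The plan is to show that the contrast difference $\mathbf{K}(\theta)$ of (\ref{eq:K}) converges, uniformly over the compact set $\Theta_{\beta_R}$, to the stated limit $\mathbf{Y}_4$. The starting point is to pin down how $\beta_R$ enters: in the contrast (\ref{eq:contrast_II}) it appears only inside the mean vectors carried by $\mathbf{m}_{p,i}^{(\mathrm{II})}$ --- directly through $\mu_R$ in $r_{R,K_p}^{(\mathrm{II})}$ at order $\Delta_n$, and through $\mathcal{L}$ acting on $\mu_{S_2}$ (resp.\ $\mu_{S_1}$) inside $r_{S_2,K_p+1}^{(\mathrm{II})}$ (resp.\ $r_{S_1,K_p+2}^{(\mathrm{II})}$) at order $\Delta_n^2$ (resp.\ $\Delta_n^3$); any $\beta_R$-dependence of the matrices $\mathbf{G}_{i-1,j}^{(\mathrm{II})},\mathbf{H}_{i-1,j}^{(\mathrm{II})}$ is, for $j\ge 1$, cushioned by the prefactor $\Delta_n^j$ and by the fact that, by design of these Taylor coefficients, $\mathbb{E}_{\trueparam}[(\cdot)^\top\mathbf{G}_{i-1,j}^{(\mathrm{II})}(\cdot)\mid\mathcal{F}_{t_{i-1}}]+\mathbf{H}_{i-1,j}^{(\mathrm{II})}$ is of higher order. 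Writing $\theta=(\beta_{S_1},\beta_{S_2},\beta_R,\sigma)$ and $\theta_0=(\beta_{S_1},\beta_{S_2},\truebeta_R,\sigma)$, I would first establish the crucial structural fact that $\delta\mathbf{m}_i:=\mathbf{m}_{p,i}^{(\mathrm{II})}(\Delta_n,\theta)-\mathbf{m}_{p,i}^{(\mathrm{II})}(\Delta_n,\theta_0)$ is $\mathcal{F}_{t_{i-1}}$-measurable (the terms in $X_{t_i}$ cancel) and is, componentwise, $O_{\mathbb{P}_{\trueparam}}(\sqrt{\Delta_n})$, with leading $R$-block $-\sqrt{\Delta_n}\,(\mu_R(\sample{X}{i-1},\beta_R)-\mu_R(\sample{X}{i-1},\truebeta_R))$ and leading $S_1$-, $S_2$-blocks obtained by applying $\mathbf{\Sigma}_{S_1R}^{(\mathrm{II})}a_R^{-1}$, resp.\ $\mathbf{\Sigma}_{S_2R}^{(\mathrm{II})}a_R^{-1}$, to that $R$-block (this uses the block form (\ref{eq:mat_Sigma_II}) of $\mathbf{\Sigma}^{(\mathrm{II})}$ and the matrix identities of Lemmas \ref{lemm:mat_1}, \ref{lemm:mat_2}, \ref{lemma:matrix_3}).

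I would then insert $\mathbf{m}_{p,i}^{(\mathrm{II})}(\Delta_n,\theta)=\mathbf{m}_{p,i}^{(\mathrm{II})}(\Delta_n,\theta_0)+\delta\mathbf{m}_i$ into (\ref{eq:K}) and expand, so that $\mathbf{K}(\theta)$ splits into a cross term $\tfrac{2}{n\Delta_n}\sum_{i}\sum_{j}\Delta_n^{j}\,\mathbf{m}_{p,i}^{(\mathrm{II})}(\Delta_n,\theta_0)^\top\mathbf{G}_{i-1,j}^{(\mathrm{II})}(\theta_0)\,\delta\mathbf{m}_i$, a pure quadratic term $\tfrac{1}{n\Delta_n}\sum_i\sum_j\Delta_n^j\,\delta\mathbf{m}_i^\top\mathbf{G}_{i-1,j}^{(\mathrm{II})}(\theta_0)\,\delta\mathbf{m}_i$, and remainders coming from the $\beta_R$-dependence of $\mathbf{G}_{i-1,j}^{(\mathrm{II})},\mathbf{H}_{i-1,j}^{(\mathrm{II})}$. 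The last remainders and the whole $j\ge 1$ part carry a prefactor $\Delta_n^{j-1}$; combined with the higher-order cancellation just mentioned and with the uniform law of large numbers used for Lemmas \ref{lemm:step1}, \ref{lemm:step_2}, they are $o_{\mathbb{P}_{\trueparam}}(1)$ uniformly in $\beta_R$. It remains to handle the $j=0$ cross and quadratic terms. For the cross term I decompose $\mathbf{m}_{p,i}^{(\mathrm{II})}(\Delta_n,\theta_0)=\xi_i+\mathbb{E}_{\trueparam}[\mathbf{m}_{p,i}^{(\mathrm{II})}(\Delta_n,\theta_0)\mid\mathcal{F}_{t_{i-1}}]$ with $\{\xi_i\}$ a martingale-difference array; since $\delta\mathbf{m}_i$ and $\mathbf{\Lambda}_{i-1}^{(\mathrm{II})}(\theta_0)$ are $\mathcal{F}_{t_{i-1}}$-measurable, $\tfrac{2}{n\Delta_n}\sum_i\xi_i^\top\mathbf{\Lambda}_{i-1}^{(\mathrm{II})}(\theta_0)\delta\mathbf{m}_i$ is a martingale sum whose summed conditional variances are $O(n\Delta_n)$, hence it is $o_{\mathbb{P}_{\trueparam}}(1)$ by the standard triangular-array martingale law of large numbers.

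The main obstacle is the conditional-expectation piece of the cross term. Using (\ref{eq:m_approx}) together with the rates $\hat\beta_{S_1,p,n}-\truebeta_{S_1}=o_{\mathbb{P}_{\trueparam}}(\sqrt{\Delta_n^3})$ and $\hat\beta_{S_2,p,n}-\truebeta_{S_2}=o_{\mathbb{P}_{\trueparam}}(\sqrt{\Delta_n})$ from (\ref{eq:step1})--(\ref{eq:step2}) and $\hat\sigma_{p,n}\probconv\truesigma$ from Lemma \ref{lemma:step3_1}, the $S_1$- and $S_2$-blocks of $\mathbb{E}_{\trueparam}[\mathbf{m}_{p,i}^{(\mathrm{II})}(\Delta_n,\theta_0)\mid\mathcal{F}_{t_{i-1}}]$ are only $o_{\mathbb{P}_{\trueparam}}(1)$, so a direct bound would produce a term of order $o_{\mathbb{P}_{\trueparam}}(\Delta_n^{-1/2})$ --- exactly the $\mathcal{O}(\Delta_n^{-1/2})$ phenomenon flagged in Remark \ref{rem:key}. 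This is removed by the structure of $\delta\mathbf{m}_i$: by the identities of Lemmas \ref{lemm:mat_1}, \ref{lemm:mat_2}, \ref{lemma:matrix_3} and the block form (\ref{eq:mat_Sigma_II}), $\mathbf{\Lambda}_{i-1}^{(\mathrm{II})}(\theta_0)\,\delta\mathbf{m}_i$ has vanishing $S_1$- and $S_2$-components at leading order and $R$-component $-\sqrt{\Delta_n}\,a_R^{-1}(\sample{X}{i-1},\sigma)(\mu_R(\sample{X}{i-1},\beta_R)-\mu_R(\sample{X}{i-1},\truebeta_R))$; thus only the $R$-block of $\mathbb{E}_{\trueparam}[\mathbf{m}_{p,i}^{(\mathrm{II})}(\Delta_n,\theta_0)\mid\mathcal{F}_{t_{i-1}}]$ contributes, and that block is $O_{\mathbb{P}_{\trueparam}}(\Delta_n^{3/2})$ --- its $\Delta_n$-term cancels because $\theta_0$ and $\trueparam$ share the same rough-drift parameter --- making the cross term $O_{\mathbb{P}_{\trueparam}}(\Delta_n)\to 0$. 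The same identities collapse the quadratic $j=0$ term to $\tfrac{1}{n}\sum_i(\mu_R(\sample{X}{i-1},\beta_R)-\mu_R(\sample{X}{i-1},\truebeta_R))^\top a_R^{-1}(\sample{X}{i-1},\sigma)(\mu_R(\sample{X}{i-1},\beta_R)-\mu_R(\sample{X}{i-1},\truebeta_R))+o_{\mathbb{P}_{\trueparam}}(1)$.

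To finish, I would evaluate at $(\beta_{S_1},\beta_{S_2},\sigma)=(\hat\beta_{S_1,p,n},\hat\beta_{S_2,p,n},\hat\sigma_{p,n})$, use the consistency of these estimators and the polynomial-growth/smoothness bounds of conditions \ref{assump:coeff}, \ref{assump:additional_con} to replace $a_R^{-1}(\cdot,\hat\sigma_{p,n})$ by $a_R^{-1}(\cdot,\truesigma)$ up to $o_{\mathbb{P}_{\trueparam}}(1)$, and then apply the ergodic theorem under conditions \ref{assump:moments}, \ref{assump:finite_moment}, uniformly over $\beta_R\in\Theta_{\beta_R}$ --- as in the proofs of Lemmas \ref{lemm:step1}, \ref{lemm:step_2} --- to obtain $\mathbf{K}(\hat\beta_{S_1,p,n},\hat\beta_{S_2,p,n},\beta_R,\hat\sigma_{p,n})\probconv\mathbf{Y}_4(\beta_R)$ uniformly, which is the assertion. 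The delicate step, as indicated, is the matrix algebra annihilating the $\mathcal{O}(\Delta_n^{-1/2})$ contributions; without it the argument would, as in earlier works, be forced back to the restriction $\Delta_n=o(n^{-1/2})$.
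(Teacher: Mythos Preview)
Your proposal is correct and follows essentially the same approach as the paper's proof: the same decomposition of $\mathbf{K}(\theta)$ into a quadratic piece in $\delta\mathbf{m}_i$, cross terms with $\mathbf{m}_{p,i}(\Delta_n,\theta_0)$, and a $j\ge 1$ remainder (the paper's $Z^{(1)},\ldots,Z^{(4)}$), with Lemma~\ref{lemma:matrix_3} playing the identical role of collapsing the quadratic term to $d_R^\top a_R^{-1}d_R$ and annihilating the $\mathcal{O}(\Delta_n^{-1/2})$ contributions in the cross term. Your martingale splitting of the cross term is equivalent to the paper's further decomposition $\mathbf{m}_{p,i}(\Delta_n,\theta_0)=\mathbf{m}_{p,i}(\Delta_n,\trueparam)+\bigl(\mathbf{m}_{p,i}(\Delta_n,\theta_0)-\mathbf{m}_{p,i}(\Delta_n,\trueparam)\bigr)$ into $Z^{(3)}$ and $Z^{(2)}$, and your $j\ge 1$ remainder is the paper's $Z^{(4)}$, whose limit $\mathbf{Z}(\beta_R,\sigma)$ vanishes at $\sigma=\truesigma$ by exactly the Taylor-coefficient cancellation you invoke and hence at $\hat\sigma_{p,n}$ by the already-established consistency.
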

Thus, the proof of consistency for the proposed estimator is now complete.

\subsection{Proof of Theorem \ref{thm:clt} (CLT)} \label{sec:pf_asymptotic_normality}
We define the $N_{\theta} \times N_{\theta}$ matrix $\mathbf{M}_n$ as $\mathbf{M}_n = \mrm{Diag} (v_n)$, where: 
\begin{align*}
v_n = \Bigl[
\underbrace{\sqrt{\tfrac{n}{\Delta_n^3}}, \ldots, \sqrt{\tfrac{n}{\Delta_n^3}}}_{N_{\beta_{S_1}}}, \; 
\underbrace{\sqrt{\tfrac{n}{\Delta_n}}, \ldots, \sqrt{\tfrac{n}{\Delta_n}}}_{N_{\beta_{S_2}}}, \;
\underbrace{{\sqrt{n\Delta_n}}, \ldots, {\sqrt{n\Delta_n}}}_{N_{\beta_R}}, \;
\underbrace{{\sqrt{n}}, \ldots, {\sqrt{n}}}_{N_\sigma}
\Bigr]. 
\end{align*}
Noting that $\partial_\theta \ell_{p, n} (\hat{\theta}_{p, n}) = \mathbf{0}_{N_{\theta}}$, a Taylor expansion of $\partial_\theta \ell_{p, n} (\hat{\theta}_{p, n})$ at the true parameter $\trueparam$ gives: 
\begin{align} \label{eq:taylor_clt}
\mathbf{I}_{p, n} ( \trueparam ) 
 = \int_0^1 
\mathbf{J}_{p, n} 
\bigl(  
\trueparam  + \lambda ( \hat{\theta}_{p, n} - \trueparam)  \bigr) d \lambda \times 
\mathbf{M}_{n} (\hat{\theta}_{p, n} - \trueparam), 
% \begin{bmatrix}
%   \sqrt{\tfrac{n}{\Delta^3_n}} \bigl(  \hat{\beta}_{S_1, p, n} - \truebeta_{S_1} \bigr)  \\
%   \sqrt{\tfrac{n}{\Delta_n}} \bigl(  \hat{\beta}_{S_2, p, n} - \truebeta_{S_2} \bigr)  \\ 
%   \sqrt{n \Delta_n}  \bigl(  \hat{\beta}_{R, p, n} - \truebeta_{R} \bigr)  \\ 
%   \sqrt{n} \bigl( \hat{\sigma}_{p, n} - \truesigma \bigr)
%   \end{bmatrix}, 
\end{align} 
where we have set for $\theta \in \Theta$:
\begin{align*}
\mathbf{I}_{p, n} (\theta) \equiv  
- \mathbf{M}_n^{-1} \,  \partial_\theta \ell_{p, n} (\theta), 
\qquad  
\mathbf{J}_{p, n} (\theta) \equiv 
\mathbf{M}_{n}^{-1} \, \partial^2_\theta \ell_{p, n} (\theta) \, \mathbf{M}_{n}^{-1}.  
\end{align*} 
To prove the asymptotic normality, we exploit the following two results.
% whose proof are postponed to Appendix \ref{appendix:pf_slln} and \ref{appendix:pf_clt}: 
% 
\begin{lemma} \label{lemm:slln}
% Let $p \ge 2$ be an integer. 
Assume that conditions \ref{assump:hor}, \ref{assump:coeff}\,$(2K_p)$, \ref{assump:additional_con}, \ref{assump:moments}, \ref{assump:finite_moment} and \ref{assump:ident} hold. 
If \limit, then: 
\begin{align*}
\sup_{\lambda \in [0,1]} \Bigl|  
\mathbf{J}_{p, n} \bigl( \trueparam  + \lambda ( \hat{\theta}_{p, n} - \trueparam)  \bigr) 
- 2 \Gamma (\trueparam) \Bigr| 
\probconv 0. 
\end{align*} 
\end{lemma}  
\begin{prop} \label{prop:clt}
Assume that conditions \ref{assump:hor}, \ref{assump:coeff}\,$(2K_p)$, \ref{assump:additional_con}, \ref{assump:moments} and \ref{assump:finite_moment} hold. 
If \limit , with the additional design condition $\Delta_n = o (n^{-1/p})$, then: 
\begin{align*}
\textstyle 
\mathbf{I}_{p, n} (\trueparam) \distconv \mathcal{N} \bigl( \mathbf{0}_{N_\theta}, 4 \Gamma (\trueparam) \bigr). 
\end{align*} 
\end{prop}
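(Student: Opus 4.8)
The plan is to establish the CLT for the (rescaled) score vector $\mathbf{I}_{p,n}(\trueparam) = -\mathbf{M}_n^{-1}\partial_\theta \ell_{p,n}(\trueparam)$ by decomposing it into a martingale-difference part and a negligible remainder, then applying a martingale CLT. First I would write out $\partial_\theta \ell_{p,n}(\trueparam)$ explicitly using the definition \eqref{eq:contrast_II}: differentiating in $\theta$ produces, for each coordinate block, a sum over $i=1,\dots,n$ of terms that are quadratic (or linear) in $\mathbf{m}_{p,i}^{(\mrm{II})}(\Delta_n,\trueparam)$ with coefficients that are smooth functions of $\obs{X}{i-1}{II}$, plus the $\mathbf{H}$-derivative terms. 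Conditioning on $\mathcal{F}_{t_{i-1}}$ and using the moment expansions — in particular \eqref{eq:expansion_cov_2} for the second moment and the key centering property \eqref{eq:m_approx} from Remark \ref{rem:mean_def}, which gives $\E{\trueparam}[\mathbf{m}_{p,i}^{(w),k}(\Delta_n,\trueparam)\mid\mathcal{F}_{t_{i-1}}] = R_k(\sqrt{\Delta_n^{2K_p+1}},\obs{X}{i-1}{II},\trueparam)$ — I would split each summand into (a) its $\mathcal{F}_{t_{i-1}}$-conditional expectation and (b) a martingale difference. For block $\sigma$ the leading conditional expectation should cancel exactly against the $\mathbf{H}$-term (this is the analogue of the classical $\mathrm{Tr}(\Lambda\Sigma) + \log\det\Sigma$ cancellation in the Gaussian quasi-likelihood); the residual conditional expectation is then $\mathcal{O}(\Delta_n^{K_p+1})$ per term, and after multiplying by the $\sqrt{n}$ scaling from $\mathbf{M}_n^{-1}$ it contributes $\mathcal{O}(\sqrt{n}\,\Delta_n^{K_p+1})$; since $K_p = [p/2]$, for even $p$ this is $\mathcal{O}(\sqrt{n}\,\Delta_n^{p/2+1}) = o(1)$ under $n\Delta_n^p\to 0$ together with $\Delta_n\to 0$, and for odd $p$ one uses $2(K_p+1) = p+1 > p$. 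This is precisely where the design condition $\Delta_n = o(n^{-1/p})$ enters, exactly as anticipated in the Remark following Theorem \ref{thm:clt}.

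Next I would handle the drift blocks $\beta_{S_1},\beta_{S_2},\beta_R$ similarly. Here the score terms are (up to negligible pieces) linear in the centered increments, so their conditional expectations are automatically of the small order dictated by \eqref{eq:m_approx}, and the bias contribution is controlled by the same computation; the matrix-algebra identities (Lemmas \ref{lemm:mat_1}, \ref{lemm:mat_2}, \ref{lemma:matrix_3}) are used to kill the would-be $\mathcal{O}(\Delta_n^{-1/2})$ pieces, just as in the consistency proof (cf.\ Remark \ref{rem:key}). Having reduced $\mathbf{I}_{p,n}(\trueparam)$ to $\sum_{i=1}^n \zeta_{n,i} + o_{\mathbb{P}_{\trueparam}}(1)$ where $\zeta_{n,i}$ is an $\mathcal{F}_{t_i}$-measurable, $\mathcal{F}_{t_{i-1}}$-conditionally-centered array, I would verify the two hypotheses of the martingale CLT (e.g.\ Theorem in Hall--Heyde or the form used by \cite{uchi:12}): (i) $\sum_{i=1}^n \E{\trueparam}[\zeta_{n,i}\zeta_{n,i}^\top\mid\mathcal{F}_{t_{i-1}}] \probconv 4\Gamma(\trueparam)$, and (ii) the conditional Lyapunov/Lindeberg condition $\sum_{i=1}^n \E{\trueparam}[|\zeta_{n,i}|^4\mid\mathcal{F}_{t_{i-1}}] \probconv 0$. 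For (i) I would compute the conditional variance using the second- and fourth-moment expansions of $\mathbf{m}_{p,i}^{(\mrm{II})}$ (Gaussian-type fourth moments give the factor producing the $\tfrac{1}{2}\mathrm{Tr}[\partial_\sigma\mathbf\Sigma\,\mathbf\Lambda\,\partial_\sigma\mathbf\Sigma\,\mathbf\Lambda]$ form for the $\sigma$-block and the $\partial_\beta\mu^\top a^{-1}\partial_\beta\mu$ forms with constants $720,12,1$ matching $\Gamma^{(\mrm{II})}$), then pass from the normalized sum to the integral against $\truedist$ via the ergodic theorem (condition \ref{assump:moments}), controlling remainders with \ref{assump:finite_moment} and \ref{assump:additional_con}; note the factor $4$ arises because $\Gamma$ already absorbed a factor $2$ in the second-derivative limit (Lemma \ref{lemm:slln}), and the score's asymptotic variance is twice the "information", consistent with the $2\Gamma$ vs $4\Gamma$ bookkeeping. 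Condition (ii) follows from the a.s.\ polynomial growth of the coefficients, the moment bounds, and the extra powers of $\Delta_n$ in $\zeta_{n,i}$ after the $\mathbf{M}_n^{-1}$ scaling (each $\zeta_{n,i}$ is $\mathcal{O}_{\mathbb{P}}(n^{-1/2})$ in an appropriate sense, so the fourth-moment sum is $\mathcal{O}(n^{-1})\to 0$).

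I expect the main obstacle to be the bias computation for the $\sigma$-block: one must expand the conditional expectation of the quadratic form $\mathbf{m}_{p,i}^{(\mrm{II})\top}\mathbf{G}_{i-1,j}^{(\mrm{II})}\mathbf{m}_{p,i}^{(\mrm{II})}$ summed against $\Delta_n^j$ up to $j = K_p$, show that all contributions of order $\Delta_n^0,\dots,\Delta_n^{K_p}$ cancel against the $\mathbf{H}$-terms by construction of the formal Taylor expansion of $\mathbf\Xi_{K,i}^{(\mrm{II})}$, and isolate the first non-cancelling term as $\mathcal{O}(\Delta_n^{K_p+1})$ uniformly with the correct polynomial-growth envelope in $\mathcal{S}$. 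This requires carefully tracking that the Gaussian proxy's quadratic-form expectation reproduces the true second moment \eqref{eq:expansion_cov_2} coefficient-by-coefficient in $\Delta_n$ up to the truncation level, which is essentially the defining purpose of the $\mathbf{G}_{i,k}$, $\mathbf{H}_{i,k}$ coefficients — but making the error bookkeeping airtight (especially the interaction between the mean-approximation error from \eqref{eq:m_approx} entering $\mathbf{m}_{p,i}^{(\mrm{II})}$ quadratically, which produces cross terms of order $\sqrt{\Delta_n^{2K_p+1}}\cdot\sqrt{\Delta_n^{\text{(scale)}}}$) is the delicate part. Everything else (ergodic averaging, Lyapunov bound, martingale CLT invocation) is routine given conditions \ref{assump:moments}--\ref{assump:finite_moment} and the technical lemmas in the Appendix.
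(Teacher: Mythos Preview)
Your plan is correct and matches the paper's approach closely: write $\mathbf{I}_{p,n}(\trueparam) = -\sum_{i=1}^n \xi_i(\trueparam)$, verify (i) $\sum_i \mathbb{E}_{\trueparam}[\xi_i^k\mid\mathcal{F}_{t_{i-1}}]\probconv 0$ (the only place $\Delta_n=o(n^{-1/p})$ is used), (ii) $\sum_i \mathbb{E}_{\trueparam}[\xi_i^{k_1}\xi_i^{k_2}\mid\mathcal{F}_{t_{i-1}}]\probconv 4[\Gamma(\trueparam)]_{k_1k_2}$, (iii) a Lyapunov-type bound, and invoke the martingale CLT (the paper cites Hall--Heyde, Theorems~4.2 \& 4.4). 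The matrix-algebra lemmas you mention are indeed used, but in step (ii) rather than in the bias step (i).

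Regarding the obstacle you flag---making the $\mathbf{G}$/$\mathbf{H}$ cancellation airtight---the paper's device is to restrict to the event $D_{\Delta_n}=\bigcap_{i}\{\det\mathbf{\Xi}_{K_p,i-1}(\Delta_n,\trueparam)>0\}$. On $D_{\Delta_n}$ the matrix $\mathbf{\Xi}_{K_p,i-1}$ is genuinely invertible, so the \emph{formal} Taylor expansions defining $\mathbf{G}_{i-1,j}$ and $\mathbf{H}_{i-1,j}$ are honest Taylor expansions of $(\mathbf{\Xi}_{K_p,i-1})^{-1}$ and $\log\det\mathbf{\Xi}_{K_p,i-1}$ with remainders in $\mathcal{S}$; combined with the identity $\partial_{\theta,k}\log\det\mathbf{\Xi} = -\mathrm{Tr}[(\partial_{\theta,k}\mathbf{\Xi}^{-1})\mathbf{\Xi}]$ this yields
\[
\sum_{j=0}^{K_p}\Delta_n^j\Bigl\{\mathrm{Tr}\bigl[(\partial_{\theta,k}\mathbf{G}_{i-1,j})\,\mathbf{\Xi}_{K_p,i-1}\bigr]+\partial_{\theta,k}\mathbf{H}_{i-1,j}\Bigr\}=R(\Delta_n^{K_p+1},X_{t_{i-1}},\trueparam),
\]
which is exactly the cancellation you want. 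The complementary event is then handled by $\mathbb{P}_{\trueparam}(D_{\Delta_n}^c)\to 0$, since $\det\mathbf{\Xi}_{K_p,i-1}-\det\mathbf{\Sigma}_{i-1}\in\mathcal{S}$ and $\mathbf{\Sigma}_{i-1}$ is positive definite by Lemma~\ref{lemma:positive_II}.
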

\noindent We provide the proof of Lemma \ref{lemm:slln} in Section  \ref{appendix:pf_slln} of the Supplementary Material. We show the proof of Proposition \ref{prop:clt} in the next subsection where we highlight the manner in which we make use of the condition  $\Delta_n = o (n^{-1/ p})$, $p \ge 2$. By applying these two results to equation (\ref{eq:taylor_clt}), the proof of Theorem \ref{thm:clt} is complete.   
\subsubsection{Proof of Proposition \ref{prop:clt}} 
For simplicity of notation, we write 
$
\textstyle 
\partial_{\theta, k} \ell_{p, n} (\theta) 
= \sum_{i = 1}^n {\xi}_{i}^k (\theta), \; \theta \in \Theta, \, 1 \le k \le N_\theta$, where: 
\begin{align} \label{eq:xi}
\xi_{i}^k (\theta)
& =  
[ \mathbf{M}_n^{-1}]_{kk} \times  \nonumber \\ 
& \sum_{j = 0}^{K_p} 
\, \Delta_n^j \cdot 
\partial_{\theta, k} \bigl\{ 
\mathbf{m}_{p, i} (\Delta_n, \theta)^\top 
\, 
\mathbf{G}_{i-1, j} (\theta) 
\, 
\mathbf{m}_{p, i} (\Delta_n, \theta) 
+ \mathbf{H}_{i-1, j} (\theta) 
\bigr\}. 
\end{align} 
Due to Theorems 4.2 \& 4.4 in \cite{hall:14}, Proposition \ref{prop:clt} holds once we prove the following convergences:  
\begin{description}
\item[(i)] 
If \limit \, with $\Delta_n = o(n^{-1/p})$, then:   
\begin{align} \label{eq:score_E}
\sum_{i = 1}^n 
\mathbb{E}_{\trueparam} \bigl[ \xi_i^k (\trueparam) | \mathcal{F}_{t_{i-1}} \bigr] 
\probconv 0, \quad 1 \le k \le N_\theta. 
\end{align}
\item[(ii)] If \limit, then:    
\begin{gather}
\sum_{i = 1}^n 
\mathbb{E}_{\trueparam} \bigl[ \xi_i^{k_1} (\trueparam) \xi_i^{k_2} (\trueparam) | \mathcal{F}_{t_{i-1}} \bigr] 
\probconv 4 \bigl[ \Gamma(\trueparam) \bigr]_{k_1 k_2}, \quad 1 \le k_1, k_2 \le N_\theta; \label{eq:hess_E} \\[-0.2cm]  
\sum_{i = 1}^n 
\mathbb{E}_{\trueparam} \bigl[ 
\bigl( \xi_i^{k_1} (\trueparam) \xi_i^{k_2} (\trueparam) \bigr)^2 
% \xi_i^{k_3} (\trueparam) \xi_i^{k_4} (\trueparam) 
| \mathcal{F}_{t_{i-1}} \bigr] 
\probconv 0, \quad 1 \le k_1, k_2  \le N_\theta.  
\label{eq:fourth_E}
\end{gather}   
\end{description}
% 
% \begin{lemma} \label{lemma:clt_moments}

% \end{lemma}
% 
\noindent Notice that the design condition $\Delta_n = o (n^{-1/p})$ is used in the proof that the expectation of the score function converges to $0$, i.e., for convergence (\ref{eq:score_E}). We provide the proof of the two convergences (\ref{eq:hess_E}) and (\ref{eq:fourth_E}) in Section \ref{sec:pf_clt_limits} of the Supplementary Material and focus on the proof of (\ref{eq:score_E}) in this section. 
\\

\noindent
{\it Proof of  convergence (\ref{eq:score_E})}. 
Let $1 \le k \le N_{\theta}$. We write for $\theta \in \Theta$ and $1 \le i \le n$: 
\begin{align*} 
\partial_{\theta, k} \mathbf{m}_{p, i} (\Delta_n, \theta) 
\equiv v_{p, k} (\Delta_n, \sample{X}{i-1}, \theta). 
\end{align*}
Note that the $\mathbb{R}^N$-valued function 
$v_{p, k} (\Delta_n, \sample{X}{i-1}, \theta)$ is independent of $\sample{X}{i}$. It then follows that: 
\begin{align*}
\sum_{i = 1}^n \mathbb{E}_{\trueparam}  \bigl[ \xi_i^k (\trueparam)  | \mathcal{F}_{t_{i-1}} \bigr] 
= \sum_{i = 1}^n \bigl\{ F^{(1), k}_{i-1} (\trueparam) +  F^{(2), k}_{i-1} (\trueparam) \bigr\},  
\end{align*}
where we have set: 
\begin{align*}
& F^{(1), k}_{i-1} (\trueparam) 
= 2 [\mathbf{M}_n^{-1}]_{kk} 
\times \sum_{j = 0}^{K_p} \sum_{l_1, l_2 = 1}^N \Delta_n^j \, 
v_{p, k}^{l_1} (\Delta_n, \sample{X}{i-1}, \trueparam) 
\bigl[ \mathbf{G}_{i-1, j} (\trueparam) \bigr]_{l_1 l_2} 
\mathbb{E}_{\trueparam} 
\bigl[ 
\mathbf{m}_{p, i}^{l_2} (\Delta_n, \trueparam) | \mathcal{F}_{t_{i-1}} 
\bigr]; \\
& F^{(2), k}_{i-1} (\trueparam) 
=  
[\mathbf{M}_n^{-1}]_{kk} 
\times 
\sum_{j = 0}^{K_p} \Delta_n^j
\biggl\{ 
\sum_{l_1, l_2 = 1}^N 
\bigl[ \partial_{\theta, k} \mathbf{G}_{i-1, j} 
(\trueparam) \bigr]_{l_1 l_2} 
\mathbb{E}_{\trueparam} 
\bigl[ 
\mathbf{m}_{p, i}^{l_1} (\Delta_n, \trueparam) 
\mathbf{m}_{p, i}^{l_2} (\Delta_n, \trueparam) | \mathcal{F}_{t_{i-1}} 
\bigr]  \\[-0.3cm]  
& \qquad \qquad \qquad  \qquad \qquad    + \partial_{\theta, k} \mathbf{H}_{i-1, j} (\trueparam) \biggr\}.  
\end{align*}
For the first term, it follows from (\ref{eq:m_approx}) that: 
\begin{align*} %\label{eq:F_1} 
F_{i-1}^{(1), k} (\trueparam) = 
\begin{cases}
\tfrac{1}{n} R^{(1), k} (\sqrt{n \Delta_n^{2K_p + 1}}, \sample{X}{i-1}, \trueparam), & 
{1 \le k \le N_{\beta}}; \\ 
\tfrac{1}{n} R^{(1), k} (\sqrt{n \Delta_n^{2K_p + 2}}, \sample{X}{i-1}, \trueparam), & 
{N_{\beta}  + 1 \le k \le N_\theta}, 
\end{cases}
\end{align*}
for some $R^{(1), k} \in \mathcal{S}$, under condition \ref{assump:coeff}$\,(2K_p)$. 
Thus, it follows from Lemmas \ref{lemma:aux_1}-\ref{lemma:aux_2} in the Supplementary Material that if \limit \, with $\Delta_n = o (n^{-1/p})$, then: 
\begin{align*}  
\sum_{i = 1}^n F_{i-1}^{(1), k}  (\trueparam) \probconv 0, \qquad 1 \le k \le N_\theta. 
\end{align*}
% 
% For the term $F^{(2), k}_{i-1} (\trueparam)$, we show that 
% % for any $\varepsilon > 0$, 
% if \limit \, with $\Delta_n = o (n^{-1/p})$, then 
% 
% \begin{align} \label{eq:F_2_conv}
% \sum_{i = 1}^n F_{i-1}^{(2), k}  (\trueparam) \probconv 0, \qquad 1 \le k \le N_\theta.
% % \mathbb{P}_{\trueparam} \Bigl( \bigl|  \sum_{i = 1}^n  F^{(2), k}_{i-1} (\trueparam) \bigr| > \varepsilon \Bigr) \to  0, \quad 1 \le k \le N_{\theta}.   
% \end{align}
% 
We consider the convergence of the term $\textstyle \sum_{i = 1}^n F_{i-1}^{(2), k}  (\trueparam)$. We introduce the following subsets in the event space $\Omega$:
\begin{align}
\Omega_{\varepsilon} 
& := \Bigl\{ \omega \in \Omega \, \Big| \, 
\bigl| \textstyle \sum_{i = 1}^n  F^{(2), k}_{i-1} (\trueparam) \bigr| > \varepsilon   \Bigr\},
\quad  \varepsilon > 0; \nonumber \\   
D_{\Delta_n, i} 
& := 
\Bigl\{ \omega \in \Omega \, \Big| \, \det \boldsymbol{\Xi}_{K_p, i} (\Delta_n, \trueparam)  > 0 \Bigr\}, \quad  0 \le i \le n-1. \nonumber 
\end{align}
We also set $\textstyle  D_{\Delta_n} = \bigcap_{i = 1}^n D_{\Delta_n, i-1} $. We then have for any $\varepsilon > 0$: 
\begin{align} \label{eq:prob_bd}
\mathbb{P}_{\trueparam} (\Omega_{\varepsilon}) 
\le \mathbb{P}_{\trueparam} \bigl( \Omega_{\varepsilon} \cap D_{\Delta_n} \bigr) 
+  \mathbb{P}_{\trueparam} \bigl( \Omega_{\varepsilon} \cap D^c_{\Delta_n} \bigr).  
\end{align}
For the first term of the right hand side of (\ref{eq:prob_bd}), we have the following result whose proof is provided in Section \ref{append:pf_F2_conv} of the Supplementary Material. 
\begin{lemma} \label{lemma:F_2_conv}
Assume that conditions \ref{assump:hor}, \ref{assump:coeff}\,$(2K_p)$, \ref{assump:additional_con}, \ref{assump:moments}, \ref{assump:finite_moment} and \ref{assump:ident} hold. 
For any $\varepsilon > 0$, if \limit \, with $\Delta_n = o (n^{-1/p})$, then 
% \begin{align*} 
$
\mathbb{P}_{\trueparam} 
\bigl(
\Omega_{\varepsilon} \cap D_{\Delta_n} 
\bigr)
\to 0. 
$ 
% \end{align*}  
\end{lemma} 
\noindent We next consider the second term of the right hand side of (\ref{eq:prob_bd}). We have:
\begin{align*}
\mathbb{P}_{\trueparam} \bigl( \Omega_{\varepsilon} \cap D^c_{\Delta_n} \bigr) 
& \leq \sum_{i = 1}^{n} \mathbb{P}_{\trueparam} \bigl(  D_{\Delta_n, i-1}^c \bigr) \\ 
& = \sum_{i = 1}^{n} \mathbb{P}_{\trueparam} 
\Bigl( 
\bigl\{ 
\omega \in \Omega \, | \, 
\det \boldsymbol{\Xi}_{K_p, i-1} (\Delta_n, \trueparam)  = 0  
\bigr\} \Bigr).   
\end{align*}
Since it follows that: 
\begin{align*} 
 \det \boldsymbol{\Xi}_{K_p, i-1} (\Delta_n, \trueparam) 
 - \det \boldsymbol{\Sigma}_{i-1} (\trueparam) 
 = R (\Delta_n, \sample{X}{i-1}, \trueparam) 
\end{align*}
for ${R} \in \mathcal{S}$, 
and $\det \boldsymbol{\Sigma}_{i-1} (\trueparam) > 0$, $1 \le i \le n$, under condition \ref{assump:hor}-$(ii)$ due to Lemma \ref{lemma:positive_II}, we have that 
$\mathbb{P}_{\trueparam} \bigl( \Omega_{\varepsilon} 
\cap D^c_{\Delta_n} \bigr) \to 0$ as $\Delta_n \to 0$.   
% 
% \begin{align*}
% \end{align*} 
% 
Thus, we obtain that $\textstyle \sum_{i = 1}^n F_{i-1}^{(2), k}  (\trueparam) \probconv 0$ if \limit \, with $\Delta_n = o (n^{-1/p})$, and the proof of convergence (\ref{eq:score_E}) is complete. 
\section{Conclusions} 
\label{sec:conclusion}
We have proposed general contrast estimators for a wide class of hypo-elliptic diffusions specified in (\ref{eq:hypo-I}) and (\ref{eq:hypo-II}), and showed that the estimators achieve consistency and asymptotic normality in a high-frequency complete observation regime under the weakened design condition $\Delta_n = o (n^{-1/p})$, $p \ge 2$. 
\textcolor{black}{For elliptic diffusions, contrast estimators under such a general weak design condition have been investigated in earlier works, e.g.~\cite{kess:97, uchi:12}, however 
% We have thus closed a gap between elliptic and hypo-elliptic diffusion classes over availability of analytic results in such a  context, as general contrast estimators for elliptic SDEs under similarly weak design conditions have been investigated in earlier works, see e.g.~\cite{kess:97, uchi:12}.
in the context of hypo-elliptic SDEs, established results (e.g.~\cite{dit:19, glot:21, melnykova2020parametric, igu:23}) before our work typically relied on a condition of  `rapidly increasing experimental design', i.e.~the requirement that $\Delta_n = o (n^{-1/2})$. }
%The development in our work provides flexibility of statistical analysis for hypo-elliptic diffusions since the CLT holds with a weaker design of high frequency observations (Theorem \ref{thm:clt}). 
The numerical experiments in 
Section~\ref{sec:sim} illustrated cases where for a given step-size $\Delta_n$ in-between observations and given $n$,
% the effect of the condition $\Delta_n = o (n^{-1/p})$ and 
estimators requiring the condition $\Delta_n = o(n^{-1/2})$ can induce unreliable biased parametric inference procedures. In contrast, the bias was removed upon consideration of estimators which required the weaker design condition 
$\Delta_n = o (n^{-1/p})$, $p \ge 3$. 
We stress that computations w.r.t.~the contrast function that delivers estimators under the weakened design condition can be automated via use of symbolic programming and automatic differentiation, so that that users need only specify the drift and diffusion coefficients of the given SDE model. 

Our research opens up several potential future directions.
\begin{itemize}[leftmargin=0.5cm]
\item[1.] {\textit{Development and analytical study of estimators under alternative observation designs, such as partially observed coordinates and/or low-frequency observation settings.} Such designs are important from a practical perspective but have yet to be investigated analytically even for elliptic diffusions. The results in this paper are already relevant for such different observation designs as they can be regarded as providing a minimum requirement on the step-size $\Delta_n$ (for given $n$) that needs to be used when considering smaller amounts of data compared to the complete observation regime treated in this work. E.g., in a low-frequency setting where \emph{data augmentation} procedures (within an Expectation-Maximisation or MCMC setting) will introduce latent SDE values, 
the user-specified step-size $\Delta_n$ will need to satisfy the weakened design conditions obtained in our work (i.e.~if the likelihood-based method is not supported in the case where imputed variables were indeed observations, there is absolutely no basis for the method that uses imputed values to provide reliable estimates).  
In general, it is of interest to explore the type of convergence rates and step-size conditions obtained and required, respectively, in CLTs under such different observation designs.}    

\item[2.] 
\textcolor{black}{\textit{Parametric inference for Mckean-Vlasov type hypo-elliptic SDEs and related interacting particle system models.} Recently, parametric inference for SDEs with coefficients depending on the (empirical) law of the process has become an active research area in Statistics, e.g. \cite{amo:23, sha:23}, though these works focus on elliptic models. Law-dependent hypo-elliptic SDEs appear in several applications \citep{bal:12}, and it is of high interest to establish estimators and analytic results that cover such hypo-elliptic models.} 

\item[3.] 
\textcolor{black}{\textit{Non-parametric estimation for hypo-elliptic SDEs.} Established results in the literature have focused on elliptic diffusions. In the scalar case, \cite{hoff:99} studied non-parametric estimation of drift and diffusion functions from high-frequency observations and obtained optimal convergence rates, with extensions to low-frequency observations regimes given in \cite{gobet:04}. \cite{ab:19} studied a Bayesian non-parametric approach using high-frequency data. For multivariate elliptic models, see \cite{nick:20} for a work in a Bayesian setting and \cite{oga:24} which proposed a non-parametric drift estimator based on deep-learning and obtained a convergence rate under the condition of $\Delta_n = o (n^{-1/2})$. Providing results for hypo-elliptic models along the above lines makes up a very interesting research direction.} 
\end{itemize}
\subsection*{Acknowledgements}
Yuga Iguchi is supported by the Additional Funding Programme for Mathematical Sciences, delivered by EPSRC (EP/V521917/1) and the Heilbronn Institute for Mathematical Research. 
% 
% \begin{supplement}
% The document contains proofs of the technical results introduced in Section \ref{sec:pf} and derivation of the marginal likelihood used in the numerical experiment of Section \ref{sec:fhn_partial}. 
% % \stitle{Title of Supplement A}
% % \sdescription{Short description of Supplement A.}
% \end{supplement}  
% 
% \bibliographystyle{imsart-nameyear} 
% \bibliography{Hypo}
% 
\appendix 

\section{Preliminaries} \label{appendix:aux}
\subsection{Two Auxiliary Results}
 We introduce two auxiliary lemmas used in the proof of technical results required by the main statements, Theorems \ref{thm:consistency} \& \ref{thm:clt}.
\begin{lemma} \label{lemma:aux_1}
Let $f : \mathbb{R}^N \times \Theta \to \mathbb{R}$ be differentiable w.r.t.~$x \in \mathbb{R}^N$ and $\theta \in \Theta $, with derivatives of polynomial growth in $x \in \mathbb{R}^N$ uniformly in $\theta \in \Theta$. Under conditions \ref{assump:hor}, \ref{assump:coeff}\,$(2K_p)$, \ref{assump:additional_con}, \ref{assump:moments} and \ref{assump:finite_moment} in the main text, if \limit, then
\begin{align*}
\sup_{\theta \in \Theta} 
\left| 
\frac{1}{n} \sum_{i=1}^n f (\sample{X}{i-1}, \theta) -
\int f (x, \theta) \truedist (dx)
\right| \probconv  0.
\end{align*}
\end{lemma}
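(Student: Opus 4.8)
The statement to prove is a uniform law of large numbers (uniform ergodic theorem) for functionals of the discretely observed diffusion: under ergodicity (condition [H4]), moment bounds ([H5], [H6]), and smoothness of $f$, one has
\[
\sup_{\theta \in \Theta} \Bigl| \tfrac{1}{n}\sum_{i=1}^n f(X_{t_{i-1}},\theta) - \int f(x,\theta)\,\nu_{\trueparam}(dx)\Bigr| \probconv 0.
\]

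\textbf{The plan is as follows.} First I would establish \emph{pointwise} convergence in probability for each fixed $\theta\in\Theta$. This is a standard consequence of ergodicity: by condition [H4] the process $\{X_t\}$ is ergodic with invariant law $\nu_{\trueparam}$, so the continuous-time ergodic theorem gives $\tfrac{1}{T}\int_0^T f(X_s,\theta)\,ds \to \int f\,d\nu_{\trueparam}$ almost surely as $T\to\infty$. To pass to the discrete Riemann-type sum $\tfrac{1}{n}\sum_{i=1}^n f(X_{t_{i-1}},\theta)$ one compares it with $\tfrac{1}{n\Delta_n}\int_0^{n\Delta_n} f(X_s,\theta)\,ds$; the difference is controlled by an It\^o-Taylor / Lipschitz estimate on $f(X_s,\theta)-f(X_{t_{i-1}},\theta)$ for $s\in[t_{i-1},t_i]$, which is $O_{\mathbb{P}}(\Delta_n)$ uniformly in $i$ thanks to the polynomial-growth derivative bound on $f$ combined with the uniform moment bound $\sup_{t}\mathbb{E}_{\trueparam}[|X_t|^r]<\infty$ from [H6] (and [H2] on the coefficients). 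Since $n\Delta_n\to\infty$, this gives pointwise convergence in probability. (This is exactly the type of argument in Kessler, Uchida--Yoshida; one may alternatively cite an existing lemma of that form.)

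\textbf{Second}, I would upgrade pointwise to uniform convergence over the compact set $\Theta$ via a standard tightness/stochastic-equicontinuity argument. Write $g_n(\theta) := \tfrac{1}{n}\sum_{i=1}^n f(X_{t_{i-1}},\theta) - \int f(x,\theta)\,\nu_{\trueparam}(dx)$. By the mean value theorem and the hypothesis that $f$ is differentiable in $\theta$ with $x$-derivatives of polynomial growth (uniformly in $\theta$), for $\theta,\theta'\in\Theta$,
\[
|g_n(\theta)-g_n(\theta')| \le |\theta-\theta'|\cdot \tfrac{1}{n}\sum_{i=1}^n \sup_{\vartheta\in\Theta}|\partial_\vartheta f(X_{t_{i-1}},\vartheta)| + |\theta-\theta'|\cdot \int \sup_{\vartheta}|\partial_\vartheta f(x,\vartheta)|\,\nu_{\trueparam}(dx),
\]
and by [H6] the quantity $\tfrac{1}{n}\sum_i \sup_\vartheta|\partial_\vartheta f(X_{t_{i-1}},\vartheta)|$ is $O_{\mathbb{P}}(1)$ (its expectation is bounded uniformly in $n$). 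Hence $\{g_n\}$ is stochastically equicontinuous. Combined with pointwise convergence on the (countable) dense subset of the compact $\Theta$ and a covering argument — for any $\varepsilon>0$, cover $\Theta$ by finitely many balls of radius $\delta$ on which the oscillation of $g_n$ is controlled — one obtains $\sup_{\theta\in\Theta}|g_n(\theta)|\probconv 0$. (One also needs $\theta\mapsto\int f(x,\theta)\,\nu_{\trueparam}(dx)$ continuous, which follows from dominated convergence using the same polynomial-growth bound and the finite moments of $\nu_{\trueparam}$ from [H4].)

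\textbf{The main obstacle} is the discretisation step: controlling $\tfrac{1}{n}\sum_{i=1}^n\bigl(f(X_{t_{i-1}},\theta) - \tfrac{1}{\Delta_n}\int_{t_{i-1}}^{t_i} f(X_s,\theta)\,ds\bigr)$ uniformly in $\theta$ while only assuming $\Delta_n\to 0$ and $n\Delta_n\to\infty$ (not the stronger $\Delta_n=o(n^{-1/2})$). One applies It\^o's formula to $f(\cdot,\theta)$ on each subinterval, bounds the drift and martingale parts using $\mathcal{L}f$ and $\mathcal{L}_j f$ (which are again of polynomial growth in $x$ uniformly in $\theta$ by [H2]), takes expectations using $\sup_t \mathbb{E}_{\trueparam}[|X_t|^r]<\infty$, and checks that the aggregate bound is $o_{\mathbb{P}}(1)$: the drift contribution is $O(\Delta_n)$ per step, summing to $O(n\Delta_n)\cdot\tfrac1n = O(\Delta_n)\to 0$ after normalising by $\tfrac1n$ — more carefully, one gets $\tfrac1n\sum_i O_{\mathbb{P}}(\Delta_n) = O_{\mathbb{P}}(\Delta_n)$, and the martingale part is handled by an $L^2$ estimate giving $O_{\mathbb{P}}(\sqrt{\Delta_n/n})$. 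Both vanish. The uniformity in $\theta$ throughout is obtained exactly as in the equicontinuity argument above, or one can simply note that all the polynomial-growth constants are uniform in $\theta\in\Theta$ by [H2], so every bound derived for fixed $\theta$ is in fact uniform.
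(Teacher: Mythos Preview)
Your proposal is correct and follows precisely the classical route: pointwise convergence via the continuous-time ergodic theorem plus a discretisation estimate, then uniformity over the compact $\Theta$ by a stochastic-equicontinuity/covering argument exploiting the $\theta$-derivative bound. The paper itself does not give a proof at all --- it simply states that the result is a multivariate version of Lemma~8 in Kessler (1997) and omits the details --- and the argument you have outlined is exactly the one underlying that reference, so there is no substantive difference to report.
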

\textit{Proof}.
This is a multivariate version of Lemma 8 in \cite{kess:97}, and we omit the proof.  $\Box$
\begin{lemma} \label{lemma:aux_2}
Let $p \ge 2$, $K_p = [p/2]$, $1 \leq j_1, j_2 \leq N$, and $f : \mathbb{R}^N \times \Theta \to \mathbb{R}$ satisfy the same assumption as in the statement of Lemma \ref{lemma:aux_1}. Under conditions \ref{assump:hor}, \ref{assump:coeff}\,$(2K_p)$, \ref{assump:additional_con}, \ref{assump:moments} and \ref{assump:finite_moment} in the main text, if \limit, then
\begin{gather*}
\sup_{\theta \in \Theta}  
\, \Bigl| 
\tfrac{1}{n} \sum_{i=1}^n  
f (\sample{X}{i-1}, \theta) 
\mathbf{m}_{p, i}^{j_1} (\Delta_n, \trueparam) 
\mathbf{m}_{p, i}^{j_2} (\Delta_n, \trueparam) 
- \int f (x, \theta) 
\bigl[ \boldsymbol{\Sigma} (x, \trueparam) \bigr]_{j_1 j_2} \truedist (dx)
\Bigr| \probconv 0;  \label{eq:canonical_conv1} \\
\sup_{\theta \in \Theta} 
\Bigl| 
\tfrac{1}{n  \sqrt{\Delta_n}} 
\sum_{i=1}^n   f (\sample{X}{i-1}, \theta) 
\mathbf{m}_{p, i}^{j_1} (\Delta_n, \trueparam)  
\Bigr| \probconv 0. 
\label{eq:canonical_conv2} 
\end{gather*}
\end{lemma}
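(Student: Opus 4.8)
The plan is to treat both displays by a single template: split each summand into its $\mathcal{F}_{t_{i-1}}$-conditional mean plus a martingale-difference remainder, control the predictable part via the ergodic average of Lemma~\ref{lemma:aux_1} together with the residual bounds built into the class $\mathcal{S}$, kill the martingale part by a second-moment estimate, and finally upgrade the resulting pointwise convergences to uniform ones over the compact parameter set $\Theta$.

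Concretely, set $\zeta_i := \mathbf{m}_{p,i}^{j_1}(\Delta_n,\trueparam)\,\mathbf{m}_{p,i}^{j_2}(\Delta_n,\trueparam) - \mathbb{E}_{\trueparam}\bigl[\mathbf{m}_{p,i}^{j_1}(\Delta_n,\trueparam)\,\mathbf{m}_{p,i}^{j_2}(\Delta_n,\trueparam)\,\big|\,\mathcal{F}_{t_{i-1}}\bigr]$ and $\eta_i := \mathbf{m}_{p,i}^{j_1}(\Delta_n,\trueparam) - \mathbb{E}_{\trueparam}\bigl[\mathbf{m}_{p,i}^{j_1}(\Delta_n,\trueparam)\,\big|\,\mathcal{F}_{t_{i-1}}\bigr]$, both $\{\mathcal{F}_{t_i}\}$-martingale differences. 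The It\^o--Taylor expansion (\ref{eq:expansion_cov_2}) gives $\mathbb{E}_{\trueparam}[\mathbf{m}_{p,i}^{j_1}\mathbf{m}_{p,i}^{j_2}\,|\,\mathcal{F}_{t_{i-1}}] = [\mathbf{\Sigma}(\sample{X}{i-1},\trueparam)]_{j_1j_2} + \sum_{j=1}^{K_p}\Delta_n^{j}[\mathbf{\Sigma}_j(\sample{X}{i-1},\trueparam)]_{j_1j_2} + R(\Delta_n^{K_p+1},\sample{X}{i-1},\trueparam)$ with $R\in\mathcal{S}$, while (\ref{eq:m_approx}) shows $\mathbb{E}_{\trueparam}[\mathbf{m}_{p,i}^{j_1}\,|\,\mathcal{F}_{t_{i-1}}]$ is itself of $\mathcal{S}$-type with prefactor $\sqrt{\Delta_n^{2K_p+1}}$. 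I will also use that the conditional moments of all orders of the (multi-scaled) standardised increments $\mathbf{m}_{p,i}$ are bounded by a polynomial in $\sample{X}{i-1}$, uniformly in $i$; this is a by-product of the same It\^o--Taylor expansion combined with the moment bound \ref{assump:finite_moment}.

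For the first display, the leading term $\tfrac1n\sum_i f(\sample{X}{i-1},\theta)[\mathbf{\Sigma}(\sample{X}{i-1},\trueparam)]_{j_1j_2}$ converges uniformly in $\theta$ to $\int f(x,\theta)[\mathbf{\Sigma}(x,\trueparam)]_{j_1j_2}\,\truedist(dx)$ by Lemma~\ref{lemma:aux_1}, applicable because $\mathbf{\Sigma}$ is smooth with polynomially growing derivatives under \ref{assump:coeff}, so that the weight $x\mapsto f(x,\theta)[\mathbf{\Sigma}(x,\trueparam)]_{j_1j_2}$ meets the hypotheses of that lemma; the $\Delta_n^{j}$-terms and the $R\in\mathcal{S}$-term are bounded by $\Delta_n\cdot\tfrac1n\sum_i C(1+|\sample{X}{i-1}|^{q})$, hence are $o_{\mathbb{P}_{\trueparam}}(1)$ uniformly in $\theta$ by \ref{assump:finite_moment} and Markov's inequality; and the martingale part $\tfrac1n\sum_i f(\sample{X}{i-1},\theta)\zeta_i$ has second moment $\tfrac1{n^2}\sum_i\mathbb{E}_{\trueparam}[f(\sample{X}{i-1},\theta)^2\,\mathbb{E}_{\trueparam}[\zeta_i^2\,|\,\mathcal{F}_{t_{i-1}}]]\le C/n$, using the conditional-moment bound and \ref{assump:finite_moment}. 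For the second display, the predictable part $\tfrac1{n\sqrt{\Delta_n}}\sum_i f(\sample{X}{i-1},\theta)R(\sqrt{\Delta_n^{2K_p+1}},\sample{X}{i-1},\trueparam)$ is bounded by $\Delta_n^{K_p}\cdot\tfrac1n\sum_i C(1+|\sample{X}{i-1}|^{q})=o_{\mathbb{P}_{\trueparam}}(1)$ since $K_p=[p/2]\ge1$, and the martingale part $\tfrac1{n\sqrt{\Delta_n}}\sum_i f(\sample{X}{i-1},\theta)\eta_i$ has second moment $\le \tfrac{C}{n^{2}\Delta_n}\sum_i\mathbb{E}_{\trueparam}[1+|\sample{X}{i-1}|^{q}]\le C/(n\Delta_n)\to0$ --- here the divergence $n\Delta_n\to\infty$ is precisely what absorbs the factor $1/\sqrt{\Delta_n}$.

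It remains to pass from pointwise to uniform convergence in $\theta$. Since the increments $\mathbf{m}_{p,i}(\Delta_n,\trueparam)$ carry no $\theta$-dependence, only the weight $f(\cdot,\theta)$ does, and the hypothesis on $f$ supplies the same polynomial-growth control on $\partial_\theta f$. Re-running the $L^2$-estimates above with $\partial_\theta f$ in place of $f$ bounds the $L^2$-norms of the $\theta$-gradients of all the error processes; combined with compactness of $\Theta$, a Sobolev embedding / Kolmogorov continuity argument --- the same device underlying Lemma~\ref{lemma:aux_1} and Kessler's Lemmas~8--9 in \cite{kess:97}, cf.\ also \cite{igu:23} --- upgrades the pointwise limits to uniform ones. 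I expect the main obstacle to be the uniform-in-$i$ polynomial control of the conditional moments of the multi-scaled standardised increments $\mathbf{m}_{p,i}$ together with the bookkeeping of the exact $\Delta_n$-orders in their It\^o--Taylor expansions --- this is where the hypo-elliptic, multi-scale structure genuinely enters; once those orders are secured, the rest is the routine triangular-array law-of-large-numbers template sketched above.
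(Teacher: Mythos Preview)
Your proposal is correct and follows essentially the same route as the paper: the paper defines $\mathscr{T}_i^{(1)}(\theta)=\tfrac1n f(\sample{X}{i-1},\theta)\mathbf{m}_{p,i}^{j_1}\mathbf{m}_{p,i}^{j_2}$ and $\mathscr{T}_i^{(2)}(\theta)=\tfrac1{n\sqrt{\Delta_n}}f(\sample{X}{i-1},\theta)\mathbf{m}_{p,i}^{j_1}$, verifies that $\sum_i\mathbb{E}_{\trueparam}[\mathscr{T}_i^{(\cdot)}\,|\,\mathcal{F}_{t_{i-1}}]$ has the right limit and $\sum_i\mathbb{E}_{\trueparam}[(\mathscr{T}_i^{(\cdot)})^2\,|\,\mathcal{F}_{t_{i-1}}]\to0$, and then invokes Lemma~9 of \cite{genon:93} --- which is exactly your predictable-plus-martingale-difference template packaged as a black box --- before upgrading to uniformity via the Kessler Lemma~8--9 argument you cite. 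The only cosmetic difference is that the paper absorbs all the $\Delta_n^j$-correction terms from (\ref{eq:expansion_cov_2}) into a single $R(\Delta_n,\sample{X}{i-1},\theta)\in\mathcal{S}$ rather than isolating them separately as you do.
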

\textit{Proof}. 
We define
\begin{align*}
\mathscr{T}_i^{(1)} (\theta) 
= \tfrac{1}{n} f (\sample{X}{i-1}, \theta) 
\mathbf{m}_{p, i}^{j_1} (\Delta_n, \trueparam) 
\mathbf{m}_{p, i}^{j_2} (\Delta_n, \trueparam), 
\quad  
\mathscr{T}_i^{(2)} (\theta) 
 = \tfrac{1}{n  \sqrt{\Delta_n}} f (\sample{X}{i-1}, \theta) \mathbf{m}_{p, i}^{j_1} (\Delta_n, \trueparam). 
\end{align*} 
Due to Lemma \ref{lemma:aux_1}, for any $\theta \in \Theta$, if \limit, then 
\begin{align*}
\sum_{i=1}^n 
\mathbb{E}_{\trueparam}
\bigl[
\mathscr{T}_i^{(1)} (\theta)  | \mathcal{F}_{t_{i-1}}
\bigr] 
& = \tfrac{1}{n} \sum_{i=1}^n 
f (\sample{X}{i-1}, \theta) \bigl[ \boldsymbol{\Sigma}_{i-1} (\trueparam) 
\bigr]_{j_1 j_2}
+ \tfrac{1}{n} \sum_{i=1}^n  
R (\Delta_n, \sample{X}{i-1}, \theta) \\ 
& \quad  \quad\quad\quad\quad\quad\quad\quad\quad\quad\quad
\probconv \int_{\mathbb{R}^N} f (x, \theta) 
\bigl[ \boldsymbol{\Sigma} (x, \trueparam) \bigr]_{j_1 j_2} \truedist (dx); \\[0.2cm] 
\sum_{i=1}^n 
\mathbb{E}_{\trueparam}
\bigl[
\bigl( \mathscr{T}_i^{(1)} (\theta) \bigr)^2 
| \mathcal{F}_{t_{i-1}}
\bigr]  
& = \tfrac{1}{n^2} \sum_{i=1}^n  
\widetilde{R} (1, \sample{X}{i-1}, \theta) \probconv 0, 
\end{align*}
where ${R}, \widetilde{R} \in \mathcal{S}$. Similarly, we have, if \limit, 
\begin{align*}
& \sum_{i = 1}^n \mathbb{E}_{\trueparam}
\bigl[
\mathscr{T}_i^{(2)} (\theta)  | \mathcal{F}_{t_{i-1}}
\bigr]  \probconv 0; \\ 
& \sum_{i = 1}^n \mathbb{E}_{\trueparam}
\bigl[
\bigl( \mathscr{T}_i^{(2)} (\theta)
\bigr)^2 
| \mathcal{F}_{t_{i-1}}
\bigr] = \tfrac{1}{n \Delta_n } \tfrac{1}{n} 
\sum_{i = 1}^n R (1, \sample{X}{i-1}, \theta)
\probconv 0,   
\end{align*}
where $R \in \mathcal{S}$. Thus, due to Lemma 9 in \cite{genon:93}, we have for any $\theta \in \Theta$,   
\begin{gather*}
\tfrac{1}{n} \sum_{i=1}^n  
f (\sample{X}{i-1}, \theta) 
\mathbf{m}_{p, i}^{j_1} (\Delta_n, \trueparam) 
\mathbf{m}_{p, i}^{j_2} (\Delta_n, \trueparam) 
\probconv \int f (x, \theta) 
\bigl[ \boldsymbol{\Sigma} (x, \trueparam) \bigr]_{j_1 j_2} \truedist (dx); \\ 
\tfrac{1}{n  \sqrt{\Delta_n}} 
\sum_{i=1}^n   f (\sample{X}{i-1}, \theta) 
\mathbf{m}_{p, i}^{j_1} (\Delta_n, \trueparam) 
\probconv 0, 
\end{gather*}
as \limit. Uniform convergence w.r.t.~$\theta$ is shown by the same arguments as in the proofs of Lemmas 9, 10 in \cite{kess:97} and now the proof is complete. $\Box$
\subsection{Proof of Lemma \ref{lemma:positive_I}}
\label{append:pf_positive_I}
It is immediate from condition \ref{assump:hor}$(i)$ in the main text that the matrix $a_R(x, \sigma)$ is positive definite for any $(x, \sigma) \in \mathbb{R}^N \times \Theta_\sigma$. 
The condition also implies that 
\begin{align*}
\mrm{span} \Big\{
\mrm{proj}_{N_R + 1, N} \bigl\{ [A_0, A_k] (x, \theta)  \bigr\}, \, 1 \le k \le d \Bigr\}
= 
\mrm{span} \Big\{ 
\mathcal{L}_k \mu_S (x, \theta), 
\, 1 \le k \le d 
\Bigr\}
= 
\mathbb{R}^{N_S}, 
\end{align*}
for all $(x, \theta) \in \mathbb{R}^N \times \Theta$. Thus, the matrix $a_S (x, \theta)$ is also positive definite for any $(x, \theta) \in \mathbb{R}^N \times \Theta$. Finally, the following calculation completes the proof: 
\begin{align*}
\det \boldsymbol{\Sigma}^{(\mrm{I})} (x, \theta) = 
12 \det a_R (x, \sigma) \det a_S (x, \theta) > 0, \quad  (x, \theta) \in \mathbb{R}^N \times \Theta. 
\end{align*}
\section{Proof of Technical Results for Theorem \ref{thm:consistency}} 
\label{appendix:pf_techincal_consistency}
This section is devoted to the proof of Lemmas \ref{lemm:step1}--\ref{lemma:step3_2} stated in Section \ref{sec:pf_consistency}, i.e.~in the context of the proof of consistency of the proposed contrast estimators (Theorem \ref{thm:consistency}). We note that the proof below proceeds with the second class of hypo-elliptic SDEs (\ref{eq:hypo-II}) and makes use of the notation of the contrast function $\ell_{p, n}^{\, (\mrm{II})} (\theta), \, p \ge 2$, defined in the main text with the subscript $(\mrm{II})$ being dropped. Throughout the proofs, we assume $p \ge 3$ and use often the following notation:
\begin{align*}
d (x, \theta) = 
\begin{bmatrix*} 
d_{S_1} (x_S, \beta_{S_1})  \\[0.2cm]
d_{S_2} (x, \beta_{S_2}) \\[0.2cm]  
d_{R} (x, \beta_{R})  
\end{bmatrix*}
= 
\begin{bmatrix} 
\mu_{S_1} (x_S, \beta_{S_1}) - \mu_{S_1} (x_S, \truebeta_{S_1}) \\[0.2cm]
\mu_{S_2} (x, \beta_{S_2}) - \mu_{S_2} (x, \truebeta_{S_2}) \\[0.2cm]  
\mu_{R} (x, \beta_{R}) - \mu_{R} (x, \truebeta_{R}) 
\end{bmatrix},
\qquad (x, \theta) \in \mathbb{R}^N \times \Theta. 
\end{align*}
\subsection{Proof of Lemma \ref{lemm:step1}}  
It holds that 
$$
\tfrac{\Delta_n^3}{n} \ell_{p, n} (\theta) 
= \tfrac{1}{n} \sum_{i = 1}^n \sum_{k = 1}^4 \mathcal{E}^{(k)}_{i-1} (\theta), \quad \theta \in \Theta, 
$$
with 
\begin{align*}
\mathcal{E}^{(1)}_{i-1} (\theta)  
& \equiv 
d_{S_1} ( X_{S_1, t_{i-1}}, \beta_{S_1})^\top 
\boldsymbol{\Lambda}_{S_1 S_1} (\sample{X}{i-1}, \theta) 
d_{S_1} ( X_{S_1, t_{i-1}}, \beta_{S_1}); 
\\[0.3cm] 
\mathcal{E}^{(2)}_{i-1} (\theta)  
& \equiv  \sum_{j_1, j_2 = 1}^N 
R_{j_1 j_2} (\Delta_n^3, \sample{X}{i-1}, \theta)
\, 
\mathbf{m}_{p, i}^{j_1} (\Delta_n, \trueparam) 
\, 
\mathbf{m}_{p, i}^{j_2} (\Delta_n, \trueparam);  
\\[0.1cm] 
\mathcal{E}^{(3)}_{i-1} (\theta)    
& \equiv \sum_{j = 1}^N 
R_j (\sqrt{\Delta_n^3}, \sample{X}{i-1}, \theta)
\, \mathbf{m}_{p, i}^{j} (\Delta, \trueparam),  \quad 
\mathcal{E}^{(4)}_{i-1} (\theta) 
\equiv R (\Delta_n, \sample{X}{i-1}, \theta),\\[-0.4cm]
\end{align*}
for some functions $R_{j_1 j_2}, R_j, R \in \mathcal{S}$. 
From Lemmas  \ref{lemma:aux_1} and \ref{lemma:aux_2}, 
we immediately have that if \limit, then
\begin{align*}
& \sup_{\theta \in \Theta} \, 
\Bigl| 
\tfrac{1}{n} \sum_{i = 1}^n 
\mathcal{E}^{(1)}_{i-1} (\theta) -
\int_{\mathbb{R}^N} 
d_{S_1} (x_S, \beta_{S_1})^\top
\boldsymbol{\Lambda}_{S_1 S_1} (x, \theta) 
d_{S_1} (x_S, \beta_{S_1}) \truedist (dx)
\Bigr| \probconv 0; \\ 
& \sup_{\theta \in \Theta} \, 
\Bigl| 
\tfrac{1}{n} \sum_{i = 1}^n \mathcal{E}^{(k)}_{i-1} (\theta) 
\Bigr| 
\probconv 0, \ \ 2 \le k \le 4.  
\end{align*}
Thus, we obtain the limit (\ref{eq:conv_1}) and the proof is now complete.  
\label{appendix:pf_step1}

\subsection{Proof of Lemma \ref{lemma:step1_B}}  
\label{appendix:pf_step1_B}
\subsubsection{Proof of limit (\ref{eq:conv_A})}
\label{sec:pf_conv_A}
It holds that for $(\beta_{S_2}, \beta_R, \sigma)
 \in \Theta_{\beta_{S_2}} \times \Theta_{\beta_R} \times \Theta_{\sigma}$ and $1 \le k \le N_{\beta_{S_1}}$, 
\begin{align*}
& \mathbf{A}_{p,n}^k (\truebeta_{S_1}, \beta_{S_2}, \beta_R, \sigma) \\ 
& = 
\tfrac{1}{n} \sum_{i = 1}^n  
\Biggl\{ 
\tfrac{1}{\sqrt{\Delta_n}} {F}^k(\sample{X}{i-1}, \theta) 
+ \sqrt{\Delta_n} \cdot {R}^k (1, \sample{X}{i-1}, \theta) 
+ \sum_{j = 1}^N {R}^k_j (1, \sample{X}{i-1}, \theta) \mathbf{m}_{p, i}^j (\Delta_n, \trueparam) \\
& \qquad \qquad 
+ \sum_{j_1, j_2 = 1}^N 
{R}^k_{j_1 j_2} (\sqrt{\Delta_n^3}, \sample{X}{i-1}, \theta) 
\mathbf{m}_{p, i}^{j_1} (\Delta_n, \trueparam)
\mathbf{m}_{p, i}^{j_2} (\Delta_n, \trueparam) 
\Biggr\} |_{\theta = (\truebeta_{S_1}, \beta_{S_2}, \beta_R, \sigma)},      
\end{align*}
where ${R}^k, {R}^k_{j}, {R}^k_{j_1j_2} \in \mathcal{S}$ and 
\begin{align*}
{F}^k (x, \theta)
\equiv
- \bigl( \partial_{\beta_{S_1}, k} \, 
\mu_{S_1} (x, \beta_{S_1}) \bigr)^\top 
\, 
{M}(x, \theta) 
\, 
d_{S_2}  (x, \beta_{S_2}), \qquad (x, \theta) \in 
\mathbb{R}^N \times \Theta,  
\end{align*}
with the $\mathbb{R}^{N_{S_1} \times N_{S_2}}$-valued function $M$ being defined as: 
\begin{align} \label{eq:M}
{M} (x, \theta) 
= \boldsymbol{\Lambda}_{S_1 S_1} (x, \theta) \partial_{x_{S_2}}^\top \mu_{S_1} (x_S, \truebeta_{S_1})  + 
2 \boldsymbol{\Lambda}_{S_1 S_2} (x, \theta) .  
\end{align} 
In the derivation of ${F}_{i-1}^k (\theta)$, we used: \vspace{0.1cm} 
\begin{align} 
& \mathcal{L} \mu_{S_1} (x, \trueparam) 
- \mathcal{L} \mu_{S_1} (x, \theta)|_{\theta = (\truebeta_{S_1}, \beta_{S_2}, \beta_R, \sigma)}  \nonumber \\[0.2cm] 
& \qquad  = \bigl(
\partial_{x_{S_1}}^\top 
\mu_{S_1} (x_S, \truebeta_{S_1})
\bigr) \mu_{S_1} (x_S, \truebeta_{S_1}) 
+ \bigl(
\partial_{x_{S_2}}^\top 
\mu_{S_1} (x_S, \truebeta_{S_1})
\bigr) \mu_{S_2} (x, \truebeta_{S_2}) \nonumber \\ 
& \quad\quad\quad\qquad - \bigl(
\partial_{x_{S_1}}^\top 
\mu_{S_1} (x_S, \truebeta_{S_1})
\bigr) \mu_{S_1} (x_S, \truebeta_{S_1}) 
- \bigl(
\partial_{x_{S_2}}^\top 
\mu_{S_1} (x_S, \truebeta_{S_1})
\bigr) \mu_{S_2} (x, \beta_{S_2}) \nonumber \\[0.2cm] 
& \qquad= 
- \bigl(
\partial_{x_{S_2}}^\top 
\mu_{S_1} (x_S, \truebeta_{S_1})
\bigr) d_{S_2} (x, \beta_{S_2}). 
\label{eq:diff_S1}    
\end{align}
Indeed, it holds that $F^k (\sample{X}{i-1}, \theta) = 0$ due to the following result (Lemma 6 in \cite{igu:23}). 
\begin{lemma} \label{lemm:mat_1}
For any $(x, \theta) \in \mathbb{R}^N \times \Theta$, $M (x, \theta) = \mathbf{0}_{N_{S_1} \times N_{S_2}}$. 
\end{lemma}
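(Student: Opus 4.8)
\emph{Proof strategy.} The plan is to reduce the claim to elementary block linear algebra, exploiting two structural identities forced by the degenerate shape of (\ref{eq:hypo-II}). Fix $(x,\theta)$ and write $B:=\partial_{x_{S_2}}^\top\mu_{S_1}(x_S,\beta_{S_1})$ and $D:=\partial_{x_R}^\top\mu_{S_2}(x,\beta_{S_2})$ (in the regime where the lemma is applied one has $\theta=(\truebeta_{S_1},\cdot,\cdot,\cdot)$, so this $B$ coincides with the Jacobian in (\ref{eq:M})). Since the vector fields $A_k$ vanish on the smooth coordinates, the generator iterates collapse to $\mathcal{L}_k\mu_{S_2}=D\,A_{R,k}$ for every $k$, and — using in addition that $\mu_{S_1}$ depends on $x$ only through $x_S$ — to $\mathcal{L}_k\mathcal{L}\mu_{S_1}=BD\,A_{R,k}$. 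I would first record these and then plug them into the block expressions for $\mathbf{\Sigma}^{(\mrm{II})}(x,\theta)$ following (\ref{eq:mat_Sigma_II}); this produces $a_{S_2}=Da_RD^\top$, $a_{S_1}=Ba_{S_2}B^\top$, $\mathbf{\Sigma}^{(\mrm{II})}_{S_2R}=\tfrac12 Da_R$, $\mathbf{\Sigma}^{(\mrm{II})}_{S_1S_2}=\tfrac18 Ba_{S_2}$, $\mathbf{\Sigma}^{(\mrm{II})}_{S_1R}=\tfrac13 B\mathbf{\Sigma}^{(\mrm{II})}_{S_2R}$ and $\mathbf{\Sigma}^{(\mrm{II})}_{S_1S_1}=\tfrac1{20}Ba_{S_2}B^\top$ (all at $(x,\theta)$), and in particular $\mathbf{\Sigma}^{(\mrm{II})}_{S_2R}a_R^{-1}=\tfrac12 D$ and $D\mathbf{\Sigma}^{(\mrm{II})}_{RS_2}=\tfrac12 a_{S_2}$, where $a_R$ and $a_{S_2}$ are invertible by Lemma \ref{lemma:positive_II}.

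The second step is to pin down the first block-row $[\Lambda_1,\Lambda_2,\Lambda_3]$ of $\mathbf{\Lambda}=(\mathbf{\Sigma}^{(\mrm{II})})^{-1}$, with $\Lambda_1=\mathbf{\Lambda}_{S_1S_1}$ and $\Lambda_2=\mathbf{\Lambda}_{S_1S_2}$ (well defined since $\mathbf{\Sigma}^{(\mrm{II})}$ is positive definite). The second and third block-columns of the identity $[\Lambda_1,\Lambda_2,\Lambda_3]\,\mathbf{\Sigma}^{(\mrm{II})}=[I_{N_{S_1}},0,0]$, after inserting the factorisations above, read $\tfrac18\Lambda_1Ba_{S_2}+\tfrac13\Lambda_2a_{S_2}+\Lambda_3\mathbf{\Sigma}^{(\mrm{II})}_{RS_2}=0$ and $(\tfrac13\Lambda_1B+\Lambda_2)\mathbf{\Sigma}^{(\mrm{II})}_{S_2R}+\Lambda_3a_R=0$. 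From the latter, $\Lambda_3=-(\tfrac13\Lambda_1B+\Lambda_2)\mathbf{\Sigma}^{(\mrm{II})}_{S_2R}a_R^{-1}=-\tfrac12(\tfrac13\Lambda_1B+\Lambda_2)D$; substituting into the former and using $D\mathbf{\Sigma}^{(\mrm{II})}_{RS_2}=\tfrac12 a_{S_2}$ together with invertibility of $a_{S_2}$, everything collapses to $\tfrac1{24}\Lambda_1B+\tfrac1{12}\Lambda_2=0$, i.e.\ $\Lambda_1B+2\Lambda_2=0$. Since $B=\partial_{x_{S_2}}^\top\mu_{S_1}$, this is exactly $M(x,\theta)=\mathbf{0}_{N_{S_1}\times N_{S_2}}$. (Feeding $\Lambda_2=-\tfrac12\Lambda_1B$ and the resulting $\Lambda_3=\tfrac1{12}\Lambda_1BD$ back into the first block-column yields $\tfrac1{720}\Lambda_1Ba_{S_2}B^\top=I$, i.e.\ $\mathbf{\Lambda}_{S_1S_1}=720\,a_{S_1}^{-1}$, consistent with Lemma 13 in \cite{igu:23}.)

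The load-bearing step is the first one: establishing the factorisations, in particular $a_{S_2}=Da_RD^\top$ and $\mathbf{\Sigma}^{(\mrm{II})}_{S_2R}=\tfrac12 Da_R$. This is where the precise structure of (\ref{eq:hypo-II}) — namely the common factor $D$ shared by $\mathcal{L}_k\mu_{S_2}$ and $A_{R,k}$ — is used, and where a naive attack would stall if one treated the $A_{R,k}$ as unrelated to the $\mathcal{L}_k\mu_{S_2}$; the constants $\tfrac1{20},\tfrac18,\tfrac16,\tfrac13,\tfrac12$ must also be tracked carefully. Once these identities are in place the remainder is routine. Alternatively one may shortcut the whole argument by quoting the explicit first block-row of $\mathbf{\Lambda}$ recorded in \cite{igu:23} ($\mathbf{\Lambda}_{S_1S_1}=720\,a_{S_1}^{-1}$, $\mathbf{\Lambda}_{S_1S_2}=-360\,a_{S_1}^{-1}\,\partial_{x_{S_2}}^\top\mu_{S_1}$), from which $M=0$ is immediate.
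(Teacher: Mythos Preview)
Your argument is correct. The paper does not supply its own proof of this lemma; it simply records the statement and cites Lemma~6 in \cite{igu:23}. Your proposal therefore goes further than the paper: you give a fully self-contained derivation, first extracting the factorisations $\mathcal{L}_k\mu_{S_2}=D\,A_{R,k}$ and $\mathcal{L}_k\mathcal{L}\mu_{S_1}=BD\,A_{R,k}$ forced by the (\ref{eq:hypo-II}) structure, then reading off the first block-row of $\mathbf{\Lambda}\mathbf{\Sigma}^{(\mrm{II})}=I$. The arithmetic with the constants $\tfrac18,\tfrac13,\tfrac16,\tfrac12$ is right and yields exactly $\Lambda_1 B+2\Lambda_2=0$. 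Your closing remark---that one could alternatively just quote the explicit block formulae for $\mathbf{\Lambda}_{S_1S_1}$ and $\mathbf{\Lambda}_{S_1S_2}$ from \cite{igu:23}---is in fact precisely what the paper does.

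One small point worth flagging. The paper defines $M(x,\theta)$ in (\ref{eq:M}) with the Jacobian $\partial_{x_{S_2}}^\top\mu_{S_1}$ evaluated at $\truebeta_{S_1}$, while your identity has it at the $\beta_{S_1}$-component of $\theta$. You correctly note that in the derivation of (\ref{eq:conv_A}) the two coincide since $\theta=(\truebeta_{S_1},\cdot,\cdot,\cdot)$ there; but the lemma is subsequently reused (e.g.\ in the treatment of $\widetilde{S}^{(4)}$ in Appendix~\ref{appendix:pf_step_2} and of $Q^{(1)}$ in Appendix~\ref{appendix:pf_step_4}) with general $\beta_{S_1}$, and there the Jacobian appearing is at $\beta_{S_1}$, not $\truebeta_{S_1}$. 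So the identity that is actually needed throughout is the one you prove, $\mathbf{\Lambda}_{S_1S_1}(x,\theta)\,\partial_{x_{S_2}}^\top\mu_{S_1}(x_S,\beta_{S_1})+2\mathbf{\Lambda}_{S_1S_2}(x,\theta)=0$, with the same $\beta_{S_1}$ on both sides; the $\truebeta_{S_1}$ in (\ref{eq:M}) should be read as shorthand for this in the special case first encountered. Your proof covers all the applications.
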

Thus, Lemmas \ref{lemma:aux_1} and \ref{lemma:aux_2} yield
\begin{align*}
\sup_{(\beta_{S_2}, \beta_R, \sigma) \in \Theta_{\beta_{S_2}} \times \Theta_{\beta_{R}} \times \Theta_\sigma} 
\Bigl| \mathbf{A}_{p,n}^k (\truebeta_{S_1}, \beta_{S_2}, \beta_R, \sigma) \Bigr| \probconv 0, 
\end{align*}
as \limit. The proof of  convergence (\ref{eq:conv_A}) is now complete. 
\subsubsection{Proof of limit (\ref{eq:conv_B})} 
We define 
\begin{align*}
 \hat{\mathbf{B}}_{p, n} (\theta)
 \equiv \tfrac{\Delta_n^3}{n} \, 
 \partial_{\beta_{S_1}}^2 \ell_{p, n} 
  \left( \theta \right), \quad \theta \in \Theta.    
\end{align*} 
Then, it holds that
\begin{align*}
\hat{\mathbf{B}}_{p, n} (\theta) 
= \tfrac{1}{n} \sum_{i = 1}^n 
\sum_{k = 0}^2  \hat{\mathbf{B}}^{(k)}_{i-1}
(\theta),  
\end{align*}
where we have set:  
for $1 \le j_1, j_2 \le N_{\beta_{S_1}}$, 
\begin{align*}
[\hat{\mathbf{B}}^{(0)}_{i-1} (\theta)]_{j_1 j_2} 
& \equiv  
2 \, \bigl( \partial_{\beta_{S_1, j_1}} \mu_{S_1} 
(X_{S, t_{i-1}}, \beta_{S_1}) \bigr)^\top 
\boldsymbol{\Lambda}_{S_1 S_1} (X_{t_{i-1}}, \theta) 
\,  \partial_{\beta_{S_1}, j_2} 
\mu_{S_1} (X_{S, t_{i-1}}, \beta_{S_1});  \\[0.3cm] 
[\hat{\mathbf{B}}^{(1)}_{i-1} (\theta)]_{j_1 j_2}  
& \equiv  
2 \, d_{S_1} (X_{S, t_{i-1}}, \beta_{S_1})^\top   
\partial_{\beta_{S_1}, j_1} 
\partial_{\beta_{S_1}, j_2}
\bigl\{  
\boldsymbol{\Lambda}_{S_1 S_1} (\sample{X}{i-1}, \theta) 
 d_{S_1} (X_{S, t_{i-1}}, \beta_{S_1}) 
\bigr\} ;  \\[0.3cm]
[\hat{\mathbf{B}}^{(2)}_{i-1} (\theta)]_{j_1 j_2}  
& \equiv  
\sum_{1 \le k_1, k_2 \le N} 
R_{k_1 k_2}^{j_1 j_2} (\Delta_n^3, \sample{X}{i-1}, \theta)
\, 
\mathbf{m}_{p, i}^{k_1} (\Delta_n, \trueparam) 
\, 
\mathbf{m}_{p, i}^{k_2} (\Delta_n, \trueparam) \\ 
& \qquad  + \sum_{1 \le k \le N} 
R_{k}^{j_1 j_2} (\sqrt{\Delta_n}, 
\sample{X}{i-1}, \theta) \, 
\mathbf{m}_{p, i}^{k} (\Delta_n, \trueparam)  
+ R^{j_1 j_2} ( \sqrt{\Delta_n}, \sample{X}{i-1}, \theta),
\end{align*} 
for some $R^{j_1 j_2}_{k_1 k_2}, R^{j_1 j_2}_{k}, R^{j_1 j_2} \in \mathcal{S}$. 
Due to the consistency of $\hat{\beta}_{S_1, p, n}$, we have 
\begin{align*}
\sup_{\lambda \in [0,1]} \sup_{(\beta_{S_2}, \beta_R, \sigma) \in \Theta_{\beta_{S_2}} \times \Theta_{\beta_{R}} \times \Theta_{\sigma}} 
\Bigl|
\tfrac{1}{n} \sum_{i = 1}^n \hat{\mathbf{B}}^{(1)}_{i-1} ( (\truebeta_{S_1} + \lambda (\hat{\beta}_{S_1, p, n} - \truebeta_{S_1}), \beta_{S_2}, \beta_R, \sigma ))
\Bigr| 
\probconv 0,
\end{align*} 
and also obtain from Lemma \ref{lemma:aux_2} that
\begin{align*}
\sup_{\lambda \in [0,1]} \sup_{(\beta_{S_2}, \beta_R, \sigma) \in \Theta_{\beta_{S_2}} \times \Theta_{\beta_{R}} \times \Theta_{\sigma}} 
\Bigl|
\tfrac{1}{n} \sum_{i = 1}^n \hat{\mathbf{B}}^{(2)}_{i-1} ( (\truebeta_{S_1} + \lambda (\hat{\beta}_{S_1, p, n} - \truebeta_{S_1}), \beta_{S_2}, \beta_R, \sigma ))
\Bigr| 
\probconv 0. 
\end{align*} 
Thus, Lemmas \ref{lemma:aux_1} and \ref{lemma:aux_2} yield  
\begin{align*}
& \sup_{(\beta_{S_2}, \beta_R, \sigma) \in \Theta_{\beta_{S_2}} \times \Theta_{\beta_R} \times \Theta_\sigma} 
\Bigl| 
\mathbf{B}_{p,n}
\bigl( 
\hat{\beta}_{S_1, p, n}, \beta_{S_2}, \beta_R, \sigma
\bigr) 
- 2  \mathbf{{B}} 
\bigl( 
\truebeta_{S_1}, \beta_{S_2}, \sigma
\bigr) \Bigr| 
%\\[0.1cm] 
% & \le \sup_{(\beta_{S_2}, \beta_R, \sigma) \in \Theta_{\beta_{S_2}} \times \Theta_{\beta_R} \times \Theta_\sigma} 
% \Bigl| 
% \mathbf{B}_{p,n}
% \bigl( 
% \hat{\beta}_{S_1, p, n}, \beta_{S_2}, \beta_R, \sigma
% \bigr) 
% - \tfrac{1}{n} \sum_{i = 1}^n \hat{\mathbf{B}}^{(0)}_{i-1} ( (\truebeta_{S_1} + \lambda (\hat{\beta}_{S_1, p, n} - \truebeta_{S_1}), \beta_{S_2}, \beta_R, \sigma )) \Bigr|  \\[0.2cm]
% & \quad \quad  \quad  + \sup_{(\beta_{S_2}, \beta_R, \sigma) \in \Theta_{\beta_{S_2}} \times \Theta_{\beta_R} \times \Theta_\sigma} 
% \Bigl| 
% \tfrac{1}{n} \sum_{i = 1}^n \hat{\mathbf{B}}^{(0)}_{i-1} ( (\truebeta_{S_1} + \lambda (\hat{\beta}_{S_1, p, n} - \truebeta_{S_1}), \beta_{S_2}, \beta_R, \sigma ))
% - 2  \mathbf{{B}} 
% \bigl( 
% \truebeta_{S_1}, \beta_{S_2}, \sigma
% \bigr) \Bigr|  \\[0.3cm]
\probconv 0,
\end{align*}
where we used
\begin{align*}
\boldsymbol{\Lambda}_{S_1 S_1} (x, \theta) = 720 \, a_{S_1}^{-1} (x, \theta), \qquad (x, \theta) \in \mathbb{R}^N \times \Theta. 
\end{align*}
The proof of limit (\ref{eq:conv_B}) is now complete. 
\subsection{Proof of Lemma \ref{lemm:step_2}} 
\label{appendix:pf_step_2}
% We write $\theta^{S_2, R}
% \equiv \bigl( \beta_{S_2}, \beta_R, \sigma \bigr) \in \Theta_{\beta_{S_2}} \times \Theta_{\beta_R} \times \Theta_\sigma$. 
It holds that 
\begin{align*}
\tfrac{\Delta_n}{n} \, \ell_{p, n} 
 ( \theta ) 
=  \tfrac{1}{n} \sum_{i = 1}^n \sum_{k = 1}^5 S_{i-1}^{(k)} (\theta), \qquad \theta \in \Theta,  
\end{align*} 
where we have set:  \vspace{0.1cm}
\begin{align*} 
S_{i-1}^{(1)} (\theta) 
& = \sum_{j_1, j_2 =1}^{N_{S_1}} 
R_{j_1 j_2}^{(1)} (\Delta_n, \sample{X}{i-1}, \theta) 
\cdot 
\tfrac{d_{S_1}^{j_1} (X_{S, t_{i-1}}, \beta_{S_1})}{\sqrt{\Delta_n^3}} 
\cdot 
\tfrac{d_{S_1}^{j_2} (X_{S, t_{i-1}}, \beta_{S_1})}{\sqrt{\Delta_n^3}} ; \\[0.2cm]
S_{i-1}^{(2)} (\theta) 
& = \sum_{j=1}^{N_{S_1}} 
R_{j}^{(1)} (\sqrt{\Delta_n}, \sample{X}{i-1}, \theta) 
\cdot 
\tfrac{d_{S_1}^{j} (X_{S, t_{i-1}}, \beta_{S_1})}{\sqrt{\Delta_n^3}}; \\[0.2cm] 
S_{i-1}^{(3)} (\theta) 
& = \sum_{\substack{1 \le  j_1 \le N_{S_1} \\ 1 \le j_2 \le N}}
R_{j_1, j_2}^{(2)} (1, \sample{X}{i-1}, \theta) 
\cdot 
\tfrac{d_{S_1}^{j_1} (X_{S, t_{i-1}}, \beta_{S_1})}{\sqrt{\Delta_n^3}}
\cdot 
\tfrac{\mathbf{m}_{p, i}^{j_2} (\Delta_n, \trueparam)}{\sqrt{\Delta_n}}; \\[0.2cm]  
S_{i-1}^{(4)} (\theta)  
& = 
\tfrac{1}{4} \bigl( \mathcal{L} \mu_{S_1} (\sample{X}{i-1}, \trueparam) 
- \mathcal{L} \mu_{S_1} (\sample{X}{i-1}, \theta) \bigr)^\top 
\boldsymbol{\Lambda}_{S_1 S_1} (\sample{X}{i-1}, \theta)
\bigl( \mathcal{L} \mu_{S_1} (\sample{X}{i-1}, \trueparam)
- \mathcal{L} \mu_{S_1} (\sample{X}{i-1}, \theta) \bigr) \\ 
& \quad - \tfrac{1}{2} \bigl( \mathcal{L} \mu_{S_1} (\sample{X}{i-1}, \trueparam)
- \mathcal{L} \mu_{S_1} (\sample{X}{i-1}, \theta) \bigr)^\top
\boldsymbol{\Lambda}_{S_1S_2} (\sample{X}{i-1}, \theta)  
d_{S_2} (\sample{X}{i-1}, \beta_{S_2}) \\ 
& \quad - \tfrac{1}{2} d_{S_2} (\sample{X}{i-1}, \beta_{S_2})^\top 
\boldsymbol{\Lambda}_{S_2S_1} (\sample{X}{i-1}, \theta)  
\bigl( \mathcal{L} \mu_{S_1} (\sample{X}{i-1}, \trueparam)
- \mathcal{L} \mu_{S_1} (\sample{X}{i-1}, \theta) \bigr)  \\   
& \quad + d_{S_2} (\sample{X}{i-1}, \beta_{S_2})^\top
\boldsymbol{\Lambda}_{S_2 S_2} (\sample{X}{i-1}, \theta)
d_{S_2} (\sample{X}{i-1}, \beta_{S_2}) \\
& 
= 
d_{S_2} (\sample{X}{i-1}, \beta_{S_2})^\top \, 
\widetilde{S}_{i-1}^{(4)} (x, \theta)
\, 
d_{S_2} (\sample{X}{i-1}, \beta_{S_2}),  \\[0.2cm]
S_{i-1}^{(5)} (\theta)  
& = \sum_{j_1, j_2 = 1}^N 
{R}_{j_1 j_2}^{(3)}
(\Delta_n,  \sample{X}{i-1}, \theta) 
\, \mathbf{m}_{p, i}^{j_1} (\Delta_n, \trueparam) 
\, \mathbf{m}_{p, i}^{j_2} (\Delta_n, \trueparam ) 
+ \sum_{j = 1}^N 
{R}_{j}^{(2)} (\sqrt{\Delta_n}, \sample{X}{i-1}, \theta) \,
\mathbf{m}_{p, i}^{j} (\Delta_n, \trueparam) \\
& \quad + {R} ({\Delta_n},  \sample{X}{i-1}, \theta), \\[-0.3cm] 
\end{align*} 
for some functions $R_j^{(\cdot)}, R_{j_1 j_2}^{(\cdot)}, \, {R} \in \mathcal{S}$ and 
\begin{align} 
\widetilde{S}^{(4)} (x, \theta) 
& = \tfrac{1}{4} \bigl( \partial_{x_{S_2}}^\top
\mu_{S_1} (x_S, \beta_{S_1}) \bigr)^\top 
\, 
\boldsymbol{\Lambda}_{S_1 S_1} (x, \theta) 
\, 
\partial_{x_{S_2}}^\top \mu_{S_1} (x_S, \beta_{S_1})
+ \tfrac{1}{2} 
\bigl( \partial_{x_{S_2}}^\top
\mu_{S_1} (x_S, \beta_{S_1})
\bigr)^\top 
\, 
\boldsymbol{\Lambda}_{S_1 S_2} (x, \theta) \nonumber \\ 
& \quad \quad \quad 
+ \tfrac{1}{2}
\boldsymbol{\Lambda}_{S_2 S_1} (x, \theta)  
\partial_{x_{S_2}}^\top
\mu_{S_1} (x_S, \beta_{S_1}) 
+ \boldsymbol{\Lambda}_{S_2 S_2} (x, \theta) \nonumber  \\[0.2cm] 
& = \tfrac{1}{2}
\boldsymbol{\Lambda}_{S_2 S_1} (x, \theta)  
\partial_{x_{S_2}}^\top
\mu_{S_1} (x_S, \beta_{S_1}) 
+ \boldsymbol{\Lambda}_{S_2 S_2} (x, \theta)  \nonumber \\[0.2cm] 
& = 12 \, a_{S_2}^{-1} (x, \theta). \label{eq:apply_mat_2}
\end{align}  
In the above, we made use of the equation (\ref{eq:diff_S1}) in the first equation, Lemma \ref{lemm:mat_1} in the second equation and the following result (Lemma 13 in \cite{igu:23}) in the last equation. 
\begin{lemma} \label{lemm:mat_2}
For any $(x, \theta) \in \mathbb{R}^N \times \Theta$, it holds that
\begin{align*}
\boldsymbol{\Lambda}_{S_2 S_2} (x, \theta)
= 12 \, a_{S_2}^{-1} (x, \theta) - \tfrac{1}{2}
\boldsymbol{\Lambda}_{S_2 S_1} (x, \theta)  
\partial_{x_{S_2}}^\top
\mu_{S_1} (x_S, \beta_{S_1}).    
\end{align*}
\end{lemma}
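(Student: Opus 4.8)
The plan is to trade the problem for a small, model-independent piece of linear algebra, after first extracting from the degenerate form of (\ref{eq:hypo-II}) a factorization of $\mathbf{\Sigma}^{(\mrm{II})}(x,\theta)$. Because each diffusion vector field $A_k=[\mathbf{0}_{N_{S_1}}^\top,\mathbf{0}_{N_{S_2}}^\top,A_{R,k}^\top]^\top$ is supported on the rough coordinates, for any smooth map $g$ one has $\mathcal{L}_k g=(\partial_{x_R}^\top g)A_{R,k}$; since $\mu_{S_1}$ is a function of $x_S$ alone this gives $\mathcal{L}_k\mu_{S_1}\equiv 0$, so $\mathcal{L}\mu_{S_1}$ has no second-order part and depends on $x_R$ only through the factor $\mu_{S_2}$, whence $\mathcal{L}_k\mathcal{L}\mu_{S_1}=(\partial_{x_{S_2}}^\top\mu_{S_1})(\partial_{x_R}^\top\mu_{S_2})A_{R,k}$. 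Writing $B:=\partial_{x_{S_2}}^\top\mu_{S_1}$ and $J:=\partial_{x_R}^\top\mu_{S_2}$, so that $\mathcal{L}_k\mu_{S_2}=JA_{R,k}$, $\mathcal{L}_k\mathcal{L}\mu_{S_1}=BJA_{R,k}$ and $a_R=\sum_k A_{R,k}A_{R,k}^\top$, a block-by-block comparison with (\ref{eq:mat_Sigma_II}) produces
\begin{align*}
\mathbf{\Sigma}^{(\mrm{II})}(x,\theta)=\mathcal{C}\,(G\otimes a_R)\,\mathcal{C}^\top,\qquad \mathcal{C}:=\mathrm{Diag}\bigl[\,BJ,\;J,\;I_{N_R}\,\bigr],
\end{align*}
where $G\otimes a_R$ denotes the $3\times 3$ block matrix with $(i,j)$-block $G_{ij}a_R$ and $G$ is the fixed matrix with rows $(\tfrac1{20},\tfrac18,\tfrac16)$, $(\tfrac18,\tfrac13,\tfrac12)$, $(\tfrac16,\tfrac12,1)$. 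In particular $a_{S_2}=Ja_RJ^\top$, and under \ref{assump:hor}$(ii)$ — which, via Lemma \ref{lemma:positive_II}, makes $a_{S_2}$ positive definite — $J$ has full row rank $N_{S_2}$.

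Next I would read off the $S_2$-block-row of $\mathbf{\Lambda}:=(\mathbf{\Sigma}^{(\mrm{II})})^{-1}$, which exists by Lemma \ref{lemma:positive_II}, from the relation $[\mathbf{\Lambda}_{S_2 S_1},\mathbf{\Lambda}_{S_2 S_2},\mathbf{\Lambda}_{S_2 R}]\,\mathbf{\Sigma}^{(\mrm{II})}=[\,0,\,I_{N_{S_2}},\,0\,]$. Setting $\mu:=[\mathbf{\Lambda}_{S_2 S_1}BJ,\ \mathbf{\Lambda}_{S_2 S_2}J,\ \mathbf{\Lambda}_{S_2 R}]$ (a block-row of three $N_{S_2}\times N_R$ matrices) and $\nu:=\mu\,G$, the three block-column identities read $\nu_1 a_R J^\top B^\top=0$, $\nu_2 a_R J^\top=I_{N_{S_2}}$ and $\nu_3 a_R=0$, hence $\nu_3=0$. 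Now $\nu_3=0$ is exactly $\tfrac16\mu_1+\tfrac12\mu_2+\mu_3=0$, so $\mu_3=-\tfrac16\mu_1-\tfrac12\mu_2$; substituting into $\nu_2=\tfrac18\mu_1+\tfrac13\mu_2+\tfrac12\mu_3$ gives
\begin{align*}
12\,\nu_2=\tfrac12\mu_1+\mu_2=\bigl(\tfrac12\,\mathbf{\Lambda}_{S_2 S_1}B+\mathbf{\Lambda}_{S_2 S_2}\bigr)J.
\end{align*}
It remains to identify $\nu_2$. The rows of $\mu_1=\mathbf{\Lambda}_{S_2 S_1}(BJ)$ and of $\mu_2=\mathbf{\Lambda}_{S_2 S_2}J$ lie in the row space of $J$; via $\mu_3=-\tfrac16\mu_1-\tfrac12\mu_2$ so do those of $\mu_3$, and therefore those of $\nu_2=\tfrac1{24}\mu_1+\tfrac1{12}\mu_2$. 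Thus $\nu_2=CJ$ for some $C\in\mathbb{R}^{N_{S_2}\times N_{S_2}}$, and $\nu_2 a_R J^\top=I$ together with $Ja_RJ^\top=a_{S_2}$ forces $Ca_{S_2}=I$, i.e. $\nu_2=a_{S_2}^{-1}J$. Feeding this back, $(\tfrac12\mathbf{\Lambda}_{S_2 S_1}B+\mathbf{\Lambda}_{S_2 S_2})J=12\,a_{S_2}^{-1}J$; since $J$ is onto it is right-cancellable ($DJ=0\Rightarrow D=0$ for $D\in\mathbb{R}^{N_{S_2}\times N_{S_2}}$), giving $\mathbf{\Lambda}_{S_2 S_2}=12\,a_{S_2}^{-1}-\tfrac12\,\mathbf{\Lambda}_{S_2 S_1}\,\partial_{x_{S_2}}^\top\mu_{S_1}$.

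The two places needing genuine care — and where I expect most of the work — are (i) the verification of the factorization, i.e. that the three $S_1$-blocks of $\mathbf{\Sigma}^{(\mrm{II})}$ really are $B$ (resp.\ $BJ$) applied to the $(S_2,R)$-data; this is the one-level-deeper counterpart of the projection identity underlying Lemma \ref{lemma:positive_II} and Lemma \ref{lemm:mat_1}, and is where the chain of Lie-bracket computations is invoked; and (ii) the right-cancellation of the non-square matrix $J$, legitimate precisely because \ref{assump:hor}$(ii)$ renders $a_{S_2}=Ja_RJ^\top$ positive definite and hence $J$ surjective onto $\mathbb{R}^{N_{S_2}}$. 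Everything in between is the fixed $3\times 3$ arithmetic of $G$, independent of the model; reading off the $S_1$-block-row of $\mathbf{\Lambda}$ by the same argument likewise recovers the companion identity $\mathbf{\Lambda}_{S_1 S_1}=720\,a_{S_1}^{-1}$ used in the main text.
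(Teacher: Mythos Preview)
Your argument is correct. The paper does not actually prove this lemma; it merely cites it as Lemma~13 in \cite{igu:23}. Your self-contained derivation via the factorization $\mathbf{\Sigma}^{(\mrm{II})}=\mathcal{C}\,(G\otimes a_R)\,\mathcal{C}^\top$ with $\mathcal{C}=\mathrm{Diag}[BJ,\,J,\,I_{N_R}]$ is clean and reduces the model-dependent content to exactly the two places you flag: the chain-rule identities $\mathcal{L}_k\mu_{S_2}=JA_{R,k}$, $\mathcal{L}_k\mathcal{L}\mu_{S_1}=BJA_{R,k}$ (which hold because $\mu_{S_1}$ is a function of $x_S$ only and the diffusion columns are supported on the rough block), and the full-row-rank of $J$ (equivalently, positive definiteness of $a_{S_2}=Ja_RJ^\top$ under \ref{assump:hor}$(ii)$), which justifies the right-cancellation at the last step. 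The arithmetic with the fixed $3\times3$ matrix $G$ is routine and checks out: $\nu_3=0$ gives $\mu_3=-\tfrac16\mu_1-\tfrac12\mu_2$, whence $\nu_2=\tfrac{1}{24}\mu_1+\tfrac{1}{12}\mu_2$ and $12\nu_2=\tfrac12\mu_1+\mu_2$, while $\nu_2=a_{S_2}^{-1}J$ follows because $\nu_2=CJ$ and $Ca_{S_2}=I$.

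One small presentational point: the step ``$\nu_2$ has rows in the row space of $J$, hence $\nu_2=CJ$'' is slightly indirect. Since you have already computed $\nu_2=\tfrac{1}{24}\mu_1+\tfrac{1}{12}\mu_2$ explicitly in terms of $\mu_1=\mathbf{\Lambda}_{S_2 S_1}BJ$ and $\mu_2=\mathbf{\Lambda}_{S_2 S_2}J$, you can write $\nu_2=CJ$ with $C=\tfrac{1}{24}\mathbf{\Lambda}_{S_2 S_1}B+\tfrac{1}{12}\mathbf{\Lambda}_{S_2 S_2}$ directly; the equation $\nu_2a_RJ^\top=I$ then reads $Ca_{S_2}=I$ and your final identity follows without any separate row-space discussion. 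Your closing remark about the $S_1$-row yielding $\mathbf{\Lambda}_{S_1 S_1}=720\,a_{S_1}^{-1}$ is also correct, and in fact even simpler there: the analogous computation gives $\tilde\nu_1=\tfrac{1}{720}\tilde\mu_1=\tfrac{1}{720}\mathbf{\Lambda}_{S_1 S_1}BJ$, and $\tilde\nu_1 a_R J^\top B^\top=I$ together with $a_{S_1}=BJa_RJ^\top B^\top$ closes the loop without any cancellation step.
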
 
Notice that for any $x_S \in \mathbb{R}^{N_S}$ 
\begin{align*} 
&\tfrac{d_{S_1}^{j}
(x_S, \, \hat{\beta}_{S_1, p, n})}{\sqrt{\Delta_n^3}} \\ 
& = \sum_{\ell = 1}^{N_{\beta_{S_1}}} 
\tfrac{\mu_{S_1}^{j} \bigl(x_S,  (\beta_{S_1}^{\dagger, 1}, \ldots, \beta_{S_1}^{\dagger, \ell}, 
\hat{\beta}_{S_1, p, n }^{\ell+1}, \ldots, \hat{\beta}_{S_1, p, n }^{N_{\beta_{S_1}}} ) \bigr)
- \mu_{S_1}^{j} \bigl(x_S,  (\beta_{S_1}^{\dagger, 1}, \ldots, \beta_{S_1}^{\dagger, \ell-1}, 
\hat{\beta}_{S_1, p, n }^{\ell}, \ldots, \hat{\beta}_{S_1, p, n }^{N_{\beta_{S_1}}} ) \bigr)
}{\beta_{S_1}^{\dagger, \ell}-\hat{\beta}_{S_1,p,n}^{\ell}} \cdot
\tfrac{\beta_{S_1}^{\dagger, \ell}-\hat{\beta}_{S_1,p,n}^{\ell}}{\sqrt{\Delta_n^3}},  
\end{align*}
where we interpret
\begin{align*}
( \beta_{S_1}^{\dagger, 1}, \ldots, \beta_{S_1}^{\dagger, \ell}, 
\hat{\beta}_{S_1, p, n }^{\ell+1}, \ldots, \hat{\beta}_{S_1, p, n }^{N_{\beta_{S_1}}} ) 
= 
\begin{cases}
\hat{\beta}_{S_1, p, n},  & \ell = 0; \\ 
\truebeta_{S_1}, & \ell = N_{\beta_{S_1}}
\end{cases}. 
\end{align*}
Thus, due to condition \ref{assump:additional_con} and the convergence rate $\hat{\beta}_{S_1, p, n} - \truebeta_{S_1} = o_{\mathbb{P}_{\trueparam}} (\sqrt{\Delta_n^3})$ , it follows from Lemmas \ref{lemma:aux_1} and \ref{lemma:aux_2} that
\begin{align*} 
\sup_{(\beta_{S_2}, \beta_{R}, \sigma) \in \Theta_{\beta_{S_1}} \times \Theta_{\beta_R} \times \Theta_\sigma} 
\Bigl| \tfrac{1}{n} \sum_{i = 1}^n  S_{i-1}^{(k)} (\hat{\beta}_{S_1,p,n}, \beta_{S_2}, \beta_{R}, \sigma)
\Bigr| 
\probconv 0,  \qquad  k = 1, 2, 3, 5, 
\end{align*}
as \limit. Finally, for the fourth term, we apply Lemma \ref{lemma:aux_1} to obtain 
\begin{align*} 
\tfrac{1}{n} \sum_{i = 1}^n  S_{i-1}^{(4)} (\hat{\beta}_{S_1,p,n}, \beta_{S_2}, \beta_{R}, \sigma)
\probconv 
\int_{\mathbb{R}^N} 
d_{S_2} (x, \beta_{S_2})^\top 
\widetilde{S}^{(4)} 
\bigl( 
x,  (\truebeta_{S_1}, \beta_{S_2}, \beta_{R}, \sigma)
\bigr) 
d_{S_2} (x, \beta_{S_2}) 
 \truedist (dx), 
\end{align*} 
as \limit \, uniformly in $(\beta_{S_2}, \beta_{R}, \sigma) \in \Theta_{\beta_{S_2}} \times \Theta_{\beta_{R}} \times \Theta_\sigma$. 
The proof of Lemma \ref{lemm:step_2} is now complete. 
\subsection{Proof of Lemma \ref{lemma:step_2}} 
\label{appendix:pf_step_4} 
\subsubsection{Proof of limit (\ref{eq:step2_A})}
We have shown in Section \ref{sec:pf_conv_A} that if \limit, then 
\begin{align*}
\sup_{(\beta_R, \sigma) \in \Theta_{\beta_R} \times \Theta_\sigma} 
\Bigl| 
\tfrac{\sqrt{\Delta_n^3}}{n} \partial_{\beta_{S_1}} \ell_{p, n} (\truebeta_S, \beta_R, \sigma)
\Bigr| 
\probconv 
0.   
\end{align*}
Thus, we here focus on the term
\begin{align*} 
\widetilde{\mathbf{A}}_{p,n}^j (\truebeta_{S}, \beta_R, \sigma) 
= \tfrac{\sqrt{\Delta_n}}{n} \partial_{\beta_{S_2}, j} \ell_{p, n} (\truebeta_S, \beta_R, \sigma), 
\qquad N_{\beta_{S_1}} + 1 \le j \le N_{\beta_S}. 
\end{align*}
Noticing that for any $x \in \mathbb{R}^N, \, (\beta_R, \sigma) \in \Theta_{\beta_R} \times \Theta_\sigma$, 
\begin{align*}
\tfrac{r_{S_1, K_p + 2} (x, \trueparam) - 
r_{S_1, K_p + 2} (x, (\truebeta_S, \beta_R, \sigma))}{\sqrt{\Delta_n^5}} = R (\sqrt{\Delta_n}, x, (\truebeta_S, \beta_R, \sigma)); \\ 
\tfrac{r_{S_2, K_p + 1} (x, \trueparam) - 
r_{S_2, K_p + 1} (x, (\truebeta_S, \beta_R, \sigma))}{\sqrt{\Delta_n^3}} = \widetilde{R} (\sqrt{\Delta_n}, x, (\truebeta_S, \beta_R, \sigma)),  
\end{align*}
for some $R, \widetilde{R} \in \mathcal{S}$, we have 
\begin{align*} 
\widetilde{\mathbf{A}}_{p,n}^j (\truebeta_{S}, \beta_R, \sigma) & = \tfrac{1}{n} \sum_{i = 1}^n 
\biggl\{ 
\sum_{k_1, k_2 = 1}^N  
R^{j}_{k_1 k_2} (\sqrt{\Delta_n}, \sample{X}{i-1}, \theta) 
\mathbf{m}_{p, i}^{k_1} (\Delta_n, \trueparam)
\mathbf{m}_{p, i}^{k_2} (\Delta_n, \trueparam) 
\nonumber \\ 
& \qquad 
+ \sum_{k = 1}^N 
R^{j}_{k} (1, \sample{X}{i-1}, \theta) 
\mathbf{m}_{p, i}^{k_1} (\Delta_n, \trueparam) 
+ R^{j} (\sqrt{\Delta_n}, \sample{X}{i-1}, \theta) 
\biggr\}|_{\theta = (\truebeta_{S}, \beta_R, \sigma)}, 
\end{align*} 
where $R^j_{k_1 k_2}, R^j_{k}, R^j \in \mathcal{S}$. 
From Lemmas \ref{lemma:aux_1} and \ref{lemma:aux_2}, we immediately obtain that 
\begin{align*}
\sup_{(\beta_R, \sigma) \in \Theta_{\beta_R} \times \Theta_\sigma } \Bigl| \widetilde{\mathbf{A}}_{p,n}^{j} (\truebeta_{S}, \beta_R, \sigma)   \Bigr| \probconv 0, 
\end{align*}
as \limit, and the proof of the limit (\ref{eq:step2_A}) is now complete. 
\subsubsection{Proof of limit (\ref{eq:step2_B})}
We define 
\begin{align*}
\mathbf{U}_n (\theta) 
:= \widetilde{\mathbf{M}}_{n} 
\, 
\partial^2_{\beta_S} \ell_{p, n} (\theta) 
\, 
\widetilde{\mathbf{M}}_{n} 
= 
\begin{bmatrix} 
\mathbf{U}_{S_1 S_1} ( \theta ) & \mathbf{U}_{S_1 S_2}  ( \theta )  \\[0.2cm]
\mathbf{U}_{S_2 S_1} ( \theta ) & 
\mathbf{U}_{S_2 S_2} ( \theta )  
\end{bmatrix},  \quad \theta \in \Theta, 
\end{align*} 
where we have set: 
\begin{align*}
\mathbf{U}_{S_1 S_1} ( \theta )  
& = \tfrac{\Delta_n^3}{n} \partial^2_{\beta_{S_1}}  \ell_{p, n} (\theta), \quad  
\mathbf{U}_{S_1 S_2} ( \theta )  
= \tfrac{\Delta_n^2}{n} \partial_{\beta_{S_1}} \partial_{\beta_{S_2}}^\top \ell_{p, n} (\theta); \\[0.1cm]
\mathbf{U}_{S_2 S_1}  ( \theta ) 
& = \mathbf{U}_{S_1 S_2} ( \theta )^\top, \quad 
\mathbf{U}_{S_2 S_2} ( \theta )  
=  \tfrac{\Delta_n}{n} \partial^2_{\beta_{S_2}}  
\ell_{p, n} (\theta),  
\end{align*} 
From the proof of Lemma \ref{lemma:step1_B} in Appendix \ref{appendix:pf_step1_B} and the consistency of the estimator $\hat{\beta}_{S,p ,n}$, we have that if \limit, then 
\begin{align*}
 & \sup_{\lambda \in [0,1]} \sup_{(\beta_R, \sigma) \in \Theta_{\beta_R} \times \Theta_\sigma}
\Bigl| 
\mathbf{U}_{S_1 S_1} 
\bigl( (\truebeta_S + \lambda ( \hat{\beta}_{S, p, n} 
- \truebeta_S), \beta_R, \sigma) 
\bigr)
- 2 \widetilde{\mathbf{B}}^{S_1} 
\bigl( (\truebeta_S, \beta_R, \sigma)  \bigr)  
\Bigr| \probconv 0, 
\end{align*}
where $\widetilde{\mathbf{B}}^{S_1} 
(\truebeta_S, \beta_R, \sigma)$ is defined in (\ref{eq:limit_B_1}). We will next show in detail that 
\begin{align}
 & \sup_{\lambda \in [0,1]} \sup_{(\beta_R, \sigma) \in \Theta_{\beta_R} \times \Theta_\sigma}
\Bigl| 
\mathbf{U}_{S_1 S_2} 
\bigl(
(\truebeta_S + \lambda ( \hat{\beta}_{S, p, n} 
- \truebeta_S), \beta_R, \sigma) 
\bigr)
\Bigr| \probconv 0;  \label{eq:u_12} \\
& \sup_{\lambda \in [0,1]} \sup_{(\beta_R, \sigma) \in \Theta_{\beta_R} \times \Theta_\sigma}
\Bigl| 
\mathbf{U}_{S_2 S_2} 
\bigl(
(\truebeta_S + \lambda ( \hat{\beta}_{S, p, n} 
- \truebeta_S), \beta_R, \sigma) 
\bigr)
- 2 \widetilde{\mathbf{B}}^{S_2} 
\bigl( (\truebeta_S, \beta_R, \sigma) \bigr)  
\Bigr| \probconv 0,  \label{eq:u_22}
\end{align}
where $\widetilde{\mathbf{B}}^{S_2} 
(\truebeta_S, \beta_R, \sigma)$ is defined in (\ref{eq:limit_B_2}). It holds that 
\begin{align*}
\mathbf{U}_{S_1 S_2} (\theta) 
= \tfrac{1}{n} \sum_{i = 1}^n 
\sum_{k = 1}^4 Q^{(k)}_{i-1} (\theta),  
\end{align*}
where we have set for $1 \le j_1 \le N_{\beta_{S_1}}$, $1 \le j_2 \le N_{\beta_{S_2}}$, 
\begin{align*}
\bigl[ Q_{i-1}^{(1)} (\theta) \bigr]_{j_1 j_2} 
& = 
\partial_{\beta_{S_1}, j_1} 
\mu_{S_1}(X_{S, t_{i-1}}, \beta_{S_1})^\top 
M (\sample{X}{i-1}, \theta) 
\partial_{\beta_{S_2}, j_2}
\mu_{S_2} ( \sample{X}{i-1}, \beta_{S_2}); \\[0.3cm] 
% \\  
% & \qquad \times \biggl\{ 
% \boldsymbol{\Lambda}_{S_1 S_2} 
% \bigl(\sample{X}{i-1}, \theta \bigr)
% \partial_{\beta_{S_2}, j_2}
% \mu_{S_2} ( \sample{X}{i-1}, \beta_{S_2}) 
% + \tfrac{1}{2} \boldsymbol{\Lambda}_{S_1 S_1} (\sample{X}{i-1}, \theta )
% \partial_{\beta_{S_2}, j_2} \mathcal{L} \mu_{S_1} 
% ( \sample{X}{i-1}, \beta_{S}) 
% \biggr\}; \\[0.3cm] 
%
\bigl[ Q_{i-1}^{(2)} (\theta) \bigr]_{j_1 j_2}  
& = \sum_{k_1, k_2 = 1}^{N_{{S_1}}} 
R_{j_1 j_2}^{k_1 k_2} (1, \sample{X}{i-1}, \theta) 
d_{S_1}^{k_1} (X_{S, t_{i-1}}, \beta_{S_1})
\tfrac{d_{S_1}^{k_2} (X_{S, t_{i-1}}, \beta_{S_1})}{{\Delta_n}}; \\[0.3cm]  
\bigl[ Q_{i-1}^{(3)} (\theta) \bigr]_{j_1 j_2} 
& 
=   
\sum_{\substack{1 \le k_1 \le N_{S_1} \\ 1 \le k_2 \le N_{{S_2}}}} 
{R}_{j_1 j_2}^{k_1 k_2} (1, \sample{X}{i-1}, \theta)
\, d_{S_1}^{k_2} (X_{S, t_{i-1}}, \beta_{S_1}) 
\, d_{S_2}^{k_2} (\sample{X}{i-1}, \beta_{S_2}); \\[0.1cm]  
% 
% \bigl[ Q_{i-1}^{(4)} (\theta) \bigr]_{j_1 j_2} 
% & \equiv  
% \sum_{k = 1}^{N_S} {R}_{j_1j_2}^{k} (\sqrt{\Delta_n}, \sample{X}{i-1}, \theta) 
% \bigl(\mu_{S}^k (\sample{X}{i-1}, \truebeta_S)  
% - \mu_{S}^k (\sample{X}{i-1}, \beta_S)  \bigr); \\[0.1cm]
% 
\bigl[ Q_{i-1}^{(4)} (\theta) \bigr]_{j_1 j_2} 
& = 
\sum_{k_1, k_2 = 1}^{N} 
\widetilde{R}_{j_1j_2}^{k_1 k_2} (\sqrt{\Delta_n}, \sample{X}{i-1}, \theta) 
\mathbf{m}_{p, i}^{k_1} (\Delta_n, \trueparam) 
\mathbf{m}_{p, i}^{k_2} (\Delta_n, \trueparam) \\ 
& \quad\quad\quad + 
\sum_{k = 1}^{N} \widetilde{R}_{j_1j_2}^{k} (\sqrt{\Delta_n}, \sample{X}{i-1}, \theta) \mathbf{m}_{p, i}^k (\Delta_n, \trueparam) 
+ {R}_{j_1j_2} (\sqrt{\Delta_n}, \sample{X}{i-1}, \theta), 
\end{align*}
for some functions $R_{j_1 j_2}^{k_1 k_2}, \, \widetilde{R}_{j_1 j_2}^{k_1 k_2}, \, {R}_{j_1 j_2}^{k_1 k_2}, \, \widetilde{R}_{j_1 j_2}^{k}, \, {R}_{j_1 j_2} \in \mathcal{S}$ and $M:\mathbb{R}^N \times \Theta \to \mathbb{R}^{N_{S_1} \times N_{S_2}}$ being defined as (\ref{eq:M}).  
From Lemmas \ref{lemma:aux_1}, \ref{lemma:aux_2}, \ref{lemm:mat_1} and the consistency of the estimator $\hat{\beta}_{S, p, n}$ with the rate of convergence  $\hat{\beta}_{S_1, p, n} - \truebeta_{S_1} = o_{\mathbb{P}_{\trueparam}} (\sqrt{\Delta_n^3})$, we have 
that if \limit, then
\begin{align*}
\sup_{\lambda \in [0,1]} 
\sup_{(\beta_R, \sigma) \in \Theta_{\beta_R} \times \Theta_{\sigma}} 
\Bigl|
\tfrac{1}{n} \sum_{i = 1}^n 
Q_{i-1}^{(k)} 
\bigl(
(\truebeta_S + \lambda ( \hat{\beta}_{S, p, n} 
- \truebeta_S), \beta_R, \sigma) 
\bigr) 
\Bigr| \probconv 0, \qquad 1 \le k \le 4, 
\end{align*} 
and now the proof of (\ref{eq:u_12}) is complete. 

We next consider the term $\mathbf{U}_{S_2 S_2} (\theta)$. It holds that 
\begin{align*}
\mathbf{U}_{S_2 S_2} (\theta) 
= \tfrac{1}{n} \sum_{i = 1}^n  
\sum_{k = 1}^4  T_{i-1}^{(k)}(\theta),
\end{align*} 
where we have set, for $1 \le j_1, j_2 \le N_{\beta_{S_2}}$, 
\begin{align*}    
\bigl[T_{i-1}^{(1)} (\theta) \bigr]_{j_1 j_2}  
& = 
\sum_{k_1, k_2 = 1}^{N_{{S_1}}}   
{R}^{j_1 j_2}_{k_1 k_2} (1, \sample{X}{i-1}, \theta) 
\cdot 
\tfrac{d_{S_1}^{k_1} (X_{S, t_{i-1}}, \beta_{S_1})}{\sqrt{\Delta_n}}
\cdot 
\tfrac{d_{S_1}^{k_2} (X_{S, t_{i-1}}, \beta_{S_1})}{\sqrt{\Delta_n}} 
\\[-0.1cm] &\qquad \qquad + \sum_{k = 1}^{N_{{S_1}}} 
R^{j_1 j_2}_{k} (1, \sample{X}{i-1}, \theta) 
\cdot \tfrac{d_{S_1}^{k} 
(X_{S, t_{i-1}}, \beta_{S_1})}{\Delta_n};
\\[0.2cm]
\bigl[T_{i-1}^{(2)} (\theta) \bigr]_{j_1 j_2}   
& = 
2 \partial_{\beta_{S_2}, j_1} \mu_{S_2} (\sample{X}{i-1}, \beta_{S_2})^\top 
\boldsymbol{\Lambda}_{S_2 S_2} (\sample{X}{i-1}, \theta)
\partial_{\beta_{S_2}, j_2} \mu_{S_2} (\sample{X}{i-1}, \beta_{S_2}) \nonumber \\[0.1cm]
& \quad + 
\partial_{\beta_{S_2}, j_1} 
\mathcal{L} \mu_{S_1} (\sample{X}{i-1}, \beta_{S})^\top 
\boldsymbol{\Lambda}_{S_1 S_2} (\sample{X}{i-1}, \theta)
\partial_{\beta_{S_2}, j_2} \mu_{S_2} (\sample{X}{i-1}, \beta_{S_2}) \nonumber \\[0.1cm]  
& \quad +  
\partial_{\beta_{S_2}, j_1} 
 \mu_{S_2} (\sample{X}{i-1}, \beta_{S_2})^\top
\boldsymbol{\Lambda}_{S_2  S_1} (\sample{X}{i-1}, \theta)
\partial_{\beta_{S_2}, j_2} \mathcal{L} \mu_{S_1} (\sample{X}{i-1}, \beta_{S})  \nonumber \\[0.1cm]  
& \quad + \tfrac{1}{2}
\partial_{\beta_{S_2}, j_1} 
\mathcal{L} \mu_{S_1} (\sample{X}{i-1}, \beta_{S})^\top
\boldsymbol{\Lambda}_{S_1 S_1} (\sample{X}{i-1}, \theta)
\partial_{\beta_{S_2}, j_2} \mathcal{L} \mu_{S_1} (\sample{X}{i-1}, \beta_{S}); \\[0.2cm] 
\bigl[T_{i-1}^{(3)} (\theta) \bigr]_{j_1 j_2}  
& = 
\sum_{k = 1}^{N_{{S_2}}} 
\widetilde{R}^{j_1 j_2}_k (1, \sample{X}{i-1}, \theta) 
\, d_{S_2}^{k} (\sample{X}{i-1}, \beta_{S_2})
\\ & \qquad \qquad + \sum_{\substack{1 \le  k_1 \le N_{{S_1}} \\[-0.1cm] 1 \le  k_2  \le N}} 
\bar{R}^{j_1 j_2}_{k_1 k_2} (1, \sample{X}{i-1}, \theta) 
\cdot 
\tfrac{d_{S_1}^{k_1} 
(X_{S, t_{i-1}}, \beta_{S_1})}{\sqrt{\Delta_n}} \cdot 
\mathbf{m}_{p, i}^{k_2} (\Delta_n, \trueparam) 
\\[0.2cm]
\bigl[T_{i-1}^{(4)} (\theta) \bigr]_{j_1 j_2}    
& = 
\sum_{k_1, k_2  = 1}^N  
\widetilde{R}^{j_1 j_2}_{k_1 k_2}  (\Delta_n, \sample{X}{i-1}, \theta) 
\mathbf{m}_{p, i}^{k_1} (\Delta_n, \trueparam)
\mathbf{m}_{p, i}^{k_2} (\Delta_n, \trueparam)   
\\ 
& \qquad \qquad 
+ \sum_{k =1}^N \bar{R}^{j_1 j_2}_{k} (\sqrt{\Delta_n}, \sample{X}{i-1}, \theta) \mathbf{m}_{p, i} (\Delta_n, \trueparam)
+ R (\sqrt{\Delta_n}, \sample{X}{i-1}, \theta),  
\end{align*}
where $R^{j_1 j_2}_{k_1 k_2},  R^{j_1 j_2}_{k}, \widetilde{R}^{j_1 j_2}_{k_1 k_2},  \widetilde{R}^{j_1 j_2}_{k},  \bar{R}^{j_1 j_2}_{k_1 k_2},  
\bar{R}^{j_1 j_2}_{k},  R \in \mathcal{S}$. 
From Lemmas \ref{lemma:aux_1}, \ref{lemma:aux_2}, \ref{lemm:mat_1} and the consistency of $\hat{\beta}_{S, p, n}$ with the convergence rate $\hat{\beta}_{S_1, p, n} - \truebeta_{S_1} = o_{\mathbb{P}_{\trueparam}} (\sqrt{\Delta_n^3})$, we obtain that if \limit, then 
\begin{gather*}
\sup_{\lambda \in [0,1]} 
\sup_{(\beta_R, \sigma) \in \Theta_{\beta_R} \times \Theta_{\sigma}} 
\Bigl|
\tfrac{1}{n} \sum_{i = 1}^n 
T_{i-1}^{(k)} 
\bigl(
(\truebeta_S + \lambda ( \hat{\beta}_{S, p, n} 
- \truebeta_S), \beta_R, \sigma) 
\bigr) 
\Bigr| \probconv 0, \qquad k = 1, 3, 4.  \\ 
\end{gather*} 
Similarly to the computation of $\widetilde{S}^{(4)}_{i-1}$ in (\ref{eq:apply_mat_2}),  we obtain from Lemmas \ref{lemm:mat_1} and \ref{lemm:mat_2} that
\begin{align*} 
[T_{i-1}^{(2)} (\theta)]_{j_1 j_2} 
& = 
2 \partial_{\beta_{S_2}, j_1} \mu_{S_2} (\sample{X}{i-1}, \beta_{S_2})^\top 
\boldsymbol{\Lambda}_{S_2 S_2} (\sample{X}{i-1}, \theta)
\partial_{\beta_{S_2}, j_2} \mu_{S_2} (\sample{X}{i-1}, \beta_{S_2}) \nonumber \\
& \quad 
+ 
\partial_{\beta_{S_2}, j_1}  
\mu_{S_2} (\sample{X}{i-1}, \beta_{S})^\top 
\boldsymbol{\Lambda}_{S_2 S_1} (\sample{X}{i-1}, \theta)
\partial_{\beta_{S_2}, j_2} 
\mathcal{L} \mu_{S_1} 
(\sample{X}{i-1}, \beta_{S_2})
\nonumber \\[0.2cm]
& = 24 \, \partial_{\beta_{S_2}, j_1} \mu_{S_2} (\sample{X}{i-1}, \beta_{S_2})^\top 
\, a_{S_2}^{-1} (\sample{X}{i-1}, \theta) 
\, 
\partial_{\beta_{S_2}, j_2} \mu_{S_2} (\sample{X}{i-1}, \beta_{S_2}).  
\end{align*}
Thus, Lemma \ref{lemma:aux_1} yields 
\begin{align} 
\sup_{\lambda \in [0,1]} 
\sup_{(\beta_R, \sigma) \in \Theta_{\beta_R} \times \Theta_{\sigma}} 
\Bigl| 
\tfrac{1}{n} \sum_{i = 1}^n 
T_{i-1}^{(2)} 
\bigl(
\truebeta_S + \lambda ( \hat{\beta}_{S, p, n} 
- \truebeta_S), \beta_R, \sigma 
\bigr) 
- 2 \widetilde{\mathbf{B}}^{S_2} 
 (\truebeta_S, \beta_R, \sigma) 
\Bigr| 
\probconv 0,  \nonumber 
\end{align}
and the proof of convergence (\ref{eq:u_22}) is now complete.  
\subsection{Proof of Lemma \ref{lemma:step3_1}} 
\label{appendix:pf_step3_1}
We make use of the following notation: 
\begin{align*}
\nu_{S, i-1} (\Delta, \beta_S)
& =
\Bigl[ 
\tfrac{d_{S_1}(X_{S, t_{i-1}}, \beta_{S_1})^\top }{\sqrt{\Delta^3}}, 
\, 
\tfrac{d_{S_2}(\sample{X}{i-1}, \beta_{S_2})^\top}{\sqrt{\Delta}} 
\Bigr]^\top;  \\[0.2cm]
\widetilde{d}_{S_1, i-1} (\Delta, \beta_S)
& = \tfrac{
\mathcal{L} \mu_{S_1} (\sample{X}{i-1}, \truebeta_S) 
- \mathcal{L} \mu_{S_1} (\sample{X}{i-1}, \beta_S)}{\sqrt{\Delta}}, \qquad  
\Delta> 0, \;\,\, \beta_S = (\beta_{S_1}, \beta_{S_2}), 
\end{align*}
for $1\le i \le n$. It holds that 
\begin{align*}
\tfrac{1}{n} \ell_{p, n} (\theta) 
= \tfrac{1}{n} \sum_{i = 1}^n 
\sum_{k = 1}^4 V_{i-1}^{(k)} (\theta),
\quad \theta \in \Theta, 
\end{align*}
with 
\begin{align*}
V_{i-1}^{(1)} (\theta)
& \equiv 
\sum_{j_1, j_2 =1}^{N_{S}} 
R_{j_1 j_2}^{(1)} (1, \sample{X}{i-1}, \theta) 
\, \nu_{S, i-1}^{j_1} (\Delta_n, \beta_S)
\, \nu_{S, i-1}^{j_2} (\Delta_n, \beta_S) \\ 
& \quad 
+ \sum_{j_1, j_2 =1}^{N_{S_1}}    
{R}_{j_1 j_2}^{(2)} (1, \sample{X}{i-1}, \theta)  
\, \widetilde{d}_{S_1, i-1}^{j_1} (\Delta_n, \beta_S) 
\, 
\widetilde{d}_{S_1, i-1}^{j_2} (\Delta_n, \beta_S) \\
& \quad 
+ \sum_{\substack{1 \le j_1 \le N_{S_1} \\ 1 \le j_2 \le N_S}} {R}_{j_1 j_2}^{(3)} (1, \sample{X}{i-1}, \theta) \, 
\widetilde{d}_{S_1, i-1}^{j_1} (\Delta_n, \beta_S) 
\, 
\nu_{S, i-1}^{j_2} (\Delta_n, \beta_S);
\\[0.2cm] 
V_{i-1}^{(2)} (\theta)  
& \equiv    
\sum_{\substack{1 \le j_1 \le N_{S} \\ 1 \le j_2 \le N}}
{R}_{j_1 j_2}^{(4)}  (1, \sample{X}{i-1}, \theta)
\, \nu_{S, i-1}^{j_1} (\Delta_n, \beta_{S}) 
\mathbf{m}_{p, i}^{j_2} (\Delta_n, \trueparam) \\
& \quad +
\sum_{\substack{1 \le j_1 \le N_{S_1} \\ 1 \le j_2 \le N}}
{R}_{j_1 j_2}^{(5)}  (1, \sample{X}{i-1}, \theta)
\, \widetilde{d}_{S, i-1}^{j_1} (\Delta_n, \beta_{S}) 
\mathbf{m}_{p, i}^{j_2} (\Delta_n, \trueparam) \\
& \quad + \sum_{1 \le j \le N_{S}}
{R}_{j}^{(1)} (\sqrt{\Delta_n}, \sample{X}{i-1}, \theta) 
\nu_{S, i-1}^{j} (\Delta_n, \beta_{S}) 
+ \sum_{1 \le j \le N_{S_1}} {R}_{j}^{(2)} (\sqrt{\Delta_n}, \sample{X}{i-1}, \theta) 
\widetilde{d}_{S, i-1}^{j} (\Delta_n, \beta_{S}); \\[0.2cm]
V_{i-1}^{(3)} (\theta)  
& \equiv  
\mathbf{m}_{p, i} (\Delta_n, \trueparam)^\top
\boldsymbol{\Lambda}_{i-1} (\theta)
% \bigl( (\beta_S, \sigma) \bigr) 
\mathbf{m}_{p, i} (\Delta_n, \trueparam) 
+ \log \det \boldsymbol{\Sigma}_{i-1} (\theta);
% \bigl( (\beta_S, \sigma) \bigr);  
\\[0.3cm]
V_{i-1}^{(4)} (\theta) 
&  \sum_{1 \le j_1, j_2 \le N} 
{R}_{j_1 j_2}^{(6)} ({\Delta_n}, \sample{X}{i-1}, \theta) 
\mathbf{m}_{p, i}^{j_1} (\Delta_n , \trueparam)
\mathbf{m}_{p, i}^{j_2} (\Delta_n , \trueparam) \\
& + \sum_{1 \le j \le N} 
{R}_j^{(3)} (\sqrt{\Delta_n}, \sample{X}{i-1}, \theta) 
\mathbf{m}_{p, i}^{j} (\Delta_n, \trueparam)  
+ R (\Delta_n, \sample{X}{i-1}, \theta), 
\end{align*} 
where $R_{j_1 j_2}^{(\cdot)}, 
R_j^{(\cdot)}, R \in \mathcal{S}$. 
% , \, 
% 1 \le \iota_1 \le 6, \, 1 \le \iota_1 \le 3$. 
% 
Due to condition \ref{assump:additional_con} and the rates of convergence 
$\hat{\beta}_{S_1, p, n} - \truebeta_{S_1} = o_{\mathbb{P}_{\trueparam}} (\sqrt{\Delta_n^3})$ 
and
$\hat{\beta}_{S_2, p, n} - \truebeta_{S_2} = o_{\mathbb{P}_{\trueparam}} (\sqrt{\Delta_n})$,
we have: 
\begin{align*}
\sup_{1 \le i \le n} 
\bigl| 
\nu_{S, i-1} (\Delta_n, \hat{\beta}_{S, p, n}) 
\bigr| \probconv 0, 
\qquad 
\sup_{1 \le i \le n} 
\bigl| 
\widetilde{d}_{S, i-1} (\Delta_n, \hat{\beta}_{S, p, n}) 
\bigr| \probconv 0, 
\end{align*}
as \limit. Thus, Lemmas \ref{lemma:aux_1} and \ref{lemma:aux_2} yield: 
\begin{align*} 
& \sup_{(\beta_R, \sigma) \in \Theta_{\beta_R} \times \Theta_\sigma} 
\Bigl| \tfrac{1}{n} \sum_{i = 1}^n V_{i-1}^{(k)} 
\bigl( (\hat{\beta}_{S, p , n}, \beta_R, \sigma) \bigr)
\Bigr| \probconv 0, \ \ k = 1,2,4; \\[0.1cm]
& \sup_{(\beta_R, \sigma) \in \Theta_{\beta_R} \times \Theta_\sigma} 
\Bigl| \tfrac{1}{n} \sum_{i = 1}^n V_{i-1}^{(3)} 
\bigl( (\hat{\beta}_{S, p , n}, \beta_R, \sigma) \bigr)
-  \mathbf{Y}_3 (\sigma)  \Bigr| \probconv 0, 
\end{align*} 
as \limit. The proof is now complete. 
\subsection{Proof of Lemma \ref{lemma:step3_2}} 
\label{appendix:pf_step3_2}
% 
% The contrast function is expressed as: 
% % 
% \begin{align}
% \ell_{p, n} (\theta) 
% = \ell_{p, n}^{(1)}  (\theta) 
%  + \ell_{p, n}^{(2)}  (\theta),  
% \end{align}
% % 
% where we have set: 
% % 
% \begin{align*}
% \ell_{p, n}^{(1)} (\theta) 
% & = \sum_{i = 1}^n 
% \mathbf{m}_{p, i} (\Delta_n, \theta)^\top
% \boldsymbol{\Lambda}_{i-1} ((\beta_S, \sigma))
% \mathbf{m}_{p, i} (\Delta_n, \theta)
% + \log \det \boldsymbol{\Sigma}_{i-1} ((\beta_S, \sigma)); 
% \\[0.2cm]
% % % 
% \ell_{p, n}^{(2)} (\theta)  
% & = \sum_{i = 1}^n \sum_{j = 1}^{K_p} 
% \Delta_n^{j} \cdot 
% \bigl\{ 
% \mathbf{m}_{p, i} (\Delta_n, \theta)^\top 
% \, 
% \mathbf{G}_{i-1, j} (\theta) 
% \, 
% \mathbf{m}_{p, i} (\Delta_n, \theta) 
% + \mathbf{H}_{i-1, j} (\theta) 
% \bigr\},    
% \end{align*}
% 
% for $\theta \in \Theta$. 
We first emphasise that $\boldsymbol{\Sigma}_{i-1}$ and $\boldsymbol{\Lambda}_{i-1}$ indeed depend on parameters $\beta_S = (\beta_{S_1}, \beta_{S_2}) \in \Theta_{\beta_S}$ and $\sigma \in \Theta_\sigma$ but not on $\beta_R \in \Theta_{\beta_R}$. Then, the term $\mathbf{K}(\theta)$,  $\theta \in \Theta$, defined in (\ref{eq:K}) is expressed as: 
\begin{align*}
\mathbf{K} (\theta) 
= \tfrac{1}{n} \sum_{i = 1}^n \sum_{k = 1}^4 Z_{i-1}^{(k)} (\theta), 
\end{align*}
where we have set: 
\begin{align*}
& Z_{i-1}^{(1)} (\theta)   
\equiv \tfrac{1}{\Delta_n}   
\bigl( 
\mathbf{m}_{p, i} (\Delta_n, \theta) 
- \mathbf{m}_{p, i} (\Delta_n, (\beta_S, \truebeta_R, \sigma)) \bigr)^\top 
\boldsymbol{\Lambda}_{i-1} ((\beta_S, \sigma))
\bigl( 
\mathbf{m}_{p, i} (\Delta_n, \theta) 
- \mathbf{m}_{p, i} (\Delta_n, (\beta_S, \truebeta_R, \sigma))
\bigr); \\[0.2cm]
& Z_{i-1}^{(2)} (\theta) 
\equiv  \tfrac{2}{\Delta_n}  
\bigl( \mathbf{m}_{p, i} (\Delta_n, (\beta_S, \truebeta_R, \sigma))) 
- \mathbf{m}_{p, i} (\Delta_n, \trueparam) \bigr)^\top  
\boldsymbol{\Lambda}_{i-1} ((\beta_S, \sigma)) 
\bigl( 
\mathbf{m}_{p, i} (\Delta_n, \theta) 
- \mathbf{m}_{p, i} (\Delta_n, (\beta_S, \truebeta_R, \sigma)) \bigr); \\[0.2cm]
& Z_{i-1}^{(3)} (\theta)  
\equiv  \tfrac{2}{\Delta_n} 
\mathbf{m}_{p, i} (\Delta_n, \trueparam)^\top
\boldsymbol{\Lambda}_{i-1} ((\beta_S, \sigma))
\bigl( \mathbf{m}_{p, i} (\Delta_n, \theta) - \mathbf{m}_{p, i} (\Delta_n, (\beta_S, \truebeta_R, \sigma) ) \bigr); 
\\[0.2cm]
& Z_{i-1}^{(4)} (\theta)    
\equiv \sum_{j = 1}^{K_p} 
\Delta_n^{j-1} \cdot 
\Bigl\{ 
\mathbf{m}_{p, i} (\Delta_n, \theta)^\top 
\, 
\mathbf{G}_{i-1, j} (\theta) 
\, 
\mathbf{m}_{p, i} (\Delta_n, \theta) 
- 
\mathbf{m}_{p, i} (\Delta_n, \widetilde{\theta})^\top 
\, 
\mathbf{G}_{i-1, j} (\widetilde{\theta}) 
\, 
\mathbf{m}_{p, i} (\Delta_n, \widetilde{\theta})  \\ 
& \qquad \qquad 
+ \mathbf{H}_{i-1, j} (\theta)
- \mathbf{H}_{i-1, j} (\widetilde{\theta}) 
\Bigr\}|_{\widetilde{\theta} = (\beta_S, \truebeta_R, \sigma)}. 
\end{align*} 
We will study the terms $Z_{i-1}^{(k)} (\theta), \, 1 \le k \le 4$. Noticing that
\begin{align*} 
& \mathbf{m}_{p, i} (\Delta_n, \theta)
- \mathbf{m}_{p, i} (\Delta_n,  (\beta_S, \truebeta_R, \sigma)) \nonumber \\[0.3cm]
& =  
\sqrt{\Delta_n} 
\begin{bmatrix}
\tfrac{1}{6} \partial_{x_{S_2}}^\top  
\mu_{S_1} (X_{S, t_{i-1}}, \beta_{S_1})  
\partial_{x_R}^\top  \mu_{S_2} (\sample{X}{i-1}, \beta_{S_2}) 
\\[0.3cm]
\tfrac{1}{2} \partial_{x_R}^\top 
\mu_{S_2} (\sample{X}{i-1}, \beta_{S_2}) 
\\[0.3cm] 
I_{N_R \times N_R} 
\end{bmatrix} d_R (\sample{X}{i-1}, \beta_R) 
+ R (\sqrt{\Delta_n^3}, \sample{X}{i-1}, \theta)
\end{align*}
with the residual $R = [R^j]_{1 \le j \le N}$ such that $R^j \in \mathcal{S}$, we have 
\begin{align*}
Z_{i-1}^{(1)} (\theta)  
= d_R (\sample{X}{i-1}, \beta_R)^\top 
a_R^{-1} (\sample{X}{i-1}, \sigma)
d_R (\sample{X}{i-1}, \beta_R)
+ R (\Delta_n, \sample{X}{i-1}, \theta) 
\end{align*}
with $R \in \mathcal{S}$, where we exploited the following result whose proof is postponed to the end of this subsection. 
\begin{lemma} \label{lemma:matrix_3}
For any $\theta = (\beta_S, \beta_R, \sigma) \in \Theta$ and $1 \le i \le n$, it follows under condition \ref{assump:hor} that
\begin{align*}
\boldsymbol{\Lambda}_{i-1} (\beta_S, \sigma)
\begin{bmatrix}
\tfrac{1}{6} \partial_{x_{S_2}}^\top  
\mu_{S_1} (X_{S, t_{i-1}}, \beta_{S_1})  
\partial_{x_R}^\top  \mu_{S_2} (\sample{X}{i-1}, \beta_{S_2}) 
\\[0.3cm]
\tfrac{1}{2} \partial_{x_R}^\top 
\mu_{S_2} (\sample{X}{i-1}, \beta_{S_2}) 
\\[0.3cm]
I_{N_R \times N_R} 
\end{bmatrix}
=  
\begin{bmatrix}
\mathbf{0}_{N_{S_1}\times N_R} \\[0.1cm]
\mathbf{0}_{N_{S_2}\times N_R} \\[0.2cm]
a_R^{-1} (\sample{X}{i-1}, \sigma) 
\end{bmatrix}. 
\end{align*}
\end{lemma}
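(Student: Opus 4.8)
The plan is to rephrase the claim by left‑multiplying both sides by $\mathbf{\Sigma}^{(\mrm{II})}_{i-1}(\beta_S,\sigma)$, which is legitimate since, as recalled at the beginning of this subsection, $\mathbf{\Sigma}^{(\mrm{II})}_{i-1}$ (and $a_R$) depend on $\theta$ only through $(\beta_S,\sigma)$, and $\mathbf{\Sigma}^{(\mrm{II})}_{i-1}(\beta_S,\sigma)$ is positive definite — hence invertible — by Lemma~\ref{lemma:positive_II}. With $\mathbf{\Lambda}_{i-1}=(\mathbf{\Sigma}^{(\mrm{II})}_{i-1})^{-1}$, the statement becomes the assertion that the $3\times 1$ block vector on the left of the lemma equals the third block‑column of $\mathbf{\Sigma}^{(\mrm{II})}_{i-1}$ right‑multiplied by $a_R^{-1}(\sample{X}{i-1},\sigma)$, i.e.
\[
\begin{bmatrix}
\tfrac{1}{6}\,\partial_{x_{S_2}}^\top\mu_{S_1}(X_{S,t_{i-1}},\beta_{S_1})\,\partial_{x_R}^\top\mu_{S_2}(\sample{X}{i-1},\beta_{S_2}) \\[0.2cm]
\tfrac{1}{2}\,\partial_{x_R}^\top\mu_{S_2}(\sample{X}{i-1},\beta_{S_2}) \\[0.2cm]
I_{N_R\times N_R}
\end{bmatrix}
=
\begin{bmatrix}
\mathbf{\Sigma}^{(\mrm{II})}_{S_1R}(\sample{X}{i-1},\theta)\,a_R^{-1}(\sample{X}{i-1},\sigma) \\[0.2cm]
\mathbf{\Sigma}^{(\mrm{II})}_{S_2R}(\sample{X}{i-1},\theta)\,a_R^{-1}(\sample{X}{i-1},\sigma) \\[0.2cm]
\mathbf{\Sigma}^{(\mrm{II})}_{RR}(\sample{X}{i-1},\theta)\,a_R^{-1}(\sample{X}{i-1},\sigma)
\end{bmatrix}.
\]
This reduces the proof to three block identities, of which the last is immediate because $\mathbf{\Sigma}^{(\mrm{II})}_{RR}=a_R$.

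For the other two I would use the single elementary fact that, since the diffusion vector fields $A_k=[\mathbf{0}_{N_S}^\top,A_{R,k}^\top]^\top$ of class~(\ref{eq:hypo-II}) are supported on the rough coordinates, for a smooth $\mathbb{R}^m$‑valued map $\varphi$ one has $\mathcal{L}_k\varphi=(\partial_{x_R}^\top\varphi)\,A_{R,k}$, while in a second‑order operator $\mathcal{L}$ the diffusion part only produces derivatives in $x_R$. Applied to $\mu_{S_2}$ this gives $\mathcal{L}_k\mu_{S_2}=(\partial_{x_R}^\top\mu_{S_2})A_{R,k}$, whence $\mathbf{\Sigma}^{(\mrm{II})}_{S_2R}=\tfrac12\sum_k(\partial_{x_R}^\top\mu_{S_2})A_{R,k}A_{R,k}^\top=\tfrac12(\partial_{x_R}^\top\mu_{S_2})\,a_R$, and multiplying by $a_R^{-1}$ yields the $S_2R$ identity. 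For the $S_1R$ identity I would first compute $\mathcal{L}\mu_{S_1}$: since $\mu_{S_1}$ depends only on $x_S=(x_{S_1},x_{S_2})$, its second derivatives in the rough directions vanish, so the diffusion part of $\mathcal{L}$ drops and $\mathcal{L}\mu_{S_1}=(\partial_{x_{S_1}}^\top\mu_{S_1})\mu_{S_1}+(\partial_{x_{S_2}}^\top\mu_{S_1})\mu_{S_2}$ (the identity already used at (\ref{eq:diff_S1})). Then $\mathcal{L}_k$ of this picks out only the $x_R$‑dependence, which enters solely through $\mu_{S_2}$, giving $\mathcal{L}_k\mathcal{L}\mu_{S_1}=(\partial_{x_{S_2}}^\top\mu_{S_1})(\partial_{x_R}^\top\mu_{S_2})A_{R,k}$, and therefore $\mathbf{\Sigma}^{(\mrm{II})}_{S_1R}=\tfrac16(\partial_{x_{S_2}}^\top\mu_{S_1})(\partial_{x_R}^\top\mu_{S_2})\,a_R$; multiplying by $a_R^{-1}$ closes the argument.

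\emph{Main obstacle.} There is no deep difficulty: the computation is essentially bookkeeping, and the only things to watch are (i) that every vanishing contribution is correctly justified from the degenerate structure — the $A_k$ having zero smooth components and $\mu_{S_1}$ not depending on $x_R$ — and (ii) that $a_R^{-1}$ and $(\mathbf{\Sigma}^{(\mrm{II})}_{i-1})^{-1}$ are genuinely available, which is precisely what condition~\ref{assump:hor}$(ii)$ and Lemma~\ref{lemma:positive_II} guarantee. The identity is of the same flavour as Lemmas~\ref{lemm:mat_1} and~\ref{lemm:mat_2}, and could alternatively be obtained by combining block‑inverse formulas for $\mathbf{\Sigma}^{(\mrm{II})}$ with those two lemmas, but the direct verification sketched above is the cleanest route.
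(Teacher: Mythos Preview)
Your proposal is correct and follows essentially the same approach as the paper: both reduce the claim, via the invertibility of $\mathbf{\Sigma}^{(\mrm{II})}_{i-1}$ and $a_R$ guaranteed by condition~\ref{assump:hor}$(ii)$ and Lemma~\ref{lemma:positive_II}, to the identity that the block vector equals the third block-column of $\mathbf{\Sigma}^{(\mrm{II})}_{i-1}$ right-multiplied by $a_R^{-1}$, and then observe that $\mathbf{\Lambda}_{i-1}$ times that block-column is $[\mathbf{0};\mathbf{0};I]$. You supply more detail than the paper in explicitly computing $\mathcal{L}_k\mu_{S_2}$ and $\mathcal{L}_k\mathcal{L}\mu_{S_1}$ to verify the $S_2R$ and $S_1R$ blocks, which is a welcome clarification.
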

Thus, we apply Lemma \ref{lemma:aux_1} to obtain: 
\begin{align*}
\sup_{\beta_R \in \Theta_{\beta_R}} \Bigl| 
\tfrac{1}{n} \sum_{i = 1}^n Z_{i-1}^{(1)} (\hat{\beta}_{S, p, n}, \beta_R, \hat{\sigma}_{p, n}) 
- \mathbf{Y}_4 (\beta_R) \Bigr| 
\probconv 0, 
\end{align*}
as \limit. For the terms $Z_{i-1}^{(k)}$, $k = 2, 3$, we again make use of Lemmas \ref{lemma:aux_1}, \ref{lemma:aux_2}, \ref{lemma:matrix_3} with the rates of convergence $\hat{\beta}_{S_1, p, n} - \truebeta_{S_1} = o_{\mathbb{P}_{\trueparam}} (\sqrt{\Delta_n^3})$ and $\hat{\beta}_{S_2, p, n} - \truebeta_{S_2} = o_{\mathbb{P}_{\trueparam}} (\sqrt{\Delta_n})$ to obtain that if \limit, then
\begin{align*}
\sup_{\beta_R \in \Theta_{\beta_R}} \Bigl| 
\tfrac{1}{n} \sum_{i = 1}^n Z_{i-1}^{(k)} (\hat{\beta}_{S, p, n}, \beta_R, \hat{\sigma}_{p, n}) \Bigr| 
\probconv 0, \quad k = 2,3. 
\end{align*} 
Finally, we consider the term $Z_{i-1}^{(4)}$. We have 
\begin{align*}
Z_{i-1}^{(4)} (\theta) = 
\bar{Z}_{i-1}^{(4)} (\theta) - \bar{Z}_{i-1}^{(4)} ((\beta_S, \truebeta_R, \sigma)), \quad \theta = (\beta_S, \beta_R, \sigma) \in \Theta, 
\end{align*}
where we have set: 
\begin{align*}
\bar{Z}_{i-1}^{(4)} (\theta) 
= \sum_{j = 1}^{K_p} 
\Delta_n^{j-1} \cdot 
\Bigl\{ \mathbf{m}_{p, i} (\Delta_n, \theta)^\top 
\, 
\mathbf{G}_{i-1, j} (\theta) 
\, 
\mathbf{m}_{p, i} (\Delta_n, \theta)  
+ \mathbf{H}_{i-1, j} (\theta)
\Bigr\}. 
\end{align*}
From the same argument as used in the proof of Lemma \ref{lemma:step3_1} 
in Section \ref{appendix:pf_step3_1} and Lemma \ref{lemma:aux_1}, we get:
\begin{align*}
\sup_{(\beta_R, \sigma) \in \Theta_{\beta_R} \times \Theta_{\sigma}} 
\Bigl| 
\tfrac{1}{n} \sum_{i = 1}^n 
\bar{Z}^{(4)}_{i-1} (\hat{\beta}_{S, p, n}, \beta_R, \sigma)  - \mathbf{Z} (\beta_R, \sigma)
\Bigr| \probconv 0, 
\end{align*}
as \limit, where we have set for $(\beta_R, \sigma) \in \Theta_{\beta_R} \times \Theta_\sigma$, 
\begin{align*} 
\mathbf{Z} (\beta_R, \sigma) 
= \int
\Bigl\{ 
- \mathrm{Tr} \bigl[ 
\mathbf{G}_1 (x, (\truebeta_S, \beta_R, \sigma))  
\boldsymbol{\Sigma} (x, (\truebeta_S, \truesigma)) \bigr]
+ 
\mathbf{H}_1 (x, (\truebeta_S, \beta_R, \sigma)) 
\Bigr\}
\truedist (dx),
\end{align*}
with
\begin{align*}
\mathbf{G}_1 (x, \theta) = 
\boldsymbol{\Lambda} (x, (\beta_S, \sigma)) 
\boldsymbol{\Sigma}_1 (x, \theta) 
\boldsymbol{\Lambda} (x, (\beta_S, \sigma)), 
\quad 
\mathbf{H}_1 (x, \theta) = \mathrm{Tr} 
\bigl[ \boldsymbol{\Lambda} (x, (\beta_S, \sigma)) 
\boldsymbol{\Sigma}_1 (x, \theta)  \bigr],  
\end{align*} 
for $\quad (x, \theta) \in \mathbb{R}^N \times \Theta$.
In the above definition of $\mathbf{G}_1 (x, \theta)$ and  $\mathbf{H}_1 (x, \theta)$, $\boldsymbol{\Sigma}_1 (x, \theta) \in \mathbf{R}^{N \times N}$ is determined by the It\^o-Taylor expansion (\ref{eq:expansion_cov_2}) with $\sample{X}{i-1}$ replaced by $x \in \mathbb{R}^N$. In particular, we notice from the consistency of $\hat{\sigma}_{p,n}$ that  
\begin{align*}
\sup_{\beta_R \in \Theta_{\beta_R}} 
\Bigl| 
\mathbf{Z} (\beta_R, \hat{\sigma}_{p, n}) 
\Bigr| \probconv 0,
\end{align*} 
thus, 
\begin{align*}
\sup_{\beta_R \in \Theta_{\beta_R}} 
\Bigl| 
\tfrac{1}{n} \sum_{i = 1}^n 
Z_{i-1}^{(4)} (\hat{\beta}_{S, p, n}, \beta_R, \hat{\sigma}_{p, n})  
\Bigr| \probconv 0 
\end{align*}
as \limit. The proof of Lemma \ref{lemma:step3_2} is now complete. 
\\

\noindent 
\textit{Proof of Lemma \ref{lemma:matrix_3}}. 
Making use of the invertibility of the matrix $a_R (x, \theta)$ for any $(x, \theta) \in \mathbb{R}^N \times \Theta$ under condition \ref{assump:hor}, we get
\begin{align}  
\begin{bmatrix}
\tfrac{1}{6} \partial_{x_{S_2}}^\top  
\mu_{S_1} (X_{S, t_{i-1}}, \beta_{S_1})  
\partial_{x_R}^\top  \mu_{S_2} (\sample{X}{i-1}, \beta_{S_2}) 
\\[0.3cm]
\tfrac{1}{2} \partial_{x_R}^\top 
\mu_{S_2} (\sample{X}{i-1}, \beta_{S_2}) 
\\[0.3cm]
I_{N_R \times N_R} 
\end{bmatrix}  
& = 
\begin{bmatrix}
\boldsymbol{\Sigma}_{S_1 R} (\sample{X}{i-1}, (\beta_S, \sigma))
\\[0.3cm]
\boldsymbol{\Sigma}_{S_2 R} (\sample{X}{i-1}, (\beta_S, \sigma)) \\[0.3cm]
a_{R}  (\sample{X}{i-1}, \sigma)  
\end{bmatrix} a_{R}^{-1} (\sample{X}{i-1}, \sigma). 
\label{eq:m_diff}
\end{align}
%
% where $R = [R^j]_{1 \le j \le N}$ is a residual such that $R^j \in \mathcal{S}$. In the first equation above, we exploited the following formulae: for $(x, \theta) \in \mathbb{R}^N \times \Theta$,  
% % 
% \begin{align} 
% \mathcal{L} \mu_{S_2} (x, \theta) 
% & = \partial_{x_R}^\top \mu_{S_2} (x, \beta_{S_2}) 
% \mu_{R} (x, \beta_R) + v_{S_2} (x, \beta_S, \sigma); \label{eq:LV_S2} \\[0.2cm]
% \mathcal{L}^2 \mu_{S_1} (x, \theta)  
% & = 
% \partial_{x_{S_2}}^\top \mu_{S_1} (x_S, \beta_{S_1})  \partial_{x_R}^\top \mu_{S_2} (x, \beta_{S_2}) 
% \mu_{R} (x, \beta_R) + v_{S_1} (x, \beta_S, \sigma), \label{eq:LLV_S1} 
% \end{align}
% % 
% with some functions $v_{S_2} : \mathbb{R}^N \times \Theta_{\beta_S} \times \Theta_\sigma \to \mathbb{R}^{N_{S_2}}$ 
% and $v_{S_1} : \mathbb{R}^N \times \Theta_{\beta_S} \times \Theta_\sigma \to \mathbb{R}^{N_{S_1}}$.  
%
Thus, (\ref{eq:m_diff}) leads to the assertion by noticing that 
\begin{align*}
\boldsymbol{\Lambda}_{i-1} (\beta_S, \sigma) 
\begin{bmatrix}
\boldsymbol{\Sigma}_{S_1 R} (\sample{X}{i-1}, (\beta_S, \sigma))
\\[0.3cm]
\boldsymbol{\Sigma}_{S_2 R} (\sample{X}{i-1}, (\beta_S, \sigma)) \\[0.3cm]
a_{R}  (\sample{X}{i-1}, \sigma)  
\end{bmatrix} 
= 
\begin{bmatrix}
\mathbf{0}_{N_{S_1}\times N_R} \\
\mathbf{0}_{N_{S_2}\times N_R} \\
I_{N_{R} \times N_R} 
\end{bmatrix}. 
\end{align*} 
\section{Proof of Technical Results for Theorem \ref{thm:clt}} \label{append:pf_technical_clt}
\subsection{Proof of Lemma \ref{lemm:slln}}
\label{appendix:pf_slln} 
We have already shown in the proof of Lemmas \ref{lemma:step1_B} and \ref{lemma:step_2} that if \limit, then
\begin{align} \label{eq:conv_J}
\sup_{\lambda \in [0,1]} \Bigl|  
\bigl[\mathbf{J}_{p, n} \bigl( \trueparam  + \lambda ( \hat{\theta}_{p, n} - \trueparam)  \bigr)\bigr]_{{k_1 k_2}} 
- 2 \bigl[ \Gamma (\trueparam) \bigr]_{k_1 k_2} \Bigr| 
\probconv 0, 
\end{align}  
for $1 \le k_1, k_2 \le N_{\beta_S}$. We then check the above convergence for other cases of $k_1, k_2$. For simplicity of notation, we introduce: 
\begin{align*}
\boldsymbol{\Psi}_{i-1} (\theta) 
= 
\begin{bmatrix}
\tfrac{1}{6} \partial_{x_{S_2}}^\top  
\mu_{S_1} (X_{S, t_{i-1}}, \beta_{S_1})  
\partial_{x_R}^\top  \mu_{S_2} (\sample{X}{i-1}, \beta_{S_2}) 
\\[0.3cm]
\tfrac{1}{2} \partial_{x_R}^\top 
\mu_{S_2} (\sample{X}{i-1}, \beta_{S_2}) 
\\[0.3cm]
I_{N_R \times N_R} 
\end{bmatrix}, \quad 1 \le i \le n.  
\end{align*}
Since the matrix $\mathbf{J}_{p, n} \bigl( \theta \bigr), \, \theta \in \Theta$ is symmetric, we study the following 4 cases. 
\\ 

\noindent
\textbf{Case (i).} $1 \le k_1 \le N_{\beta_{S}}, N_{\beta_{S}} + 1 \le k_2 \le N_{\beta}$. Due to the rate of convergence $\hat{\beta}_{S, p, n} - \truebeta_S$ and the consistency of $\hat{\theta}_{p, n}$, we have 
\begin{align*}
& \bigl[\mathbf{J}_{p, n} \bigl( \trueparam  + \lambda ( \hat{\theta}_{p, n} - \trueparam)  \bigr)\bigr]_{{k_1 k_2}} 
\nonumber \\ 
& = \tfrac{2}{n} \sum_{i = 1}^n 
\biggl\{ 
\begin{bmatrix}
\partial_{\theta, k_1} \mu_{S} (\sample{X}{i-1}, \beta_S) \\[0.1cm]
\mathbf{0}_{N_R} 
\end{bmatrix}^\top
\boldsymbol{\Lambda}_{i-1} (\theta) \,  
\boldsymbol{\Psi}_{i-1} (\theta) \, 
\partial_{\theta, k_2} \mu_R (\sample{X}{i-1}, \beta_R)
\biggr\}|_{\theta = \trueparam + \lambda ( \hat{\theta}_{p, n} - \trueparam)}
 + o_{\mathbb{P}_{\trueparam}} (1) \\ 
& = o_{\mathbb{P}_{\trueparam}} (1), 
\end{align*}
where $o_{\mathbb{P}_{\trueparam}} (1)$ is uniform w.r.t.~$\lambda \in [0,1]$. Notice that the first term in the right hand side of the first equation becomes $0$ due to Lemma \ref{lemma:matrix_3}. Thus, from Lemma \ref{lemma:aux_1}, the limit (\ref{eq:conv_J}) holds 
for $1 \le k_1 \le N_{\beta_{S}}, N_{\beta_{S}} + 1 \le k_2 \le N_{\beta}$. 
\\

\noindent
\textbf{Case (ii).} $1 \le k_1 \le N_{\beta}, N_{\beta} + 1 \le k_2 \le N_{\theta}$. Again, making use of the rate of convergence $\hat{\beta}_{S, p, n} - \truebeta_S$ and the consistency of $\hat{\theta}_{p, n}$, we obtain 
\begin{align*}
& \bigl[\mathbf{J}_{p, n} \bigl( \trueparam  + \lambda ( \hat{\theta}_{p, n} - \trueparam)  \bigr)\bigr]_{{k_1 k_2}} 
= o_{\mathbb{P}_{\trueparam}} (1),
\end{align*}
thus the limit (\ref{eq:conv_J}) holds for $1 \le k_1 \le N_{\beta}, N_{\beta} + 1 \le k_2 \le N_{\theta}$. 
\\

\noindent
\textbf{Case (iii).} $N_{\beta_S} + 1 \le k_1, k_2 \le N_{\beta}$. Similarly to the early cases, we have that 
\begin{align*}
& \bigl[\mathbf{J}_{p, n} \bigl( \trueparam  + \lambda ( \hat{\theta}_{p, n} - \trueparam)  \bigr)\bigr]_{{k_1 k_2}} 
\nonumber \\ 
& = \tfrac{2}{n} \sum_{i = 1}^n 
\Bigl\{ 
\partial_{\theta, k_1} \mu_R (\sample{X}{i-1}, \beta_R)^\top
\boldsymbol{\Psi}_{i-1} (\theta)^\top 
\boldsymbol{\Lambda}_{i-1} (\theta) \,  
\boldsymbol{\Psi}_{i-1} (\theta) \, 
\partial_{\theta, k_2} \mu_R (\sample{X}{i-1}, \beta_R)
\Bigr\} 
|_{\theta = \trueparam + \lambda ( \hat{\theta}_{p, n} - \trueparam)} \\ 
& \qquad \qquad + o_{\mathbb{P}_{\trueparam}} (1) \\
& = \tfrac{2}{n} \sum_{i = 1}^n 
\Bigl\{ \partial_{\theta, k_1} \mu_R (\sample{X}{i-1}, \beta_R)^\top 
a_R^{-1} (\sample{X}{i-1}, \theta) \, 
\partial_{\theta, k_2} \mu_R (\sample{X}{i-1}, \beta_R)
\Bigr\} 
|_{\theta = \trueparam + \lambda ( \hat{\theta}_{p, n} - \trueparam)} + o_{\mathbb{P}_{\trueparam}} (1),  
\end{align*} 
where we used Lemma \ref{lemma:matrix_3} in the last equation. Thus, from Lemma \ref{lemma:aux_1}, the convergence (\ref{eq:conv_J}) holds for $N_{\beta_S} + 1 \le k_1, k_2 \le N_{\beta}$. 
\\

\noindent
\textbf{Case (iv).} $N_{\beta} + 1 \le k_1, k_2 \le N_{\theta}$. We have that
\begin{align*}
& 
\bigl[\mathbf{J}_{p, n} \bigl( \trueparam  + \lambda ( \hat{\theta}_{p, n} - \trueparam)  \bigr)\bigr]_{{k_1 k_2}} 
\\
& = \tfrac{1}{n} \sum_{i = 1}^n 
\Bigl\{ 
\mathbf{m}_{p,i}(\Delta_n, \trueparam)^\top \partial_{\theta, k_1} \partial_{\theta, k_2} \boldsymbol{\Lambda}_{i-1} (\theta)
\mathbf{m}_{p,i}(\Delta_n, \trueparam) 
+ \partial_{\theta, k_1} \partial_{\theta, k_2} 
\log \det \boldsymbol{\Sigma}_{i-1} (\theta)
\Bigr\} 
|_{\theta = \trueparam + \lambda ( \hat{\theta}_{p, n} - \trueparam)} \\ 
& \qquad \qquad 
+ o_{\mathbb{P}_{\trueparam}} (1).  
\end{align*}
Thus, applying Lemma \ref{lemma:aux_1}, we have that if  \limit, then
\begin{align*}
& \sup_{\lambda \in [0,1]} \bigl[\mathbf{J}_{p, n} \bigl( \trueparam  + \lambda ( \hat{\theta}_{p, n} - \trueparam)  \bigr)\bigr]_{{k_1 k_2}}  \\ 
& \probconv \int \Bigl\{ 
\mathrm{Tr}\bigl[\partial_{\theta, k_1} \partial_{\theta, k_2} \boldsymbol{\Lambda} (x, \trueparam) 
\boldsymbol{\Sigma} (x, \trueparam)
\bigr] + 
\partial_{\theta, k_1} \partial_{\theta, k_2} 
\log \det \boldsymbol{\Sigma} (x, \trueparam) 
\Bigr\} 
\truedist(dx) 
= 2 [\Gamma (\trueparam)]_{k_1 k_2}, 
\end{align*}
where we applied the following formulae in the above equation. For $\theta \in \Theta$, 
\begin{gather*}
\partial_{\theta, k_1} \partial_{\theta, k_2} 
\log \det \boldsymbol{\Sigma} (x, \theta) 
= - \mathrm{Tr} \bigl[
\partial_{\theta, k_1} \partial_{\theta, k_2}  
\boldsymbol{\Lambda} (x, \theta) 
\, \boldsymbol{\Sigma}  (x, \theta ) \bigr] 
- \mathrm{Tr} \bigl[ 
\partial_{\theta, k_1} \boldsymbol{\Sigma} (x, \theta) \partial_{\theta, k_2} \boldsymbol{\Lambda} (x, \theta)  \bigr]; \\[0.2cm]
\mathrm{Tr} \bigl[ 
\partial_{\theta, k_1} \boldsymbol{\Sigma} (x, \theta) \partial_{\theta, k_2} \boldsymbol{\Lambda} (x, \theta) \bigr] 
 = - \mathrm{Tr} \bigl[ 
\partial_{\theta, k_1} 
\boldsymbol{\Sigma} (x, \theta) \, 
\boldsymbol{\Lambda} (x, \theta)
\partial_{\theta, k_2} \boldsymbol{\Sigma}  
(x, \theta) \,  
\boldsymbol{\Lambda} (x, \theta)   
\bigr]. 
\end{gather*} 
The proof of Lemma \ref{lemm:slln} is now complete. 
\subsection{Proof of limits (\ref{eq:hess_E}) \& (\ref{eq:fourth_E})} 
\label{sec:pf_clt_limits} 
\subsubsection{Proof of (\ref{eq:hess_E})}
We write for $(x, \theta) \in \mathbb{R}^N \times \Theta$, 
\begin{align*}
r (x, \theta) 
=
\begin{bmatrix} 
\mu_{S_1} (x_S, \beta_{S_1}) 
+ \tfrac{1}{2} \mathcal{L} \mu_{S_1} (x, \beta_S) + \tfrac{1}{6} \mathcal{L}^2 \mu_{S_1} (x, \theta)  \\[0.2cm]
\mu_{S_2} (x, \beta_{S_2}) 
+ \tfrac{1}{2} \mathcal{L} \mu_{S_2} (x, \theta)
\\[0.2cm] 
\mu_{R} (x, \beta_R)
\end{bmatrix}. 
\end{align*}
From the definition of $\xi_i^k (\trueparam), \, 1 \le i \le n, \, 1 \le k \le N_\theta$, given in (\ref{eq:xi}), we have:
\begin{itemize}
\item For $1 \le k \le N_{\beta}$, 
\begin{align} \label{eq:xi_case_1}
\xi_i^k (\theta)
& = \tfrac{1}{\sqrt{n}} 
\biggl\{ 
- 2 \, \mathbf{m}_{p, i} (\Delta_n, \trueparam)^\top
\boldsymbol{\Lambda}_{i-1} (\trueparam) 
\partial_{\theta, k} r (\sample{X}{i-1}, \trueparam) \nonumber  \\ 
& \qquad + \sum_{j_1, j_2  =1}^N R^{j_1 j_2}_{k} (\sqrt{\Delta_n}, \sample{X}{i-1}, \trueparam)
\mathbf{m}_{p. i}^{j_1}(\Delta_n, \trueparam)
\mathbf{m}_{p. i}^{j_2}(\Delta_n, \trueparam)  \nonumber \\
& \qquad 
+ \sum_{j =1}^N R^{j}_k (\sqrt{\Delta_n}, \sample{X}{i-1}, \trueparam)
\mathbf{m}_{p. i}^{j}(\Delta_n, \trueparam) 
+ R_k (\sqrt{\Delta_n}, \sample{X}{i-1}, \trueparam)
\biggr\}, 
\end{align} 
where $R^{j_1j_2}_k, \, R_k^j, \, R_k \in \mathcal{S}$. 
\item For $N_{\beta} + 1 \le k \le N_{\theta}$, 
\begin{align} \label{eq:xi_case_2}
\xi_i^k (\theta) 
& = \tfrac{1}{\sqrt{n}} 
\biggl\{ 
\mathbf{m}_{p, i} (\Delta_n, \trueparam)^\top 
\partial_{\theta, k} \boldsymbol{\Lambda}_{i-1} (\trueparam)
\mathbf{m}_{p, i} (\Delta_n, \trueparam)  
+ \partial_{\theta, k} \log \det \boldsymbol{\Sigma}_{i-1} (\trueparam) \nonumber \\ 
& \qquad + \sum_{j_1, j_2  =1}^N R^{j_1 j_2}_{k} (\sqrt{\Delta_n}, \sample{X}{i-1}, \trueparam)
\mathbf{m}_{p. i}^{j_1}(\Delta_n, \trueparam)
\mathbf{m}_{p. i}^{j_2}(\Delta_n, \trueparam)  \nonumber \\
& \qquad 
+ \sum_{j =1}^N R^{j}_k (\sqrt{\Delta_n}, \sample{X}{i-1}, \trueparam)
\mathbf{m}_{p. i}^{j}(\Delta_n, \trueparam) 
+ R_k (\sqrt{\Delta_n}, \sample{X}{i-1}, \trueparam) 
\biggr\}, 
\end{align}
where $R^{j_1j_2}_k, \, R_k^j, \, R_k \in \mathcal{S}$. 
\end{itemize}  
To simplify the notation, we write  
\begin{align*}
\mathbf{P}_{k_1 k_2} (\trueparam) 
\equiv 
\sum_{i = 1}^n \mathbb{E}_{\trueparam}
\bigl[  
 \xi_{i}^{k_1} (\trueparam) 
 \xi_{i}^{k_2} (\trueparam) 
| \mathcal{F}_{t_{i-1}} 
\bigr],  \qquad 1 \le k_1, k_2 \le N_{\theta},  
\end{align*} 
and then check its limit in the high-frequency observation regime. We note that from a similar argument used in the proof of Lemma \ref{lemma:aux_2}, it holds that  
\begin{align}
& \tfrac{1}{n} \sum_{i = 1}^n \sum_{j_1, j_2, j_3 = 1}^N R^{j_1j_2j_3} (1, \sample{X}{i-1}, \trueparam) 
\mathbf{m}_{p, i}^{j_1} (\Delta_n, \trueparam) 
\mathbf{m}_{p, i}^{j_2} (\Delta_n, \trueparam)  
\mathbf{m}_{p, i}^{j_3} (\Delta_n, \trueparam)  
\probconv 0; \label{eq:m_three}\\ 
& \tfrac{1}{n} \sum_{i = 1}^n \sum_{j_1, j_2, j_3, j_4 = 1}^N R^{j_1j_2j_3j_4} (1, \sample{X}{i-1}, \trueparam) 
\mathbf{m}_{p, i}^{j_1} (\Delta_n, \trueparam) 
\mathbf{m}_{p, i}^{j_2} (\Delta_n, \trueparam)  
\mathbf{m}_{p, i}^{j_3} (\Delta_n, \trueparam)  
\mathbf{m}_{p, i}^{j_4} (\Delta_n, \trueparam)  \nonumber  \\ 
& \probconv 
\sum_{j_1, j_2, j_3, j_4 = 1}^N
\int R^{j_1j_2j_3j_4} (1, x, \trueparam) 
\Bigl\{ 
[\boldsymbol{\Sigma} (x, \trueparam)]_{j_1 j_2} 
[\boldsymbol{\Sigma} (x, \trueparam)]_{j_3 j_4}  
+ 
[\boldsymbol{\Sigma} (x, \trueparam)]_{j_1 j_3} 
[\boldsymbol{\Sigma} (x, \trueparam)]_{j_2 j_4} \nonumber  \\
& \qquad \qquad \qquad \qquad + 
[\boldsymbol{\Sigma} (x, \trueparam)]_{j_1 j_4} 
[\boldsymbol{\Sigma} (x, \trueparam)]_{j_2 j_3}  
\Bigr\} 
\truedist (dx), \label{eq:m_four}
\end{align} 
as \limit, where $R^{j_1j_2j_3}, R^{j_1j_2j_3j_4} \in \mathcal{S}$.    
For $1 \le k_1, k_2 \le N_{\beta}$, we apply Lemmas \ref{lemma:aux_1}, \ref{lemma:aux_2} and the limits (\ref{eq:m_three}) and (\ref{eq:m_four}) to obtain: 
\begin{align*}
& \mathbf{P}_{k_1 k_2} (\trueparam) 
= \tfrac{4}{n} \sum_{i = 1}^n \sum_{j_1, j_2, j_3, j_4 = 1}^N 
\biggl\{ 
\mathbf{m}_{p, i}^{j_1} (\trueparam)[\boldsymbol{\Lambda}_{i-1} (\trueparam)]_{j_1 j_2} \partial_{\theta, k_1} r^{j_2} (\sample{X}{i-1}, \trueparam) \\ 
& \qquad \qquad \qquad \qquad \qquad  
\times \mathbf{m}_{p, i}^{j_3} (\trueparam)[\boldsymbol{\Lambda}_{i-1}(\trueparam)]_{j_3 j_4} \partial_{\theta, k_2} 
r^{j_4} (\sample{X}{i-1}, \trueparam) \biggr\}
+ o_{\mathbb{P}_{\trueparam}} (1)  \nonumber \\ 
& \probconv 4  \sum_{j_1, j_2, j_3, j_4 =1}^N 
\int [\boldsymbol{\Lambda}(x, \trueparam)]_{j_1 j_2} \partial_{\theta, k_1} r^{j_2} (x, \trueparam)  
[\boldsymbol{\Lambda}(x, \trueparam)]_{j_3 j_4} 
[\boldsymbol{\Sigma} (x, \trueparam)]_{j_1 j_3}
\partial_{\theta, k_2} r^{j_4} (x, \trueparam) 
\, \truedist(dx)  \nonumber \\
& = 4 
\int 
\partial_{\theta, k_1} r (x, \trueparam)^\top
[\boldsymbol{\Lambda}(x, \trueparam)]  
\partial_{\theta, k_2} r (x, \trueparam)  
\, \truedist(dx)  = 4 [\Gamma (\trueparam)]_{k_1 k_2},  
\end{align*}
as \limit. In particular in the last equation above, we used Lemmas \ref{lemm:mat_1}, \ref{lemm:mat_2} and \ref{lemma:matrix_3}. For $N_{\beta} + 1 \le k_1, k_2 \le N_\theta$, we have that if \limit, then  
\begin{align*}
& \mathbf{P}_{k_1 k_2} (\trueparam)  
= \tfrac{1}{n} \sum_{i = 1}^n \biggl\{
\sum_{j_1, j_2, j_3, j_4 = 1}^N 
[\partial_{\theta, k_1} \boldsymbol{\Lambda}_{i-1} (\trueparam)]_{j_1 j_2}
[\partial_{\theta, k_2} \boldsymbol{\Lambda}_{i-1} (\trueparam)]_{j_3 j_4} \prod_{q = 1}^4 
\mathbf{m}_{p, i}^{j_{q}}(\Delta_n, \trueparam) 
\\ 
% \mathbf{m}_{p, i}^{j_2}(\Delta_n, \trueparam) 
% \mathbf{m}_{p, i}^{j_3}(\Delta_n, \trueparam)
% \mathbf{m}_{p, i}^{j_4}(\Delta_n, \trueparam) \nonumber \\
& \qquad + \sum_{j_1, j_2 = 1}^N 
[\partial_{\theta, k_1} \boldsymbol{\Lambda}_{i-1} (\trueparam)]_{j_1 j_2} 
\partial_{\theta, k_2} \log \det \boldsymbol{\Sigma}_{i-1} 
(\trueparam) 
\mathbf{m}_{p, i}^{j_1}(\Delta_n, \trueparam) 
\mathbf{m}_{p, i}^{j_2} (\Delta_n, \trueparam) \nonumber \\ 
& \qquad + \sum_{j_1, j_2 = 1}^N 
\partial_{\theta, k_1} \log \det \boldsymbol{\Sigma}_{i-1} (\trueparam)
[\partial_{\theta, k_2} \boldsymbol{\Lambda}_{i-1} (\trueparam)]_{j_1 j_2}
\mathbf{m}_{p, i}^{j_1}(\Delta_n, \trueparam) 
\mathbf{m}_{p, i}^{j_2}(\Delta_n, \trueparam) 
\nonumber \\ 
& \qquad 
+ 
\partial_{\theta, k_1} \log \det \boldsymbol{\Sigma}_{i-1} (\trueparam)
\partial_{\theta, k_2} \log \det \boldsymbol{\Sigma}_{i-1} (\trueparam) 
\biggr\}  + o_{\mathbb{P}_{\trueparam}} (1) \nonumber \\ 
& \probconv \sum_{j_1, j_2, j_3, j_4 = 1}^N  
\int \Bigl\{ 
[\partial_{\theta, k_1}
\boldsymbol{\Lambda} (x, \trueparam)]_{j_1 j_2} 
[\partial_{\theta, k_2} \boldsymbol{\Lambda}
(x, \trueparam)]_{j_3 j_4} 
\times \bigl( 
[\boldsymbol{\Sigma} (x, \trueparam)]_{j_1 j_2} 
[\boldsymbol{\Sigma} (x, \trueparam)]_{j_3 j_4} 
\nonumber  \\
& \qquad \qquad \qquad \qquad + 
[\boldsymbol{\Sigma} (x, \trueparam)]_{j_1 j_3} 
[\boldsymbol{\Sigma} (x, \trueparam)]_{j_2 j_4}  
+ 
[\boldsymbol{\Sigma} (x, \trueparam)]_{j_1 j_4} 
[\boldsymbol{\Sigma} (x, \trueparam)]_{j_2 j_3}  
\bigr) \Bigr\}  \truedist(dx) \nonumber \\
& \qquad + \sum_{j_1, j_2 = 1}^N 
\int \Bigl\{ [\partial_{\theta, k_1}
\boldsymbol{\Lambda} (x, \trueparam)]_{j_1 j_2}
\partial_{\theta, k_2} \log \det \boldsymbol{\Sigma} (x,\trueparam) 
[ \boldsymbol{\Sigma} (x, \trueparam)]_{j_1 j_2}  \Bigr\}  \truedist(dx) \nonumber \\ 
& \qquad + \sum_{j_1, j_2 = 1}^N 
\int \Bigl\{ 
\partial_{\theta, k_1} \log \det \boldsymbol{\Sigma} (x, \trueparam) [\partial_{\theta, k_2}
\boldsymbol{\Lambda} (x, \trueparam)]_{j_1 j_2}
[ \boldsymbol{\Sigma} (x, \trueparam)]_{j_1 j_2} \Bigr\} \truedist(dx) \nonumber \\ 
& \qquad + \int \Bigl\{ 
\partial_{\theta, k_1} \log \det \boldsymbol{\Sigma} (x, \trueparam)  
\partial_{\theta, k_2} \log \det \boldsymbol{\Sigma} (x, \trueparam)  
\Bigr\} \truedist (dx)  \nonumber \\
& = \sum_{j_1, j_2, j_3, j_4 = 1}^N
\int \Bigl\{[\partial_{\theta, k_1}
\boldsymbol{\Lambda} (x, \trueparam)]_{j_1 j_2} 
[\partial_{\theta, k_2} \boldsymbol{\Lambda}
(x, \trueparam)]_{j_3 j_4}  
\bigl( 
[\boldsymbol{\Sigma} (x, \trueparam)]_{j_1 j_3} 
[\boldsymbol{\Sigma} (x, \trueparam)]_{j_2 j_4} \nonumber \\
& \qquad \qquad \qquad \qquad + 
[\boldsymbol{\Sigma} (x, \trueparam)]_{j_1 j_4} 
[\boldsymbol{\Sigma} (x, \trueparam)]_{j_2 j_3}  
\bigr)\Bigr\} \truedist (dx) \nonumber  \\ 
& = 4 [\Gamma (\trueparam)]_{k_1 k_2}, 
\end{align*}
where in the second and third equality, we made use of the following formulae. For $x \in \mathbb{R}^N$,  
\begin{gather*}
\partial_{\theta, k} 
\log \det \boldsymbol{\Sigma} (x, \trueparam) 
= - \sum_{j_1, j_2 = 1}^N 
[\partial_{\theta, k} \boldsymbol{\Lambda} (x, \trueparam)]_{j_1 j_2} 
[\boldsymbol{\Sigma} (x, \trueparam)]_{j_2 j_1}; \\ 
[\partial_{\theta, k} 
\boldsymbol{\Lambda} (x, \trueparam)]_{j_1 j_2}
= - \sum_{j_3, j_4 = 1}^N  
[\boldsymbol{\Lambda} (x, \trueparam) ]_{j_1 j_3}
[\partial_{\theta, k} \boldsymbol{\Sigma}
(x, \trueparam)]_{j_3 j_4} 
[\boldsymbol{\Lambda} (x, \trueparam)]_{j_4 j_2}, 
\end{gather*} 
for $N_{\beta} +  1 \le k \le N_{\theta}$ and $1 \le j_1, j_2 \le N$.  Finally, for the cases 
$1 \le k_1 \le N_{\beta}, \, 
N_{\beta} + 1 \le k_2 \le N_{\theta}$ or $1 \le k_2 \le N_{\beta}, \, 
N_{\beta} + 1 \le k_1 \le N_{\theta}$, we have from (\ref{eq:xi_case_1}), (\ref{eq:xi_case_2}) Lemmas \ref{lemma:aux_1}-\ref{lemma:aux_2} and the limit (\ref{eq:m_three}) that 
\begin{align*}
\mathbf{P}_{k_1 k_2} (\trueparam)  
\probconv 0, 
\end{align*}
as \limit. The proof of limit (\ref{eq:hess_E}) is
now complete. 
\subsubsection{Proof of limit (\ref{eq:fourth_E})}
From the similar argument in the proof of (\ref{eq:hess_E}), it is shown that if \limit, then
\begin{align*}
\sum_{i = 1}^n \mathbb{E}_{\trueparam}
\bigl[  
\bigl(  \xi_{i}^{k_1} (\trueparam) 
 \xi_{i}^{k_2} (\trueparam) \bigr)^2 
 % \xi_{i}^{k_3} (\trueparam) 
 % \xi_{i}^{k_4} (\trueparam)  
| \mathcal{F}_{t_{i-1}} 
\bigr] \probconv 0,  \qquad 1 \le k_1, k_2 \le N_\theta, 
\end{align*}
by noticing that the left hand side contains $1/n^2$. We omit the detailed proof. 
\subsection{Proof of Lemma \ref{lemma:F_2_conv}} 
\label{append:pf_F2_conv}
\noindent Let $1 \le i \le n$ and $p \ge 2$. Making use of 
\begin{align*} 
& \mathbb{E}_{\trueparam} 
\bigl[ 
\mathbf{m}_{p, i}^{l_1} (\Delta_n, \trueparam) 
\mathbf{m}_{p, i}^{l_2} (\Delta_n, \trueparam) | \mathcal{F}_{t_{i-1}} 
\bigr]  
= [\boldsymbol{\Xi}_{K_p, i-1} (\Delta_n, \trueparam)]_{l_1 l_2} + 
[\mathbf{C} (\Delta_n^{K_p + 1}, \sample{X}{i-1}, \trueparam)]_{l_1 l_2}, \quad 1 \le l_1, l_2 \le N, 
\end{align*}
for some $[\mathbf{C}]_{l_1 l_2} \in \mathcal{S}$, we have $F^{(2), k}_{i-1} (\trueparam) = F^{(2, \mrm{I}), k}_{i-1} (\trueparam)  + F^{(2, \mrm{II}), k}_{i-1} (\trueparam)$ with 
\begin{align*}
F^{(2, \mrm{I}), k}_{i-1} (\trueparam) 
& =  
[\mathbf{M}_n^{-1}]_{kk} 
\sum_{j = 0}^{K_p} \Delta_n^j
\Bigl\{ 
\mathrm{Tr}
\Bigl[ 
\bigl( \partial_{\theta, k} \mathbf{G}_{i-1, j} 
(\trueparam) \bigr) \, 
\boldsymbol{\Xi}_{K_p, i-1} (\trueparam)
\Bigr] 
+ \partial_{\theta, k} \mathbf{H}_{i-1, j} (\trueparam) \Bigr\}; \\
F^{(2, \mrm{II}), k}_{i-1} (\trueparam)  
& =
\tfrac{1}{n} R^{{(2, \mrm{II}), k}} \bigl(\sqrt{n \Delta_n^{2 K_p + 2}}, \sample{X}{i-1}, \trueparam \bigr),  
\end{align*} 
where $R^{{(2, \mrm{II}), k}} \in \mathcal{S}$. We will show that under the event $D_{\Delta_n, i-1} = \{\omega \in \Omega \, |  \, \det \boldsymbol{\Xi}_{K_p, i-1} (\Delta_n, \trueparam)  > 0 \}$, $1 \le i \le n$, it follows that  % 
\begin{align} \label{eq:key}
\sum_{j = 0}^{K_p} \Delta_n^j
\Bigl\{ 
\mathrm{Tr} 
\Bigl[ 
\bigl( \partial_{\theta, k} \mathbf{G}_{i-1, j} 
(\trueparam) \bigr) \boldsymbol{\Xi}_{K_p, i-1} (\trueparam) 
\Bigr]
+ 
\partial_{\theta, k} \mathbf{H}_{i-1, j} (\trueparam) \Bigr\} 
= R
(\Delta_n^{K_p + 1}, \sample{X}{i-1}, \trueparam),  
\end{align}
where $R \in \mathcal{S}$. Due to the invertibility of the matrix $\boldsymbol{\Xi}_{K_p, i-1} (\Delta_n, \trueparam)$ under the event $D_{\Delta_n, i-1}$, we have 
\begin{align}
\partial_{\theta, k}
\log \det \boldsymbol{\Xi}_{K_p, i-1} (\Delta_n, \trueparam) 
& = \mrm{Tr} \bigl[ \bigl( \boldsymbol{\Xi}_{K_p, i-1} (\Delta_n, \trueparam) \bigr)^{-1} 
\partial_{\theta, k}
\boldsymbol{\Xi}_{K_p, i-1} (\Delta_n, \trueparam) \bigr]
\nonumber \\
& = - \mrm{Tr} \bigl[ \partial_{\theta, k}
\bigl( \boldsymbol{\Xi}_{K_p, i-1} (\Delta_n, \trueparam) \bigr)^{-1}  
\boldsymbol{\Xi}_{K_p, i-1} (\Delta_n, \trueparam) \bigr],  \quad 1 \le k \le N_\theta, \label{eq:deriv_logdet}
\end{align}
where in the last equality we have used $N = \mrm{Tr} [\bigl( \boldsymbol{\Xi}_{K_p, i-1} (\Delta_n, \trueparam) \bigr)^{-1}  
\boldsymbol{\Xi}_{K_p, i-1} (\Delta_n, \trueparam) ]$. Taylor expansion of $\bigl( \boldsymbol{\Xi}_{K_p, i-1} (\Delta_n, \trueparam) \bigr)^{-1}$ and $\log \det \boldsymbol{\Xi}_{K_p, i-1} (\Delta_n, \trueparam)$ at $\Delta_n = 0$ yield: 
\begin{align}
\Bigl[ \bigl( \boldsymbol{\Xi}_{K_p, i-1} (\Delta_n, \trueparam) \bigr)^{-1} \Bigr]_{l_1 l_2} 
& = \sum_{j  = 0}^{K_p} \Delta_n^j \,  
\bigl[ \mathbf{G}_{i-1, j} (\trueparam) \bigr]_{l_1 l_2} 
+ R^1_{l_1 l_2} (\Delta_n^{K_p + 1}, \sample{X}{i-1}, \trueparam), \quad  1 \le l_1, l_2 \le N; \label{eq:taylor_inv} \\  
\log \det \boldsymbol{\Xi}_{K_p, i-1} (\Delta_n, \trueparam) 
& = \sum_{j  = 0}^{K_p} \Delta_n^j \,  \mathbf{H}_{i-1, j} (\trueparam) + R^2 (\Delta_n^{K_p + 1}, \sample{X}{i-1}, \trueparam), \label{eq:taylor_det} 
\end{align} 
where $R^1_{l_1 l_2}, \, R^2 \in \mathcal{S}$ so that they are continuously differentiable w.r.t. $\theta \in \Theta$ and $\partial_{\theta, k} R^1_{l_1 l_2}, \, \partial_{\theta, k} R^2 \in \mathcal{S}$ for $1 \le k \le N_\theta$. Hence, (\ref{eq:key}) immediately follows from (\ref{eq:deriv_logdet}), (\ref{eq:taylor_inv}) and (\ref{eq:taylor_det}). 
\\

Thus, (\ref{eq:key}) gives
\begin{align*} %\label{eq:F_2_1}
F^{(2, \mrm{I}), k}_{i-1} (\trueparam)  
=
\begin{cases} 
\tfrac{1}{n} R^{(2, \mrm{I}), k}
\bigl( 
\sqrt{n \Delta_n^{2K_p + 2}}, \sample{X}{i-1}, \trueparam \bigr),  &  1 \le k \le N_{\beta_S}, \, N_{\beta} + 1 \le k \le N_\theta; \\
\tfrac{1}{n} R^{(2, \mrm{I}), k} ( \sqrt{n \Delta_n^{2 K_p + 1}}, \sample{X}{i-1}, \trueparam),  &  N_{\beta_S} + 1 \le k \le N_\beta,    
\end{cases} 
\end{align*}
where $R^{(2, \mrm{I}), k} \in \mathcal{S}$. 
% For the term $F^{(2, \mrm{II}), k}_{i-1} (\trueparam)$, we immediately have  
% % 
% \begin{align} \label{eq:F_2_2}
% F^{(2, \mrm{II}), k}_{i-1} (\trueparam)   
% = \tfrac{1}{n} R^{{(2, \mrm{II}), k}} \bigl(\sqrt{n \Delta_n^{2 K_p + 2}}, \sample{X}{i-1}, \trueparam \bigr), \quad 1 \le k \le N_\theta, 
% \end{align}
% % 
% where $R^{{(2, \mrm{II}), k}} \in \mathcal{S}$. 
% From (\ref{eq:F_1}), (\ref{eq:F_2_1}) and (\ref{eq:F_2_2}), we obtain
% % 
% \begin{align*}
% \sum_{i = 1}^n \mathbb{E}_{\trueparam} 
% \bigl[ \xi_i^k (\trueparam) | \mathcal{F}_{t_{i-1}} \bigr] 
% = 
% \tfrac{1}{n} \sum_{i = 1}^n R^k(\sqrt{n \Delta_n^{2K_p + 1}}, \sample{X}{i-1}, \trueparam),  \qquad 1 \le k \le N_\theta, 
% \end{align*}
% % 
% for some $R^k \in \mathcal{S}$. 
Thus, under the event $\textstyle{ D_{\Delta_n} = \bigcap_{i = 1}^n D_{\Delta_n, i-1}}$, if \limit \, with $\Delta_n = o (n^{-1/p})$, then 
\begin{align*}
\sum_{i = 1}^n F^{(2), k}_{i-1} (\trueparam)
= \sum_{i = 1}^n \bigl\{ 
F^{(2, \mrm{I}), k}_{i-1} (\trueparam) 
+ F^{(2, \mrm{II}), k}_{i-1} (\trueparam)
\bigr\} \probconv 0,
\end{align*} 
and now the proof of Lemma \ref{lemma:F_2_conv} is complete.  
\section{
Supporting Material for Numerical Experiments}
\label{appendix:KF}
We provide the details to construct the marginal likelihood used in the numerical experiment of parameter estimation of FHN model (\ref{eq:fhn_model}) under a partial observation regime in Section \ref{sec:fhn_partial} in the main text. 
\\ 

To construct the marginal likelihood used in the computation of the MLE (\ref{eq:mle_partial}), we introduce (locally) Gaussian approximations associated with the contrast function $\ell_{p, n}^{\, (\mrm{I})} (\theta)$, $p=2, 3$, defined in (\ref{eq:contrast_I_p2}) and (\ref{eq:contrast_I}). We note that when $p=2$, the scheme corresponds to the local Gaussian (LG) scheme developed in \cite{glot:20, glot:21}. The one-step Gaussian approximations for $p=2, 3$ are written in the following form: 

\begin{align} \label{eq:condtionally_Gaussian}
{Z}_{t_{i+1}} 
= 
\begin{bmatrix}
   {X}_{t_{i+1}}  \\[0.1cm]
   {Y}_{t_{i+1}} 
\end{bmatrix} 
= 
a (\Delta_n, {X}_{t_{i}}, \theta) 
+ b (\Delta_n, {X}_{t_{i}}, \theta) Y_{t_{i}} 
+ c_p (\Delta_n, {X}_{t_{i}}, \theta) w_{i+1},  \qquad p = 2, 3, 
\end{align}
where $a, b, c_p : (0, \infty) \times \mathbb{R} \times \Theta \to \mathbb{R}^2$ are explicitly determined and $\{w_{i+1}\}_{i = 0, \ldots, n-1}$ is i.i.d. sequence of standard normal random variables. In particular, we have: 
\begin{align*} 
a (\Delta, x, \theta) 
& = 
\begin{bmatrix}
    &x + \tfrac{\Delta}{\varepsilon} (x - x^3 - s)  
    + \tfrac{\Delta^2 }{2 \varepsilon^2} (1 - 3 x^2) (x - x^3 - s)
    - \tfrac{\Delta^2}{2 \varepsilon} (\gamma x + \beta) \\[0.2cm] 
    & (\gamma x + \beta) \Delta
\end{bmatrix};  \\[0.3cm]
b (\Delta, x, \theta) 
& = 
\begin{bmatrix}
- \tfrac{\Delta}{\varepsilon} 
+  
\bigl\{-  (1 - 3 x^2 ) + \varepsilon \bigr\} \tfrac{\Delta^2}{2 \varepsilon^2} \\[0.2cm]
1 - \Delta  
\end{bmatrix},  
\end{align*} 
for $\Delta > 0$, $x \in \mathbb{R}$ and $\theta = (\varepsilon, \gamma, \alpha, \sigma)$. Also, the covariance matrix $\Sigma_p \equiv c_p c_p^\top, \, p =2, 3$, is defined as follows: 
% for $x \in \mathbb{R}, \Delta > 0$ and the above parameter vector $\theta$, 
%
\begin{align*}
\Sigma_2 (\Delta, x, \theta) 
& =
\begin{bmatrix}
\tfrac{\Delta^3}{3} \tfrac{\sigma^2}{\varepsilon^2} & - \tfrac{\Delta^2}{2} \tfrac{\sigma^2}{\varepsilon}  \\[0.2cm]
- \tfrac{\Delta^2}{2} \tfrac{\sigma^2}{\varepsilon} 
& \Delta \, \sigma^2  
\end{bmatrix};  \\[0.2cm]
\Sigma_3 (\Delta, x, \theta)  
& = \Sigma_2 (\Delta, x, \theta)  \\ 
&  
\quad + 
\begin{bmatrix}
\tfrac{\Delta^4}{4} L_1 (x, \theta) L_2 (x, \theta)
& \Delta^3 \bigl( \tfrac{1}{6} 
\sigma L_2 (x, \theta) 
+ \tfrac{1}{3} L_1 (x, \theta) L_3 (x, \theta) \bigr)  \\[0.2cm]
\Delta^3 \bigl( \tfrac{1}{6} 
\sigma L_2 (x, \theta) 
+ \tfrac{1}{3} L_1 (x, \theta) L_3 (x, \theta) \bigr) 
& 
\Delta^2 \sigma L_3 (x, \theta) 
\end{bmatrix}, 
\end{align*}
where we have set: 
\begin{align*}
L_1 (x, \theta) = - \tfrac{\sigma}{\varepsilon}, 
\quad 
L_2 (x, \theta) = \tfrac{\sigma}{\varepsilon^2} \times \bigl\{- 
(1 - 3 x^2) + \varepsilon
\bigr\}, 
\quad L_3 (x, \theta) = - \sigma.   
\end{align*}
Thus, given the observation $X_{t_i}$, the scheme (\ref{eq:condtionally_Gaussian}) is interpreted as a linear Gaussian model w.r.t.~the hidden component $Y_{t_{i}}$. For the linear Gaussian state space model (\ref{eq:condtionally_Gaussian}), the Kalman Filter (KF) recursion formula can be obtained. 
We recall $\Delta_n$ is the step-size of the observation and $n$ is the number of data. 
For the simplicity of notation, we write $Z_k = (X_k, Y_k), \, 0 \le k \le n$ instead of $Z_{t_k} = (X_{t_k}, Y_{t_k})$ and $Z_{0:n} \equiv \{ (X_k, Y_k) \}_{k = 0, \ldots, n}$. 
Then, it holds that 
$$
Y_{k} | X_{0:k} \sim \mathscr{N} (m_{p, k}, Q_{p, k}), 
\quad  0 \le k \le n, \ \  p = 2, 3,
$$ 
where the filtering mean $m_{p, k}$ and variance $Q_{p, k}$ is defined as follows (the derivation can be found in Appendix F in \cite{igu:23}): 
\begin{align*}
m_{p, k}  = \mu^Y_{p, k-1} 
+ \frac{\Lambda_{p, k-1}^{YX}}{\Lambda_{p, k-1}^{XX}} (X_{k} - \mu^X_{p, k-1}), \qquad 
Q_{p, k}  = \Lambda_{p, k-1}^{YY}
- \frac{\Lambda_{p, k-1}^{YX} 
\,  \Lambda_{p, k-1}^{XY}}{\Lambda_{p, k-1}^{XX}}, 
\end{align*}
where  
\begin{align*}
\begin{bmatrix}
    \mu^X_{p, k-1} \\[0.2cm] 
    \mu^Y_{p, k-1} 
\end{bmatrix}
&  = a (\Delta_n, X_{k-1}, \theta)
 + b (\Delta_n, X_{k-1}, \theta) m_{p, k-1};  \\[0.3cm]
\begin{bmatrix}
    \Lambda_{p, k-1}^{XX} & \Lambda_{p, k-1}^{XY} \\[0.2cm] 
    \Lambda_{p, k-1}^{XY}  & \Lambda_{p, k-1}^{YY} 
\end{bmatrix}
& = \Sigma_p (\Delta_n, X_{k-1}, \theta) + 
b (\Delta_n, X_{k-1}, \theta) \,  Q_{p, k-1} \, b (\Delta_n, X_{k-1}, \theta)^\top.  
\end{align*}  
Since it holds that  $X_k | X_{0:k-1} \sim \mathscr{N} (\mu^X_{p, k-1}, \Lambda_{p, k-1}^{XX})$, the marginal likelihood $f_{p, n} (X_{0:n} ; \theta), \, p = 2, 3$, is obtained as:
\begin{align*} %\label{eq:marginal_likelihood_KF}
f_{p, n} (X_{0:n} ; \theta) 
= \varphi_0 (X_0) \times \prod_{k = 1}^n \varphi  \bigl( 
 X_{k} \, ;  \, 
 \mu^X_{p, k-1}, \, \Lambda_{p, k-1}^{XX} 
 \bigr), 
\end{align*}
where $\varphi_0$ is the density of $X_0$ and $\varphi (\cdot \, ;  \, \mu, v)$ is the density of (scalar) Gaussian random variable with the mean $\mu$ and the variance $v$.   

\bibliographystyle{rss} 
\bibliography{Hypo}      

\end{document}